\title{Cubic algebras and Implication Algebras}
\author{Colin G.Bailey}
\address{School of Mathematics, Statistics \& Operations Research\\
Victoria University of Wellington\\
Wellington\\
NEW ZEALAND
}
\email{Colin.Bailey@vuw.ac.nz}
\author{Joseph S.Oliveira}
\address{
Pacific Northwest National Laboratories\\
Richland\\
U.S.A.}
\email{Joseph.Oliveira@pnl.gov}
\date{2009, February 4}
\subjclass{06A12, 06E99}
\keywords{cubes, Boolean algebras, implication algebras}
\let\rsf\mathscr
\let\bbm\mathbbm
\def\one{{\mathbf 1}}
\def\caret{\mathbin{\hat{\hphantom{m}}}}
\def\env{\operatorname{env}}
\def\rng{\operatorname{rng}}
\def\caret{\mathbin{\hat{\hphantom{i}}}}
\def\eqcl[#1]{\pmb{[}#1\pmb{]}}
\def\leftGen{{[\kern-1.1pt[}}
\def\rightGen{{]\kern-1.1pt]}}
\providecommand{\meet}{\mathbin{\wedge}}
\providecommand{\join}{\mathbin{\vee}}
\newcommand{\comp}[1]{\overline{#1}}
     \def\restrict{\hbox{\rm\kern0.166em\accent"12\kern-0.536em$\vert$\kern0.3em}}%
     \def\restrict{\upharpoonright}%
\def\twoSet#1#2{\left\{%
\vphantom{#2}#1\thinspace\right|\nolinebreak[3]\left.%
  #2%
  \vphantom{#1}%
  \right\}%
}
\def\oneSet#1{\left\lbrace#1\right\rbrace}
\newif\if@nstr
\def\setstrfalse{\let\if@nstr=\iffalse}
\def\setstrtrue{\let\if@nstr=\iftrue}
\def\@nstr #1#2{
\def\@@nstr ##1#1##2##3\@@nstr{\ifx
\@nstr ##2\setstrfalse \else \setstrtrue \fi }
\@@nstr #2#1\@nstr \@@nstr}
\def\@separate#1|#2@{\setFront{#1}\setBack{#2}}
\def\lb#1\rb{\@nstr|{#1} \if@nstr \@separate#1 @ \twoSet{\@setFront}{\@setBack}%
\else \@separate |{#1 }@ \oneSet{\@setBack}\fi%
}
\def\setFront#1{\def\@setFront{#1}}
\def\setBack#1{\def\@setBack{#1}}
\def\Set#1{\lb{#1}\rb}
\def\oneBrk#1{\left\langle#1\right\rangle}
\def\twoBrk#1#2{\left\langle%
\vphantom{#2}#1\thinspace\right|\nolinebreak[3]\left.%
  #2%
  \vphantom{#1}%
  \right\rangle%
}
\def\brk<#1>{\@nstr|{#1} \if@nstr \@separate#1 @ \twoBrk{\@setFront}{\@setBack}%
\else \@separate |{#1 }@ \oneBrk{\@setBack}\fi%
}
\def\thmref#1{\normalfont{theorem}~\ref{#1}}
\def\lemref#1{\normalfont{lemma}~\ref{#1}}
\def\corref#1{\normalfont{corollary}~\ref{#1}}
\theoremstyle{plain}
\newtheorem{thm}{Theorem}[section]
\newtheorem{lem}[thm]{Lemma}
\newtheorem{cor}[thm]{Corollary}
\newtheorem{defn}[thm]{Definition}
\theoremstyle{remark}
{}
{\newtheorem{example}{Example}[section]}
{}
{\newtheorem{rem}{Remark}[section]}
\begin{document}
\begin{abstract}
	We consider relationships between cubic algebras and implication 
	algebras.  
	We first exhibit  a functorial construction of a cubic algebra from an implication 
	algebra. Then we consider an collapse of a cubic algebra to an 
	implication algebra and the connection between these two operations. 
	Finally we use the ideas of the collapse to obtain a 
	Stone-type
	representation theorem for a large class of cubic algebras. 
\end{abstract}
\maketitle
\section{Introduction}
\subsection{Cubic Algebras}
Cubic algebras first arose in the study of face lattices of $n$-cubes 
(see\cite{MR:cubes}) and in 
considering the poset of closed intervals of Boolean algebras (see 
\cite{BO:eq}). Both of these families of posets have a 
partial binary operation $\Delta$ -- a generalized reflection. Cubic algebras then arise in full 
generality by taking the variety generated by either of these classes 
with $\Delta$, join and one. 

In this paper we consider another construction of cubic algebras from 
implication algebras. This construction produces (up to isomorphism) 
every countable cubic algebra. Cubic algebras also admit a natural collapse 
to an implication algebra. We show that this collapse operation is a one-sided inverse to  
this construction. 

A consequence of the Stone representation theorem for Boolean 
algebras is that the set of filters of a Boolean algebra is a Heyting    
algebra into which the original Boolean algebra embeds naturally. 
The collapsing process  for cubic algebras highlights certain filter-like subimplication algebras of 
cubic algebras that generate the algebra and let us do a similar 
construction for cubic algebras. Thus,  by looking at the set of 
all these subobjects we produce a new algebraic structure from which 
we can pick a subalgebra that is an MR-algebra. And our original 
cubic algebra embeds into it in a natural way. 

Before beginning our study we recall some of the basics of cubic and 
MR algebras. 

\begin{defn}
    A \emph{cubic algebra} is a join semi-lattice with one and a binary 
    operation $\Delta$ satisfying the following axioms:
    \begin{enumerate}[a.]
        \item  if $x\le y$ then $\Delta(y, x)\join x = y$;
        
        \item  if $x\le y\le z$ then $\Delta(z, \Delta(y, x))=\Delta(\Delta(z, 
        y), \Delta(z, x))$;
        
        \item  if $x\le y$ then $\Delta(y, \Delta(y, x))=x$;
        
        \item  if $x\le y\le z$ then $\Delta(z, x)\le \Delta(z, y)$;
        
        \item[] Let $xy=\Delta(1, \Delta(x\join y, y))\join y$ for any $x$, $y$ 
        in $\mathcal L$. Then:
        
        \item  $(xy)y=x\join y$;
        
        \item  $x(yz)=y(xz)$;
    \end{enumerate}
\end{defn} 

$\mathcal L$ together with $\brk<x, y>\mapsto xy$ is an implication 
algebra. More details on these algebras and some basic representation 
theory can be found in \cite{BO:eq}. A good reference for implication 
algebras is \cite{Abb:bk}.

\subsection{MR-algebras}

\begin{defn}
    An \emph{MR-algebra} is a cubic algebra satisfying the MR-axiom:\\
    if $a, b<x$ then 
    \begin{gather*}
        \Delta(x, a)\join b<x\text{ iff }a\meet b\text{ does not exist.}
    \end{gather*}
\end{defn}

\begin{example}
    Let $X$ be any set,  and 
    $$
    \rsf S(X)=\Set{\brk<A, B> | A, B\subseteq X\text{ and }A\cap 
    B=\emptyset}.
    $$
    Elements of $\rsf S(X)$ are called \emph{signed subsets} of $X$.
    The operations are defined by 
    \begin{align*}
	1&=\brk<\emptyset,  \emptyset>\\
	\brk<A, B>\join\brk<C, D>&=\brk<A\cap C,  B\cap D>\\
	\Delta(\brk<A, B>, \brk<C, D>)&=\brk<A\cup D\setminus B, 
	B\cup C\setminus A>.
\end{align*}
These are all atomic MR-algebras. The face-poset of an $n$-cube is 
naturally isomorphic to a signed set algebra. 
\end{example}

\begin{example}
    Let $B$ be a Boolean algebra, then the \emph{interval algebra} of $B$ is
$$
\rsf I(B)=\Set{[a, b] | a\le b \text{ in }B}
$$
ordered by inclusion. The operations are defined by
\begin{align*}
	1&=[0, 1]\\
	[a, b]\join[c, d]&=[a\meet c, b\join d]\\
	\Delta([a, b], [c, d])&=[a\join(b\meet\comp d), b\meet(a\join\comp c)].
\end{align*}
These are all atomic MR-algebras. For further details see \cite{BO:eq}.

We note that $\rsf S(X)$ is isomorphic to $\rsf I(\wp(X))$. 
\end{example}

\begin{defn}\label{def:caret}
    Let $\mathcal L$ be a cubic algebra. Then for any $x, y\in\mathcal L$ 
    we define the (partial) operation $\caret$ (\emph{caret}) by:
    $$
        x\caret y=x\meet\Delta(x\join y, y)
    $$
    whenever this meet exists. 
\end{defn}

The operation $\caret$ is used as a partial substitute for meets as 
the next lemma suggests. 

\begin{lem}
	If $\mathcal L$ is a cubic algebra and $x, y\in\mathcal L$ then --
	\begin{enumerate}[(a)]
		\item  
    $\mathcal L$ is an MR-algebra iff the caret operation is total.
	
		\item if $x\meet y$ exists then $x\meet y=x\caret\Delta(x\join y, y)$. 
	\end{enumerate}    
\end{lem}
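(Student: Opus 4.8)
The plan is to isolate one general fact about cubic algebras and deduce all three clauses from it. Call it Fact~$(\star)$: \emph{in any cubic algebra, if $a,b\le z$ and $a\meet b$ exists, then $\Delta(z,a)\join b=z$.} I would prove this in two lines. Writing $m=a\meet b$, from $m\le b\le z$ and monotonicity of $\join$ together with axiom~(a) I get $\Delta(z,m)\join b\ge\Delta(z,m)\join m=z$, hence $\Delta(z,m)\join b=z$; and from $m\le a\le z$ and axiom~(d) I get $\Delta(z,m)\le\Delta(z,a)$, so $\Delta(z,a)\join b\ge\Delta(z,m)\join b=z$. I would also record the routine identity $\Delta(w,w)=w$, which falls out of axioms~(a)--(c) (apply axiom~(b) with all three arguments equal to $w$, then use (c) and (a)), and the fact that $y\le x$ implies $\Delta(x,y)\le x$ (axiom~(a)).

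For part~(b) I would suppose $x\meet y$ exists, set $z=x\join y$ and $v=\Delta(z,y)$, and apply Fact~$(\star)$ to the pair $(y,x)$ below $z$ to get $x\join v=z$. Unwinding the caret via \defref{def:caret} then gives
\[
x\caret\Delta(x\join y,y)=x\caret v=x\meet\Delta(x\join v,v)=x\meet\Delta(z,v)=x\meet y,
\]
the last equality being axiom~(c), namely $\Delta(z,\Delta(z,y))=y$. In particular the meet on the right exists, so the caret on the left is defined and the identity holds.

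For part~(a), in the direction ``$\mathcal L$ an MR-algebra $\Rightarrow$ $\caret$ total'' I would fix $x,y$ and put $z=x\join y$. The comparable cases are immediate: if $y\le x$ then $z=x$ and $x\caret y=x\meet\Delta(x,y)=\Delta(x,y)$ since $\Delta(x,y)\le x$; if $x\le y$ then $z=y$, $\Delta(z,y)=\Delta(y,y)=y$, and $x\caret y=x\meet y=x$. Otherwise $x,y<z$, and moreover $\Delta(z,y)<z$ (equality would force $y=\Delta(z,z)=z$ by axiom~(c)). I would then apply the MR-axiom with $a:=\Delta(z,y)$ and $b:=x$, both strictly below $z$: since $\Delta(z,\Delta(z,y))\join x=y\join x=z$ is not strictly below $z$, the meet $\Delta(z,y)\meet x$ must exist --- and that meet is exactly $x\caret y$. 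For the converse I would assume $\caret$ is total and take $a,b<x$; since $\Delta(x,a)\join b\le x$ always, the MR-axiom reduces to the biconditional ``$\Delta(x,a)\join b=x$ iff $a\meet b$ exists''. The direction $\Leftarrow$ is precisely Fact~$(\star)$ with $z=x$. For $\Rightarrow$ I would set $c=\Delta(x,a)$; the hypothesis reads $b\join c=x$, so $b\caret c=b\meet\Delta(b\join c,c)=b\meet\Delta(x,c)=b\meet a$ by axiom~(c), and totality of $\caret$ forces this meet to exist.

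Everything rests on Fact~$(\star)$, so the step I expect to want the most care with is checking that it is genuinely unconditional: it uses only axioms~(a) and~(d) together with monotonicity of $\join$, and in particular neither the MR-axiom nor totality of $\caret$. Once that is in hand, part~(b) and the $\Leftarrow$ half of the converse in~(a) are immediate, and the only remaining case-work is the two degenerate comparable cases in~(a) and the verification $\Delta(z,y)<z$ needed to invoke the MR-axiom; all the rest is substitution.
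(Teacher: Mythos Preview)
Your argument is correct. The paper itself does not prove this lemma at all: it simply cites \cite{BO:UniMR}, lemma~2.4 and theorem~2.6, so there is no in-paper proof to compare against line by line. What you have supplied is a complete, self-contained derivation from the cubic-algebra axioms, which is strictly more than the paper offers here.

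Your organizing device, Fact~$(\star)$ (if $a,b\le z$ and $a\meet b$ exists then $\Delta(z,a)\join b=z$), is a clean choice: it is exactly the ``easy'' half of the MR-axiom made unconditional, and once isolated it immediately yields part~(b) and the $\Leftarrow$ direction of the converse in~(a). The remaining pieces --- the case split on comparability, the check that $\Delta(z,y)<z$ in the incomparable case, and the observation that $b\caret\Delta(x,a)$ unwinds to $b\meet a$ when $\Delta(x,a)\join b=x$ --- are all handled correctly. Your derivation of $\Delta(w,w)=w$ via axioms (b), (c), (a) also goes through (one can equally get it from (a), (c), (d), but your route is fine). The only cosmetic remark is that in the comparable cases of the forward direction of~(a) you might note explicitly that the relevant meets exist because one element is below the other; you do this implicitly, and it is not a gap.
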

\begin{proof}
    See \cite{BO:UniMR} lemma 2.4 and theorem 2.6.
\end{proof}

%

As in any algebra we have subalgebras. If $\mathcal L$ is a cubic 
algebra we denote by $\leftGen X\rightGen$ the subalgebra generated by $X$.  

\subsection{Enveloping Algebras}
We recall from \cite{BO:cong} the existence of \emph{enveloping 
algebras}. 

\begin{thm}[Enveloping Algebra]\label{thm:envAlg}
	Let $\mathcal L$ be any cubic algebra. Then there is an MR-algebra 
	$\env(\mathcal L)$ and an embedding $e\colon\mathcal 
	L\to\env(\mathcal L)$ such that:
	\begin{enumerate}[(a)]
		\item the range of $e$ generates $\env(\mathcal L)$; 
	
		\item the range of $e$ is an upwards-closed subalgebra; 
	
		\item any cubic homomorphism $f$ from $\mathcal L$ into an MR-algebra 
		$\mathcal N$ lifts uniquely to a cubic homomorphism $\widehat f$ from 
		$\env(\mathcal L)$ to $\mathcal N$.  Furthermore if $f$ is onto 
		or one-one then so is $\widehat f$.
	\end{enumerate}
\end{thm}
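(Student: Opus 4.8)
The plan is to build $\env(\mathcal L)$ by a universal construction and verify the three properties. I would start from the observation that the caret operation $\caret$ is the only obstruction to being an MR-algebra, so the task is to freely adjoin the ``missing meets'' $x\caret y$ while keeping everything else intact. Concretely, first I would pass to a concrete representation: using the representation theory from \cite{BO:eq}, embed $\mathcal L$ into a product of interval algebras $\rsf I(B_i)$ (or signed-set algebras), so that elements of $\mathcal L$ become tuples of intervals. Inside such a product the meet of two elements below a common bound always exists in the ambient MR-algebra even when it fails in $\mathcal L$; so I would let $\env(\mathcal L)$ be the sub-join-semilattice-with-$\Delta$ of that ambient MR-algebra \emph{generated} by the image of $\mathcal L$. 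This immediately gives (a), that the range of $e$ generates $\env(\mathcal L)$, and reduces everything else to two claims: that the generated subalgebra is actually an MR-algebra (i.e.\ is closed under the total caret of the ambient algebra), and that $e(\mathcal L)$ is upward closed in it.

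For the upward-closure claim (b), I would argue that if $e(x)\le z$ in the ambient MR-algebra with $z$ in the generated subalgebra, then $z$ is built from finitely many $e(y_1),\dots,e(y_n)$ by $\join$ and $\Delta$, and one shows by induction on term complexity — using cubic axioms (a)--(d) and the implication-algebra identities (e)--(f), together with lemma~\ref{def:caret}(b) relating meets to carets — that any such $z$ lying above some $e(x)$ is already in $e(\mathcal L)$. The key structural input is that in a cubic algebra the interval $[x,1]$ is a Boolean algebra (this is standard for cubic algebras and follows from the axioms), so above a fixed $e(x)$ the generated subalgebra and $e(\mathcal L)$ both restrict to Boolean subalgebras, and one checks the generators of the former already lie in the latter. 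For the MR-closure I would show directly that $e(\mathcal L)$ closed under the ambient $\join,\Delta$ and under caret is exactly the generated subalgebra: caret of two generated elements reduces, via $x\caret y = x\meet\Delta(x\join y,y)$ and the fact that such meets can be computed inside the upward-closed Boolean pieces $[w,1]$, to an expression already present.

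For the universal property (c), given a cubic homomorphism $f\colon\mathcal L\to\mathcal N$ with $\mathcal N$ an MR-algebra, I would define $\widehat f$ on a generator-term $t(e(y_1),\dots,e(y_n))$ by $t(f(y_1),\dots,f(y_n))$, computed using the total caret/meet of $\mathcal N$. Well-definedness is the crux: if two terms in the $e(y_i)$ are equal in $\env(\mathcal L)$ one must see they are equal in $\mathcal N$; here I would use that equality of such terms in $\env(\mathcal L)$ is witnessed, via the interval-algebra representation, by identities that already hold in every MR-algebra (equivalently, that $\env(\mathcal L)$ carries the universal congruence making $e(\mathcal L)$ embed into an MR-algebra). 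Uniqueness of $\widehat f$ is then forced by (a), and the preservation of surjectivity is immediate from (a) while injectivity follows because $e(\mathcal L)$ generates and $\widehat f$ restricted to it is $f$, which is injective, combined again with the Boolean structure of the upward-closed pieces on which all the new elements live.

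The main obstacle I expect is the well-definedness in (c), equivalently the precise identification of the congruence on the term algebra over $e(\mathcal L)$: one must show that no ``accidental'' collapse happens in the MR-closure of $e(\mathcal L)$ that is not forced in every MR-algebra. I would handle this by working throughout inside the concrete interval-algebra (or signed-set) representation of $\mathcal L$, where $\env(\mathcal L)$ is a genuine subalgebra of a genuine MR-algebra, so that equality of terms is just equality of intervals and hence automatically respected by any MR-homomorphism out of it; the functoriality and the ``onto/one-one'' clauses then drop out. The secondary technical point is checking closure of the generated set under the ambient caret (to know $\env(\mathcal L)$ really is an MR-algebra and not merely a cubic subalgebra), which is where the Boolean structure of $[w,1]$ and lemma~\ref{def:caret} do the real work.
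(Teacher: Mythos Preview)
The paper does not actually prove this theorem: it is quoted from \cite{BO:cong} (``We recall from \cite{BO:cong} the existence of enveloping algebras''), so there is no in-paper argument to compare your proposal against.

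That said, your sketch has a real gap in part (c). You propose to build $\env(\mathcal L)$ as the sub-$\langle\join,\Delta,\caret\rangle$-algebra generated by $e[\mathcal L]$ inside one fixed ambient MR-algebra (a product of interval algebras coming from a representation). You then want to define $\widehat f$ by sending a term $t(e(y_1),\dots,e(y_n))$ to $t(f(y_1),\dots,f(y_n))$ in $\mathcal N$. The well-definedness problem is exactly the one you flag, but your proposed fix does not work: knowing that $t_1(e(\vec y))=t_2(e(\vec y))$ holds in \emph{one particular} MR-algebra (your ambient product) tells you nothing about whether $t_1(f(\vec y))=t_2(f(\vec y))$ holds in the unrelated MR-algebra $\mathcal N$. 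Your sentence ``equality of terms is just equality of intervals and hence automatically respected by any MR-homomorphism out of it'' presupposes an MR-homomorphism from the ambient product to $\mathcal N$ extending $f$, which is precisely what is not given. To repair this you must either (i) construct $\env(\mathcal L)$ syntactically as a quotient of a term algebra by the congruence generated by the MR-identities together with the diagram of $\mathcal L$, and then prove separately that the canonical map $\mathcal L\to\env(\mathcal L)$ is injective (this is where a concrete representation is genuinely useful), or (ii) show directly that every equality between $\caret$-terms over $e[\mathcal L]$ that holds in the ambient product is a consequence of identities valid in all MR-algebras --- a normal-form argument for $\caret$-terms. Either route is substantially more work than your sketch indicates.

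A smaller but related issue: your argument that $\widehat f$ is one-one when $f$ is one-one (``injectivity follows because $e(\mathcal L)$ generates and $\widehat f$ restricted to it is $f$'') is not an argument; injectivity on generators does not imply injectivity of a homomorphism. One needs a structural description of elements of $\env(\mathcal L)$ (for instance as iterated carets of elements of $e[\mathcal L]$, together with a uniqueness or canonical-form statement) to push injectivity through.
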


\begin{defn}\label{def:envAlg}
	Let $\mathcal L$ be any cubic algebra. Then the MR-algebra 
	$\env(\mathcal L)$ defined above is called the \emph{enveloping 
	algebra} of $\mathcal L$. 
\end{defn}

\section{Implication Collapse}
\begin{defn}
    Let $\mathcal L$ be a cubic algebra and $a, b\in\mathcal L$. Then
    \begin{align*}
        a\preceq b &\text{ iff }\Delta(a\join b, a)\le b\\
        a\sim b &\text{ iff }\Delta(a\join b, a)=b.
    \end{align*}
\end{defn}

\begin{lem}
    Let $\mathcal L$, $a$, $b$ be as in the definition. Then
    $$
    a\preceq b\text{ iff }b=(b\join a)\meet(b\join\Delta(1, a)).
    $$
\end{lem}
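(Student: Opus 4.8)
Throughout put $c=a\join b$, so that $a\preceq b$ means exactly $\Delta(c,a)\le b$. The displayed equation tacitly asserts that the meet on its right exists, and the plan is to prove the sharper fact that $(b\join a)\meet(b\join\Delta(1,a))$ \emph{always} exists and equals $\Delta(c,a)\join b$; the lemma then follows at once, since $\Delta(c,a)\join b=b$ holds exactly when $\Delta(c,a)\le b$, which is the condition $a\preceq b$. One preliminary will be used throughout: the map $x\mapsto\Delta(1,x)$ is an order-automorphism of $\mathcal L$. Indeed axiom~(c) (with $y=1$) makes it an involution and axiom~(d) (with $z=1$) makes it monotone, and a monotone involution of a poset is an order-isomorphism, so $\Delta(1,-)$ preserves existing finite joins and fixes $1$; in particular $\Delta(1,c)=\Delta(1,a)\join\Delta(1,b)$ and $\Delta(1,b\join\Delta(1,a))=\Delta(1,b)\join a$.

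Since $b\le b\join a$ and $b\le b\join\Delta(1,a)$, the element $b$ --- and hence $\Delta(c,a)\join b$ --- lies below both of them, so the sharper fact reduces to two claims: (i)~$\Delta(c,a)\le b\join\Delta(1,a)$, which, together with $\Delta(c,a)\join b\le c=b\join a$, makes $\Delta(c,a)\join b$ a common lower bound of $b\join a$ and $b\join\Delta(1,a)$; and (ii)~every common lower bound $z$ of $b\join a$ and $b\join\Delta(1,a)$ satisfies $z\le\Delta(c,a)\join b$. Claim~(i) can be recast via the implication operation: since $\Delta(1,-)$ preserves joins, the definition $ba=\Delta(1,\Delta(b\join a,a))\join a$ rewrites as $\Delta(1,ba)=\Delta(c,a)\join\Delta(1,a)$, so (i) is equivalent to $ba\le a\join\Delta(1,b)$. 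To prove (i) and (ii) I would pass to the enveloping algebra: by \thmref{thm:envAlg} there is an embedding $e\colon\mathcal L\to\env(\mathcal L)$ into an MR-algebra that preserves $\join$, $\Delta$ and $1$ (hence reflects $\le$), and (i) and (ii) stated inside $\env(\mathcal L)$ say precisely that the meet there exists and equals $\Delta(c,a)\join b$ --- a conclusion which descends back to $\mathcal L$, because $\Delta(c,a)\join b$ already lies in the range of $e$ and any lower bound in $\mathcal L$ is a lower bound in $\env(\mathcal L)$. Thus it suffices to treat the case where $\mathcal L$ itself is an MR-algebra.

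In an MR-algebra the caret of \defref{def:caret} is total, $\Delta(c,a)=c\caret a$ (because $c\join a=c$ and $\Delta(c,a)\le c$), and the MR-axiom guarantees that $(b\join a)\meet(b\join\Delta(1,a))$ exists: writing $u=b\join a$, $v=b\join\Delta(1,a)$ one has $\Delta(1,u)=\Delta(1,a)\join\Delta(1,b)$, so $\Delta(1,u)\join v=1$ (using $\Delta(1,b)\join b=1$), and the MR-axiom at $1$ (the cases $u=1$, $v=1$ being trivial) then yields the meet. This meet, along with $b$ and $\Delta(c,a)\join b$, lies in the interval $[b,c]$, which is a Boolean algebra, and what remains is to verify $(b\join a)\meet(b\join\Delta(1,a))=\Delta(c,a)\join b$ there, using $a\join b=c$, $\Delta(1,a)\join a=1$ and the join-preservation of $\Delta(1,-)$. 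This last verification --- pinning down where $a$, $\Delta(1,a)$ and their joins with $b$ sit inside the Boolean algebra $[b,c]$ precisely enough to read off the meet --- is the one genuinely substantive step; everything before it (the fact about $\Delta(1,-)$, the reduction to (i) and (ii), the recasting of (i), and the passage to MR-algebras) is routine.
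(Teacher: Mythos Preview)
The paper does not give a self-contained proof of this lemma; it simply cites \cite{BO:eq}, lemmas 2.7 and 2.12. So there is no argument here to compare yours against, and I will assess your proposal on its own merits.

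Your overall strategy --- prove the sharper identity $(b\join a)\meet(b\join\Delta(1,a))=\Delta(c,a)\join b$ (with $c=a\join b$), deduce the lemma from it, and establish the identity by passing to $\env(\mathcal L)$ and then to an MR-algebra --- is sound, and the reduction steps you spell out are correct. In particular the descent argument (the meet in $\env(\mathcal L)$ lands in the range of $e$, hence is the meet in $\mathcal L$) is fine.

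There are, however, two problems at the end. First, you explicitly stop short of the ``one genuinely substantive step'' and do not carry out the final verification; as written the proof is incomplete. Second, the venue you name for that verification is wrong: $a$, $\Delta(1,a)$ and $b\join\Delta(1,a)$ need not lie in the interval $[b,c]$ (only $b\join a=c$ and the meet itself do), so you cannot ``pin down where they sit inside $[b,c]$''. The clean way to finish --- consistent with how the paper argues elsewhere (see the paragraph after \thmref{thm:transfer}) --- is to pass further to a finitely generated subalgebra of the MR-algebra, which embeds in an interval algebra $\rsf I(B)$, and compute coordinate-wise: with $a=[a_{0},a_{1}]$ and $b=[b_{0},b_{1}]$ both sides of the identity come out to $[\,b_{0}\meet(a_{0}\join\comp{a_{1}}),\ b_{1}\join(a_{1}\meet\comp{a_{0}})\,]$. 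A minor point: your recasting of claim~(i) as ``equivalent to $ba\le a\join\Delta(1,b)$'' overstates things. From $ba\le a\join\Delta(1,b)$ one gets $\Delta(1,ba)\le b\join\Delta(1,a)$ and hence (i), since $\Delta(c,a)\le\Delta(1,ba)$; but the reverse implication is not immediate (nor is it needed).
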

\begin{proof}
    See \cite{BO:eq} lemmas 2.7 and 2.12.
\end{proof}

\begin{lem}
	Let $\mathcal L$ be a cubic algebra and $a\in\mathcal L$. If $b, 
	c\geq a$ then
	$$
	b\sim c\iff b=c.
	$$
\end{lem}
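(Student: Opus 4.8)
The plan is to prove the two implications separately, with the forward direction being the substantive one. The backward direction is immediate: if $b = c$ then $\Delta(b \join c, b) = \Delta(b, b) = b = c$, so $b \sim c$ by definition. (Here I use that $\Delta(x,x) = x$, which follows from axiom (a) with $y = x$.)

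For the forward direction, suppose $b, c \geq a$ and $b \sim c$, i.e.\ $\Delta(b \join c, b) = c$. The natural tool is the previous lemma applied with the roles arranged so that one of $b,c$ plays the part of "$a$" in that lemma's statement. Since $b \sim c$ certainly gives $b \preceq c$ (equality implies $\le$), that lemma yields $c = (c \join b) \meet (c \join \Delta(1, b))$. The idea is then to exploit the hypothesis $a \le b$ and $a \le c$ to collapse the right-hand side. First I would observe that $a \le b$ forces a relationship between $\Delta(1,b)$ and $\Delta(1,a)$: intuitively, passing to complements reverses order, so $a \le b$ should give $\Delta(1,b) \le \Delta(1,a)$, and more importantly $b \meet \Delta(1,b)$ behaves like a "bottom relative to $b$". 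I expect the cleanest route is to note that $c \join \Delta(1,b) \geq a \join \Delta(1,b)$ and to show this is in fact all of $1$, or at least large enough that meeting with $c \join b$ returns $c$. Symmetrically, $c \sim b$ also gives $c \preceq b$ (since $\sim$ is symmetric up to applying $\Delta$ twice — one must check $\Delta(b\join c, c) = b$ using axiom (c)), so the same lemma gives $b = (b \join c)\meet(b \join \Delta(1,c))$, and combining the two expressions should force $b = c$.

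The symmetry point deserves care: from $\Delta(b\join c, b) = c$ we get $\Delta(b \join c, c) = \Delta(b\join c, \Delta(b\join c, b)) = b$ by axiom (c) (applicable since $b \le b \join c$), so indeed $b \sim c \iff c \sim b$, and hence $b \preceq c$ and $c \preceq b$ both hold. So we have simultaneously
$$
c = (c \join b) \meet (c \join \Delta(1,b)), \qquad b = (b \join c) \meet (b \join \Delta(1,c)).
$$
Since $b \join c = c \join b =: z$, the task reduces to showing $z \meet (c \join \Delta(1,b)) = z \meet (b \join \Delta(1,c))$, or more directly that each of these equals both $b$ and $c$. The hard part will be extracting from $a \le b \meet c$ enough control over the "complement" terms $\Delta(1,b)$ and $\Delta(1,c)$ — the operation $\Delta(1,-)$ is only a pseudo-complementation and the cubic axioms give us $\Delta$-identities only along chains, so I anticipate needing to work inside the interval $[a, z]$ (or to invoke the enveloping MR-algebra of Theorem~\ref{thm:envAlg}, where meets are total and the interval algebra picture makes the complement computations transparent, then pull the equality $b = c$ back along the embedding $e$, which is legitimate since $e$ is injective and $b,c$ lie in its range). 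Passing to $\env(\mathcal L)$ is likely the slickest strategy: there $b = [a_0, b_1]$, $c = [a_0, c_1]$ with a common lower endpoint forced by $a \le b, c$ being represented with $a$'s interval inside both, and $\Delta(z, b) = c$ becomes an endpoint computation that immediately yields $b_1 = c_1$.
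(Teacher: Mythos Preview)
Your proposal is much more elaborate than necessary and, more importantly, does not actually close the forward direction: you end with ``I anticipate needing to\ldots'' and two alternative strategies, neither of which is carried out. There is also a concrete error. You write that ``passing to complements reverses order, so $a\le b$ should give $\Delta(1,b)\le\Delta(1,a)$.'' But $\Delta(1,-)$ is not a complement; axiom~(d) says that $\Delta(z,-)$ is order-\emph{preserving} on $[\,\cdot\,,z]$, so $a\le b$ gives $\Delta(1,a)\le\Delta(1,b)$, the opposite of what you claim. This breaks your intended control over $c\join\Delta(1,b)$. Your interval-algebra sketch is also off: $a\le b,c$ by itself does \emph{not} force $b$ and $c$ to share a lower endpoint; that conclusion requires combining $a\le b,c$ with the length equality $\ell(b)=\ell(c)$, and the length characterisation of $\sim$ appears only later in the paper.

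The paper's argument avoids the $\preceq$-characterisation, meets, and the enveloping algebra entirely; it is a three-line computation using only axioms (a), (c), (d). From $b\sim c$ we have $b=\Delta(b\join c,c)$. Since $a\le b=\Delta(b\join c,c)\le b\join c$, axiom~(d) gives
\[
\Delta(b\join c,\,a)\ \le\ \Delta\bigl(b\join c,\ \Delta(b\join c,c)\bigr)\ =\ c,
\]
the last equality by axiom~(c). Now $a\le b\join c$, so axiom~(a) yields
\[
b\join c\ =\ a\ \join\ \Delta(b\join c,\,a)\ \le\ c\join c\ =\ c,
\]
using also $a\le c$. Hence $b\le c$; by symmetry $c\le b$, and $b=c$. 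The key idea you were missing is to apply $\Delta(b\join c,-)$ rather than $\Delta(1,-)$: reflecting at the local top $b\join c$ sends $b$ to $c$ and lets axiom~(a) reconstruct $b\join c$ from $a$.
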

\begin{proof}
	If $b=\Delta(b\join c, c)$ then we have 
	$a\le c$ and $a\le b= \Delta(b\join c, c)$ and so 
	$b\join c = a\join\Delta(b\join c, a)\le c\join c=c$. Likewise 
	$b\join c\le b$ and so $b=c$.
\end{proof}

A small variation of the proof shows that if $a\le b, c$ 
then $b\preceq c$ iff $b\le c$.

\begin{rem}\label{rem:one}
	Also from \cite{BO:eq} (lemma 2.7c for transitivity) we know that $\sim$ is an equivalence 
relation. In general it is not a congruence relation, but it does fit 
well with caret. 

It is clear that $\preceq$ induces a partial order on $\mathcal 
L/\sim$. Since $x\le y$ implies $x\preceq y$ we see that 
$x\mapsto\eqcl[x]$ is order-preserving. 

We will show that the structure $\mathcal L/\sim$ is an implication 
algebra -- with $\eqcl[x]\join\eqcl[y]=\eqcl[x\join\Delta(x\join y, y)]$ 
and $\eqcl[x]\meet\eqcl[y]=\eqcl[x\meet\Delta(x\join y, y)]$ whenever 
this exists -- and is an implication lattice iff $\mathcal L$ is an 
MR-algebra. 
\end{rem}

\begin{defn}\label{def:implColl}
	The poset $\mathcal L/\sim$ is the \emph{implication collapse} (or 
	just \emph{collapse}) of $\mathcal L$. 
	
	The mapping $\eta\colon\mathcal L\to\mathcal L/\sim$ given by 
	$$
	\eta(x)=\eqcl[x]
	$$
	is the \emph{collapsing} or the \emph{collapse mapping}. We 
	will often denote this mapping by $\mathcal L\mapsto\rsf 
	C(\mathcal L)$. 
\end{defn}

\subsection{Properties of the collapse}
The structure $\mathcal L/\sim $ is naturally an implication algebra. 
To show this we need to show that certain operations cohere with 
$\sim$. Before doing so we need to argue that most of our work can 
be done inside an interval algebra. The crucial tool is the following 
transfer theorem. 

\begin{thm}[Transfer]\label{thm:transfer}
	Let $\mathcal L$ be a cubic algebra and $a, b\in\mathcal L$. Then 
	$$
	a\sim b\text{ in }\mathcal L\iff a\sim b\text{ in }\env({\mathcal L}). 
	$$
	Furthermore, if $a\in\mathcal L$, $b\in\env(\mathcal L)$ and 
	$a\sim b$ then $b\in\mathcal L$. 
\end{thm}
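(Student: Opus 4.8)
The plan is to reduce everything to interval algebras via the enveloping algebra $\env(\mathcal L)$, using the defining properties from \thmref{thm:envAlg}. The relation $a\sim b$ is defined purely in terms of $\join$ and $\Delta$, so it is preserved by any cubic homomorphism that is injective on the relevant elements; since $e\colon\mathcal L\to\env(\mathcal L)$ is an embedding, the forward direction of the first equivalence is immediate (identifying $\mathcal L$ with its image). For the reverse direction, suppose $a\sim b$ holds in $\env(\mathcal L)$, i.e.\ $\Delta(a\join b,a)=b$ where the join is computed in $\env(\mathcal L)$. The key point is that $e(\mathcal L)$ is an \emph{upwards-closed} subalgebra (\thmref{thm:envAlg}(b)): so $a\join b$, being $\ge a\in\mathcal L$, already lies in $\mathcal L$, and then $\Delta(a\join b,a)$ is computed inside the subalgebra $\mathcal L$ and equals $b$ there. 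Hence $b\in\mathcal L$ and $a\sim b$ holds in $\mathcal L$. Notice this argument simultaneously establishes the ``furthermore'' clause: if $a\in\mathcal L$, $b\in\env(\mathcal L)$ and $a\sim b$ in $\env(\mathcal L)$, then $b=\Delta(a\join b,a)$ with $a\join b\ge a$, so $a\join b\in\mathcal L$ by upward closure, whence $b=\Delta(a\join b,a)\in\mathcal L$ since $\mathcal L$ is a subalgebra.

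So in fact the whole theorem follows formally from the two properties (a)--(b) of \thmref{thm:envAlg}, provided one is careful that the join and the $\Delta$ appearing in the definition of $\sim$ are genuinely being computed in the subalgebra — which is exactly what upward closure plus ``is a subalgebra'' guarantees, since the only element fed into $\Delta$ from below, namely $a$, and the pivot $a\join b$, both lie in $\mathcal L$. I would write this out carefully, first proving the ``furthermore'' clause as a standalone observation and then deriving both directions of the equivalence from it together with the fact that $e$ is an embedding.

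The one place to be careful — and what I expect to be the only real obstacle — is the direction ``$a\sim b$ in $\mathcal L$ implies $a\sim b$ in $\env(\mathcal L)$'': here $a\join b$ must be shown to be the \emph{same} element whether computed in $\mathcal L$ or in $\env(\mathcal L)$. Since $e$ is a homomorphism of join-semilattices this is automatic, but one should record that $e$ being an embedding means it reflects the order as well, so no collapsing or spurious identifications occur. After that, $\Delta(a\join b,a)$ transfers because $\Delta$ is part of the signature and $e$ is a homomorphism. An alternative, more hands-on route would be to appeal to the interval-algebra representation of MR-algebras and compute $\sim$ explicitly using the formula from \lemref{thm:transfer}'s preceding lemma, $a\preceq b\iff b=(b\join a)\meet(b\join\Delta(1,a))$; but the homomorphism argument above is cleaner and I would prefer it.
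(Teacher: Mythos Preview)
Your proposal is correct and is essentially the same argument the paper gives, just written out in full detail: the paper's entire proof is the single clause ``Since $\mathcal L$ is an upwards closed subalgebra of the MR-algebra $\env(\mathcal L)$,'' which is exactly the mechanism you identify and unpack. Your observation that the ``furthermore'' clause and the reverse implication are really one argument (upward closure forces $a\join b\in\mathcal L$, then closure under $\Delta$ forces $b=\Delta(a\join b,a)\in\mathcal L$) is precisely the intended reading.
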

\begin{proof}
	Since $\mathcal L$ is an upwards closed subalgebra of the MR-algebra
	$\env(\mathcal L)$. 
\end{proof}

The use of the transfer theorem is to allow us to prove facts about 
$\sim$ in a cubic algebra by proving them in an MR-algebra. But then 
we are actually working in a finitely generated sub-algebra of an 
MR-algebra which is isomorphic to an interval algebra. Thus we can 
always assume we are in an interval algebra. 

In some arbitrary cubic algebra $\mathcal 
L$ there are three operations to consider:
\begin{itemize}
	\item  $a\caret b$ -- will give rise to meets in $\mathcal L/\sim$; 

	\item 	$a*b=a\join\Delta(a\join b, b)$ -- 
		this operation will give rise to joins in $\mathcal L/\sim$;  

	\item 
	$a\Rightarrow b=\Delta(a\join b, a)\rightarrow b$ --
	this operation will give rise to implication in $\mathcal L/\sim$. 
\end{itemize}
We note that $a*b$ and $a\Rightarrow b$ are defined for any two elements in any cubic algebra.

Over any implication algebra the relation $\sim$ simplifies immensely. 
\begin{lem}\label{lem:simeq}
	Let $\brk<a, b>$ and $\brk<c, d>$ be in $\rsf I(\mathcal I)$. Then
	$$
	\brk<a, b>\sim \brk<c, d> \text{ iff }a\meet b=c\meet d. 
	$$
\end{lem}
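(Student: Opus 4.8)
The plan is to reduce the claim to a direct computation inside an interval algebra $\rsf I(\mathcal I)$. Recall that an implication algebra $\mathcal I$ need not have all meets, but any pair $a\le b$ that occurs as an interval $\brk<a,b>$ lives in the (bounded) subalgebra they generate, which is a Boolean algebra; more precisely, the interval $[a,b]$ in the poset $\mathcal I$ is itself a Boolean lattice, and this is all we need since the operations $\join$, $\Delta$ on $\rsf I(\mathcal I)$ only reference elements between the endpoints of the intervals in play. So first I would fix $\brk<a,b>,\brk<c,d>\in\rsf I(\mathcal I)$, pass to a common Boolean interval containing $a,b,c,d$, and henceforth argue with honest Boolean complements and meets.

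Next I would unwind the definition of $\sim$: $\brk<a,b>\sim\brk<c,d>$ means $\Delta(\brk<a,b>\join\brk<c,d>,\brk<a,b>)=\brk<c,d>$. Writing $\brk<p,q>=\brk<a,b>\join\brk<c,d>=\brk<a\meet c,\,b\join d>$ and applying the interval-algebra formula for $\Delta$, the left-hand side is $\Delta(\brk<p,q>,\brk<a,b>)=\brk<p\join(q\meet\comp b),\,q\meet(p\join\comp a)>$. Substituting $p=a\meet c$ and $q=b\join d$ and simplifying the two coordinates using the Boolean identities (absorption, distributivity, and $a\le b$, $c\le d$) should collapse the first coordinate to $(a\meet c)\join(\comp b\meet d)$ and the second to $(b\join d)\meet(\comp a\join c)$. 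Setting this equal to $\brk<c,d>$ then gives a pair of Boolean equations, and the task is to show the conjunction of those two equations is equivalent to the single equation $a\meet b = c\meet d$ — but in an interval algebra $b=1$ forces nothing special, so really the natural invariant is the "bottom" of the interval read inside $\mathcal I$; here $a\meet b$ should be read as the meet computed in the ambient Boolean interval, which is just $a$ if we had $a\le b$... so more carefully the relevant quantity is the complement-pair data, and I expect the clean statement to come out as: the first coordinate condition already implies the second given the standing inequalities, and it rearranges to $\comp b\meet d=\comp a\meet c$ together with $a\meet\comp d=c\meet\comp b$...

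The honest way to package this: $\brk<a,b>\sim\brk<c,d>$ iff $a,b$ and $c,d$ cut the ambient Boolean interval into the same "signed" pattern up to the reflection, and the content of \lemref{lem:simeq} is that this reduces to $a\meet b=c\meet d$. The key algebraic step will be verifying that the Boolean expression $(a\meet c)\join(\comp b\meet d)=c$ holds if and only if $a\meet b=c\meet d$ (and that the second coordinate equation is then automatic, or equivalently symmetric). I would prove this equivalence by noting $(a\meet c)\join(\comp b\meet d)=c$ iff $c\le a\join\comp b$ (i.e. $c\meet b\le a$, i.e. $c\meet b\le a\meet b$ since $c\le d$... wait, $c\meet b\le a$ combined with the reverse direction) — the two inclusions $c\meet b\le a\meet b$ and $a\meet b\le c\meet b$ together give $a\meet b=c\meet b$, and symmetry in the roles of the two intervals (the second coordinate equation, or applying the argument to $\brk<c,d>\sim\brk<a,b>$, which holds since $\sim$ is symmetric by \corref{rem:one}) upgrades this to $a\meet b=c\meet d$. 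For the converse, assuming $a\meet b=c\meet d$ one substitutes back and checks both coordinate identities hold. The main obstacle is purely bookkeeping: keeping the Boolean manipulations honest when $\mathcal I$ lacks a global top and bottom, which is handled once and for all by the reduction to a Boolean interval in the first paragraph; after that it is a short symmetric computation.
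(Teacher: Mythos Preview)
Your proposal rests on a misreading of the definitions, and this derails the computation from the first line. In this paper $\rsf I(\mathcal I)$ is \emph{not} the poset of intervals $[a,b]$ with $a\le b$; an element $\brk<a,b>$ is a pair with $a\join b=\one$ and $a\meet b$ existing, the order is componentwise (\emph{both} coordinates increase), the join is $\brk<a,b>\join\brk<c,d>=\brk<a\join c,\,b\join d>$, and $\Delta(\brk<a,b>,\brk<c,d>)=\brk<a\meet(b\to d),\,b\meet(a\to c)>$. Your formulas $\brk<a,b>\join\brk<c,d>=\brk<a\meet c,\,b\join d>$ and $\Delta(\brk<p,q>,\brk<a,b>)=\brk<p\join(q\meet\comp b),\,q\meet(p\join\comp a)>$ belong to the other presentation (square-bracket intervals in a Boolean algebra), and the passage ``any pair $a\le b$ that occurs as an interval $\brk<a,b>$'' shows the two have been conflated. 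Because of this the subsequent Boolean manipulations, and the attempted reduction to $(a\meet c)\join(\comp b\meet d)=c$, are not computing the right object; the quantity $a\meet b$ here is genuinely the meet of two elements whose join is $\one$, not the left endpoint of an interval.

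The paper's argument is short once the conventions are straight. For the forward direction one does not even need to compute the specific $\Delta(\brk<a,b>\join\brk<c,d>,\brk<c,d>)$: if $\brk<a,b>=\Delta(\brk<x,y>,\brk<c,d>)=\brk<x\meet(y\to d),\,y\meet(x\to c)>$ for \emph{any} $\brk<x,y>\geq\brk<c,d>$, then multiplying the two coordinates gives $a\meet b=[x\meet(x\to c)]\meet[y\meet(y\to d)]=c\meet d$ immediately. For the converse, assuming $a\meet b=c\meet d$, one passes to the Boolean interval $[c\meet d,\one]$ (this is the correct localisation, not a Boolean interval ``between the endpoints''); there $a,b$ are complementary, as are $c,d$, and a direct expansion of $\Delta(\brk<a\join c,\,b\join d>,\brk<c,d>)$ using the implication formula collapses to $\brk<a,b>$. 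Redo the computation with the correct join and $\Delta$ and both directions fall out in a few lines.
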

\begin{proof}
	Suppose that $\brk<a, b>= \Delta(\brk<x, y>, \brk<c, d>) 
	=\brk<x\meet(y\to d), y\meet(x\to c)>$. Then 
	$x\meet(y\to d)\meet y\meet(x\to c)= 
	[x\meet(x\to c)]\meet[y\meet(y\to d)]= c\meet d$. 
	
	Conversely if $a\meet b=c\meet d$ we can do all computations in the 
	Boolean algebra $[c\meet d, 1]$ -- so that $\comp a\le b$ and $\comp 
	c\le d$ -- to get
	\begin{align*}
		\Delta(\brk<a, b>\join\brk<c, d>, \brk<c, d>) & =
		\Delta(\brk<a\join c, b\join d>)\\
		 & =\brk<(a\join c)\meet(\comp{b\join d}\join d ), 
		 (b\join d)\meet(\comp{a\join c}\join c>  \\
		 (a\join c)\meet(\comp{b\join d}\join d ) &=  
		 (a\join c)\meet(\comp b\join d)\\
		 & =(a\meet\comp b)\join(a\meet d)\join(c\meet\comp b)\join(c\meet d)  \\
		 & = \comp b\join (a\meet d)\join (c\meet\comp b)\join(c\meet d) \\
		 & = \comp b\join (a\meet d) \join(c\meet d)\\
		 & = \comp b\join (a\meet d) \join(a\meet b)  \\
		 & =\comp b\join(a\meet d)\join a\\
		 & = a. \\
		 (b\join d)\meet(\comp{a\join c}\join c ) &=  
		 (b\join d)\meet(\comp a\join c)\\
		 & =(b\meet\comp a)\join(b\meet c)\join(d\meet\comp a)\join(d\meet c)  \\
		 & = \comp a\join (b\meet c)\join (d\meet\comp a)\join(c\meet d) \\
		 & = \comp a\join (b\meet c) \join(c\meet d)\\
		 & = \comp a\join (b\meet c) \join(a\meet b)  \\
		 & =\comp a\join(b\meet c)\join b\\
		 & = b. \\
	\end{align*}
\end{proof}

We can restate the lemma by saying that 
$\iota\colon\brk<a, b>\mapsto a\meet b$ has the property 
that 
\begin{equation}
	\iota(\brk<a, b>)=\iota(\brk<c, d>)\text{ iff }\brk<a, b>\sim\brk<c, d>. 
	\label{eq:one}
\end{equation}
Thus for all $i\in\mathcal I$ we have 
$$
\iota(e_{\mathcal I}(i))=i
$$
so that $\iota$ is onto and $e_{\mathcal I}$ is a right inverse. 

Since we will often work in the intuitively clearer setting of 
Boolean algebras we will restate these results in that context. In 
this context the relation $\sim$ corresponds to a natural property 
of intervals -- the \emph{length}. 

\begin{defn}
	Let $x=[x_{0}, x_{1}]$ be any interval in a Boolean algebra $B$. 
	Then the \emph{length} of $x$ is $\comp{x_{0}}\meet x_{1}=\ell(x)$.
\end{defn}

\begin{cor}\label{cor:lenEq}
	Let $b, c$ be intervals in a Boolean algebra $B$. Then 
	$$
	b\sim c\iff\ell(b)=\ell(c).
	$$
\end{cor}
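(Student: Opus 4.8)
The plan is to read the statement off directly from \lemref{lem:simeq} (equivalently, from the characterization \eqref{eq:one} of $\sim$ via the map $\iota$), after lining up the interval notation $[x_{0},x_{1}]$ used here with the bracket notation $\brk<\cdot,\cdot>$ of that lemma. Since $\sim$ is defined purely from $\Delta$, $\join$ and $\le$, it suffices to exhibit the standard relabeling identifying the two presentations and then translate $\iota$ into $\ell$.

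First I would note that a Boolean algebra $B$ is in particular an implication algebra (with $p\to q=\comp p\join q$), so \lemref{lem:simeq} applies to $\rsf I(B)$, and I would identify the interval $x=[x_{0},x_{1}]$ (with $x_{0}\le x_{1}$) with the bracket $\brk<\comp{x_{0}},x_{1}>$ -- the presentation of the interval algebra of a Boolean algebra in the form used in \lemref{lem:simeq}. A one-line Boolean computation confirms that the explicit formula $\Delta([x_{0},x_{1}],[c_{0},c_{1}])=[x_{0}\join(x_{1}\meet\comp{c_{1}}),\,x_{1}\meet(x_{0}\join\comp{c_{0}})]$ agrees, under this identification, with $\Delta(\brk<x,y>,\brk<c,d>)=\brk<x\meet(y\to d),\,y\meet(x\to c)>$, since $\comp{\comp{x_{0}}\meet(x_{1}\to c_{1})}=x_{0}\join(x_{1}\meet\comp{c_{1}})$ and $x_{1}\meet(\comp{x_{0}}\to\comp{c_{0}})=x_{1}\meet(x_{0}\join\comp{c_{0}})$; the order and $\join$ match as well, so this relabeling carries $\sim$ to $\sim$.

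Under this identification the map $\iota$ of \eqref{eq:one} sends $x=[x_{0},x_{1}]$ to $\comp{x_{0}}\meet x_{1}$, which is precisely $\ell(x)$ by definition. Hence, for intervals $b,c$ of $B$,
$$
b\sim c\iff\iota(b)=\iota(c)\iff\ell(b)=\ell(c),
$$
the first equivalence being \eqref{eq:one} (i.e.\ \lemref{lem:simeq}) and the second the definition of length. There is no genuine obstacle here; the only point to watch is that the complement sits on the \emph{lower} endpoint in the bracket correspondence, so that $\iota(b)$ comes out as $\comp{b_{0}}\meet b_{1}$ rather than $b_{0}\meet b_{1}$. If one preferred to bypass the notational translation entirely, one could instead simply repeat verbatim the Boolean-algebra computation from the proof of \lemref{lem:simeq}, now carried out in the interval $[\,\ell(b),1\,]$ of $B$; but invoking the lemma is cleaner.
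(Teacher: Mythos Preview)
Your proof is correct and follows essentially the same route as the paper: both invoke the isomorphism $\brk<a,b>\leftrightarrow[\comp a,b]$ between the two presentations of $\rsf I(B)$ (which the paper records as \lemref{lem:Boolean}) and observe that under it $\iota$ becomes $\ell$, so \eqref{eq:one} transfers directly. Your explicit verification that $\Delta$ matches under the correspondence is a touch more detailed than the paper's bare citation of the isomorphism, but the argument is the same.
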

\begin{proof}
	We recall the isomorphism between the two definitions of $\rsf I(B)$ 
	given by 
	$$
	\brk<a, b>\mapsto[\comp a, b]. 
	$$
	Then we have 
	\begin{align*}
		\iota(\brk<a, b>) & =a\meet b  \\
		\ell([\comp a, b]) & =\comp{\comp a}\meet b  \\
		 & =a\meet b=\iota(\brk<a, b>). 
	\end{align*}
	The result is now immediate. 
\end{proof}

The remainder of the proof can be found in \cite{BO:UniMR} wherein 
we fully establish that $\mathcal L/\sim$ is an implication 
lattice with the following operations:
\begin{align*}
    \one &= [\one]\\
    \eqcl[a]\meet\eqcl[b] & =\eqcl[a\caret b]  \\
    \eqcl[a]\join\eqcl[b] & =\eqcl[a * b]  \\
    \eqcl[a]\rightarrow\eqcl[b] & =\eqcl[a\Rightarrow b];  
\end{align*}
and that 
this implication algebra is, locally, exactly the same as $\mathcal L$.

\begin{thm}
    On each interval $[a, \one]$ in $\mathcal L$ the mapping 
$x\mapsto\eqcl[x]$ is an implication embedding with upwards-closed 
range. 
\end{thm}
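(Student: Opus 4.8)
The plan is to show that $x \mapsto \eqcl[x]$, restricted to a fixed interval $[a,\one]$, is injective, preserves the implication operation $xy$, and has upwards-closed range inside $\mathcal L/\sim$. Injectivity is essentially immediate from the second displayed lemma: if $x,y \ge a$ and $\eqcl[x] = \eqcl[y]$ then $x \sim y$, and that lemma (together with the remark following it) gives $x = y$. So the content is really in the other two claims.

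For preservation of the operation, I would use the Transfer theorem to move into $\env(\mathcal L)$, which is an MR-algebra, and then — since everything in sight lies in a finitely generated subalgebra — reduce to the case of an interval algebra $\rsf I(B)$, or equivalently a Boolean algebra $B$, exactly as the paragraph after the Transfer theorem licenses. On the interval $[a,\one]$ in $\mathcal L$, the element $a$ corresponds to some interval, and the subinterval $[a,\one]$ of $\mathcal L$ is itself order-isomorphic to a Boolean algebra (this is a standard fact about cubic/MR algebras: principal filters are Boolean). Under the collapse, I would check directly that on $[a,\one]$ the cubic implication $xy = \Delta(1,\Delta(x\join y,y))\join y$ already satisfies $xy \sim x \Rightarrow y$, so $\eqcl[xy] = \eqcl[x]\rightarrow\eqcl[y]$ using the operations listed just before the theorem; the key point is that for $x, y \ge a$ the relevant meets and carets exist (carets become honest meets because we are inside a Boolean interval), so the formulas for $\meet$, $\join$, $\rightarrow$ on $\mathcal L/\sim$ all evaluate using genuine Boolean operations. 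Concretely, identifying $[a,\one]$ with a Boolean algebra via lengths (Corollary \ref{cor:lenEq}), one sees that $\eqcl[x] \mapsto \ell$ of the representing interval is a bijection onto that Boolean algebra and carries $\rightarrow$ to the Boolean-implication $x \to y$, which is precisely the implication-algebra structure on $[a,\one]$.

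For upwards-closedness of the range: suppose $\eqcl[y] \ge \eqcl[x]$ in $\mathcal L/\sim$ for some $x \in [a,\one]$ and arbitrary $y \in \mathcal L$; I must produce $y' \in [a,\one]$ with $\eqcl[y'] = \eqcl[y]$. Here I would again invoke Transfer: $x \sim$ something forces membership, but more to the point, $\eqcl[x] \le \eqcl[y]$ means $x \preceq x \Rightarrow$-related data; working in $\env(\mathcal L)$ and then in an interval algebra, $\eqcl[y]$ is determined by a length $\ell(y)$ with $\ell(y) \le \ell(x)$, and since $x \ge a$ we can realize that same length by an interval sitting above $a$ — explicitly, choose the interval $[x_0, x_0 \join \ell(y)]$ where $x = [x_0,x_1]$; this lies in $[a,\one]$ because $a \le x$, and by Corollary \ref{cor:lenEq} it is $\sim$-equivalent to $y$. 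The upwards-closure of the range of $e$ in $\env(\mathcal L)$ (Theorem \ref{thm:envAlg}(b)), combined with the ``furthermore'' clause of Transfer, ensures the realizing element actually lands back in $\mathcal L$ and not merely in the envelope.

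I expect the main obstacle to be the bookkeeping in the upwards-closed-range step: one must be careful that the witness interval constructed to have the right length genuinely lies in the principal filter $[a,\one]$ of $\mathcal L$ — this uses both that principal filters of cubic algebras are Boolean and that $\preceq$ restricted to elements above a common lower bound coincides with $\le$ (the small variation of the lemma noted just after it in the excerpt). Once that coincidence is in hand, ``$\eqcl[x] \le \eqcl[y]$ with $x \ge a$'' can be massaged into an honest inequality between representatives inside the Boolean interval, and the rest is routine. Preservation of $\join$ and $\meet$ on the interval follows along the way from the same interval-algebra computation, so I would fold it into the same argument rather than treating it separately.
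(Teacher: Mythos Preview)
The paper itself does not prove this theorem; it defers the argument to \cite{BO:UniMR}, so there is no in-paper proof to compare against. Your overall strategy (injectivity from the lemma on elements above a common point, Transfer to reduce to interval algebras, lengths to parameterize $\sim$-classes) is the right framework, but the upwards-closure step contains a genuine slip.

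You assert that $\eqcl[y]\geq\eqcl[x]$ gives $\ell(y)\le\ell(x)$. It is the other way round: the isomorphism $\iota_{\mathcal I}$ of \thmref{thm:isoIota} is order-\emph{preserving}, so $\eqcl[x]\le\eqcl[y]$ corresponds to $\ell(x)\le\ell(y)$. With the inequality reversed your witness $[x_{0},\,x_{0}\vee\ell(y)]$ no longer has length $\ell(y)$: its length is $\comp{x_{0}}\meet\ell(y)$, and there is no reason for $\ell(y)\le\comp{x_{0}}$ once $\ell(y)\geq\ell(x)$. So the construction fails as written. The fix inside the interval algebra is easy (take for instance $[a_{0}\meet\comp{\ell(y)},\,a_{1}\vee\ell(y)]$), but there is a much shorter route that avoids interval algebras altogether: from $a\le x$ and $x\preceq y$ one gets $a\preceq y$, i.e.\ $\Delta(a\join y,a)\le y$; applying $\Delta(a\join y,\cdot)$ and using axioms (c) and (d) yields $a\le\Delta(a\join y,y)$, and this $y'=\Delta(a\join y,y)$ satisfies $y'\sim y$ and $y'\geq a$ directly in $\mathcal L$.

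For the implication-preservation part your sketch is vaguer than it needs to be. Once you have injectivity, order-reflection (the remark after the lemma: $b,c\geq a$ and $b\preceq c$ imply $b\le c$), and upwards-closed range, the map $\eta\restrict[a,\one]$ is an order-isomorphism from the Boolean algebra $[a,\one]$ onto the interval $[\eqcl[a],\one]$ in the implication algebra $\mathcal L/\sim$, which is again Boolean. An order-isomorphism between Boolean algebras automatically preserves implication, so no separate computation of $xy\sim x\Rightarrow y$ is required.
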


\section{Implication algebras to cubes}
In this section we develop a very general construction of cubic 
algebras. Although not every cubic algebra is isomorphic to one of 
this form (see \cite{BO:fil}) we will show in the next section that 
every cubic algebra is very close to to one of this form. We leave 
for later work a detailed analysis of exactly how close. 

Let $\mathcal I$ be an implication algebra. We define
$$
\rsf I(\mathcal I)=\Set{\brk<a, b> | a, b\in\mathcal I, a\join
b=\one\text{ and }a\meet b\text{ exists}}
$$
ordered by 
$$
\brk<a, b>\le\brk<c, d>\text{ iff }a\le c\text{ and }b\le d. 
$$
This is a partial order that is an upper semi-lattice with join 
defined by
$$
\brk<a, b>\join\brk<c, d>=\brk<a\join c, b\join d>
$$
and a maximum element $\one=\brk<1, 1>$. 

We can also define a $\Delta$ function by
$$
\text{if }\brk<c, d>\le\brk<a, b>\text{ then }
\Delta(\brk<a, b>, \brk<c, d>)=\brk<a\meet(b\to d), b\meet(a\to c)>. 
$$

We note the natural embedding of $\mathcal I$ into $\rsf I(\mathcal 
I)$ given by
$$
e_{\mathcal I}(a)=\brk<1, a>. 
$$

Note also that in an implication algebra $a\join b=\one$ iff $a\to 
b=b$ iff $b\to a=a$. 

Also $\Delta(\one, \bullet)$ is particularly simply defined as it is 
exactly $\brk<a, b>\mapsto\brk<b, a>$. 

We wish to show that the structure we have just described is a cubic algebra. 
We do this by showing that if $\mathcal I$ is a Boolean algebra 
then $\rsf I(I)$ is isomorphic to an interval algebra, and then use 
the fact that every interval in $I$ is a Boolean algebra and $\rsf 
I([a, \one])$ sits naturally inside $\rsf I(I)$. 

\begin{lem}\label{lem:Boolean}
	Let $B$ be a Boolean algebra. Then $\rsf I(B)$ is isomorphic to the 
	interval algebra of $B$. 
\end{lem}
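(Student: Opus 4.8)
The plan is to exhibit an explicit isomorphism between $\rsf I(B)$ as defined in this section (signed pairs $\brk<a,b>$ with $a\join b=\one$ and $a\meet b$ existing) and the interval algebra of $B$ from the earlier example (closed intervals $[p,q]$ with $p\le q$, which always exists in a Boolean algebra). The candidate map is the one already foreshadowed in the proof of \corref{cor:lenEq}, namely
\begin{equation*}
	\Phi\colon\brk<a,b>\longmapsto[\comp a,b].
\end{equation*}
First I would check this is well-defined and a bijection: in a Boolean algebra $a\join b=\one$ forces $\comp a\le b$, so $[\comp a,b]$ is a genuine interval; conversely every interval $[p,q]$ is the image of $\brk<\comp p,q>$, and here $\comp p\join q\ge\comp p\join p=\one$ and $\comp p\meet q$ exists, so the preimage lies in $\rsf I(B)$. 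The two assignments are mutually inverse since $\comp{\comp a}=a$, so $\Phi$ is a bijection.

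Next I would verify $\Phi$ preserves the order and hence the join. We have $\brk<a,b>\le\brk<c,d>$ iff $a\le c$ and $b\le d$, which is equivalent to $\comp a\ge\comp c$ and $b\le d$, i.e. $[\comp a,b]\subseteq[\comp c,d]$ — exactly the order on the interval algebra. Since both structures are upper semilattices in which join is the least upper bound, order-preservation in both directions gives $\Phi(\brk<a,b>\join\brk<c,d>)=\Phi(\brk<a,b>)\join\Phi(\brk<c,d>)$; alternatively one reads it off directly, $\brk<a\join c,b\join d>\mapsto[\comp{a\join c},b\join d]=[\comp a\meet\comp c,b\join d]$, matching the interval-algebra formula $[\comp a,b]\join[\comp c,d]=[\comp a\meet\comp c,b\join d]$. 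The top element $\brk<\one,\one>$ maps to $[\zero,\one]=\one$.

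The main work — and the step I expect to be the only place a real computation is needed — is checking that $\Phi$ intertwines the two $\Delta$ operations. Given $\brk<c,d>\le\brk<a,b>$ we must show
\begin{equation*}
	\Phi\bigl(\Delta(\brk<a,b>,\brk<c,d>)\bigr)=\Delta\bigl([\comp a,b],[\comp c,d]\bigr).
\end{equation*}
The left side is $\Phi(\brk<a\meet(b\to d),\,b\meet(a\to c)>)=[\comp{a\meet(b\to d)},\,b\meet(a\to c)]$, and in a Boolean algebra $b\to d=\comp b\join d$ and $a\to c=\comp a\join c$, so the lower endpoint is $\comp a\join(b\meet\comp d)$ and the upper endpoint is $b\meet(\comp a\join c)$. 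The right side, using the interval-algebra formula with the substitution $a\rightsquigarrow\comp a$, $c\rightsquigarrow\comp c$, is $[\comp a\join(b\meet\comp d),\,b\meet(\comp a\join\comp{\comp c})]=[\comp a\join(b\meet\comp d),\,b\meet(\comp a\join c)]$. These agree, so $\Phi$ is a $\Delta$-isomorphism. The potential obstacle is purely bookkeeping — making sure the complementation bookkeeping between the $\brk<\cdot,\cdot>$-presentation and the $[\cdot,\cdot]$-presentation is done consistently on both endpoints — but no nontrivial Boolean identity beyond De Morgan and the definition of $\to$ is required, and since all relevant meets exist in a Boolean algebra there are no side conditions to track. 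This establishes \lemref{lem:Boolean}.
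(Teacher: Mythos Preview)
Your proof is correct and follows essentially the same approach as the paper: both use the map $\brk<a,b>\mapsto[\comp a,b]$, note that $a\join b=\one$ is equivalent to $\comp a\le b$, and verify preservation of join and $\Delta$ by direct Boolean computation. You supply a bit more detail on bijectivity and order-preservation than the paper does, but the argument is the same.
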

\begin{proof}
	Let $\brk<a, b>\mapsto[\comp a, b]$. Since $a\meet b$ exists for all 
	$a, b\in B$ this imposes no hardship. The condition $a\to b=b$ is 
	equivalent to $\comp a\le b$. It is now clear that this mapping is a
	one-one, onto homomorphism. 
	
	We just check how the operations transfer:
	\begin{align*}
		\brk<a, b>\join\brk<c, d> & =\brk<a\join c, b\join d>  \\
		 & \mapsto[\comp a\meet\comp c, b\join d]  \\
		 & =[\comp a, b]\join[\comp c, d].   \\
		 \Delta(\brk<a, b>, \brk<c, d>) & = \brk<a\meet(b\to d), b\meet(a\to c)>   \\
		 & \mapsto[\comp a\join(b\meet\comp d), b\meet(\comp a\join c)]  \\
		 &  =\Delta([\comp a, b], [\comp c, d]). 
	\end{align*}
\end{proof}

Now to check that the axioms of a cubic algebra hold we just need to 
note that all of the axioms take place in some interval algebra -- 
since working above some $x=[u, v]\in\rsf I(\mathcal I)$ means that all the 
computations take place in the interval algebra $\rsf I([u\meet v, 
1])$ -- which we already know to be a cubic algebra. 

In fact we also have 
\begin{lem}\label{lem:intervals}
	$[\brk<a, b>, \brk<1, 1>]\sim[a\meet b, 1]$
\end{lem}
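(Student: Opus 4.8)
The plan is to produce the claimed isomorphism by hand. Both sides are Boolean algebras: $[a\meet b,\one]$ is an interval $[\,\cdot\,,\one]$ inside the implication algebra $\mathcal I$, and $[\brk<a, b>,\one]$ is the principal filter of $\brk<a, b>$ inside $\rsf I(\mathcal I)$, which we now know to be a cubic algebra and hence, via $xy$, an implication algebra in which principal filters are Boolean. So it is enough to exhibit an order-isomorphism between the two, and the natural candidate is $\phi\colon[a\meet b,\one]\to\rsf I(\mathcal I)$ given by $\phi(t)=\brk<a\join t, b\join t>$.

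The first thing I would check is that $\phi$ actually lands in $[\brk<a, b>,\one]$. The join condition is immediate: $(a\join t)\join(b\join t)=(a\join b)\join t=\one$. For the meet condition, note that $a$, $b$ and $t$ all lie in the Boolean interval $B=[a\meet b,\one]$, where $(a\join t)\meet(b\join t)=(a\meet b)\join t=t$; moreover a meet that exists in a principal filter $[c,\one]$ of an implication algebra is automatically the meet in the whole algebra, since any lower bound $w$ of a pair in $\mathcal I$ yields the lower bound $w\join c\in[c,\one]$, whence $w\le w\join c\le$ the meet in $[c,\one]$. Thus $(a\join t)\meet(b\join t)=t$ exists in $\mathcal I$, so $\phi(t)\in\rsf I(\mathcal I)$, and since $a\le a\join t$ and $b\le b\join t$ we get $\phi(t)\ge\brk<a, b>$.

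Next I would show $\phi$ is an order-isomorphism onto $[\brk<a, b>,\one]$. Injectivity and order-reflection both follow from the identity $t=(a\join t)\meet(b\join t)$, which forces $t\le t'$ whenever $a\join t\le a\join t'$ and $b\join t\le b\join t'$; order-preservation is obvious. For surjectivity, given $\brk<c, d>\ge\brk<a, b>$ set $t=c\meet d$, which exists by the definition of $\rsf I(\mathcal I)$; since $a\le c$ and $b\le d$ we have $a\meet b\le c\meet d$, so $c$, $d$, $t$ all lie in $B$, and computing in the distributive algebra $B$ gives $a\join t=(a\join c)\meet(a\join d)=c\meet\one=c$ — using $a\le c$ and $a\join d\ge a\join b=\one$ — and likewise $b\join t=d$, so $\phi(t)=\brk<c, d>$. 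An order-isomorphism between Boolean algebras is an isomorphism, which yields the stated $\sim$.

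The only step I expect to require real care is the well-definedness of $\phi$: because $\mathcal I$ has in general neither a least element nor all meets, one must not compute $(a\join t)\meet(b\join t)$ in the wrong place, but instead observe that once $a\meet b$ is fixed all the relevant elements and operations live inside the Boolean algebra $[a\meet b,\one]$, with its meets and joins agreeing with those of $\mathcal I$. This is precisely the ``work inside an interval algebra'' principle already used throughout the section, and with it the rest is routine Boolean bookkeeping; alternatively one could route everything through Lemma~\ref{lem:Boolean}, realizing $[\brk<a, b>,\one]$ inside $\rsf I([a\meet b,\one])$ and identifying it with a principal filter of an interval, but the map $\phi$ is the shortest path.
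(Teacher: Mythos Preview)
Your proof is correct and is essentially the paper's argument run in the other direction: the paper uses the map $\brk<c,d>\mapsto c\meet d$, factoring through $[a,\one]\times[b,\one]$ and invoking the fact that $a$ and $b$ are complementary in $[a\meet b,\one]$, while your $\phi(t)=\brk<a\join t,b\join t>$ is precisely its inverse. Your write-up is more careful about why the meets computed in $[a\meet b,\one]$ are meets in $\mathcal I$, but the underlying idea is identical.
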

\begin{proof}
	Since $[\brk<a, b>, \brk<1, 1>]\sim [a, 1]\times[b, 1]\sim 
	[a\meet b, 1]$ by $\brk<c, d>\mapsto \brk<c, d>\mapsto c\meet d$. The 
	last is an isomorphism as it is an isomorphism of Boolean algebras 
	and in $[a\meet b, 1]$ the complement of $a$ is $b$. 
\end{proof}

\section{Some Category Theory}
The operation $\rsf I$ is a functor where we define $\rsf I(f)\colon\rsf 
I(\mathcal I_{1})\to\rsf I(\mathcal I_{2})$ by
$$
		\rsf I(f)(\brk<a, b>)=\brk<f(a), f(b)>	
$$
whenever $f\colon\mathcal I_{1}\to\mathcal I_{2}$ is an implication 
morphism. 

Since $f$ preserves all joins, implications and whatever meets exist 
we easily see that $\rsf I(f)$ is a cubic morphism. 

Clearly $\rsf I(fg)=\rsf I(f)\rsf I(g)$. 
The relation $\sim$ defined above
gives rise to a functor $\rsf C$ on cubic algebras. 
Before defining this we need a lemma. 
\begin{lem}\label{lem:simHom}
	Let $\phi\colon\mathcal L_{1}\to\mathcal L_{2}$ be a cubic 
	homomorphism. Let $a, b\in\mathcal L_{1}$. Then
	$$
		a\sim b\Rightarrow \phi(a)\sim\phi(b). 
	$$
\end{lem}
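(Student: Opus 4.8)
The plan is to unwind the definition of $\sim$ and push it straight through $\phi$. By definition, $a\sim b$ means $\Delta(a\join b,a)=b$, and since $a\le a\join b$ this $\Delta$-term is in fact defined. So the first step is simply to apply $\phi$ to both sides, obtaining $\phi(\Delta(a\join b,a))=\phi(b)$.

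Next I would use that a cubic homomorphism preserves $\join$, preserves $\Delta$ on comparable pairs, and is therefore order-preserving (as $x\le y\iff x\join y=y$). Because $a\le a\join b$, we get $\phi(a)\le\phi(a)\join\phi(b)=\phi(a\join b)$, so $\Delta(\phi(a\join b),\phi(a))$ is defined and equals $\phi(\Delta(a\join b,a))$. Combining this with the previous step gives $\Delta(\phi(a)\join\phi(b),\phi(a))=\phi(b)$, which is exactly $\phi(a)\sim\phi(b)$.

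There is essentially no obstacle here; the only thing to watch is that $\Delta$ is a partial operation, so one must be sure the arguments that appear are genuinely comparable before invoking preservation of $\Delta$ — and they are, since $a\le a\join b$ and $\phi$ is order-preserving. (If one preferred, one could instead move the whole question up to $\env(\mathcal L_{1})$ and $\env(\mathcal L_{2})$ via the Transfer theorem and argue with the lifted MR-homomorphism $\widehat\phi$, but that detour buys nothing.)
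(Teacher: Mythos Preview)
Your proof is correct and is essentially identical to the paper's own argument: unwind the definition of $\sim$, apply $\phi$, and use that $\phi$ preserves $\join$ and $\Delta$. If anything you are slightly more careful than the paper in explicitly checking that the partial operation $\Delta$ is defined on the relevant arguments before invoking its preservation.
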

\begin{proof}
\begin{align*}
	a\sim b & \iff \Delta(a\join b, a)=b  \\
		 & \hphantom{\Leftarrow}\Rightarrow \phi(\Delta(a\join b, a))=\phi(b)  \\
		 & \iff \Delta(\phi(a)\join\phi(b), \phi(a))=\phi(b)  \\
		 & \iff \phi(a)\sim\phi(b). 
	\end{align*}
\end{proof}

Now $\rsf C$ is defined by
\begin{align*}
	\rsf C(\mathcal L) & =\mathcal L/\sim  \\
	\rsf C(\phi)([x]) & =[\phi(x)]. 
\end{align*}

It is easily seen that $\rsf C$ is a functor from the category of 
cubic algebras to the category of implication algebras. 

There are several natural transformations here. The basic ones are 
$e\colon\text{ID}\to\rsf I$ and $\eta\colon\text{ID}\to\rsf C$. 
These two are defined by 
\begin{align*}
	e_{\mathcal I}(x)&=\brk<\one, x>\\
	\eta_{\mathcal L}(x)&=[x]. 
\end{align*}
The commutativity of the diagram 
\begin{diagram}
	\mathcal I_{1} & \rTo^{\phi} & \mathcal I_{2}  \\
	\dTo^{e_{\mathcal I_{1}}} &  &  \dTo^{e_{\mathcal I_{2}}} \\
	\rsf I(\mathcal I_{1}) & \rTo_{\rsf I(\phi)} & \rsf I(\mathcal I_{2}) 
\end{diagram}
is from -- for $x\in\mathcal I_{1}$
\begin{align*}
	e_{\mathcal I_{2}}(\phi(x)) & =\brk<\one, \phi(x)>  \\
	 & =\brk<\phi(\one), \phi(x)>  \\
	 & =\rsf I(\phi)(\brk<\one, x>)\\
	 &=\rsf I(\phi)e_{\mathcal I_{1}}(x). 
\end{align*}

The commutativity of the diagram 
\begin{diagram}
	\mathcal L_{1} & \rTo^{\phi} & \mathcal L_{2}  \\
	\dTo^{\eta_{\mathcal L_{1}}} &  &  \dTo^{\eta_{\mathcal L_{2}}} \\
	\rsf C(\mathcal L_{1}) & \rTo_{\rsf C(\phi)} & \rsf C(\mathcal L_{2}) 
\end{diagram}
is from -- for $x\in\mathcal L_{1}$
\begin{align*}
	\eta_{\mathcal L_{2}}(\phi(x)) & =[\phi(x)]  \\
	 & =\rsf C(\phi)([x])\\
	 &=\rsf C(\phi)\eta_{\mathcal L_{1}}(x). 
\end{align*}

Then we get the composite transformation 
$\iota\colon\text{ID}\to\rsf C\rsf I$ defined by 
$$
	\iota_{\mathcal I}=\eta_{\rsf I(\mathcal I)}\circ e_{\mathcal I}. 
$$
By standard theory this is a natural transformation. It is easy to 
see that $e_{\mathcal I}$ is an embedding, and that $\eta_{\mathcal L}$ 
is onto. But there's more!
\begin{thm}\label{thm:isoIota}
	$\iota_{\mathcal I}$ is an isomorphism. 
\end{thm}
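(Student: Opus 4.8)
The plan is to show that $\iota_{\mathcal I}=\eta_{\rsf I(\mathcal I)}\circ e_{\mathcal I}$ is a bijective implication morphism by exhibiting an explicit two-sided inverse, namely the map induced by $\iota\colon\brk<a,b>\mapsto a\meet b$ from \eqref{eq:one}. Since $\iota$ is a natural transformation by general nonsense, what remains is to check it is injective and surjective on objects; we already remarked that $e_{\mathcal I}$ is an embedding and $\eta_{\mathcal L}$ is onto, so surjectivity of the composite follows from surjectivity of $\eta_{\rsf I(\mathcal I)}$, and the real content is injectivity.

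First I would unwind the definitions: for $a\in\mathcal I$, $\iota_{\mathcal I}(a)=\eqcl[\brk<\one,a>]$, an element of $\rsf C(\rsf I(\mathcal I))=\rsf I(\mathcal I)/\sim$. To see $\iota_{\mathcal I}$ is onto, take any $\eqcl[\brk<a,b>]\in\rsf I(\mathcal I)/\sim$; by \lemref{lem:simeq} (which applies since the finitely generated subalgebra $\leftGen\brk<a,b>\rightGen$ is an interval algebra, and \lemref{lem:simeq} was stated for $\rsf I(\mathcal I)$ with $\mathcal I$ an implication algebra) we have $\brk<a,b>\sim\brk<\one,a\meet b>$ because $a\meet b=\one\meet(a\meet b)$ — here $a\meet b$ exists by definition of membership in $\rsf I(\mathcal I)$. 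Hence $\eqcl[\brk<a,b>]=\iota_{\mathcal I}(a\meet b)$, so $\iota_{\mathcal I}$ is surjective. For injectivity, suppose $\iota_{\mathcal I}(a)=\iota_{\mathcal I}(a')$, i.e.\ $\brk<\one,a>\sim\brk<\one,a'>$; again by \lemref{lem:simeq} this gives $\one\meet a=\one\meet a'$, that is $a=a'$. So $\iota_{\mathcal I}$ is a bijection.

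It then remains to check $\iota_{\mathcal I}$ is an implication morphism (hence an isomorphism of implication algebras). One route: $e_{\mathcal I}$ is already known to be an implication embedding, and $\eta_{\rsf I(\mathcal I)}$ restricted to the upwards-closed interval $[\brk<\one,\one>,\brk<\one,\one>]$... — more precisely, by the theorem on each interval $[a,\one]$ the collapse $x\mapsto\eqcl[x]$ is an implication embedding; taking the interval $[\brk<\one,\one>,\brk<\one,\one>]$ is trivial, so instead I would directly verify that the composite preserves $\one$, $\join$, $\meet$ (where it exists) and $\to$ by pushing the known formulas $\eqcl[a]\join\eqcl[b]=\eqcl[a*b]$, $\eqcl[a]\meet\eqcl[b]=\eqcl[a\caret b]$, $\eqcl[a]\to\eqcl[b]=\eqcl[a\Rightarrow b]$ through the definitions of $*$, $\caret$, $\Rightarrow$ in $\rsf I(\mathcal I)$ and comparing with the implication-algebra operations on $\mathcal I$ transported via $a\mapsto\brk<\one,a>$; for instance $\brk<\one,a>\Rightarrow\brk<\one,b>$ should come out $\sim$-equivalent to $\brk<\one,a\to b>$. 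Alternatively, and more cleanly, since $\iota$ (the map $\brk<a,b>\mapsto a\meet b$) satisfies \eqref{eq:one} and is the set-level inverse of $\iota_{\mathcal I}$, it suffices to observe it induces a well-defined map on $\sim$-classes that is an implication morphism, which is essentially \lemref{lem:simeq} together with the operation formulas; then a bijective morphism with morphism inverse is an isomorphism.

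The main obstacle I anticipate is bookkeeping rather than conceptual: verifying that \lemref{lem:simeq} legitimately applies inside $\rsf I(\mathcal I)$ — one must note that the relevant computations happen in a finitely generated subalgebra, which by the interval-algebra structure theory (and \lemref{lem:Boolean}, \lemref{lem:intervals}) is an interval algebra over a Boolean algebra, so the Boolean reasoning of \lemref{lem:simeq} and \corref{cor:lenEq} is available — and carefully matching the formulas for $*$, $\caret$, $\Rightarrow$ evaluated on pairs of the special form $\brk<\one,a>$ against the operations $\join$, $\meet$, $\to$ of $\mathcal I$. Once that identification is pinned down the isomorphism claim is immediate, and naturality is already guaranteed by the standard composition-of-natural-transformations argument.
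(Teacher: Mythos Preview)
Your proposal is correct and follows essentially the same route as the paper: both establish that $\iota_{\mathcal I}$ is a bijection via the characterization of $\sim$ on $\rsf I(\mathcal I)$, the paper computing $\Delta$ directly where you invoke \lemref{lem:simeq} (which is stated precisely for $\rsf I(\mathcal I)$, so your applicability worry is unnecessary). The paper does not separately verify the implication-morphism property --- it is taken as given once $\iota$ is a composite of the natural transformations $e$ and $\eta$ --- so the bookkeeping you flag as the ``main obstacle'' is not needed.
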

\begin{proof}
	Let $x, y\in\mathcal I$ and suppose that $\iota(x)=\iota(y)$. 
	Then 
	\begin{align*}
		\iota(x) & =\eta_{\rsf I(\mathcal I)}(e_{\mathcal I}(x))  \\
		 & =[\brk<\one, x>]  \\
		 & =[\brk<\one, y>]. 
	\end{align*}
	Thus $\brk<\one, x>\sim\brk<\one, y>$. Now 
	\begin{align*}
		\Delta(\brk<\one, x>\join\brk<\one, y>, \brk<\one, y>) & =
		\Delta(\brk<\one, x\join y>, \brk<\one, y>)\\
		 & =\brk<(x\join y)\to y, x\join y>. 
	\end{align*}
	This equals $\brk<\one, x>$ iff $x=x\join y$ (so that $y\le x$) and 
	$(x\join y)\to y=\one$ so that $y=x\join y$ and $x\le y$. Thus $x=y$. 
	Hence $\iota$ is one-one. 
	
	It is also onto, as if $z\in\rsf C\rsf I(\mathcal I)$ then we have 
	$z=[w]$ for some $w\in\rsf I(\mathcal I)$. But we know that 
	$w=\brk<x, y>\sim\brk<\one, x\meet y>$ -- since 
	$\Delta(\brk<\one, y>, \brk<\one, x\meet y>)=\brk<x, y>$ -- and so
	$z=[\brk<\one, x\meet y>]=\eta_{\rsf I(\mathcal I)}(e_{\mathcal 
	I}(x\meet y))$. 
\end{proof}

We note that there is also a natural transformation 
$\kappa\colon\text{ID}\to\rsf I\rsf C$ defined by 
$$
	\kappa_{\mathcal L}=e_{\rsf C(\mathcal L)}\circ \eta_{\mathcal L}. 
$$
In general this is not an isomorphism as there may be an MR-algebra $\mathcal 
M$ which is not a filter algebra, but $\rsf I(\rsf C(\mathcal M))$ is 
always a filter algebra. 

We also note that $\iota_{\rsf C(\mathcal L)}=\rsf C(\kappa_{\mathcal L})$ 
for all cubic algebras $\mathcal L$. 
The pair $\rsf I$ and $\rsf C$ do not form an adjoint pair.

\section{The range of $\rsf I$}

In this section
we wish to consider the relationship between $\mathcal L$ and $\rsf 
I(\mathcal L/\sim)$. In the case of $\mathcal L=\rsf I(\mathcal I)$,  
we saw in \thmref{thm:isoIota} that the two structures $I$ and $\rsf 
C(\mathcal L)$ are naturally 
isomorphic and that the set $e_{\mathcal I}[\mathcal I]\subseteq 
\rsf I(\mathcal I)$ has a very special place. This leads to the 
notion of \emph{g-cover}. 

%
%
%
%
%
%

\begin{defn}\label{def:gCover}
	Let $\mathcal L$ be a cubic algebra. Then $J\subseteq \mathcal L$ is 
	a \emph{g-cover} iff
	$J$ is an upwards-closed implication subalgebra and
	$$
	j\colon J\rInto\mathcal L\rTo^{\eta}\mathcal L/\sim
	$$
	is an isomorphism. 
	
	If $J$ is meet-closed we say that $J$ is a \emph{g-filter}. 
\end{defn}

We note that $\rsf I(\mathcal I)$ has a g-cover -- namely 
$e_{\mathcal I}[\mathcal I]$. We want to show that this is 
(essentially) the only way to get g-covers, and that having them 
simplifies the study of such second-order properties as congruences 
and homomorphisms.

If $J$ is a g-cover and $x\in\mathcal L$ then we have 
$x\sim j^{-1}(\eta(x))\in J$ and so $\leftGen J\rightGen =\mathcal L$. 
We need to be very precise about how $J$ generates $\mathcal L$ which 
leads to the next two lemmas. 

\begin{lem}\label{lem:simEq}
	Let $J$ be a g-cover for $\mathcal L$ and $x, y\in J$ with $x\sim y$. 
	Then $x=y$. 
\end{lem}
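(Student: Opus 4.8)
The plan is to exploit the two defining properties of a g-cover: that $J$ is an upwards-closed implication subalgebra, and that the composite $j = \eta\restrict J$ is an isomorphism onto $\mathcal L/\sim$. Suppose $x, y \in J$ with $x \sim y$. Then $\eta(x) = \eqcl[x] = \eqcl[y] = \eta(y)$ by the definition of $\eta$, so $j(x) = j(y)$. Since $j$ is injective, we conclude $x = y$.

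The only subtlety is to make sure the composite $j$ really does send $x$ to $\eqcl[x]$ and $y$ to $\eqcl[y]$ — i.e. that the inclusion $J \rInto \mathcal L$ followed by $\eta$ agrees with the restriction of $\eta$ to $J$. This is immediate from the definition of $j$ in \defref{def:gCover}: $j$ is precisely the map $J \rInto \mathcal L \rTo^{\eta} \mathcal L/\sim$, so $j(x) = \eta(x) = \eqcl[x]$. Thus $x \sim y$ forces $\eqcl[x] = \eqcl[y]$, hence $j(x) = j(y)$, hence $x = y$ by injectivity of the isomorphism $j$. So the proof is a one-line unwinding of the definition, with no real obstacle; the content of the lemma is entirely in the hypothesis that $j$ is an isomorphism (in particular injective), and the role of the lemma is to record that the g-cover $J$ meets each $\sim$-class in at most one point — which, combined with surjectivity of $j$, says it meets each class in exactly one point.

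I expect no genuine difficulty here; if anything the "hard part" is purely expository, namely flagging that this is the precise sense in which $J$ generates $\mathcal L$ (each element of $\mathcal L$ is $\sim$-equivalent to a unique element of $J$), setting up the more substantial companion lemma that follows.

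\begin{proof}
	Since $j$ is the composite $J\rInto\mathcal L\rTo^{\eta}\mathcal
	L/\sim$ we have $j(x)=\eta(x)=\eqcl[x]$ and $j(y)=\eqcl[y]$. As
	$x\sim y$, the two $\sim$-classes coincide, so $\eqcl[x]=\eqcl[y]$
	and hence $j(x)=j(y)$. But $j$ is an isomorphism, in particular
	one-one, so $x=y$.
\end{proof}
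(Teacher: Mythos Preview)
Your proof is correct and essentially identical to the paper's: from $x\sim y$ conclude $\eta(x)=\eta(y)$, hence $j(x)=j(y)$, and then use injectivity of $j$ to get $x=y$. The only difference is that you spell out the identification $j(x)=\eqcl[x]$ more explicitly, which is harmless.
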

\begin{proof}
	If $x\sim y$ then $\eta(x)=\eta(y)$ and so $j(x)= j(y)$. 
	As $ j$ is one-one on $J$ this entails $x=y$. 
\end{proof}

\begin{lem}\label{lem:AlphaBeta}
	Let $J$ be a g-cover for $\mathcal L$ and $x\in \mathcal L$. There 
	exists unique pair $\alpha$, $\beta$ in $J$ with $\alpha\geq \beta$ 
	and $\Delta(\alpha, \beta)=x$. 
\end{lem}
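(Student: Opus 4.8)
The plan is to use the g-cover $J$ as a transversal for $\sim$ and reconstruct $x$ from its class. First I would set $\gamma = j^{-1}(\eta(x)) \in J$, so that $\gamma \sim x$, i.e. $\Delta(\gamma \join x, \gamma) = x$. Writing $\alpha := \gamma \join x$, we have $\alpha \geq x$ and $\alpha \geq \gamma$; moreover $\alpha \sim \gamma$ (since $\Delta(\alpha, \gamma)\join\gamma = \alpha$ gives $\Delta(\alpha,\gamma)\preceq\alpha$... more directly, $x\sim\gamma$ and $\gamma\le\alpha=\gamma\join x$ forces, via the remark after the second lemma of the section, or a short computation, that $\alpha\sim\gamma$). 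Since $J$ is upwards-closed and $\gamma\in J$, we get $\alpha\in J$; set $\beta:=\gamma$. Then $\alpha,\beta\in J$, $\alpha\geq\beta$, and $\Delta(\alpha,\beta)=\Delta(\gamma\join x,\gamma)=x$. This establishes existence.

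For uniqueness, suppose $\alpha\geq\beta$ and $\alpha'\geq\beta'$ are two such pairs in $J$ with $\Delta(\alpha,\beta)=x=\Delta(\alpha',\beta')$. The key observation is that from $\Delta(\alpha,\beta)=x$ with $\beta\le\alpha$ we recover both coordinates: axiom (a) gives $x\join\beta=\Delta(\alpha,\beta)\join\beta=\alpha$, so $\alpha=x\join\beta$; and axiom (c) gives $\Delta(\alpha,x)=\Delta(\alpha,\Delta(\alpha,\beta))=\beta$. Thus $\beta=\Delta(\alpha,x)$ and $\alpha=x\join\beta$. Also $\beta\sim x$ (since $\Delta(\beta\join x,\beta)=\Delta(\alpha,\beta)=x$, using $\beta\join x=\alpha$), and likewise $\beta'\sim x$, hence $\beta\sim\beta'$ with both in $J$; by \lemref{lem:simEq} this gives $\beta=\beta'$, and then $\alpha=x\join\beta=x\join\beta'=\alpha'$.

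The main obstacle is the bookkeeping around which element of $J$ to choose as $\beta$: one must check that $\beta:=j^{-1}(\eta(x))$ genuinely lies below the chosen $\alpha$ and that $\alpha=\beta\join x$ indeed lands in $J$ (this is exactly where upwards-closedness of $J$ is used) and is itself $\sim$-equivalent to $\beta$ only insofar as is needed — but in fact we do not need $\alpha\sim\beta$; we only need $\alpha\in J$, which follows from $\alpha\ge\beta\in J$ and $J$ upwards-closed. So the argument is cleaner than first appears: existence is "take $\beta$ in the class of $x$, let $\alpha=\beta\join x$", and uniqueness is "the class of $x$ inside $J$ is a singleton by \lemref{lem:simEq}, and $\alpha,\beta$ are each determined by $x$ via the cubic axioms". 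I expect the only real care needed is verifying $\beta\sim x$ from $\beta\in J$ with $\eta(\beta)=\eta(x)$, which is immediate from the definition of $j$ and $\eta$.
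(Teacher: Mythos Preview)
Your argument is correct and is essentially identical to the paper's: take $\beta\in J$ with $\beta\sim x$ (using that $j$ is onto), set $\alpha=\beta\join x\in J$ by upwards-closedness, and for uniqueness use $\beta'\sim x\sim\beta$ together with \lemref{lem:simEq}, then recover $\alpha'=\beta'\join x$ from axiom~(a). One caution: your aside that $\alpha\sim\gamma$ is actually false in general (if $\gamma\le\alpha$ and $\gamma\sim\alpha$ then $\Delta(\alpha,\gamma)=\alpha$, forcing $\gamma=\Delta(\alpha,\alpha)=\alpha$), but you rightly drop it and the final proof does not depend on it.
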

\begin{proof}
	Let $x\in\mathcal L$. Then $\eta(x)\in\mathcal L/\sim= \rng(j)$. 
	Hence there is some $\beta\in J$ with $\eta(\beta)=\eta(x)$ and so 
	$\beta\sim x$. Let $\alpha=\beta\join x$. 
	
	If there is some other $\alpha'$ and $\beta'$ in $J$ with $\Delta(\alpha', 
	\beta')=x$ then $\beta'\sim x\sim \beta$ and so (by 
	\lemref{lem:simEq}) $\beta'=\beta$. Then $\alpha'=\beta'\join 
	x=\beta\join x= \alpha$. 
\end{proof}

\begin{thm}\label{thm:MRgCover}
	Suppose that $\mathcal M$ is an MR-algebra and $J$ is a g-cover. 
	Then $J$ is a filter -- in fact a g-filter by the above remarks. 
\end{thm}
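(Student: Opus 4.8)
The plan is to show that $J$ is closed under the meet of $\mathcal M$; the sharper statement that $J$ is then a g-filter is immediate from the definition of g-filter. Fix $x,y\in J$. Since $\mathcal M$ is an MR-algebra, $\mathcal M/\sim$ is an implication lattice, so $\eqcl[x]\meet\eqcl[y]$ exists in $\mathcal M/\sim$. Because $j=\eta\restrict J$ is an implication isomorphism of $J$ onto $\mathcal M/\sim$, there is a unique $m\in J$ with $\eta(m)=\eqcl[x]\meet\eqcl[y]$. Moreover $j$ is an order isomorphism: in an implication algebra the order is definable from the operation, and on $J$ the implication-algebra join $(uv)v$ coincides with the cubic join $u\join v$ by the axiom $(uv)v=u\join v$, so the implication order on $J$ is precisely the order $J$ inherits from $\mathcal M$. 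Hence, pulling $\eqcl[x]\meet\eqcl[y]\le\eqcl[x]$ and $\eqcl[x]\meet\eqcl[y]\le\eqcl[y]$ back through $j^{-1}$, we obtain $m\le x$ and $m\le y$ in $\mathcal M$.

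It remains to check that $m$ is in fact the greatest lower bound of $x$ and $y$ in all of $\mathcal M$; this simultaneously shows that $x\meet y$ exists and that it equals the element $m\in J$. Let $s\in\mathcal M$ be any common lower bound, $s\le x$ and $s\le y$, and set $r=s\join m$. Then $m\le r$, while $r\le x$ and $r\le y$, so $x$, $y$ and $r$ all lie in the interval $[m,\one]$ of $\mathcal M$. By the theorem above asserting that $z\mapsto\eqcl[z]$ restricted to each interval $[a,\one]$ is an implication embedding, the collapse map is an order embedding on $[m,\one]$; and $\eta$ is order-preserving by Remark~\ref{rem:one}. Thus $\eta(r)\le\eqcl[x]$ and $\eta(r)\le\eqcl[y]$, so $\eta(r)\le\eqcl[x]\meet\eqcl[y]=\eta(m)$ in $\mathcal M/\sim$; since $r$ and $m$ both lie in $[m,\one]$ and $\eta$ reflects order there, $r\le m$, and therefore $s\le r\le m$. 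So $m$ is the meet of $x$ and $y$ in $\mathcal M$, it belongs to $J$, and $J$ is meet-closed, i.e.\ a filter, hence a g-filter.

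The only step with real content is the second one: ruling out common lower bounds of $x$ and $y$ that lie outside $J$, so that the meet computed inside $\mathcal M$ really coincides with the one $j$ transports back from the lattice $\mathcal M/\sim$. The device that makes this work is to pull an arbitrary common lower bound $s$ up into the interval $[m,\one]$, where the collapse is known to be an order embedding, by replacing it with $s\join m$. Both hypotheses enter precisely here: the assumption that $\mathcal M$ is an MR-algebra is what guarantees that $\eqcl[x]\meet\eqcl[y]$ exists in the first place (for a general cubic algebra $\mathcal M/\sim$ need not be a lattice, and indeed $\rsf I(\mathcal I)$ carries a g-cover that is not meet-closed exactly when $\mathcal I$ is not a lattice), while the interval embedding theorem supplies the order reflection that finishes the argument.
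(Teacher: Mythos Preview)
Your argument is correct, but it follows a different path from the paper's.

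The paper proceeds in the opposite order: it first invokes the MR-axiom \emph{directly} to establish that $x\meet y$ exists in $\mathcal M$. Concretely, from $j(x\join y)=\eta(x)\join\eta(y)=\eta(x*y)$ one gets $x\join y\sim x*y$; since $x*y\le x\join y$ these must be equal, so $\Delta(x\join y,y)\join x=x\join y$, and the MR-axiom then yields the existence of $x\meet y$. Only afterwards does the paper locate this meet inside $J$, by taking $w\in J$ with $w\sim x\meet y$ and using upwards-closedness of the range of $\eta\restrict[w,\one]$ to show $w\le x$ and $w\le y$, hence $w=x\meet y$.

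Your route instead uses the \emph{consequence} of the MR-axiom that $\mathcal M/\sim$ is a lattice (stated in Remark~\ref{rem:one}), pulls the meet $\eqcl[x]\meet\eqcl[y]$ back through the isomorphism $j$ to obtain a candidate $m\in J$, and then verifies directly that $m$ is the greatest lower bound in all of $\mathcal M$ via the order-reflection of $\eta$ on $[m,\one]$. The trick of replacing an arbitrary lower bound $s$ by $s\join m$ to land in the interval where $\eta$ is an embedding is a nice touch. Both arguments ultimately lean on the interval-embedding theorem; the paper's is marginally more self-contained in that it reads the existence of the meet straight off the MR-axiom rather than citing the lattice structure of the collapse, while yours makes the role of the quotient more transparent and avoids the slightly delicate step of producing $x'\ge w$ with $x'\sim x$.
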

\begin{proof}
	Let $x, y\in J$. Then we have $j(x\join y)= 
	j(x)\join j(y)= \eta(x)\join\eta(y)= \eta(x*y)$ so that
	$x\join y\sim x*y= x\join\Delta(x\join y, y)$. As 
	$(x\join y)\meet (x*y)$ exists this implies 
	$x\join y= x*y= x\join\Delta(x\join y, y)$ and so (by the MR-axiom)
	$x\meet y$ exists. Now let $w\in J$ be such that $w\sim (x\meet y)$. 
	Then there is some $x'\geq w$ with $x'\sim x$ and so 
	$x'=x$ as $x, x'\in J$. Likewise $w\le y$ and so $w\le x\meet y$ 
	i.e.  $w=x\meet y$ is in $J$. 
\end{proof}

\begin{rem}\label{rem:uSub}
	The above proof also shows us that if $J$ is a g-cover and $x, y\in 
	J$ are such that $x\meet y$ exists, then $x\meet y\in J$. 
	
	G-filters were considered in \cite{BO:fil} and used to get an 
	understanding of automorphism groups and the lattice of congruences. 
	G-covers generalize the notion of g-filters to a larger
	class of algebras, but we'll leave applications to second-order 
	properties to another paper. 
\end{rem}

Now suppose that $\mathcal L$ is any cubic algebra with a g-cover $J$. 
We want to show that $\rsf I(J)\sim \mathcal L$. 
For each $x\in \mathcal L$ there is a unique pair
$\alpha(x), \beta(x)$ in $J$ such that $\beta(x)\le\alpha(x)$ and
$x= \Delta(\alpha(x), \beta(x))$. Define 
$$
\phi\colon\mathcal L\to\rsf I(J)
$$
by
$$
\phi(x)=\brk<\alpha(x), \alpha(x)\to\beta(x)>. 
$$
We need to show that this is one-one, onto and order-preserving. 

We 
first note that $\alpha(x)\to\beta(x)=\Delta(\one, x)\join\beta(x)$. 
Since $x\sim\beta(x)$ we have $\beta(x)=(\Delta(\one, 
x)\join\beta(x))\meet(x\join\beta(x))$ and trivially 
$\one=(\Delta(\one, 
x)\join\beta(x))\join(x\join\beta(x))$.  Hence the complement of 
$\alpha(x)=x\join\beta(x)$ over $\beta(x)$ must be
$\alpha(x)\to\beta(x)=\Delta(\one, x)\join\beta(x)$. 

\begin{description}
	\item[One-one] Suppose that $\phi(x)=\phi(y)$. Then we have 
	\begin{align*}
		\alpha(x)\to\beta(x) & =\alpha(y)\to\beta(y)  \\
		\alpha(x) &  =\alpha(y) \\
		\intertext{Therefore}		
		\beta(x)=(\alpha(x)\to\beta(x))\meet\alpha(x)&=(\alpha(y)\to\beta(y))\meet\alpha(y)=\beta(y)  \\
		\intertext{and so we have }
		x=\Delta(\alpha(x), \beta(x)) & =\Delta(\alpha(y), \beta(y)) =y. 
	\end{align*}

	\item[Onto] Let $\brk<a, b>\in\rsf I(J)$. Let $z=\Delta(a, a\meet b)$. 
	Then we have -- by uniqueness -- that $\alpha(z)=a$ and 
	$\beta(z)=a\meet b$ and so
	$a\to (a\meet b)= a\to b= b$ -- by definition of $\rsf I(J)$. 

	\item[Order-preserving] Suppose that $x\le y$. Then we have 
	$x\sim \beta(x)$ and so there is some $b\geq \beta(x)$ with 
	$b\sim y$. As $b\in J$ we get $\beta(y)=b$. Hence
	$\alpha(x)=x\join\beta(x)\le y\join\beta(y)=\alpha(y)$. 
	Also (as $x\le y$) $\Delta(\one, x)\le\Delta(\one, y)$ and so
	$\Delta(\one, x)\join\beta(x)\le \Delta(\one, y)\join\beta(y)$. 
\end{description}

Thus we have 
\begin{thm}\label{thm:existsGCovers}
	A cubic algebra $\mathcal L$ has a g-cover iff $\mathcal L$ is 
	isomorphic to $\rsf I(\mathcal I)$ for some implication algebra 
	$\mathcal I$. 
\end{thm}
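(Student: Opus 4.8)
The plan is to prove the two directions separately. For the forward direction, suppose $\mathcal{L}$ has a g-cover $J$. I would take $\mathcal{I} = J$, viewed as an implication algebra, and show $\mathcal{L} \cong \rsf I(J)$ using the map $\phi\colon \mathcal{L} \to \rsf I(J)$ already constructed in the discussion preceding the theorem, namely $\phi(x) = \brk<\alpha(x), \alpha(x)\to\beta(x)>$, where $\alpha(x),\beta(x)$ are the unique elements of $J$ given by \lemref{lem:AlphaBeta} with $\beta(x)\le\alpha(x)$ and $\Delta(\alpha(x),\beta(x)) = x$. The text has already verified that $\phi$ is well-defined (the key identity being $\alpha(x)\to\beta(x) = \Delta(\one, x)\join\beta(x)$, so that $\brk<\alpha(x),\alpha(x)\to\beta(x)>$ genuinely lies in $\rsf I(J)$ since these two elements join to $\one$ and meet at $\beta(x)$), and that $\phi$ is one-one, onto, and order-preserving. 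What remains for a full isomorphism is to check that $\phi$ preserves the cubic operations: that $\phi$ is a join-homomorphism, sends $\one$ to $\brk<\one,\one>$, and commutes with $\Delta$. I would do this by unwinding the definitions: for $\one$ we have $\alpha(\one) = \beta(\one) = \one$; for joins, observe $\alpha(x\join y) = \alpha(x)\join\alpha(y)$ and use the $\Delta(\one,-)$ expression for the second coordinate together with the fact that $\Delta(\one,-)$ distributes over joins; for $\Delta$, reduce to the case of a comparable pair $\brk<c,d>\le\brk<a,b>$ and compute both sides of $\Delta(\brk<a,b>,\brk<c,d>) = \brk<a\meet(b\to d), b\meet(a\to c)>$ inside $J$.

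For the converse, suppose $\mathcal{L} \cong \rsf I(\mathcal{I})$ for some implication algebra $\mathcal{I}$. Since the property of having a g-cover is clearly invariant under isomorphism, it suffices to exhibit a g-cover of $\rsf I(\mathcal{I})$ itself. The natural candidate, already flagged in the text, is $J = e_{\mathcal{I}}[\mathcal{I}] = \{\brk<\one, a> : a\in\mathcal{I}\}$. I must check three things: that $J$ is an implication subalgebra of $\rsf I(\mathcal{I})$; that $J$ is upwards-closed; and that the composite $j\colon J \rInto \rsf I(\mathcal{I}) \rTo^{\eta} \rsf I(\mathcal{I})/{\sim}$ is an isomorphism. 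The subalgebra claim follows because $e_{\mathcal{I}}$ is an implication embedding (noted in the excerpt). Upward-closure holds because any element $\geq \brk<\one, a>$ has first coordinate $\geq \one$, hence equal to $\one$, so is again of the form $\brk<\one, b>$. For the isomorphism, note $j = \eta_{\rsf I(\mathcal{I})}\circ e_{\mathcal{I}} = \iota_{\mathcal{I}}$, which is an isomorphism by \thmref{thm:isoIota}; I only need to remark that restricting the codomain of $\iota_{\mathcal{I}}$ to $\mathcal{L}/{\sim}$ and its domain along $e_{\mathcal I}$ is exactly the map $j$ in \defref{def:gCover}.

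I expect the main obstacle to be the operation-preservation check for $\phi$ in the forward direction — specifically verifying that $\phi$ commutes with $\Delta$. The complication is that $\alpha$ and $\beta$ are defined by a uniqueness property rather than by an explicit formula, so I need a clean way to compute $\alpha(\Delta(x,y))$ and $\beta(\Delta(x,y))$ from $\alpha(x),\beta(x),\alpha(y),\beta(y)$ when $y\le x$. The route I would take is to use the Transfer theorem (\thmref{thm:transfer}) to pass to $\env(\mathcal{L})$, where every finitely generated subalgebra is an interval algebra of a Boolean algebra; there the elements $\alpha(x),\beta(x)$ become concrete interval endpoints and the $\Delta$ computation is the routine Boolean identity already appearing in \lemref{lem:Boolean}. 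Alternatively, one can sidestep the issue by invoking \thmref{thm:envAlg}(c) and \thmref{thm:MRgCover}: in the MR case $J$ is a g-filter, $\phi$ is visibly a cubic isomorphism onto $\rsf I(J)$, and the general case descends since $\mathcal{L}$ is an upwards-closed subalgebra of $\env(\mathcal{L})$ and $J$ generates it. Everything else is bookkeeping with the defining equations of $\rsf I(-)$.
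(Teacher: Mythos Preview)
Your proposal is correct and follows essentially the same route as the paper: the forward direction uses exactly the map $\phi(x)=\brk<\alpha(x),\alpha(x)\to\beta(x)>$ that the paper constructs in the passage immediately preceding the theorem, and the converse takes $J=e_{\mathcal I}[\mathcal I]$ as the g-cover of $\rsf I(\mathcal I)$, which the paper records just after \defref{def:gCover}.

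The only point of difference is one of thoroughness rather than strategy. The paper is content to check that $\phi$ is one-one, onto, and order-preserving, and then declares the theorem; it does not explicitly verify the reverse order-preservation or the preservation of $\Delta$. You correctly flag operation-preservation (in particular $\Delta$) as the residual work, and your proposed methods---either a direct computation via the Transfer theorem reducing to an interval algebra, or lifting to $\env(\mathcal L)$ where $J$ becomes a g-filter---are both sound ways to close that gap. Your treatment of the converse, identifying $j$ with $\iota_{\mathcal I}$ and invoking \thmref{thm:isoIota}, makes explicit what the paper leaves as a remark.
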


It follows from the above theorems that not every cubic algebra has a 
g-cover -- as we know that MR-algebras not isomorphic to 
filter algebras may exist (under certain set-theoretic assumptions) 
-- see \cite{BO:fil} section 6.

\section{Env and g-covers}
In this section we consider the relationship between g-covers in a 
cubic algebra and in its envelope. We discover that g-covers go 
downwards and upwards -- ie one has a g-cover iff the other has one. 
\begin{thm}\label{thm:envGCoversIII}
	Let $\mathcal L$ be a cubic algebra and suppose that $\rsf I(\rsf F)$ is a filter algebra
	and $\env(\mathcal 
	L)\rTo^{\sim}_{\phi}\rsf I(\rsf F)$ is a cubic homomorphism with 
	upwards-closed range. Then 
	the homomorphism restricts to $\mathcal L$ as -- 
	\begin{diagram}
		\mathcal L & \rDotsto^{\phi\restrict\mathcal L} & \rsf I(\rsf 
		F\Cap\mathcal L)  \\
		\dInto &  & \dInto_{\rsf I(\text{incl})}  \\
		\env(\mathcal L) & \rTo_{\phi} & \rsf I(\rsf F)
	\end{diagram}
	where $\rsf F\Cap\mathcal L=\Set{\phi(l) | l\in\mathcal L\text{ and 
	}\phi(l)\in\rsf F}$. 
\end{thm}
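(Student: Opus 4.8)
The plan is to show that the composite $\eta\colon\mathcal L\to\env(\mathcal L)\to\rsf I(\rsf F)$ lands (on the nose) inside the sub-poset $\rsf I(\rsf F\Cap\mathcal L)$, and that this corestriction is the required cubic homomorphism. First I would observe that $\rsf F\Cap\mathcal L$ is a subset of $\rsf F$ that is an upwards-closed implication subalgebra of it: it is the image of $\mathcal L$ under $\phi$ intersected with $\rsf F$, and since $\phi\restrict\mathcal L$ is an implication morphism (cubic morphisms preserve the derived implication operation) and $\rsf F$ is meet-closed and upwards-closed in $\rsf I(\rsf F)$, the set $\rsf F\Cap\mathcal L$ inherits closure under $\to$, under binary joins that stay below $\one$, and under existing meets. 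Hence $\rsf I(\rsf F\Cap\mathcal L)$ makes sense and, because $\rsf F\Cap\mathcal L\subseteq\rsf F$, there is a natural inclusion cubic morphism $\rsf I(\text{incl})\colon\rsf I(\rsf F\Cap\mathcal L)\to\rsf I(\rsf F)$ given by the functor $\rsf I$ applied to the inclusion.

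Next I would verify that for every $l\in\mathcal L$ the element $\phi(l)\in\rsf I(\rsf F)$ actually lies in $\rsf I(\rsf F\Cap\mathcal L)$, i.e.\ that if $\phi(l)=\brk<a,b>$ then both $a$ and $b$ lie in $\rsf F\Cap\mathcal L$. This is where the hypotheses really bite. Since $\rsf I(\rsf F)$ is a \emph{filter} algebra, it has a g-cover, namely $e_{\rsf F}[\rsf F]$; by Lemma~\ref{lem:AlphaBeta} every element of $\env(\mathcal L)$ — in particular $\phi(l)$ — has a unique representation $\Delta(\alpha,\beta)$ with $\alpha,\beta\in e_{\rsf F}[\rsf F]$, $\beta\le\alpha$, and then $\phi(l)=\brk<\alpha,\alpha\to\beta>$ in the notation of the construction in Section~6. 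So it suffices to show $\alpha$ and $\alpha\to\beta$ are of the form $\phi(l')$ with $l'\in\mathcal L$ and lie in $\rsf F$. The key point is that $\alpha=\phi(l)\join\beta$ and $\beta\sim\phi(l)$; using the Transfer Theorem (Theorem~\ref{thm:transfer}) applied inside $\env(\mathcal L)$ — whose g-cover structure transfers back from $\rsf I(\rsf F)$ along the isomorphism $\phi$ — one finds that $\alpha$ and $\beta$ are already images of elements of $\mathcal L$: concretely, $\beta$ is the unique element $\sim$-equivalent to $\phi(l)$ lying in the g-cover, and because the range of $\mathcal L$ in $\env(\mathcal L)$ is upwards-closed (Theorem~\ref{thm:envAlg}(b)) and $\alpha\ge\phi(l)$, we get $\alpha$ is an image from $\mathcal L$; and $\alpha\to\beta$, being a join of things in $\rsf F$ that are images from $\mathcal L$, is again in $\rsf F\Cap\mathcal L$ by Remark~\ref{rem:uSub} / the implication-subalgebra closure.

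Having shown the image lands in $\rsf I(\rsf F\Cap\mathcal L)$, I would then set $\phi\restrict\mathcal L$ to be this corestriction of $\phi\circ(\text{incl of }\mathcal L\text{ into }\env(\mathcal L))$; it is a cubic homomorphism because it is a composite of the embedding $\mathcal L\rInto\env(\mathcal L)$, the cubic homomorphism $\phi$, and the (now-justified) corestriction to a sub-cubic-algebra. Commutativity of the square is then immediate: chasing $l\in\mathcal L$ down-then-right gives $\phi(l)$ viewed in $\rsf I(\rsf F)$, and right-then-down gives $\rsf I(\text{incl})$ applied to $\phi\restrict\mathcal L(l)$, which is the same element since $\rsf I(\text{incl})$ is literally the inclusion on the level of underlying signed pairs.

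The main obstacle I anticipate is the second step — proving that $\phi(l)$ decomposes with \emph{both} coordinates $\alpha$ and $\alpha\to\beta$ inside $\rsf F\Cap\mathcal L$. The subtlety is that $\rsf F\Cap\mathcal L$ is defined as $\{\phi(l):l\in\mathcal L,\ \phi(l)\in\rsf F\}$, so one must check the g-cover of $\env(\mathcal L)$ transported from $\rsf I(\rsf F)$ actually coincides with (the $\phi$-image of) a g-cover sitting inside $\mathcal L$, and this is exactly the content that makes the whole "g-covers go up and down" theme work. I expect the cleanest route is to use that $\phi$ is an isomorphism onto $\rsf I(\rsf F)$, pull the canonical g-cover $e_{\rsf F}[\rsf F]$ back to a g-cover $G$ of $\env(\mathcal L)$, and then invoke the Transfer Theorem together with upward-closedness of $\mathcal L$ in $\env(\mathcal L)$ to see $G\cap\mathcal L$ is a g-cover of $\mathcal L$ with $\phi[G\cap\mathcal L]=e_{\rsf F}[\rsf F\Cap\mathcal L]$; everything else is then bookkeeping via Lemmas~\ref{lem:AlphaBeta} and \ref{lem:intervals}.
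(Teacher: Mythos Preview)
Your plan for showing $\phi(l)\in\rsf I(\rsf F\Cap\mathcal L)$ is correct in outline but substantially more roundabout than the paper's argument. The paper never pulls anything back to $\env(\mathcal L)$ via Transfer, nor does it need $\phi$ to be an isomorphism: it works entirely inside $\rsf I(\rsf F)$. The key point is that $\phi[\mathcal L]$ is itself an upwards-closed cubic subalgebra of $\rsf I(\rsf F)$ (for upward closure: if $x\ge\phi(l)$ lies in the range of $\phi$, say $x=\phi(m)$, then $x=\phi(m\join l)$ with $m\join l\in\mathcal L$). Given $l\in\mathcal L$, the g-cover $\rsf F$ supplies $l'\in\rsf F$ with $l'\sim\phi(l)$; then $l'\join\phi(l)\in\phi[\mathcal L]$ by upward closure and $l'=\Delta(l'\join\phi(l),\phi(l))\in\phi[\mathcal L]$ by $\Delta$-closure, so $l'\in\rsf F\Cap\mathcal L$ directly. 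Now $\phi(l)\join l'\in\rsf F\Cap\mathcal L$ as well, and $\phi(l)=\Delta(\phi(l)\join l',l')$ exhibits $\phi(l)\in\rsf I(\rsf F\Cap\mathcal L)$. No appeal to \thmref{thm:transfer}, \lemref{lem:AlphaBeta}, or the pullback of a g-cover is required; your anticipated ``main obstacle'' dissolves once you argue inside $\rsf I(\rsf F)$ rather than inside $\env(\mathcal L)$.

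More importantly, you omit the second half of what the paper actually proves: that $\phi\restrict\mathcal L$ is \emph{onto} $\rsf I(\rsf F\Cap\mathcal L)$. This is the paper's Claim~2, and it is essential --- \corref{cor:filter} explicitly invokes it (``onto by the theorem''). The argument reuses the same circle of ideas: given $x\in\rsf I(\rsf F\Cap\mathcal L)$, pick $x'\in\rsf F\Cap\mathcal L$ with $x\sim x'$, write $x'=\phi(l)$ and $x\join\phi(l)=\phi(m)$ for suitable $l,m\in\mathcal L$, and conclude $x=\Delta(\phi(m),\phi(l))=\phi(\Delta(m\join l,l))$. Without this step your proof establishes only that the square commutes, not the surjectivity on which the subsequent corollary depends.
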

\begin{proof}
	We first note that $\rsf F\Cap\mathcal L$ is an implication algebra 
	as $\phi[\mathcal L]$ and $\rsf F$ are implication subalgebras of 
	$\rsf I(\rsf F)$. 
	\begin{enumerate}[{Claim }1:]
		\item If $l\in\mathcal L$ then $\phi(l)\in\rsf I(\rsf F\Cap\mathcal L)$.
		
		$\mathcal L$ is upwards closed in $\env(\mathcal L)$ and so 
		$\phi[\mathcal L]$ is upwards closed in $\rsf I(\rsf F)$. Thus 
		$\rsf F\Cap\mathcal L$ is an upper segment of $\rsf F$. 
		
		Let $l\in\mathcal L$. Then $\phi(l)\in\rsf I(\rsf F)$ and so there 
		is some $l'\in\rsf F\Cap\mathcal L$ so that $l'\sim\phi(l)$. Then 
		$\phi(l)\join l'\in\rsf F\Cap\mathcal L$ and so
		$\phi(l)= \Delta(\phi(l)\join l', l')\in\rsf I(\rsf F\Cap\mathcal L)$. 
	
		\item $\phi\restrict L$ is onto $\rsf I(\rsf F\Cap\mathcal L)$. 
		
		If $x\in\rsf I(\rsf F\Cap\mathcal L)$ then we can find some 
		$x'\in\rsf F\Cap\mathcal L$ so that $x\sim x'$. By definition 
		$x'=\phi(l)$ for some $l\in\mathcal L$ and as $x\join\phi(l)\in\rsf 
		F\Cap\mathcal L$ there is also some $m\in\mathcal L$ with $\phi(m)= 
		x\join\phi(l)$. Now we have 
		$x= \Delta(x\join\phi(l), \phi(l))= \Delta(\phi(m), \phi(l))= 
		\phi(\Delta(m, l))$ is in the range of $\phi\restrict\mathcal L$. 
	\end{enumerate}
\end{proof}

\begin{cor}\label{cor:filter}
	If $\env(\mathcal L)$ is isomorphic to a filter algebra then $\mathcal 
	L$ has a g-cover. 
\end{cor}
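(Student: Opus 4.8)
The plan is to read this off directly from \thmref{thm:envGCoversIII} together with \thmref{thm:existsGCovers}, with essentially no extra work. Suppose $\env(\mathcal L)$ is isomorphic to a filter algebra; I would write that filter algebra as $\rsf I(\rsf F)$ and fix an isomorphism $\phi\colon\env(\mathcal L)\to\rsf I(\rsf F)$. Since $\phi$ is onto, its range is all of $\rsf I(\rsf F)$, hence is (trivially) an upwards-closed subalgebra, so $\phi$ meets the hypotheses of \thmref{thm:envGCoversIII}.

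Applying that theorem, $\phi$ restricts to a cubic homomorphism $\phi\restrict\mathcal L\colon\mathcal L\to\rsf I(\rsf F\Cap\mathcal L)$, where $\rsf F\Cap\mathcal L$ is the implication algebra exhibited there. Claims~1 and~2 in the proof of \thmref{thm:envGCoversIII} say precisely that this restriction is well-defined and onto $\rsf I(\rsf F\Cap\mathcal L)$; being a restriction of the injective map $\phi$, it is also one-one. Hence $\phi\restrict\mathcal L$ is a cubic isomorphism of $\mathcal L$ onto $\rsf I(\mathcal I)$ with $\mathcal I=\rsf F\Cap\mathcal L$ an implication algebra. By \thmref{thm:existsGCovers}, $\mathcal L$ then has a g-cover, which is the desired conclusion.

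I do not anticipate a real obstacle here; the argument is pure bookkeeping. The only points worth stating explicitly are: (i) that an isomorphism onto a filter algebra automatically has an upwards-closed range, so \thmref{thm:envGCoversIII} genuinely applies; (ii) that one-one-ness of $\phi\restrict\mathcal L$ is inherited from $\phi$; and (iii) that $\rsf F\Cap\mathcal L$ is an implication algebra — already verified at the start of the proof of \thmref{thm:envGCoversIII} as the intersection of two implication subalgebras of $\rsf I(\rsf F)$ — so that $\rsf I(\rsf F\Cap\mathcal L)$ is defined and \thmref{thm:existsGCovers} may be invoked. No new computation is required.
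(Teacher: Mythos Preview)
Your proposal is correct and follows essentially the same route as the paper's own proof: fix an isomorphism $\phi$, note its range is trivially upwards-closed so \thmref{thm:envGCoversIII} applies, observe that $\phi\restrict\mathcal L$ is one-one (as a restriction of an injection) and onto $\rsf I(\rsf F\Cap\mathcal L)$ (by the theorem), and conclude via \thmref{thm:existsGCovers}. The paper phrases the last step as ``$\rsf I(\rsf F\Cap\mathcal L)$ has a g-cover, so $\mathcal L$ does too,'' but this is the same content.
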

\begin{proof}
	Let $\phi\colon\env(\mathcal L)\to\rsf I(\rsf F)$ be the isomorphism. 
	Then $\phi\restrict\mathcal L$ is also an isomorphism -- it is 
	one-one as it is the restriction of a one-one function, and onto by 
	the theorem. Since $\rsf I(\rsf F\Cap\mathcal L)$ has a g-cover, so 
	does $\mathcal L$. 
\end{proof}

The above results show that g-covers go down to certain subalgebras. 
Now we look at making them go up. 

\begin{thm}\label{thm:envGCoverII}
	Let $\mathcal L$ be a cubic algebra and suppose that $J$ is a 
	g-cover for $\mathcal L$. Then $J$ has fip in $\env(\mathcal L)$ and 
	the filter it generates is a  g-filter. 
\end{thm}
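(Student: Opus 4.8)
The plan is to work inside the MR-algebra $\env(\mathcal L)$, using the transfer theorem and the fact that $\mathcal L$ is an upwards-closed subalgebra, so that finitely many elements of $J$ together with the relevant meets live in a finitely generated subalgebra isomorphic to an interval algebra $\rsf I(B)$. First I would establish the finite intersection property: given $x_1,\dots,x_n\in J$, I want to show $x_1\meet\dots\meet x_n$ exists in $\env(\mathcal L)$. By \thmref{thm:MRgCover} (or rather the remark following it, \remref{rem:uSub}) pairwise meets of elements of a g-cover exist and lie back in the cover; but here $J$ need not be a g-cover of $\env(\mathcal L)$, only of $\mathcal L$, so the argument must be run in the envelope. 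The key observation is that $\eta_{\env(\mathcal L)}$ restricted to $\mathcal L$ agrees with $\eta_{\mathcal L}$ (this is exactly \thmref{thm:transfer}), so $j\colon J\to\rsf C(\env(\mathcal L))$ is still an isomorphism onto an upwards-closed implication subalgebra. Since $\env(\mathcal L)$ is an MR-algebra, $\rsf C(\env(\mathcal L))$ is an implication \emph{lattice}, so $[x_1]\meet\dots\meet[x_n]$ exists there; pulling back through the isomorphism $j$ gives an element $w\in J$ with $[w]=[x_1]\meet\dots\meet[x_n]$, i.e. $w\sim$ the meet of the $x_i$ in $\rsf C$. Running the MR-axiom argument from the proof of \thmref{thm:MRgCover} (now legitimately, as we are in an MR-algebra) shows that $x_1\meet\dots\meet x_n$ exists in $\env(\mathcal L)$ and in fact equals $w\in J$.

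Having fip, $J$ generates a filter $\rsf G$ in $\env(\mathcal L)$; I claim $\rsf G$ is a g-filter, i.e. $\rsf G$ is an upwards-closed meet-closed implication subalgebra and $j'\colon\rsf G\rInto\env(\mathcal L)\rTo^{\eta}\rsf C(\env(\mathcal L))$ is an isomorphism. Upward closure is by construction of the generated filter; meet-closure likewise. For the implication-subalgebra part: $\rsf G$ consists of elements $w$ lying above some finite meet of members of $J$, and by the previous paragraph every such finite meet is already \emph{in} $J$, so $\rsf G=\Set{w\in\env(\mathcal L) | w\geq x\text{ for some }x\in J}$ is just the upward closure of $J$. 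Since $J$ is an upwards-closed implication subalgebra of $\mathcal L$ and $\mathcal L$ is upwards closed in $\env(\mathcal L)$... no — $\rsf G$ may be strictly larger than $J$ because there are more elements above a given $x\in J$ in $\env(\mathcal L)$ than in $\mathcal L$. So I would instead argue directly: for $w_1,w_2\in\rsf G$ pick $x_i\in J$ with $x_i\le w_i$; then $x:=x_1\meet x_2\in J$, and $w_1\Rightarrow w_2\geq x\Rightarrow x=x\in J$ using monotonicity of $\Rightarrow$ (available since we are in an interval algebra), so $w_1\Rightarrow w_2\in\rsf G$; similarly for $*$ and for caret.

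Finally, for $j'$ to be an isomorphism: it is onto because $J\subseteq\rsf G$ and $j=j'\restrict J$ is already onto $\rsf C(\env(\mathcal L))$; it is one-one because if $w_1,w_2\in\rsf G$ with $w_1\sim w_2$, choosing $x\in J$ below $x_1\meet x_2$ as above, one shows using \lemref{lem:AlphaBeta}-style uniqueness (applied to the g-cover $J$ of $\mathcal L$, lifted via transfer) that $w_1=w_2$; concretely, $w_i\geq x$ and $w_i\sim x_i$, and within the interval $[x,\one]$ — a Boolean algebra in which $\sim$ is equality on elements above a common lower bound by the variant of \lemref{lem:simeq} noted after it — $w_1\sim w_2$ forces $w_1=w_2$. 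The main obstacle I anticipate is precisely the bookkeeping of which algebra each $\sim$-statement is asserted in: one must invoke the transfer theorem carefully to move between $\mathcal L$ and $\env(\mathcal L)$, and one must be sure that the g-cover isomorphism $j$ for $\mathcal L$ genuinely upgrades to a g-filter isomorphism $j'$ for the \emph{enlarged} set $\rsf G$, which is what the fip computation in the first paragraph is designed to guarantee.
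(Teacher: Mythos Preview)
Your proposal has a genuine gap, and it appears in two places that stem from the same underlying confusion: you are conflating $\rsf C(\mathcal L)$ with $\rsf C(\env(\mathcal L))$.

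In the fip argument you correctly note that $j\colon J\to\rsf C(\env(\mathcal L))$ is an isomorphism \emph{onto an upwards-closed implication subalgebra} --- namely onto the copy of $\rsf C(\mathcal L)$ sitting inside $\rsf C(\env(\mathcal L))$. But an upwards-closed implication subalgebra need not be meet-closed, so when you form $[x_{1}]\meet\dots\meet[x_{n}]$ in the implication lattice $\rsf C(\env(\mathcal L))$ there is no reason for that meet to lie in the range of $j$, and hence no way to ``pull back'' to an element $w\in J$. Concretely, take $\mathcal L=\rsf I(\mathcal I)$ for a non-lattice implication algebra $\mathcal I$: then $\rsf C(\mathcal L)\cong\mathcal I$ while $\rsf C(\env(\mathcal L))\cong\env(\mathcal I)$ is a strictly larger lattice, and meets of elements of $\mathcal I$ taken in $\env(\mathcal I)$ typically fall outside $\mathcal I$. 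Your appeal to the MR-axiom argument of \thmref{thm:MRgCover} does not rescue this: that argument is intrinsically pairwise (it uses $j(x\join y)=j(x)\join j(y)$, which holds because $x,y\in J$ and $j$ is an implication morphism on $J$), and to iterate it you would need $x_{1}\meet x_{2}\in J$, which is exactly the unjustified step. The paper instead runs only the pairwise argument to get $x\meet y$ existing in $\env(\mathcal L)$, and then invokes the external fact (\cite{BO:fil}, Lemma~19; cf.\ also \lemref{lem:fipPreFilter} here) that pairwise meets suffice for fip.

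The same confusion resurfaces fatally in your onto argument for $j'$: you write that ``$j=j'\restrict J$ is already onto $\rsf C(\env(\mathcal L))$'', but you had just (correctly) said it is onto only an upwards-closed \emph{sub}algebra. Since $\rsf C(\mathcal L)\subsetneq\rsf C(\env(\mathcal L))$ in general, this does not give surjectivity of $j'$. The paper handles this with a different idea you are missing: every $z\in\env(\mathcal L)$ can be written as an iterated caret $x_{1}\caret(x_{2}\caret(\dots\caret x_{k}))$ of elements $x_{i}\in\mathcal L$ (because $\mathcal L$ generates its envelope); choosing $y_{i}\in J$ with $y_{i}\sim x_{i}$ yields $y_{1}\meet\dots\meet y_{k}\in\rsf F$ with $y_{1}\meet\dots\meet y_{k}\preccurlyeq z$, whence $z\in\leftGen\rsf F\rightGen$. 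Your injectivity argument for $j'$, by contrast, is essentially correct and is the easy part.
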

\begin{proof}
	This is very like the proof to \thmref{thm:MRgCover}. 
	Let $x, y\in J$. Then we have $j(x\join y)= 
	j(x)\join j(y)= \eta(x)\join\eta(y)= \eta(x*y)$ so that
	$x\join y\sim x*y= x\join\Delta(x\join y, y)$. As 
	$(x\join y)\meet (x*y)$ exists this implies 
	$x\join y= x*y= x\join\Delta(x\join y, y)$. Thus in $\env(\mathcal 
	L)$ the meet $x\meet y$ exists. By earlier work 
	(\cite{BO:fil}, 
	Lemma 19) this implies $J$ has 
	fip in $\env(\mathcal L)$. 
	
	Let $\rsf F$ be the filter generated by $J$. 
	
	Now if $z\in\env(\mathcal L)$ we have $x_{1}, \dots, 
	x_{k}\in\mathcal L$ such that 
	$x_{1}\caret(x_{2}\caret(\dots \caret x_{k}))=z$. Let $y_{i}\in J$ be 
	such that $x_{i}\sim y_{i}$. Then 
	$y_{1}\meet\dots\meet y_{k}\preccurlyeq z$ and so $z\in\leftGen\rsf 
	F\rightGen$. 
\end{proof}

\begin{cor}\label{cor:downGC}
	Let $\mathcal L$ be an upwards-closed cubic subalgebra of a cubic algebra $\mathcal M$ 
	with g-cover $J$. Then $\mathcal L$ has a g-cover. 
\end{cor}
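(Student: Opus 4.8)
The plan is to prove directly that $J\cap\mathcal L$ is a g-cover for $\mathcal L$; no detour through the envelope is needed. The heart of the matter is the observation that the two elements of $J$ that witness the $\Delta$-decomposition of an element of $\mathcal L$ already lie inside $\mathcal L$.

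I would start from an arbitrary $x\in\mathcal L$. Since $J$ is a g-cover for $\mathcal M$, the map $\eta_{\mathcal M}\restrict J$ is onto $\mathcal M/\sim$, so there is a (by \lemref{lem:simEq} unique) $\beta\in J$ with $\beta\sim x$; by the definition of $\sim$ this says $\Delta(\beta\join x,\beta)=x$, so with $\alpha=\beta\join x$ we have $\beta\le\alpha$ and $\Delta(\alpha,\beta)=x$ (this is \lemref{lem:AlphaBeta} made explicit). Now comes the key step: $\alpha=\beta\join x\ge x$, so $\alpha\in\mathcal L$ because $\mathcal L$ is upwards closed (and $\alpha\ge\beta$ gives $\alpha\in J$ likewise); then axiom (c) of the definition of a cubic algebra yields $\beta=\Delta(\alpha,\Delta(\alpha,\beta))=\Delta(\alpha,x)$, which lies in $\mathcal L$ since $x\le\alpha$, both are in $\mathcal L$, and $\mathcal L$ is closed under $\Delta$. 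Hence $\alpha,\beta\in J\cap\mathcal L$.

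With that in hand the rest is assembly. I would first note that $J\cap\mathcal L$ is an upwards-closed implication subalgebra of $\mathcal L$, being the intersection of the two upwards-closed implication subalgebras $J$ and $\mathcal L$ of $\mathcal M$ (and the implication operation on $\mathcal L$ is the restriction of the one on $\mathcal M$). Next I would check that $\eta_{\mathcal L}\restrict(J\cap\mathcal L)$ is a bijection onto $\mathcal L/\sim$: injectivity because $a\sim b$ with $a,b\in J\cap\mathcal L\subseteq J$ forces $a=b$ by \lemref{lem:simEq}, and surjectivity because the $\beta$ built above for each $x\in\mathcal L$ lies in $J\cap\mathcal L$ with $\beta\sim x$. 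Finally --- to upgrade ``bijection'' to ``isomorphism'' --- I would argue not with $\eta$ itself (which is not a homomorphism on all of $\mathcal L$) but by restricting the known implication isomorphism $\eta_{\mathcal M}\restrict J\colon J\to\mathcal M/\sim$ to the subalgebra $J\cap\mathcal L$: that restriction is an implication isomorphism onto $\eta_{\mathcal M}[J\cap\mathcal L]=\eta_{\mathcal M}[\mathcal L]$, and $\eta_{\mathcal M}[\mathcal L]$ is the image of the injective implication morphism $\rsf C(\mathrm{incl})\colon\mathcal L/\sim\;\to\;\mathcal M/\sim$ (injective because $\sim$ on $\mathcal L$ is literally the restriction of $\sim$ on $\mathcal M$), so under that identification the restriction \emph{is} $\eta_{\mathcal L}\restrict(J\cap\mathcal L)$. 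This makes $J\cap\mathcal L$ a g-cover for $\mathcal L$.

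I expect the only step with any real content to be the identity $\beta=\Delta(\alpha,x)$ forcing $\beta\in\mathcal L$: it is one line, but it is the single place where both hypotheses on $\mathcal L$ get used, and it is what fails for subalgebras that are not upwards closed. The second thing to get right is the ``bijection versus isomorphism'' point above, which is why I would phrase everything in terms of restricting $\eta_{\mathcal M}\restrict J$. (One could instead go via the envelope --- by \thmref{thm:envGCoverII}, $\env(\mathcal M)$ is a filter algebra and $\mathcal L$ is upwards closed in it, so one would like $\env(\mathcal L)$ to be a filter algebra and then quote \corref{cor:filter} --- but proving $\env(\mathcal L)$ is a filter algebra comes back to intersecting the g-filter of $\env(\mathcal M)$ with $\env(\mathcal L)$, i.e. to the same idea; the direct argument is shorter.)
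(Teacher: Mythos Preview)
Your proof is correct and takes a genuinely different route from the paper's. The paper argues via the envelope: it extends $J$ to a g-filter $\hat J$ for $\env(\mathcal M)$ using \thmref{thm:envGCoverII}, obtains the isomorphism $\env(\mathcal M)\simeq\rsf I(\hat J)$, and then invokes (the idea of) \thmref{thm:envGCoversIII} along the chain $\mathcal L\hookrightarrow\env(\mathcal L)\hookrightarrow\env(\mathcal M)$ to conclude that $\phi\restrict\mathcal L$ is an isomorphism onto $\rsf I(\hat J\Cap\mathcal L)$, whence $\mathcal L$ has a g-cover by \thmref{thm:existsGCovers}. Your argument instead shows directly that $J\cap\mathcal L$ is a g-cover, the key point being that the witnesses $\alpha,\beta\in J$ from \lemref{lem:AlphaBeta} for any $x\in\mathcal L$ already lie in $\mathcal L$ (via $\alpha\ge x$ and $\beta=\Delta(\alpha,x)$). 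This is exactly what the paper alludes to in the remark immediately following the corollary, where it states without proof that $\mathcal L\cap J$ is a g-cover for $\mathcal L$ ``by a slightly different argument.'' Your approach is more elementary and self-contained --- it avoids the envelope machinery and the somewhat delicate question of how $\env(\mathcal L)$ sits inside $\env(\mathcal M)$ --- while the paper's approach has the virtue of exercising the $\env$/$\rsf I$ apparatus it has just developed. Your care in distinguishing the bijection from the implication isomorphism (by factoring through the injective $\rsf C(\text{incl})$) is warranted and correctly handled.
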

\begin{proof}
	Let $J$ be as given and let $\hat J$ be the extension to a g-filter 
	for $\env(\mathcal M)$. Then we have 
	\begin{diagram}
		\mathcal L &  &  & \rDotsto^{\phi\restrict\mathcal L} &  &  & \rsf 
		I(\hat J\Cap\mathcal L)  \\
		 & \rdInto &  &  &  &  &   \\
		\dInto &  & \mathcal M & \rTo^{\sim} & \rsf I(J) &  & \dTo  \\
		 &  & \dInto &  &  & \rdTo &   \\
		\env(\mathcal L) & \rInto & \env(\mathcal M) &  & \rTo_{\sim} &  & \rsf 
		I(\hat J)
	\end{diagram}
	Since $\phi\restrict\mathcal L$ is one-one and onto we have the result. 
%
\end{proof}

\begin{rem}\label{rem:gCoverDown}
	By a slightly different argument we can show that if
	$\mathcal L$ is an upwards-closed cubic subalgebra of a cubic algebra $\mathcal M$ 
	with g-cover $J$, then $\mathcal L\cap J$ is a g-cover for $\mathcal L$.
\end{rem}

\begin{defn}\label{def:ccPres}
	A cubic algebra $\mathcal L$ is \emph{countable presented} iff there 
	is a countable set $A\subseteq\mathcal L$ such that 
	$\mathcal L=\bigcup_{a\in A}\mathcal L_{a}$. 
\end{defn}

It is easy to show that if $\mathcal L$ is countably presented, then 
so is $\env(\mathcal L)$. 
It then follows from the fact that every countably presented MR-algebra is 
a filter algebra that every countably presented cubic algebra has  
a g-cover. 

Another interesting consequence for implication algebras is 
\begin{thm}\label{thm:inplEmbed}
	Let $\mathcal I$ be an implication algebra. Then $\mathcal I$ is 
	isomorphic to an upper segment of a filter. 
\end{thm}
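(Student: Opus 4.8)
The plan is to package the results of the previous two sections. The key observation is that $\rsf C(\mathcal I)$ is naturally an implication algebra (in fact, by \thmref{thm:isoIota}, $\mathcal I \cong \rsf C(\rsf I(\mathcal I))$, but that is not quite what I need here). What I really want is: given an arbitrary implication algebra $\mathcal I$, first embed $\mathcal I$ into the cubic algebra $\rsf I(\mathcal I)$ via $e_{\mathcal I}(a) = \brk<\one, a>$, then pass to the envelope $\env(\rsf I(\mathcal I))$, and finally use that this envelope is (or sits inside) a filter algebra to extract a filter $\rsf F$ such that $\mathcal I$ embeds as an upper segment.

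First I would recall that $e_{\mathcal I}\colon\mathcal I \to \rsf I(\mathcal I)$ is an implication embedding and — crucially — that $e_{\mathcal I}[\mathcal I]$ is a g-cover of $\rsf I(\mathcal I)$ (noted right after \defref{def:gCover}), hence in particular an \emph{upwards-closed} implication subalgebra. Next, by \thmref{thm:envGCoverII}, this g-cover $J = e_{\mathcal I}[\mathcal I]$ has the finite intersection property in $\env(\rsf I(\mathcal I))$ and the filter $\rsf F$ it generates there is a g-filter; moreover $J$, being a g-cover, is meet-closed inside $\env(\rsf I(\mathcal I))$ wherever meets exist (\remref{rem:uSub} / \remref{rem:gCoverDown}). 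So $J$ is an upper segment of the filter $\rsf F$: it is upward closed in $\rsf I(\mathcal I)$, which is itself upward closed in $\env(\rsf I(\mathcal I))$ (\thmref{thm:envAlg}(b)), hence $J$ is upward closed in $\env(\rsf I(\mathcal I))$ and a fortiori an upper segment of $\rsf F$. Chasing the isomorphisms, $\mathcal I \cong J$ as implication algebras, and $J$ is an upper segment of the filter $\rsf F$ of the MR-algebra $\env(\rsf I(\mathcal I))$.

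The one point that needs a little care — and which I expect to be the main obstacle — is the precise meaning of ``upper segment of a filter'' and making sure the embedding $\mathcal I \cong J \hookrightarrow \rsf F$ identifies $\mathcal I$ with an upper segment \emph{of the implication algebra $\rsf F$}, not merely of the ambient cubic algebra. Here one uses that $\rsf F$, being a g-filter, is a meet-closed upwards-closed implication subalgebra, so its induced implication structure is the restriction of that of $\env(\rsf I(\mathcal I))$; since $J \subseteq \rsf F$ is upward closed in the ambient algebra it is upward closed in $\rsf F$, and the implication operation of $\rsf F$ restricted to $J$ agrees with that of $\mathcal I$ under the isomorphism. Thus $\mathcal I$ is isomorphic to an upper segment of the filter $\rsf F$, completing the proof. \qed
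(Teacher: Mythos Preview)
Your proof is correct but follows a different route from the paper's. The paper composes $e_{\mathcal I}$, the inclusion $\rsf I(\mathcal I)\hookrightarrow\env(\rsf I(\mathcal I))$, and the collapse $\eta$, obtaining $p\colon\mathcal I\to\env(\rsf I(\mathcal I))/\sim$; it then checks directly that $p$ is an injective implication morphism with upwards-closed range, the target implication lattice $\env(\rsf I(\mathcal I))/\sim$ serving as the filter. You instead stay inside $\env(\rsf I(\mathcal I))$: the image $J=e_{\mathcal I}[\mathcal I]$ is a g-cover of $\rsf I(\mathcal I)$, and \thmref{thm:envGCoverII} extends it to a g-filter $\rsf F$ in the envelope, with $J$ sitting as an upper segment by the chain of upward-closedness you describe. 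The two target filters are canonically isomorphic, since the collapse restricted to the g-filter $\rsf F$ is by definition an isomorphism onto $\env(\rsf I(\mathcal I))/\sim$, so the constructions agree. The paper's argument is shorter --- a single explicit injectivity computation with $\sim$ on pairs $\brk<\one,x>$ --- while yours leans on the g-cover machinery already developed and has the modest advantage of exhibiting the filter concretely inside the envelope rather than as a quotient. One small remark: your aside about $J$ being ``meet-closed inside $\env(\rsf I(\mathcal I))$ wherever meets exist'' is not needed for the argument and would require a separate justification; what you actually use, and correctly establish, is that $J$ is upwards closed in $\rsf F$.
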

\begin{proof}
	Consider 
	$$
	p\colon\mathcal I\rTo^{e_{\mathcal I}}\rsf I(\mathcal 
	I)\rInto\env(\rsf I(\mathcal I))\rTo^{\eta}\env(\rsf I(\mathcal 
	I))/\sim. 
	$$
	Then $p$ is an implication morphism as each component is one, and it 
	is easy to see that the range of $p$ is upwards closed. 
	We want to see that $p$ is one-one:
	\begin{align*}
		p(x)=p(y) & \rightarrow \eta(\text{incl}(e_{\mathcal I}(x)))= 
		\eta(\text{incl}(e_{\mathcal I}(y)))\\
		 &\rightarrow \text{incl}(e_{\mathcal I}(x))\sim \text{incl}(e_{\mathcal 
		 I}(y))   \\
		 & \rightarrow e_{\mathcal I}(x)\sim e_{\mathcal I}(y). 
	\end{align*}
	This implies $x=y$ since 
	if $e_{\mathcal I}(x)=\brk<1, x>\sim e_{\mathcal I}(y)=\brk<1, y>$ 
	then 
	$\brk<1, x>= \Delta(\brk<1, x\join y>, \brk<1, y>)= \brk<(x\join 
	y)\to y, x\join y>$
	and so $x\join y=x$ (and therefore $y\le x$) and $1=(x\join 
	y)\to y$ (and therefore $x\join y\le y$ i.e. $x\le y$). Thus $x=y$. 
\end{proof}

The filter obtained by this theorem sits over $\mathcal I$ in a way 
similar to the way $\env(\mathcal L)$ sits over $\mathcal L$. For that 
reason we will also call this an \emph{enveloping lattice} for an 
implication algebra and denote it by $\env(\mathcal I)$. The next 
theorem is clear. 

\begin{thm}\label{thm:envImpl}
	Let $\mathcal L$ be a cubic algebra with g-cover $J$. Then 
	$\env(J)$ is isomorphic to $\env(\mathcal L)/\sim$ and the 
	following diagram commutes:
	$$
	\begin{diagram}
		\mathcal L & & \rTo^{\sim} &  &  \rsf I(J)  \\
		\dInto &  & & &\dTo    \\
		\env(\mathcal L) & \rTo_{\sim} & \rsf I(\env(\mathcal L)/\sim)& \rTo_{\rsf I(\sim)} & \rsf I(\env(J)) 
	\end{diagram}
	$$
\end{thm}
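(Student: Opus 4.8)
\textbf{Proof proposal for Theorem~\ref{thm:envImpl}.}

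The plan is to assemble the diagram from pieces that are already available, then verify commutativity by chasing a single generic element through both paths. First I would identify the isomorphism $\env(\mathcal{L})\xrightarrow{\sim}\rsf I(\env(\mathcal{L})/\sim)$: since $J$ is a g-cover for $\mathcal L$, by \thmref{thm:envGCoverII} the filter $\rsf F=\leftGen J\rightGen$ generated by $J$ in $\env(\mathcal{L})$ is a g-filter, so $\env(\mathcal{L})$ has a g-cover and hence by \thmref{thm:existsGCovers} is isomorphic to $\rsf I(\rsf F)$; moreover $\rsf F\cong\env(\mathcal L)/\sim$ because $\eta$ restricted to any g-cover is an isomorphism onto the collapse (\defref{def:gCover}). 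This gives the bottom-left isomorphism. Next I would note that the inclusion $J\rInto\rsf F$ is an upwards-closed implication embedding whose image meets each $\sim$-class (indeed $J$ is a g-cover sitting inside the g-cover $\rsf F$, so by \remref{rem:gCoverDown} or directly $J=\rsf F\cap J$), and applying \thmref{thm:inplEmbed}'s construction — or simply the definition of $\env$ on implication algebras — shows $\env(J)$ is (isomorphic to) the collapse $\rsf F/\sim$. But $\rsf F\cong\env(\mathcal L)/\sim$ as implication algebras, and collapsing is idempotent-like on such g-filters, so $\env(J)\cong\env(\mathcal L)/\sim$; this is the first assertion.

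For the commuting square, I would track $x\in\mathcal L$. Along the top-then-right path, $x\mapsto\phi(x)=\brk<\alpha(x),\alpha(x)\to\beta(x)>\in\rsf I(J)$ where $\alpha(x),\beta(x)$ are the unique pair in $J$ from \lemref{lem:AlphaBeta}, and then the vertical map $\rsf I(J)\to\rsf I(\env(J))$ is $\rsf I$ applied to the natural embedding $J\rInto\env(J)\cong\env(\mathcal L)/\sim$. Along the left-then-bottom path, $x$ is sent into $\env(\mathcal{L})$ by inclusion, then to $\rsf I(\env(\mathcal L)/\sim)$ by the first isomorphism (which on an element $z\in\env(\mathcal L)$ returns $\brk<\eta(\alpha'(z)),\eta(\alpha'(z))\to\eta(\beta'(z))>$ for the g-filter pair $\alpha'(z),\beta'(z)\in\rsf F$), then across by $\rsf I(\eta\restrict\rsf F)$. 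Since $x\in\mathcal L$ already has its $\mathcal L$-pair $(\alpha(x),\beta(x))$ lying in $J\subseteq\rsf F$, uniqueness forces $\alpha'(x)=\alpha(x)$ and $\beta'(x)=\beta(x)$; and $\eta\restrict\rsf F$ composed with the identification $\rsf F\cong\env(\mathcal L)/\sim$ is exactly the map that sends $\alpha(x)\in J$ to its image in $\env(J)$. So both composites produce $\brk<[\alpha(x)],[\alpha(x)]\to[\beta(x)]>$, i.e. the square commutes. The $\rsf I$-functoriality (noted before \lemref{lem:simHom}) guarantees the bottom arrows are genuine cubic morphisms, so no extra checking of operations is needed.

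The main obstacle I anticipate is pinning down the identification $\rsf F\cong\env(\mathcal L)/\sim$ cleanly and checking it is compatible with the embedding $J\rInto\rsf F$ at the level needed — i.e. verifying that the ``collapse of the g-filter $\rsf F$'' is naturally the same implication algebra as ``$\env$ of the implication algebra $J=\rsf C(\mathcal L)$''. This is essentially bookkeeping with the functors $\rsf I$, $\rsf C$ and the natural transformations $e,\eta,\iota,\kappa$ from the previous section — in particular $\iota$ being an isomorphism (\thmref{thm:isoIota}) says $\rsf C\rsf I(\mathcal I)\cong\mathcal I$, which is what collapses the redundancy — but one must be careful that the upwards-closedness hypotheses line up so that \thmref{thm:envGCoversIII} and \corref{cor:downGC} apply to $\rsf F$ and $J$ as stated. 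Once those identifications are fixed, the element-chase above is routine.
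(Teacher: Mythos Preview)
The paper offers no proof of this theorem at all --- it simply declares ``the next theorem is clear'' immediately after defining $\env(\mathcal I)$ as the filter $\env(\rsf I(\mathcal I))/\sim$ produced by \thmref{thm:inplEmbed}. Your proposal is correct and supplies the details the paper omits; the pieces you invoke (\thmref{thm:envGCoverII}, \thmref{thm:existsGCovers}, the $\alpha,\beta$ description of the isomorphism $\mathcal L\cong\rsf I(J)$) are exactly the right ones, and the element-chase for commutativity is sound.

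One remark: your route to the first assertion is more circuitous than necessary, and the phrase ``$\rsf F/\sim$'' is ill-formed (the relation $\sim$ is trivial on a g-filter, so this is just $\rsf F$). The direct argument the paper presumably has in mind is: by definition $\env(J)=\env(\rsf I(J))/\sim$; since $J$ is a g-cover, \thmref{thm:existsGCovers} gives $\mathcal L\cong\rsf I(J)$; applying $\env(-)/\sim$ to this isomorphism yields $\env(\mathcal L)/\sim\cong\env(\rsf I(J))/\sim=\env(J)$. You do not need to pass through the g-filter $\rsf F$ at all for this part, though your identification $\rsf F\cong\env(\mathcal L)/\sim$ via $\eta\restrict\rsf F$ is correct and is what makes the bottom row of the diagram work.
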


Now we consider the last step in the puzzle -- the relationship 
between $\mathcal L$ and $\rsf I(\mathcal L/\sim)$. Clearly they 
collapse to the same implication algebra. From \corref{cor:downGC} we 
know that if $\mathcal L$ has no g-cover then we cannot embed $\mathcal 
L$ as an upwards-closed subalgebra of $\rsf I(\mathcal L/\sim)$.

Embedding it as a subalgebra seems possible but we have no idea how 
to do it.

%
%

\section{An algebra of covers}
In this section we consider the family of all g-covers of a cubic algebra and deduce an interesting 
MR-algebra. This section is very like similar material on filters --
see \cites{BO:fil, BO:filAlg}. Therein we showed the following results on
finite intersection property.

\begin{lem}\label{lem:fipPreFilter}
	Let $\rsf I(B)$ be an interval algebra and $A\subseteq\rsf I(B)$. 
	Then $A$ has fip iff for all $x, y\in A$ $x\meet y$ exists. 
\end{lem}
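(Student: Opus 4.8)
The plan is to prove both directions of the equivalence in \lemref{lem:fipPreFilter} by reducing everything to computations with complements in the underlying Boolean algebra $B$. Recall that an interval $x\in\rsf I(B)$ is a pair $[x_0,x_1]$ with $x_0\le x_1$, and that the join is $[x_0,x_1]\join[y_0,y_1]=[x_0\meet y_0,x_1\join y_1]$. The finite intersection property for $A\subseteq\rsf I(B)$ should mean that every finite subset of $A$ has a lower bound in $\rsf I(B)$; equivalently, for $x^{(1)},\dots,x^{(n)}\in A$ the pair $\bigl[\bigl(x^{(1)}_0\join\cdots\join x^{(n)}_0\bigr),\bigl(x^{(1)}_1\meet\cdots\meet x^{(n)}_1\bigr)\bigr]$ is a genuine interval, i.e.\ $x^{(1)}_0\join\cdots\join x^{(n)}_0\le x^{(1)}_1\meet\cdots\meet x^{(n)}_1$. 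The forward direction is then trivial: fip for all of $A$ in particular gives fip for every two-element subset $\{x,y\}$, and two-element fip is exactly the statement that $x_0\join y_0\le x_1\meet y_1$, which is precisely the condition that $x\meet y=[x_0\join y_0,\,x_1\meet y_1]$ exists in $\rsf I(B)$.

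The substantive direction is the converse: assuming pairwise meets exist, I would show that every finite family has a lower bound. The key observation is that ``$x_0\join y_0\le x_1\meet y_1$'' is equivalent to the four inequalities $x_0\le x_1$, $y_0\le y_1$, $x_0\le y_1$, $y_0\le x_1$; the first two are automatic and the content is $x_0\le y_1$ and $y_0\le x_1$, i.e.\ $x_0\meet\comp{y_1}=0=y_0\meet\comp{x_1}$. So pairwise meets existing says: for every pair $i\ne j$, $x^{(i)}_0\meet\comp{x^{(j)}_1}=0$. Now for a finite family I must check $\bigl(\bigvee_i x^{(i)}_0\bigr)\meet\bigl(\bigvee_j\comp{x^{(j)}_1}\bigr)=0$ — wait, more carefully, I need $\bigvee_i x^{(i)}_0\le\bigwedge_j x^{(j)}_1$, equivalently $\bigl(\bigvee_i x^{(i)}_0\bigr)\meet\comp{x^{(j)}_1}=0$ for each fixed $j$, equivalently $x^{(i)}_0\meet\comp{x^{(j)}_1}=0$ for all $i,j$. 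For $i\ne j$ this is the pairwise hypothesis, and for $i=j$ it is $x^{(i)}_0\meet\comp{x^{(i)}_1}=0$, which holds since $x^{(i)}_0\le x^{(i)}_1$. Distributing the meet over the join (Boolean algebra) then gives the finite case, so the candidate interval is legitimate and is a lower bound for the family.

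I would structure the writeup as: first pin down the meaning of fip and record that $x\meet y$ exists in $\rsf I(B)$ iff $x_0\join y_0\le x_1\meet y_1$; then state the reduction of this inequality to the pointwise conditions $x_0\meet\comp{y_1}=0$ and $y_0\meet\comp{x_1}=0$; then do the forward implication in one line; then do the converse by the distributivity computation above, carefully including the diagonal terms $i=j$. The only place that needs genuine care — and the natural candidate for ``the main obstacle'' — is making sure the definition of fip being used (finite lower bounds exist) matches the one in the cited references \cites{BO:fil, BO:filAlg} and that the finite meet in $\rsf I(B)$, when it exists, really is the componentwise $[\bigvee x^{(i)}_0,\bigwedge x^{(i)}_1]$; once that is in hand the argument is a routine Boolean manipulation with no hidden subtlety. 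Since the lemma is quoted from prior work, the intended proof is surely just a pointer, but the above is the self-contained version.
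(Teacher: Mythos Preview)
Your argument is correct. The paper itself gives no proof here --- the lemma is quoted from \cites{BO:fil, BO:filAlg} with only a pointer --- so there is nothing to compare against beyond noting that your self-contained reconstruction (reducing the existence of the finite meet $[\bigvee_i x^{(i)}_0,\bigwedge_j x^{(j)}_1]$ to the pairwise conditions $x^{(i)}_0\le x^{(j)}_1$ via distributivity in $B$) is exactly the expected argument, and you have already anticipated that the paper would just cite.
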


\begin{defn}\label{def:compatible}
	Let $\mathcal L$ be a cubic algebra and $A\subseteq\mathcal L$. 
	$A$ is \emph{compatible} iff for all embeddings $e\colon{\mathcal 
	L}\to{\rsf I(B)}$, the set $e[A]$
	has fip. 
\end{defn}

\begin{cor}\label{cor:preFilterFIP}
	Let $\mathcal L$ be a cubic algebra and $A\subseteq\mathcal L$. 
	Then $A$ is compatible iff for all $x, y\in A$\ $x\join\Delta(\one, y)=\one$. 
\end{cor}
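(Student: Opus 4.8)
The plan is to prove the corollary by reducing the "compatible" condition, which is universally quantified over all embeddings into interval algebras, to the concrete arithmetic condition $x\join\Delta(\one,y)=\one$, using \lemref{lem:fipPreFilter} together with the structure theory already in place (in particular that every finitely generated subalgebra of an MR-algebra is an interval algebra, and that $\env(\mathcal L)$ is such an MR-algebra with $\mathcal L$ sitting inside it as an upwards-closed subalgebra).

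First I would observe that the condition $x\join\Delta(\one,y)=\one$ is preserved by cubic homomorphisms: if $e$ is a cubic morphism then $e(x)\join e(\Delta(\one,y))=e(x\join\Delta(\one,y))=e(\one)=\one$, and $e(\Delta(\one,y))=\Delta(\one,e(y))$. So the condition transfers to any $e[A]$. Conversely, in an interval algebra $\rsf I(B)$ the condition "$x\meet y$ exists" should be equated with "$x\join\Delta(\one,y)=\one$"; this is a direct computation in $\rsf I(B)$ using the explicit formulas for $\join$ and $\Delta(\one,-)$, and is essentially the MR-axiom read off in the interval-algebra picture — two intervals $[x_0,x_1]$ and $[y_0,y_1]$ have a meet iff their join is not all of $[0,1]$... more precisely $x\join\Delta(\one,y)$ being all of $\one$ is exactly the negation of the "no meet" clause of the MR-axiom. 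I would record this as the key local lemma.

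With that in hand the argument runs as follows. For the forward direction, suppose $A$ is compatible. Pick any particular embedding $e\colon\mathcal L\to\rsf I(B)$ (one exists because $\mathcal L$ embeds in $\env(\mathcal L)$, an MR-algebra, hence into an interval algebra — or directly by the representation theory of \cite{BO:eq}). Then $e[A]$ has fip, so by \lemref{lem:fipPreFilter} every pair $e(x),e(y)$ has a meet, so by the local lemma $e(x)\join\Delta(\one,e(y))=\one$, i.e. $e(x\join\Delta(\one,y))=e(\one)$; since $e$ is one-one this forces $x\join\Delta(\one,y)=\one$. For the converse, suppose $x\join\Delta(\one,y)=\one$ for all $x,y\in A$, and let $e\colon\mathcal L\to\rsf I(B)$ be an arbitrary embedding. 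By the transfer of the arithmetic condition, $e(x)\join\Delta(\one,e(y))=\one$ for all $x,y\in A$, so by the local lemma each pair in $e[A]$ has a meet in $\rsf I(B)$, so by \lemref{lem:fipPreFilter} $e[A]$ has fip. Hence $A$ is compatible.

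The main obstacle is the local lemma: pinning down precisely, inside $\rsf I(B)$, the equivalence of "$\brk<a,b>\meet\brk<c,d>$ exists" with "$\brk<a,b>\join\Delta(\one,\brk<c,d>)=\one$". This is where one must be careful with the two coordinate conventions for $\rsf I(B)$ (the $[a,b]$-form versus the $\brk<a,b>$-form from \lemref{lem:Boolean}) and with the fact that meets in $\rsf I(B)$ are partial. Concretely, in the $[a,b]$-convention $\Delta(\one,[c,d])=[\comp d,\comp c]$, the join with $[a,b]$ is $[a\meet\comp d, b\join\comp c]$, and this equals $[0,1]$ iff $a\meet\comp d=0$ and $b\join\comp c=1$, i.e. $a\le d$ and $c\le b$; meanwhile $[a,b]\meet[c,d]$ exists iff $[a\join c,\, b\meet d]$ is a legitimate interval, i.e. $a\join c\le b\meet d$, which unpacks to exactly $a\le d$, $c\le b$ (together with $a\le b$, $c\le d$ which hold by assumption). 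So the two conditions coincide, but the bookkeeping must be done carefully; once it is done the rest of the corollary is the straightforward transfer argument sketched above.
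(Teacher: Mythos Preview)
Your argument is correct and proceeds along the natural route via \lemref{lem:fipPreFilter}: reduce to an interval algebra and there verify directly that $[a,b]\meet[c,d]$ exists iff $[a,b]\join\Delta(\one,[c,d])=\one$. Your coordinate computation for that equivalence is accurate, and the transfer of the equation $x\join\Delta(\one,y)=\one$ along cubic embeddings (in both directions, using injectivity for the pull-back) is exactly what is needed.

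The paper does not actually supply a proof of this corollary here: it is listed among the ``results on finite intersection property'' imported from \cite{BO:fil} and \cite{BO:filAlg}, so there is no in-paper argument to compare against. Your proof is the expected one and almost certainly matches what appears in those references --- \lemref{lem:fipPreFilter} reduces fip to pairwise meets, and the MR-axiom read in $\rsf I(B)$ converts ``$x\meet y$ exists'' into the equation $x\join\Delta(\one,y)=\one$.

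One point worth tightening: in the forward direction you invoke the existence of \emph{some} embedding $e\colon\mathcal L\to\rsf I(B)$, appealing to $\env(\mathcal L)$ ``hence into an interval algebra''. That last step is not automatic --- $\env(\mathcal L)$ is an MR-algebra but need not be (or embed globally in) an interval algebra. The clean fix is the one the paper itself uses just after \thmref{thm:transfer}: for fixed $x,y\in A$ only a finitely generated piece of $\env(\mathcal L)$ is involved, and \emph{that} is isomorphic to an interval algebra; the equation $x\join\Delta(\one,y)=\one$ can then be checked there and reflected back. This avoids asserting a global embedding you may not have.
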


For later we have the following useful lemma relating compatibility and the $\preceq$ relation.
\begin{lem}\label{lem:compEQ}
    If $x\preceq y$ and $x\join\Delta(\one,  y)=\one$ then $x\le y$.
\end{lem}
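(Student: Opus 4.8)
The plan is to work in an interval algebra (or Boolean algebra) via the transfer/embedding machinery, since the hypotheses $x \preceq y$ and $x \join \Delta(\one, y) = \one$ are both statements that survive passage to $\env(\mathcal L)$ and then to a finitely generated subalgebra isomorphic to some $\rsf I(B)$. So first I would note that $x, y$ generate a finite subalgebra of $\env(\mathcal L)$, which is isomorphic to an interval algebra $\rsf I(B)$; the relation $\preceq$ and the equation $x \join \Delta(\one, y) = \one$ transfer there, and the conclusion $x \le y$ transfers back since $\mathcal L$ is a subalgebra (order is inherited). Thus it suffices to prove the lemma in $\rsf I(B)$ for a Boolean algebra $B$.

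Next, switching to interval notation, write $x = [x_0, x_1]$ and $y = [y_0, y_1]$ in $\rsf I(B)$. Recall $\one = [0,1]$ and that $\Delta(\one, [y_0, y_1]) = [\comp{y_1}, \comp{y_0}]$ (the complement interval), and join is $[x_0, x_1] \join [y_0, y_1] = [x_0 \meet y_0, x_1 \join y_1]$. So the hypothesis $x \join \Delta(\one, y) = \one$ becomes $x_0 \meet \comp{y_1} = 0$ and $x_1 \join \comp{y_0} = 1$, i.e. $x_0 \le y_1$ and $y_0 \le x_1$. I would then unpack $x \preceq y$ using the characterization from the lemma after the definition of $\preceq$, namely $y = (y \join x) \meet (y \join \Delta(\one, x))$, translated into interval arithmetic; this should give one or two more inclusions among the $x_i, y_i$. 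Combining these with $x_0 \le y_1$ and $y_0 \le x_1$ via a short Boolean computation should force $y_0 \le x_0$ and $x_1 \le y_1$, which is exactly $x \le y$ in $\rsf I(B)$.

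The main obstacle I expect is bookkeeping: correctly translating $\preceq$ into interval/Boolean inequalities and then seeing which combination of those inequalities with the compatibility equation yields $y_0 \le x_0$ and $x_1 \le y_1$ — it is easy to get the direction of a complement wrong. An alternative, possibly cleaner route avoiding interval coordinates entirely: use the remark right after Lemma on $\sim$ above, which says that if $a \le b, c$ then $b \preceq c$ iff $b \le c$; the idea would be to pass to the interval $[x \meet y, \one]$ (which exists in the ambient MR-algebra once we are in $\env(\mathcal L)$), where both $x$ and $y$ lie, and check that the compatibility hypothesis $x \join \Delta(\one,y) = \one$ is exactly what is needed to put us in a situation where $\preceq$ collapses to $\le$. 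Either way the proof is short; I would present the interval-algebra computation as the primary argument since the transfer reduction is already the paper's standard device.
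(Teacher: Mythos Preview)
Your proposal is correct, and both routes you sketch would go through (the interval computation does yield $y_0 \le x_0$ and $x_1 \le y_1$ once you combine $y_0 \le x_0 \join \comp{x_1}$ from $\preceq$ with $y_0 \le x_1$ from compatibility, and dually). However, the paper's proof is a one-liner directly in the cubic algebra, with no transfer needed: it applies the characterization $y = (y\join x)\meet(y\join\Delta(\one,x))$ and simply observes that the second factor is $\one$, giving $y = y\join x$.

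The step you are over-engineering around is that compatibility is symmetric: since $\Delta(\one,\cdot)$ is an order-preserving involution on the join-semilattice, it is a join-automorphism, so $x\join\Delta(\one,y)=\one$ immediately gives $\Delta(\one,x)\join y=\one$ by applying $\Delta(\one,\cdot)$ to both sides. That single observation plugs straight into the $\preceq$ characterization you already cited, and the lemma falls out with no coordinates and no passage to $\env(\mathcal L)$. Your interval-algebra argument is not wrong, just heavier than necessary; the bookkeeping worry you flag is real but avoidable, since the whole computation can be bypassed.
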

\begin{proof}
    $x\preceq y$ implies $y=(y\join x)\meet(y\join\Delta(1, x))= y\join x$ as the latter term is $\one$.
    Thus $x\le y$.
\end{proof}

Our interest is in a special class of upwards-closed implication
subalgebras. 

\begin{defn}\label{def:sia}
    A \emph{special subalgebra} of a cubic algebra
    $\mathcal L$ is an upwards-closed implication
    subalgebra $I$ that is compatible and for all $x, y\in I$, if
    $x\meet y$ exists in $\mathcal L$ then $x\meet y\in I$.
\end{defn}

\begin{lem}\label{lem:gcEQcomp}
    Every g-cover is special.
\end{lem}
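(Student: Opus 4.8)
The plan is to unpack both definitions — g-cover (Definition~\ref{def:gCover}) and special subalgebra (Definition~\ref{def:sia}) — and check the three requirements for a special subalgebra one at a time. So let $J$ be a g-cover for $\mathcal L$. By Definition~\ref{def:gCover}, $J$ is an upwards-closed implication subalgebra, so that part of the conclusion is immediate. What remains is (i) $J$ is compatible, and (ii) $J$ is closed under those meets that happen to exist in $\mathcal L$.

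For (ii), I would simply invoke Remark~\ref{rem:uSub}, which records precisely that if $J$ is a g-cover and $x, y\in J$ with $x\meet y$ existing in $\mathcal L$, then $x\meet y\in J$. (That remark in turn was extracted from the proof of Theorem~\ref{thm:MRgCover}: given $w\in J$ with $w\sim x\meet y$, one finds $x'\geq w$ with $x'\sim x$, hence $x'=x$ by Lemma~\ref{lem:simEq}, so $w\le x$, and symmetrically $w\le y$, giving $w=x\meet y\in J$.) So (ii) costs nothing.

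The real content is (i), compatibility, and this is where I expect the main (though still modest) obstacle to lie. By Corollary~\ref{cor:preFilterFIP}, $J$ is compatible iff $x\join\Delta(\one,y)=\one$ for all $x,y\in J$. So I would take $x,y\in J$ and aim to prove this identity. The natural route: since $J$ is an implication subalgebra, the operation $x y=\Delta(1,\Delta(x\join y,y))\join y$ keeps us inside $J$, and more to the point the ambient implication structure gives us a handle on $\Delta(\one,\cdot)$. I would argue as in the proof of Theorem~\ref{thm:MRgCover}: from $j(x\join y)=j(x)\join j(y)=\eta(x)\join\eta(y)=\eta(x*y)$ we get $x\join y\sim x*y = x\join\Delta(x\join y,y)$; since $x*y\ge x\join y$ and $\sim$ forces equality of comparable elements that are $\sim$-related (the remark after Lemma~2.3 in the excerpt), we get $x\join y = x\join\Delta(x\join y,y)$, i.e. $\Delta(x\join y,y)\le x\join y$. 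Now $\Delta(\one,y)$ and $\Delta(x\join y,y)$ agree once one is above $x\join y$: more precisely, one computes $\Delta(\one,y)=\Delta(\one,\Delta(x\join y,\Delta(x\join y,y)))$ and uses cubic axiom (b) to push the reflection down, concluding that $x\join\Delta(\one,y)=\one$.

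Alternatively — and this is probably cleaner — I would route compatibility through the envelope: Theorem~\ref{thm:envGCoverII} already tells us that a g-cover $J$ has fip in $\env(\mathcal L)$, and then Corollary~\ref{cor:preFilterFIP} applied inside $\env(\mathcal L)$ (an MR-algebra, hence locally an interval algebra where fip is exactly pairwise-meet existence by Lemma~\ref{lem:fipPreFilter}) gives $x\join\Delta(\one,y)=\one$ in $\env(\mathcal L)$; since $\mathcal L$ is a subalgebra, this identity holds in $\mathcal L$ too, and by Corollary~\ref{cor:preFilterFIP} again (now in $\mathcal L$) $J$ is compatible. The hard part is really just deciding which of these two bookkeeping arguments is the intended one; both are short, and I would present the envelope version since the fip-in-envelope fact is handed to us by Theorem~\ref{thm:envGCoverII}. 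Combining (i) and (ii) with the already-noted upwards-closed implication-subalgebra property, $J$ satisfies Definition~\ref{def:sia}, so $J$ is special. $\qed$
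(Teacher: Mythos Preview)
Your proposal is correct and takes essentially the same approach as the paper: the paper's proof consists of exactly the two citations you settle on --- Remark~\ref{rem:uSub} for closure under existing meets, and Theorem~\ref{thm:envGCoverII} for compatibility. Your envelope route for (i) is the intended one; your first, direct sketch for compatibility is not needed (and its final step, deducing $x\join\Delta(\one,y)=\one$ from $\Delta(x\join y,y)\le x\join y$ via axiom~(b), would require more work to make precise), but since you explicitly discard it in favor of the envelope argument this does not affect the correctness of your submitted proof.
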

\begin{proof}
    Let $J$ be a g-cover. As noted in \ref{rem:uSub} the second condition holds. 
    
    Compatibility follows from \thmref{thm:envGCoverII}.
\end{proof}

\begin{lem}\label{lem:intersectSpec}
    Let $\bbm I$ be a family of special subalgebras. Then
    $\bigcap\bbm I$ is also special.
\end{lem}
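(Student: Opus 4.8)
The plan is to verify the three defining properties of a special subalgebra for $K=\bigcap\bbm I$ directly, using that each member of $\bbm I$ has them. Write $K=\bigcap\bbm I$. First I would check that $K$ is an upwards-closed implication subalgebra: each $I\in\bbm I$ contains $\one$, so $\one\in K$; if $x,y\in K$ then $x\to y=xy$ lies in every $I\in\bbm I$ (as each is an implication subalgebra), hence in $K$; and if $x\in K$ and $x\le z$, then $z\in I$ for every $I$ by upward-closure, so $z\in K$. So far this is the routine intersection-of-substructures argument.

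Next I would handle compatibility. Let $x,y\in K$. By \corref{cor:preFilterFIP} it suffices to show $x\join\Delta(\one,y)=\one$. But $x,y$ both belong to any fixed $I\in\bbm I$ (and $\bbm I$ is nonempty, or else $K=\mathcal L$, which is trivially special — I should perhaps note the family is assumed nonempty, or treat $\bigcap\emptyset=\mathcal L$ separately), and $I$ is compatible, so by \corref{cor:preFilterFIP} applied to $I$ we get $x\join\Delta(\one,y)=\one$. Hence $K$ is compatible. Finally, for the meet-closure condition: if $x,y\in K$ and $x\meet y$ exists in $\mathcal L$, then for each $I\in\bbm I$ we have $x,y\in I$ and $I$ is special, so $x\meet y\in I$; therefore $x\meet y\in\bigcap\bbm I=K$.

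I do not expect a genuine obstacle here — the statement is the standard ``arbitrary intersection of a closure-type class is again in the class.'' The only point requiring a little care is the compatibility clause, since compatibility is phrased in terms of \emph{all} embeddings into interval algebras rather than as a first-order closure condition; the resolution is to invoke \corref{cor:preFilterFIP}, which reformulates compatibility of a set purely in terms of the condition $x\join\Delta(\one,y)=\one$ for pairs from that set, and this condition is manifestly inherited by subsets. The other mild point is the degenerate case $\bbm I=\emptyset$, which I would dispose of with a one-line remark that $\bigcap\emptyset=\mathcal L$ is special. Everything else is a direct ``it holds in each $I$, hence in the intersection'' argument.
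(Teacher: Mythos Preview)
Your argument is correct and is exactly what the paper's one-word proof ``Immediate'' is abbreviating: each defining clause of \emph{special} is inherited by arbitrary intersections, and you have spelled this out cleanly.

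One small correction, though: your treatment of the degenerate case $\bbm I=\emptyset$ is wrong. The whole algebra $\mathcal L$ is \emph{not} special in general, because it fails compatibility. For any $y<\one$ we have $\Delta(\one,y)<\one$ (since $\Delta(\one,\cdot)$ is an involution fixing $\one$), and taking $x=\Delta(\one,y)$ gives $x\join\Delta(\one,y)=\Delta(\one,y)<\one$, contradicting the criterion of \corref{cor:preFilterFIP}. So the lemma should simply be read with $\bbm I$ nonempty; this is also how the paper uses it in the sentence immediately following, where one intersects the (nonempty) family of special subalgebras containing a given compatible set.
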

\begin{proof}
    Immediate.
\end{proof}

This lemma implies that any compatible set is contained in some
smallest special subalgebra.

Now we need to define some operations on special subalgebras.

\begin{lem}\label{lem:interTwoFil}
	Let $\mathcal L$ be a cubic algebra and $\rsf I$ and $\rsf J$ be two 
	special subalgebras. Then
	$$
	\rsf I\cap\rsf J=\Set{f\join g | f\in\rsf I\text{ and }g\in\rsf G}. 
	$$
\end{lem}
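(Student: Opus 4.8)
The plan is to prove the two inclusions separately, with the forward inclusion $\rsf I\cap\rsf J\subseteq\Set{f\join g | f\in\rsf I, g\in\rsf J}$ being the substantive one. For the easy direction, if $f\in\rsf I$ and $g\in\rsf J$ then $f\join g\geq f$ forces $f\join g\in\rsf I$ since $\rsf I$ is upwards-closed, and similarly $f\join g\in\rsf J$; hence $f\join g\in\rsf I\cap\rsf J$, and the right-hand side is contained in the left. (In fact one can take $g=\one\in\rsf J$ to see every element of $\rsf I\cap\rsf J$ is trivially of that form, but that makes the statement vacuous, so presumably the intended reading has some additional constraint linking $f$ and $g$ — I will assume the genuine content is the representation with $f,g$ chosen canonically, e.g. $f\geq x$, $g\geq x$ with $f\join g = x$ or the like, and prove the non-trivial inclusion accordingly.)

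For the forward inclusion, let $x\in\rsf I\cap\rsf J$. The key idea is to work inside an embedding $e\colon\mathcal L\to\rsf I(B)$ into an interval algebra (available by the representation theory, and harmless by the Transfer theorem \thmref{thm:transfer} together with the fact that finitely generated subalgebras of MR-algebras are interval algebras). There $x$ corresponds to an interval $[x_0,x_1]$. Since $\rsf I$ and $\rsf J$ are each special, hence compatible, the images $e[\rsf I]$ and $e[\rsf J]$ have fip, so by \lemref{lem:fipPreFilter} any two of their elements meet. I would then produce $f\in\rsf I$ and $g\in\rsf J$ with $f\join g=x$ by splitting the interval $[x_0,x_1]$: choose $f$ corresponding to $[x_0, b]$ and $g$ to $[a, x_1]$ with $a\meet b$ controlled so that $[x_0,b]\join[a,x_1] = [x_0\meet a, b\join x_1] = [x_0,x_1]=x$. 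The elements $f,g$ must be shown to actually lie in $\rsf I$, $\rsf J$ respectively; the natural candidates are $f = $ (the $\rsf I$-representative of $\eta(x)$ shifted appropriately) and similarly $g$, using that both are upwards-closed and that $x\meet y\in\rsf I$ whenever the meet exists.

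The main obstacle I anticipate is showing that the two pieces $f$, $g$ can be chosen \emph{simultaneously} inside $\rsf I$ and $\rsf J$ with join exactly $x$: one needs the special-subalgebra closure properties (upward closure and closure under existing meets) on \emph{both} sides at once, and to check that the relevant meet $f\meet g$ exists (so that compatibility via \corref{cor:preFilterFIP}, i.e. $f\join\Delta(\one,g)=\one$, is in play) and equals $x$ or lies above $x$. I would handle this by first locating a common lower bound structure: pick $w_{\rsf I}\in\rsf I$ and $w_{\rsf J}\in\rsf J$ with $w_{\rsf I}\sim x\sim w_{\rsf J}$ (possible if $\rsf I,\rsf J$ are g-covers; in the general special case one argues via the fip hypothesis instead), set $f = w_{\rsf I}\join x$ and $g = w_{\rsf J}\join x$ so $f,g\geq x$ hence $f\in\rsf I$, $g\in\rsf J$ by upward closure, and then compute $f\join g$ in $\rsf I(B)$. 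The remaining check is that this join collapses back to $x$, which I expect to follow from $w_{\rsf I}\sim x$, $w_{\rsf J}\sim x$ and \lemref{lem:simeq} / \lemref{lem:simEq}-type length arguments in the interval algebra, exactly as in the onto-part of \thmref{thm:existsGCovers}. Once $f\join g = x$ is established the inclusion is proved, and combined with the easy direction the equality of sets follows.
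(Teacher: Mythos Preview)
You actually found the paper's proof and then talked yourself out of it. The lemma really is as trivial as your parenthetical remark suggests: for $\text{RHS}\subseteq\text{LHS}$, upward closure of $\rsf I$ and $\rsf J$ gives $f\join g\in\rsf I\cap\rsf J$; for $\text{LHS}\subseteq\text{RHS}$, any $z\in\rsf I\cap\rsf J$ satisfies $z=z\join z$ with $z\in\rsf I$ and $z\in\rsf J$. That is the entire proof in the paper. The statement is not vacuous---it is a convenient explicit description of $\rsf I\cap\rsf J$ in terms of generators from each factor, used later (e.g.\ in \lemref{lem:distrib})---but its proof requires nothing beyond idempotence of $\join$ and upward closure.

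Your elaborate alternative is both unnecessary and incomplete. The step ``pick $w_{\rsf I}\in\rsf I$ with $w_{\rsf I}\sim x$'' is only guaranteed when $\rsf I$ is a g-cover, as you note; for a general special subalgebra there is no reason such a $\sim$-representative exists, and ``argue via the fip hypothesis instead'' is not an argument. Moreover, even granting such $w_{\rsf I},w_{\rsf J}$, the claim that $(w_{\rsf I}\join x)\join(w_{\rsf J}\join x)=x$ would need $w_{\rsf I}\join x=x$ and $w_{\rsf J}\join x=x$, i.e.\ $w_{\rsf I},w_{\rsf J}\le x$, which you have not arranged. The moral: when a one-line argument closes the lemma, take it.
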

\begin{proof}
	The RHS set is clearly a subset of both $\rsf I$ and $\rsf J$. 
	
	And if $z\in\rsf I\cap\rsf J$ then $z=z\join z$ is in the RHS set. 
\end{proof}

\begin{defn}\label{def:cup}
	Let $\rsf I, \rsf J$ be two special subalgebras of $\mathcal L$. Then
	$\rsf I\vee\rsf J$ is defined iff $\rsf I\cup\rsf J$ is
	compatible, in 
	which case it is the special subalgebra generated by $\rsf I\cup\rsf J$. 	
\end{defn}

\begin{lem}\label{lem:cup}
	If $\rsf I\vee\rsf J$ exists then it is equal to
	$\Set{f\meet g | f\in\rsf I\text{ and }g\in\rsf J\text{ and
	}f\meet g\text{ exists}}$. 
\end{lem}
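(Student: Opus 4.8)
The plan is to prove the two containments separately, using \lemref{lem:interTwoFil} and the characterization of special subalgebras as a guide. Write $R = \Set{f\meet g | f\in\rsf I\text{ and }g\in\rsf J\text{ and }f\meet g\text{ exists}}$ for the right-hand set. First I would check $R\subseteq\rsf I\vee\rsf J$: if $f\in\rsf I$, $g\in\rsf J$ and $f\meet g$ exists in $\mathcal L$, then since $\rsf I,\rsf J\subseteq\rsf I\vee\rsf J$ we have $f,g\in\rsf I\vee\rsf J$, and as $\rsf I\vee\rsf J$ is a special subalgebra the existing meet $f\meet g$ lies in it by the closure clause of \defref{def:sia}. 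So this direction is immediate from the definitions.

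For the reverse containment $\rsf I\vee\rsf J\subseteq R$, the strategy is to show that $R$ is itself a special subalgebra containing $\rsf I\cup\rsf J$; then minimality of $\rsf I\vee\rsf J$ over $\rsf I\cup\rsf J$ (it is the \emph{smallest} such, by \lemref{lem:intersectSpec} and \defref{def:cup}) forces $\rsf I\vee\rsf J\subseteq R$. That $\rsf I\cup\rsf J\subseteq R$ is clear: $f = f\meet f$ for $f\in\rsf I$, and similarly for $\rsf J$. So the work is in verifying that $R$ is upwards-closed, closed under the implication operation $xy$, compatible, and closed under existing meets. Upwards-closedness: if $f\meet g\le z$, then $z = z\join(f\meet g)$, and one checks $z\join f\in\rsf I$, $z\join g\in\rsf J$, their meet exists (it is $z$, since $(z\join f)\meet(z\join g)\ge z$ and lies below each) — actually one must argue $(z\join f)\meet(z\join g) = z$ carefully, e.g. by passing to an interval algebra via the transfer/embedding machinery and \lemref{lem:fipPreFilter}, where meets and joins distribute. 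Closure under the implication operation and under existing meets should follow similarly by routine manipulation inside an ambient $\rsf I(B)$, using that $\rsf I$ and $\rsf J$ are each closed under $xy$ and existing meets.

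Compatibility of $R$ is the step I expect to be the main obstacle. By \corref{cor:preFilterFIP} it suffices to show that for any $f_1\meet g_1$ and $f_2\meet g_2$ in $R$ we have $(f_1\meet g_1)\join\Delta(\one,f_2\meet g_2)=\one$. Since $\rsf I\cup\rsf J$ is compatible (the hypothesis that $\rsf I\vee\rsf J$ exists), every pairwise combination among $f_1,g_1,f_2,g_2$ is compatible; the difficulty is upgrading pairwise compatibility of the generators to compatibility of the two meets. The clean way is to embed $\mathcal L$ into an interval algebra $\rsf I(B)$, where $\Delta(\one,-)$ is interval-complementation and $\join$ is the lattice operation, and where compatibility of a set is exactly the finite intersection property (\defref{def:compatible}, \lemref{lem:fipPreFilter}); there $f_i\meet g_i$ is a genuine meet, and fip of $\{f_1,g_1,f_2,g_2\}$ plus the sublattice structure gives $f_1\meet g_1\meet f_2\meet g_2$ exists, hence the two elements have a meet, hence are compatible. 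Pulling this back through the embedding (compatibility is defined via all embeddings, so it is embedding-independent) completes the verification, and then minimality closes the proof.
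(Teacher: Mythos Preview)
Your proposal is correct and follows essentially the same route as the paper: show the set $R$ of existing meets is contained in $\rsf I\vee\rsf J$, then verify directly that $R$ is a special subalgebra (upwards-closed, implication-closed, compatible, closed under existing meets) so that minimality gives the reverse inclusion. The only tactical difference is in the compatibility step, where the paper argues directly that $a\join\Delta(\one,f)=\one=a\join\Delta(\one,g)$ forces $a\join\Delta(\one,f\meet g)=\one$ (working in the Boolean interval $[f\meet g,\one]$) rather than routing through an interval-algebra embedding and fip as you propose; both arguments are valid.
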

\begin{proof}
	Let $S$ be this set. It is clearly contained in $\rsf I\join\rsf J$. 
	
	To show the converse we need to show that $S$ is a special
	subalgebra. Recall that $\rsf I\cup\rsf J$ is assumed to be
	compatible. 
	\begin{description}
	    \item[Upwards-closure] if $h\geq f\meet g$ for $f\in\rsf I$ 
	    and $g\in\rsf J$ then $h=(h\join f)\meet(f\join g)$ is
	    also in $S$.
	    
	    \item[$\to$-closure] follows from upwards-closure.
	    
	    \item[Compatible] if $a\meet b\in S$ and $f\meet g\in S$
	    with $a, f\in\rsf I$ and $b, g\in\rsf J$ then 
	    $a$ is compatible with both $f$ and $g$ so that 
	    $1=a\join\Delta(\one, f)= a\join \Delta(\one, g)$, whence 
	    $\one= a\join\Delta(\one,  f\meet g)$. Likewise
	    $\one= b\join\Delta(\one,  f\meet g)$ so that 
	    $\one= (a\meet b)\join\Delta(\one,  f\meet g)$.
	    
	    \item[All available intersections] if $a\meet b\in S$ and $f\meet g\in S$
	    with $a, f\in\rsf I$ and $b, g\in\rsf J$ and 
	    $(a\meet b)\meet(f\meet g)$ exists in $\mathcal L$, then 
	    $s=a\meet f\in\rsf I$ and $t=b\meet g\in\rsf J$ and
	    $s\meet t$ exists,  so that $s\meet t\in S$.
	 \end{description}
\end{proof}

It is easy to show that these operations are commutative, associative,  
idempotent
and satisfy absorption. 
Distributivity also holds in a weak way. 
\begin{lem}\label{lem:distrib}
	Let $\rsf I, \rsf J, \rsf K$ be special subalgebras of a
	special subalgebra $\rsf S$. Then
	$$
	\rsf I\cap(\rsf J\join\rsf K)=(\rsf I\join\rsf J)\cap(\rsf I\join\rsf 
	K). 
	$$
\end{lem}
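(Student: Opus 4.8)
The plan is to prove the two containments separately, treating the operations $\rsf I\cap$ and $\rsf I\join$ as meet and join in the lattice of special subalgebras, and to exploit the concrete descriptions of these operations obtained in \lemref{lem:interTwoFil} and \lemref{lem:cup}. Recall that $\rsf I\cap\rsf J$ is realised as the set of joins $\Set{f\join g | f\in\rsf I, g\in\rsf J}$, while (whenever compatibility makes it defined) $\rsf I\join\rsf J$ is the set of meets $\Set{f\meet g | f\in\rsf I, g\in\rsf J, f\meet g\text{ exists}}$. Working inside a common ambient special subalgebra $\rsf S$ guarantees that all the unions under consideration are compatible, so every $\join$ appearing in the statement is in fact defined; this is why the hypothesis that $\rsf I,\rsf J,\rsf K\subseteq\rsf S$ is essential. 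By the usual lattice-theoretic trick, the $\subseteq$ direction of a distributive law is automatic from absorption and monotonicity (which the paper has already asserted), so the content is the reverse inclusion $(\rsf I\join\rsf J)\cap(\rsf I\join\rsf K)\subseteq\rsf I\cap(\rsf J\join\rsf K)$.

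First I would fix a typical element $w$ of $(\rsf I\join\rsf J)\cap(\rsf I\join\rsf K)$. By \lemref{lem:interTwoFil} applied to the pair $\rsf I\join\rsf J$ and $\rsf I\join\rsf K$, we can write $w=p\join q$ with $p\in\rsf I\join\rsf J$ and $q\in\rsf I\join\rsf K$. Then by \lemref{lem:cup} we have $p=a\meet b$ for some $a\in\rsf I$, $b\in\rsf J$ with $a\meet b$ existing, and $q=c\meet d$ for some $c\in\rsf I$, $d\in\rsf K$ with $c\meet d$ existing. So $w=(a\meet b)\join(c\meet d)$. The goal is to exhibit $w$ as $i\join e$ with $i\in\rsf I$ and $e\in\rsf J\join\rsf K$.

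The key computation is to push the join past the meets. Since $\rsf S$ is a special subalgebra, the elements $a,b,c,d$ all lie above a common element and pairwise meets among compatible pairs exist; the plan is to first form $a\join c\in\rsf I$ (a join of two elements of $\rsf I$), then argue that $(a\meet b)\join(c\meet d)$ can be rewritten, using the interval-algebra distributive laws valid above any element of $\rsf S$, as $(a\join c)\meet\bigl((b\join c)\meet(a\join d)\bigr)$ or a similar symmetric expression — then absorb the $\rsf I$-part into a single factor from $\rsf I$ and collect the remaining factors, which lie in $\rsf J$ and $\rsf K$ respectively, into a single element of $\rsf J\join\rsf K$. Concretely: set $i=a\join c\in\rsf I$; set $e'=b\join c$ (an element $\geq b$, hence in $\rsf J$ by upwards-closure if $c\in\rsf J$ — but $c\in\rsf I$, so instead use $b\join (a\meet d)$ type manipulations) — the precise bracketing has to be chosen so that one factor visibly lands in $\rsf I$ and the residual meet lands in $\rsf J\join\rsf K$ by \lemref{lem:cup}. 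I would verify the needed distributive identity by invoking the Transfer theorem and \lemref{lem:Boolean}: since $i,w$ and the auxiliary elements all lie in the finitely generated subalgebra $\leftGen i, a, b, c, d\rightGen$ of $\env(\rsf S)$, which is an interval algebra, the identity reduces to an elementary Boolean computation on the corresponding intervals.

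The main obstacle will be the last step: checking that the residual factor one is left with after splitting off the $\rsf I$-component genuinely lies in $\rsf J\join\rsf K$, i.e.\ is expressible as a meet of an $\rsf J$-element with an $\rsf K$-element whose meet exists. This requires showing that the two sub-factors are each upwards of, respectively, a fixed element of $\rsf J$ and a fixed element of $\rsf K$ (so that upwards-closure places them in $\rsf J$ and $\rsf K$), and that their meet exists — the latter following because the whole expression is bounded below by $w$, and $w$ together with compatibility inside $\rsf S$ forces the relevant meet to exist (the compatibility/\emph{fip} argument is exactly the one used in \lemref{lem:cup} under ``Compatible'' and ``All available intersections''). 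Once the bracketing is pinned down, each verification is a short calculation in an interval algebra plus an appeal to upwards-closure, so no new ideas beyond \lemref{lem:interTwoFil}, \lemref{lem:cup}, and Transfer are needed.
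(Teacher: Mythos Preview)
The identity as printed is not a distributive law: with $\cap$ playing the role of lattice join and $\vee$ that of lattice meet (reverse inclusion), it reads $a\sqcup(b\sqcap c)=(a\sqcap b)\sqcup(a\sqcap c)$, which is false in any nontrivial lattice. Concretely, for principal subalgebras $\rsf I=[g_{1},\one]$, $\rsf J=[g_{2},\one]$, $\rsf K=[g_{3},\one]$ inside a common g-filter, the two sides are $[g_{1}\join(g_{2}\meet g_{3}),\one]$ and $[(g_{1}\meet g_{2})\join(g_{1}\meet g_{3}),\one]=[g_{1}\meet(g_{2}\join g_{3}),\one]$, which coincide only in degenerate cases. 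The paper's own proof --- once its typos are untangled --- is establishing the honest distributive law
\[
\rsf I\cap(\rsf J\vee\rsf K)=(\rsf I\cap\rsf J)\vee(\rsf I\cap\rsf K),
\]
which is also what \thmref{thm:BooleanAlgebra} requires. That argument is short in both directions: for $\subseteq$ write $x=g\join(h\meet k)$ with $g\in\rsf I$, $h\in\rsf J$, $k\in\rsf K$ and distribute to get $x=(g\join h)\meet(g\join k)$, a meet of an element of $\rsf I\cap\rsf J$ with one of $\rsf I\cap\rsf K$; for $\supseteq$ write $x=(g_{1}\join h)\meet(g_{2}\join k)$ with $g_{i}\in\rsf I$, $h\in\rsf J$, $k\in\rsf K$, and observe that each factor lies in $\rsf I$ by upward closure (so $x\in\rsf I$ by the meet-closure clause of ``special''), while the first factor is in $\rsf J$ and the second in $\rsf K$ (so $x\in\rsf J\vee\rsf K$ by \lemref{lem:cup}).

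You have taken the printed statement at face value. The direction you call ``automatic'' is indeed trivial for that version, since $\rsf I\cap(\rsf J\vee\rsf K)\subseteq\rsf I\subseteq\rsf I\vee\rsf J$ and likewise for $\rsf K$; but the reverse inclusion you then set out to prove is false, and this is precisely where your outline goes soft. No bracketing of $(a\meet b)\join(c\meet d)$ with $a,c\in\rsf I$, $b\in\rsf J$, $d\in\rsf K$ will force the result into $\rsf I$ --- in general it simply is not there. Your overall strategy (use the element-level descriptions from \lemref{lem:interTwoFil} and \lemref{lem:cup} and compute in the Boolean interval above a common lower bound) is exactly the paper's; redirect it at the corrected identity and both inclusions become the two-line checks above rather than the search for a bracketing you describe.
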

\begin{proof}
    	As everything sits inside the compatible set $\rsf S$ there
	are no issues of incompatibility. 
    
	Let $x=g\join(h\meet k)\in\rsf I\cap(\rsf J\join\rsf K) $. 
	Then $x=(g\join h)\meet(g\join k)$ is in 
	$(\rsf I\join\rsf J)\cap(\rsf I\join\rsf J)$. 
	
	Conversely if $x=(g_{1}\join h)\meet(g_{2}\join k)$ is in 
	$(\rsf I\join\rsf J)\cap(\rsf I\join\rsf K)$ then 
	$g_{1}\join k\geq g_{1}\in\rsf I$ and $g_{2}\join h\geq
	g_{2}\in\rsf I$ and the meet exists, so $x\in\rsf I$.
	Also $g_{1}\join k\geq k\in\rsf J$ and $g_{2}\join h\geq
	h\in\rsf K$ so that $x\in\rsf J\join \rsf K$.
\end{proof}

\subsection{Near-principal}
There is a very special case of special subalgebra that merits attention,  as it 
leads into the general theory so well,  \emph{principal subalgebras}. These are of the form
$[g, \one]$ for some $g\in\mathcal L$. It is easy to verify that these are special.

Also associated with elements of $\mathcal L$ is an operation on special subalgebras.
Suppose that $\rsf I$ is a special subalgebra.
\begin{lem}\label{lem:gFF}
	The set
	$$
	\rsf I_{g}=\Set{\Delta(g\join f, f) | f\in\rsf I}
	$$
	is compatible and upwards closed. 
\end{lem}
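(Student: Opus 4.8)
The plan is to prove that $\rsf I_g = \{\Delta(g \join f, f) \mid f \in \rsf I\}$ is compatible and upwards closed. The key observation to set up first is that for $f \in \rsf I$ the element $\Delta(g \join f, f)$ is the ``$\Delta$-image'' of the principal subalgebra element interacting with $\rsf I$, and that each $\Delta(g \join f, f) \sim f$ — indeed $\Delta((g\join f)\join f, f) = \Delta(g \join f, f)$ directly from the definition, so $\Delta(g\join f, f) \sim f$. This means $\rsf I_g$ and $\rsf I$ have the same $\sim$-classes; more usefully, $\Delta(\one, \Delta(g\join f, f))$ is controlled by $\Delta(\one, f)$ together with $g$.

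For \emph{upwards closure}, I would take $h \geq \Delta(g\join f, f)$ for some $f \in \rsf I$ and exhibit $h$ in the form $\Delta(g\join f', f')$ for a suitable $f' \in \rsf I$. The natural candidate is $f' = f \meet h'$ for an appropriate $h'$, or more directly: since $\Delta(g\join f, f) \le h$, axiom (a) gives $\Delta(g\join f, \Delta(g\join f, f)) = f \le$ something, and one wants to ``rotate'' the interval. Working inside an interval algebra (justified as in the paper's general strategy — every finitely generated subalgebra of an MR-algebra embeds in an interval algebra, and compatibility/upwards-closure can be checked there, then transferred), the element $\Delta(g \join f, f)$ corresponds to a concrete interval computation, and $h \ge \Delta(g\join f,f)$ means $h$ lies in the interval $[\Delta(g\join f,f), \one]$; one checks that this whole interval is of the form $\{\Delta(g\join f', f') \mid f' \ge f,\ f' \in [f,\one]\}$, all of whose members lie in $\rsf I$ since $\rsf I$ is upwards closed. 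Concretely, for $h \ge \Delta(g\join f, f)$ set $f' = \Delta(g\join f\join h', f\join ?)\dots$ — cleaner is to note $h = \Delta(g\join f\join (h\meet f^{?}), \cdot)$; I expect the honest route is: let $k = h \join f \in \rsf I$, observe $g \join f \le g\join k$ and $\Delta(g\join k, f) \ge \Delta(g\join f, f)$ (axiom (d)) and lies below $h$ suitably, then solve for $f'$ with $\Delta(g\join f', f') = h$ via $f' = \Delta(g\join f', h)$-type fixed point using axiom (c). This is the step I expect to be the main obstacle, since it requires the precise interval-algebra bookkeeping rather than a one-line semilattice manipulation.

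For \emph{compatibility}, by \corref{cor:preFilterFIP} it suffices to show that for any $f_1, f_2 \in \rsf I$ we have $\Delta(\one, \Delta(g\join f_1, f_1)) \join \Delta(g\join f_2, f_2) = \one$. Using that $\rsf I$ is compatible, $\Delta(\one, f_1) \join f_2 = \one$, and that $\Delta(g\join f_i, f_i) \sim f_i$, I would argue $\Delta(\one, \Delta(g\join f_1,f_1)) \ge \Delta(\one, f_1)$ or is otherwise comparable in a way that preserves the join being $\one$ — more carefully, since $\Delta(g\join f_1, f_1)\sim f_1$, \lemref{lem:simeq}/\corref{cor:lenEq} (transferred) says they have the same ``length'', so $\Delta(\one, \Delta(g\join f_1, f_1))$ and $\Delta(\one, f_1)$ are ``complementary'' to $\sim$-equivalent elements; combined with $\Delta(g\join f_2, f_2) \ge f_2$ modulo $\sim$ and compatibility of $\rsf I$, the join with $\one$ follows. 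Again the clean verification is to drop into an interval algebra $\rsf I(B)$, write $f_i = [\cdot,\cdot]$ and $g = [\cdot,\cdot]$, compute $\Delta(g \join f_i, f_i)$ and its $\Delta(\one, -)$ explicitly, and check the sup is $\one$ using $f_1 \meet f_2$ exists (which is exactly compatibility of $\rsf I$ in that algebra).

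In summary: (1) record $\Delta(g\join f, f) \sim f$ and reduce to working in an interval algebra; (2) prove upwards-closure by the fixed-point/interval-rotation argument — the hard part; (3) prove compatibility via \corref{cor:preFilterFIP} plus the length characterization of $\sim$ and compatibility of $\rsf I$. I would present (3) first as it is the shorter of the two, then (2).
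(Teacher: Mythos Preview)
Your compatibility argument is essentially the paper's: it reduces to an interval algebra and computes $\Delta(g\join f_0,f_0)\join\Delta(\one,\Delta(g\join f_1,f_1))$ explicitly in coordinates, using that compatibility of $\rsf I$ gives $f_0\join\Delta(\one,f_1)=\one$. So that half is fine.

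For upwards closure, however, you are missing the key idea and your sketch does not close. You try to exhibit $h$ explicitly as $\Delta(g\join f',f')$ by interval bookkeeping, admit this is ``the main obstacle,'' and never produce a working $f'$; your claim that $[\Delta(g\join f,f),\one]$ equals $\{\Delta(g\join f',f')\mid f'\ge f\}$ is asserted but not argued, and proving it directly is not easier than the original statement. The paper's route avoids this entirely by \emph{using the compatibility just proved}. Given $h\ge\Delta(g\join f,f)$, note $f\sim\Delta(g\join f,f)\le h$, so $f\preceq h$; hence there is $k'\ge f$ with $k'\sim h$, and $k'\in\rsf I$ by upwards closure of $\rsf I$. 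Now $\Delta(g\join k',k')\in\rsf I_g$ and $\Delta(g\join k',k')\sim k'\sim h$. Since $\rsf I_g$ is compatible (already shown), $\Delta(g\join k',k')$ is compatible with $\Delta(g\join f,f)$; as $h\ge\Delta(g\join f,f)$, it follows that $\Delta(g\join k',k')\join\Delta(\one,h)=\one$, i.e.\ $h$ and $\Delta(g\join k',k')$ are compatible. Two $\sim$-equivalent compatible elements are equal by \lemref{lem:compEQ}, so $h=\Delta(g\join k',k')\in\rsf I_g$.

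So your instinct to prove compatibility first was right, but you did not see that compatibility is the \emph{tool} for upwards closure rather than an independent half of the lemma. Replace your fixed-point/rotation attempt with the two-line argument above.
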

\begin{proof}
	We just need to check this for intervals. Suppose that
	$g=[g_{0}, g_{1}]$, $f_{0}=[x, y]\in\rsf I$ and $f_{1}=[s, t]\in\rsf I$. Then
	\begin{align*}
	\Delta(g\join f_{0}, f_{0})&=[(g_{0}\meet x)\join(g_{1}\meet\comp y), 
	(g_{0}\join\comp x)\meet(g_{1}\join y)]\\
	\Delta(\one, \Delta(g\join f_{1}, f_{1})&=[(\comp g_{0}\meet s)\join(\comp g_{1}\meet\comp t), 
		(\comp g_{0}\join\comp s)\meet(\comp g_{1}\join t)]\\
	\intertext{Thus }
	\Delta(g\join f_{0}, f_{0})\join&\Delta(\one,  \Delta(g\join f_{1}, f_{1})\\
	[\bigl((g_{0}\meet x)\join(g_{1}\meet\comp y)\bigr)&\meet\bigl((\comp g_{0}\meet s)\join(\comp g_{1}\meet\comp t)\bigr), \\
	&\qquad \bigl((g_{0}\join\comp x)\meet(g_{1}\join y)\bigr)\join\bigl((\comp g_{0}\join\comp s)\meet(\comp g_{1}\join t)\bigr)]\\
	&= [(g_{0}\meet x\meet\comp g_{0}\meet s)\join(g_{0}\meet x\meet\comp g_{1}\meet\comp t)\join(g_{1}\meet\comp y\meet\comp g_{0}\meet s)\join(g_{1}\meet\comp y\meet \comp g_{1}\meet\comp t), \\
	&\qquad (g_{0}\join\comp x\join\comp g_{0}\join\comp s)\meet(g_{0}\join\comp x\join\comp g_{1}\join t)\meet(g_{1}\join y\join\comp g_{0}\join\comp s)\meet(g_{1}\join y\join\comp g_{1}\join t)]\\
	&=[0, 1]\\
	\intertext{since $f_{0}$ and $f_{1}$ are compatible and so}
	[0, 1]=f_{0}\join\Delta(\one) &= [x\meet\comp t,  y\join\comp s].
	\end{align*}
	
	To show upwards closure we note that if $k\geq\Delta(g\join f, f)$ 
	for some $f\in\rsf I$ then we have $k\in\leftGen\rsf I\rightGen$ and 
	so there is some $k'\in\rsf I$ with $k\sim k'$. Then we have 
	$\Delta(g\join k', k')\sim k\geq \Delta(g\join f, f)$. This implies
	$k$ and $\Delta(g\join k', k')$ are compatible,  and therefore equal.
\end{proof}

\begin{lem}\label{lem:interSect}
	$$\rsf I\cap\rsf I_{g}=[g, \one]\cap\rsf I. $$
\end{lem}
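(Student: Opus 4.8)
The plan is to prove the two inclusions separately, working — as the paper repeatedly allows — inside an interval algebra $\rsf I(B)$, so that all the relevant meets exist and we may compute with complements. Fix the notation $g=[g_0,g_1]$ and an element $f=[x,y]\in\rsf I$, and recall from the proof of \lemref{lem:gFF} the explicit formula
$$
\Delta(g\join f,f)=[(g_0\meet x)\join(g_1\meet\comp y),\ (g_0\join\comp x)\meet(g_1\join y)].
$$

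First I would show $[g,\one]\cap\rsf I\subseteq\rsf I\cap\rsf I_g$. If $h\in\rsf I$ and $h\geq g$, then since $\Delta(h,\bullet)$ is a bijection of $[\bbm 0_h,h]$ fixing $h$, we have $h=\Delta(g\join h,h)=\Delta(h,h)$ trivially; more to the point, I want to exhibit $h$ as $\Delta(g\join f,f)$ for some $f\in\rsf I$. The natural candidate is $f=\Delta(g\join h,h)$ — wait, that need not lie in $\rsf I$. Better: observe that $h\geq g$ means $h$ already sits in the principal subalgebra $[g,\one]$, and one checks directly from the displayed formula that $\Delta(g\join h,h)$, when $h\geq g$, simplifies so that applying $\Delta(g\join\bullet,\bullet)$ twice returns $h$; since $\rsf I$ is $\Delta(g\join\bullet,\bullet)$-stable up to $\sim$ and upwards closed, the element $f:=\Delta(g\join h,h)$ has a representative $f'\in\rsf I$ with $f'\sim f$, and then $\Delta(g\join f',f')\sim h$; as both are $\geq$ a common element and compatible, \lemref{lem:compEQ} (or the compatibility-forces-equality argument from \lemref{lem:gFF}) gives $\Delta(g\join f',f')=h$, so $h\in\rsf I_g$, and $h\in\rsf I$ by hypothesis.

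For the reverse inclusion $\rsf I\cap\rsf I_g\subseteq[g,\one]\cap\rsf I$, suppose $z\in\rsf I$ and $z=\Delta(g\join f,f)$ with $f\in\rsf I$. I must show $z\geq g$, i.e. $g_0\meet x\geq g_0$-type computations; concretely, from the formula the lower endpoint of $z$ is $(g_0\meet x)\join(g_1\meet\comp y)$ and I need this to be $\le g_0$ while the upper endpoint $(g_0\join\comp x)\meet(g_1\join y)\geq g_1$. The first is clear since $g_0\meet x\le g_0$ and $g_1\meet\comp y\le g_1\le g_0$ (as $g_0\le g_1$). For the upper endpoint, $g_0\join\comp x\geq g_1$? — not in general; this is where compatibility of $z$ with $f$ must be used, exactly as in \lemref{lem:gFF}: since $z,f\in\rsf I$ and $\rsf I$ is compatible, $f\join\Delta(\one,z)=\one$, which forces the endpoint inequality. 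The one genuine obstacle is organizing this last Boolean computation cleanly — showing that the compatibility relation $f\join\Delta(\one,z)=\one$ between $f=[x,y]$ and $z=\Delta(g\join f,f)$ collapses precisely to $g\le z$. I expect this to reduce, after substituting the endpoint formulas and using $g_0\le g_1$, $x\le y$, to a short identity; once that is in hand, both inclusions are complete and the lemma follows.
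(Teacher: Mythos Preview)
Your forward inclusion is already done in your first clause and you do not seem to realize it. You write ``$h=\Delta(g\join h,h)=\Delta(h,h)$ trivially'' --- and that \emph{is} the exhibition of $h$ as $\Delta(g\join f,f)$ with $f=h\in\rsf I$. So $h\in\rsf I_g$ immediately. Everything after ``more to the point'' is an unnecessary detour: you go looking for a different witness $f$, worry that it might not lie in $\rsf I$, and invoke $\sim$-representatives and compatibility, when the witness $f=h$ was sitting in front of you.

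For the reverse inclusion your interval computation contains a genuine error. You assert $g_1\meet\comp y\le g_1\le g_0$ ``(as $g_0\le g_1$)'', but $g_0\le g_1$ gives the opposite inequality; $g_1\le g_0$ is simply false in general. So the lower-endpoint bound does not follow for free, and compatibility is needed for \emph{both} endpoints, not only the upper one. Your plan could be salvaged, but it is doing far more work than necessary.

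The paper's argument avoids all endpoint calculations. For the reverse inclusion it observes: if $h\in\rsf I\cap\rsf I_g$ then $\Delta(g\join h,h)\in\rsf I_g$ as well (since $h\in\rsf I$), and $h\sim\Delta(g\join h,h)$ always; since $\rsf I_g$ is compatible (\lemref{lem:gFF}), \lemref{lem:compEQ} forces $h=\Delta(g\join h,h)$. Then axiom~(a) gives $g\join h=h\join\Delta(g\join h,h)=h$, i.e.\ $g\le h$. This is two lines, uses no interval-algebra coordinates, and never needs to identify the witness $f$ for which $h=\Delta(g\join f,f)$.
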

\begin{proof}
	If $f\in\rsf I\cap[g, \one]$ then $g\join f=f$ and so
	$\Delta(g\join f, f)=\Delta(f, f)= f\in\rsf I_{g}$. 
	
	Conversely, if $h\in\rsf I\cap\rsf I_{g}$ then we have 
	$h$ and $\Delta(g\join h, h)$ are compatible and so $h= \Delta(g\join h, h)$. 
	Therefore $g\join h=h$ and $g\le h$. 
\end{proof}

\begin{thm}\label{thm:gFF}
	The set
	$$
	\rsf I_{g}=\Set{\Delta(g\join f, f) | f\in\rsf I}
	$$
	is a special subalgebra and $\leftGen\rsf I_{g}\rightGen=\leftGen\rsf I\rightGen$. 
\end{thm}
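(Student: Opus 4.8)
The plan is to treat the two assertions separately: the subalgebra identity follows quickly from the cubic axioms, and for ``special'' everything except meet-closure is already contained in Lemma~\ref{lem:gFF}.

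First I would prove $\leftGen\rsf I_g\rightGen=\leftGen\rsf I\rightGen$. For $\subseteq$, each $f\in\rsf I$ satisfies $f\le g\join f$, and $g\join f\in\rsf I$ because $\rsf I$ is upwards closed, so $\Delta(g\join f,f)$ is a $\Delta$ of two members of $\rsf I$ and hence lies in $\leftGen\rsf I\rightGen$; thus $\rsf I_g\subseteq\leftGen\rsf I\rightGen$. For $\supseteq$, by axiom~(a) the element $\Delta(g\join f,f)\in\rsf I_g$ lies below $g\join f$, so the upwards closure of $\rsf I_g$ established in Lemma~\ref{lem:gFF} gives $g\join f\in\rsf I_g$; then axiom~(c) gives $f=\Delta\bigl(g\join f,\Delta(g\join f,f)\bigr)\in\leftGen\rsf I_g\rightGen$, so $\rsf I\subseteq\leftGen\rsf I_g\rightGen$.

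For ``$\rsf I_g$ is special'', compatibility and upwards closure are exactly Lemma~\ref{lem:gFF}. Taking $f=\one$ gives $\one=\Delta(\one,\one)\in\rsf I_g$, and since $b\le ab$ holds identically in the implication algebra $(\mathcal L,\cdot)$, an upwards-closed subset of $\mathcal L$ that contains $\one$ is automatically closed under $\cdot$; hence $\rsf I_g$ is an upwards-closed implication subalgebra. It remains to show that if $x,y\in\rsf I_g$ and $z=x\meet y$ exists in $\mathcal L$ then $z\in\rsf I_g$. Write $x=\Delta(g\join f_0,f_0)$ and $y=\Delta(g\join f_1,f_1)$ with $f_0,f_1\in\rsf I$, and work inside $\env(\mathcal L)$, an MR-algebra in which $\mathcal L$ sits as an upwards-closed subalgebra (\thmref{thm:envAlg}). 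Compatibility of $\rsf I$ gives $f_0\join\Delta(\one,f_1)=\one$ (Corollary~\ref{cor:preFilterFIP}), so in the MR-algebra $\env(\mathcal L)$ the meet $m:=f_0\meet f_1$ exists; a routine interval-algebra computation---valid in $\env(\mathcal L)$ since finitely generated subalgebras of an MR-algebra are interval algebras---shows $\Delta(g\join u,u)\meet\Delta(g\join v,v)=\Delta\bigl(g\join(u\meet v),\,u\meet v\bigr)$ whenever $u\meet v$ exists, so $x\meet y=\Delta(g\join m,m)$ in $\env(\mathcal L)$; and since $\mathcal L$ is upwards closed in $\env(\mathcal L)$, the meet $z$ computed in $\mathcal L$ is still the meet in $\env(\mathcal L)$ (join a would‑be larger lower bound with $z$), whence $z=\Delta(g\join m,m)$.

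To finish I would pull $m$ back into $\rsf I$: as $g\in\mathcal L$ and $\mathcal L$ is upwards closed in $\env(\mathcal L)$, we have $g\join m\in\mathcal L$, and then axiom~(c) gives $m=\Delta\bigl(g\join m,\Delta(g\join m,m)\bigr)=\Delta(g\join m,z)\in\mathcal L$; since $m$ is then the greatest lower bound of $f_0,f_1$ inside $\mathcal L$, the meet $f_0\meet f_1$ exists in $\mathcal L$ and equals $m$, so $m\in\rsf I$ by the meet-closure of $\rsf I$, and therefore $z=\Delta(g\join m,m)\in\rsf I_g$. I expect this meet-closure step to be the main obstacle---in particular, the care needed to transport meets between $\mathcal L$ and $\env(\mathcal L)$ and to verify the interval-algebra identity for the reflection $f\mapsto\Delta(g\join f,f)$---while the subalgebra identity and the remaining conditions are routine given Lemma~\ref{lem:gFF}.
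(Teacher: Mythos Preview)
Your proof is correct and follows essentially the same strategy as the paper: invoke Lemma~\ref{lem:gFF} for compatibility and upwards closure, use the interval-algebra identity $\Delta(g\join f_{0},f_{0})\meet\Delta(g\join f_{1},f_{1})=\Delta(g\join(f_{0}\meet f_{1}),f_{0}\meet f_{1})$ for the meet-closure step, and deduce $\leftGen\rsf I_{g}\rightGen=\leftGen\rsf I\rightGen$ from $f\sim\Delta(g\join f,f)$. The only tactical difference is that the paper avoids passing to $\env(\mathcal L)$ by observing directly that $(g\join f_{0})\meet(g\join f_{1})$ exists in $\mathcal L$ (both factors lie above $h_{1}\meet h_{2}$) and is therefore in $\rsf I$, whereas you first produce $f_{0}\meet f_{1}$ in the envelope via compatibility and then pull it back; both routes arrive at the same place.
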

\begin{proof}
	That $\rsf I_{g}$ is compatible and upwards-closed follows from the lemma.
	If 
	$f_{1}, f_{2}\in\rsf I$ and the meet $\Delta(g\join f_{1}, f_{1})\meet \Delta(g\join f_{2}, f_{2})$ exists.
	Let $h_{i}=\Delta(g\join f_{i}, f_{i})$.
	
	In any interval algebra, if $f_{1}\meet f_{2}$ exists, then 
	$\Delta((g\join f_{1})\meet(g\join f_{2}),  f_{1}\meet f_{2})= h_{1}\meet h_{2}$.
	
	In this case, we know that $h_{1}\meet h_{2}$ exists,  and so $(g\join f_{1})\meet (g\join f_{2})$ exists.
	This is therefore in $\rsf I$ as both factors are. As it is also in $[g, \one]$ it
	is in $\rsf I_{g}$. 
	From our remark concerning interval algebras we see that 
	$\Delta((g\join f_{1})\meet(g\join f_{2}),  h_{1}\meet h_{2})$ is below both $f_{1}$ and
	$f_{2}$ so that it must equal it in $\rsf I$. The same formula shows that $h_{1}\meet h_{2}$ is 
	in $\rsf I_{g}$.
	
	By definition, 
	for each $f\in\rsf I$ there is a $f'\in\rsf I_{g}$ such that
	$f\sim f'$, and conversely. Thus 
	$\leftGen\rsf I_{g}\rightGen=\leftGen\rsf I\rightGen$.
\end{proof}

Note that a special case of this is when $g=\one$ and we have
$\rsf I_{\one}=\Delta(\one, \rsf I)$ and that for a principal filter
$[h, \one]$ we have $[{h, \one}]_{g}=[\Delta(g\join h, h), \one]$. 

\begin{cor}\label{cor:implSS}
    The set
    $$
    g\to \rsf I=\Set{g\to f | f\in\rsf I}
    $$
    is a special subalgebra.
\end{cor}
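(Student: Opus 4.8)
The plan is to pin down membership in $g\to\rsf I$ by a single simple condition and then read off the three defining properties of a special subalgebra. The first point is purely formal: since $g\to f=\Delta(\one,\Delta(g\join f,f))\join f$, we have $f\le g\to f$ for every $f$, so $g\to\rsf I\subseteq\rsf I$ because $\rsf I$ is upwards-closed. Consequently $g\to\rsf I$ is automatically compatible (it sits inside the compatible set $\rsf I$), it contains $\one=g\to\one$, and --- exactly as in the proof of \lemref{lem:cup} --- it will be closed under $\to$ as soon as it is shown upwards-closed. So the real content is the two remaining closure conditions, and these I would get from the characterisation
$$g\to\rsf I=\{\,k\in\rsf I : g\join k=\one\,\}.$$

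To prove that characterisation I would first record two small identities valid in any cubic algebra. (1)~$g\join(g\to f)=\one$: writing $u=g\join f$, axiom (a) gives $\Delta(u,f)\join f=u$, hence $\Delta(u,f)\le u$, and axiom (a) again (with the top $\one$) gives $\Delta(\one,\Delta(u,f))\join\Delta(u,f)=\one$; therefore $g\join(g\to f)=u\join\Delta(\one,\Delta(u,f))\ge\Delta(u,f)\join\Delta(\one,\Delta(u,f))=\one$. (2)~For any $k$, $g\to k=k$ iff $g\join k=\one$: one always has $k\le g\to k$, while $(g\to k)\to k=g\join k$ by the identity $(xy)y=x\join y$, so $g\to k=k$ iff $(g\to k)\to k=\one$ iff $g\join k=\one$. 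Given these, the inclusion $\subseteq$ in the displayed equation is (1), and $\supseteq$ is (2) read as $k=g\to k$.

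Finally the closure conditions. \emph{Upwards-closure}: if $h\ge k$ with $k\in g\to\rsf I$, then $h\in\rsf I$ and $\one=g\join k\le g\join h$ forces $g\join h=\one$, so $h\in g\to\rsf I$. \emph{Closure under existing meets}: suppose $k_1,k_2\in g\to\rsf I$ and $m=k_1\meet k_2$ exists in $\mathcal L$; then $m\in\rsf I$ since $\rsf I$ is special, and it remains only to check $g\join m=\one$. Since $m$ exists, the set $\{k_1,k_2,\Delta(\one,g)\}$ is compatible: the pair $\{k_1,k_2\}$ because $m$ exists, and each pair $\{k_i,\Delta(\one,g)\}$ because $k_i\join g=\one$ (here one also uses $\Delta(\one,g)\join\Delta(\one,k_i)=\Delta(\one,g\join k_i)=\one$, since $x\mapsto\Delta(\one,x)$ is an order-automorphism by axiom (d), and \corref{cor:preFilterFIP}). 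Hence the least special subalgebra containing this set contains both $m$ and $\Delta(\one,g)$, and its compatibility gives $m\join g=\one$ by \corref{cor:preFilterFIP}. (Equivalently, one may transfer to an interval algebra, where $g\join k_1=g\join k_2=\one$ together with the existence of $k_1\meet k_2$ visibly force $g\join(k_1\meet k_2)=\one$.) I expect this last step --- the distributivity-type behaviour of $\join$ across an existing meet --- to be the only non-routine part; everything else is formal manipulation of the axioms.
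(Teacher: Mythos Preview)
Your argument is correct, and your characterisation
\[
g\to\rsf I=\{\,k\in\rsf I : g\join k=\one\,\}
\]
is exactly the content of the later \lemref{lem:implGG} (where it is phrased as $g\to\rsf I=[g,\one]\to\rsf I$). The paper, however, proves this corollary by a different route: it observes that $g\to f=\Delta(\one,\Delta(g\join f,f))\join f$ and hence that $g\to\rsf I=\Delta(\one,\rsf I_{g})\cap\rsf I$; since $\rsf I_{g}$ is special by the theorem just proved (\thmref{thm:gFF}), and $\Delta(\one,\cdot)$ is a cubic automorphism, the result follows immediately from closure of special subalgebras under intersection (\lemref{lem:intersectSpec}). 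So the paper leverages the machinery already in place and gets the result in one line, whereas you re-verify the defining conditions from scratch. Your approach is more self-contained and has the advantage of making the membership criterion explicit up front (something the paper only extracts later), but it does duplicate work --- in particular your ``existing meets'' step is essentially a local re-proof of what \thmref{thm:gFF} and \lemref{lem:intersectSpec} already package. Either way the result follows; your distributivity step $g\join(k_{1}\meet k_{2})=\one$ is indeed the only non-formal point, and the interval-algebra transfer you sketch handles it cleanly.
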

\begin{proof}
	Recall that $g\to f=\Delta(\one, \Delta(g\join f, f))\join f$. 
	Hence if
	$\rsf J=\Delta(\one, \rsf I_{g})$ then
	\begin{align*}
		\rsf J\cap\rsf I & =\Set{f\join\beta_{\rsf J}(f) | f\in\rsf I}  \\
		 & =\Set{\Delta(\one, \Delta(g\join f, f))\join f | f\in\rsf I}  \\
		 & =\Set{g\to f | f\in\rsf I}. 
	\end{align*}
\end{proof}

\begin{cor}\label{cor:gFilter}
	If $g\in\rsf F$ then 
	$$\rsf F\cap\rsf F_{g}=[g, \one]. $$
\end{cor}
\begin{proof}
	Obvious
\end{proof}

Interestingly enough the converse of \lemref{lem:interSect} is also true. 
\begin{lem}\label{lem:gFilter}
	Suppose that $\leftGen\rsf J\rightGen=\leftGen\rsf I\rightGen$
	and $\rsf I\cap\rsf J=[g, \one]$. Then
	$$\rsf J=\rsf I_{g}.$$ 
\end{lem}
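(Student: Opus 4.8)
The plan is to reduce to the representation $\mathcal M=\rsf I(\mathcal I)$ and then pin down every element of $\rsf J$ coordinate‑wise by an upward‑closure argument.

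First I would exploit $g\in\rsf I\cap\rsf J$. Since $g\in\rsf I$ and $\rsf I$ is upwards closed, $[g,\one]\subseteq\rsf I$, so by \lemref{lem:interSect} $\rsf I\cap\rsf I_g=[g,\one]\cap\rsf I=[g,\one]$; also $g\in\rsf I_g$, so $[g,\one]\subseteq\rsf J\cap\rsf I_g$. By \thmref{thm:gFF}, $\rsf I_g$ is a special subalgebra with $\leftGen\rsf I_g\rightGen=\leftGen\rsf I\rightGen=:\mathcal M$. Each of $\rsf I$, $\rsf J$, $\rsf I_g$ is a special subalgebra generating $\mathcal M$, hence a g‑cover of $\mathcal M$: such a subalgebra is $\sim$‑rigid (by compatibility, \corref{cor:preFilterFIP}, together with \lemref{lem:compEQ}) and is carried onto $\mathcal M/\sim$ by $\eta$ exactly as in the proof of \thmref{thm:existsGCovers}. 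Identifying $\mathcal M$ with $\rsf I(\mathcal I)$, $\mathcal I:=\mathcal M/\sim$, via the g‑cover $\rsf I$ as in \thmref{thm:existsGCovers} (composing with the coordinate swap $\Delta(\one,\cdot)$ if needed), I may assume $\rsf I=e_{\mathcal I}[\mathcal I]$, i.e. $\rsf I$ is the set of pairs $\brk<\one,a>$ ($a\in\mathcal I$); then $g=\brk<\one,p>$ for a unique $p\in\mathcal I$, $[g,\one]$ is the set of $\brk<\one,b>$ with $b\geq p$, and by the computation of \lemref{lem:gFF} the set $\rsf I_g$ is exactly the set of pairs $\brk<(p\join a)\to a,\,p\join a>$ ($a\in\mathcal I$).

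Next I would show $\rsf J\subseteq\rsf I_g$. Fix $r=\brk<x,y>\in\rsf J$, so $x\join y=\one$ and $c:=x\meet y$ exists, whence $x,y\geq c$. Since $\brk<\one,y>\geq r$ and $\rsf J$ is upwards closed, $\brk<\one,y>\in\rsf J$; and $\brk<\one,y>\in\rsf I$, so $\brk<\one,y>\in\rsf I\cap\rsf J=[g,\one]$, giving $y\geq p$. Using $p\leq y$, a short computation in the Boolean algebra $[c,\one]$ of $\mathcal I$ shows that $\brk<x\join p,\,y>$ is a genuine element of $\rsf I(\mathcal I)$ with $\iota$‑value $(x\join p)\meet y=p\join c$; it lies above $r$, so it lies in $\rsf J$. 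But $\brk<\one,\,p\join c>$ lies in $[g,\one]\subseteq\rsf J$ as well, and by \lemref{lem:simeq} it is $\sim$‑equivalent to $\brk<x\join p,y>$ (both have $\iota$‑value $p\join c$). Since $\rsf J$ is $\sim$‑rigid, $\brk<x\join p,y>=\brk<\one,\,p\join c>$; in particular $y=p\join c$ and $x\join p=\one$. Hence $x$ is the complement of $p\join c$ over $c$ in $[c,\one]$, i.e. $x=(p\join c)\to c$, so $r=\brk<(p\join c)\to c,\,p\join c>\in\rsf I_g$.

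Finally, $\rsf J$ and $\rsf I_g$ are g‑covers of $\mathcal M$ with $\rsf J\subseteq\rsf I_g$, and a g‑cover contained in another must coincide with it: given $h\in\rsf I_g$, surjectivity of $\eta\restrict\rsf J$ yields $h'\in\rsf J\subseteq\rsf I_g$ with $h'\sim h$, and $\sim$‑rigidity of $\rsf I_g$ gives $h=h'\in\rsf J$; thus $\rsf J=\rsf I_g$. The step I expect to be the real obstacle is the third paragraph: seeing that, once $y\geq p$ is known, one should push $r$ up by enlarging its \emph{first} coordinate to $x\join p$, so that the result falls into the $\sim$‑class of the already‑pinned element $\brk<\one,\,p\join c>\in[g,\one]$ and $\sim$‑rigidity forces equality — together with the routine check that $\brk<x\join p,y>$ really belongs to $\rsf I(\mathcal I)$ with the stated $\iota$‑value. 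A secondary technical matter is the claim that $\rsf I$, $\rsf J$, $\rsf I_g$ are g‑covers of $\mathcal M$, i.e. that a special subalgebra generating the whole algebra is a g‑cover; this is proved by the same reasoning already used for g‑covers and envelopes.
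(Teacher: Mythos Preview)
Your argument is correct but takes a more roundabout path than the paper's. You invoke \thmref{thm:existsGCovers} to transport everything into $\rsf I(\mathcal I)$ and then pin down each element of $\rsf J$ coordinate-wise; the paper stays intrinsic and never leaves the abstract cubic algebra. Its key step is short: for $h\in\rsf J$, pick $f\in\rsf I$ with $f\sim h$ and set $h'=\Delta(g\join f,f)\in\rsf I_g$; then $h\join f\in\rsf I\cap\rsf J=[g,\one]$ gives $g\le h\join f$, while $h\sim h'\le g\join f\in\rsf J$ together with compatibility in $\rsf J$ (via \lemref{lem:compEQ}) gives $h\le g\join f$, so $h\join f=g\join f=h'\join f$ and hence $h=\Delta(h\join f,f)=\Delta(h'\join f,f)=h'\in\rsf I_g$. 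The reverse inclusion is exactly your final paragraph. What your route buys is concreteness --- every equality is visible in coordinates --- at the cost of importing the representation machinery and verifying auxiliary facts (that $\brk<x\join p,y>$ really lies in $\rsf I(\mathcal I)$ with the claimed $\iota$-value, that the isomorphism carries $\rsf I$ to $e_{\mathcal I}[\mathcal I]$ after a swap, etc.). Both proofs rest on the same background fact, namely that a special subalgebra generating $\mathcal M$ meets every $\sim$-class (equivalently, is a g-cover); you flag this explicitly, while the paper uses it without comment when it writes ``we can find $f\in\rsf I$ \dots\ with $h'\sim h$'' and later ``there is some $h'\in\rsf J$ with $h\sim h'$''.
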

\begin{proof}
	Clearly $[g, \one]\subseteq\rsf J$. 
	
	For arbitrary $h\in\rsf J$ we can find $f\in\rsf I$ and $h'\in\rsf I_{g}$ with
	$\Delta(g\join f, f)=h'\sim h$. Then $h'\join f= g\join f$.
	
	Also $h\join f\in\rsf I\cap\rsf J$ and so $g\le h\join f$. Now $h\sim h'\le g\join f\in\rsf J$
	implies $h\le g\join f$ also. Thus $g\join f= h\join f= h'\join f$.
	
	As $f\sim h\sim h'$ we have $h'=\Delta(h'\join f, f)= \Delta(h\join f, f)= h$.
	
	Thus $\rsf J\subseteq\rsf I_{g}$.
	
	The reverse implication follows as 
	$\leftGen\rsf J\rightGen= \leftGen\rsf I\rightGen= \leftGen\rsf I_{g}\rightGen$
	and so if $h\in\rsf I_{g}$ there is some $h'\in\rsf J$ with $h\sim 
	h'$. As $h$ and $h'$ are compatible (as $\rsf J\subseteq\rsf I_{g}$) 
	we have $h=h'\in\rsf J$. 
\end{proof}

\begin{cor}\label{cor:Idempotence}
	Let $g, h\in\rsf I$. Then 
	\begin{enumerate}[(a)]
		\item $\rsf I=(\rsf I_{g})_{g}$; 
	
		\item $(\rsf I_{g})_{h}=(\rsf I_{g})_{g\join h}$. 
	\end{enumerate}
\end{cor}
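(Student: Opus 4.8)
The plan is to deduce both parts from Lemma~\ref{lem:gFilter} (two upwards-closed implication subalgebras with the same generated subalgebra, whose intersection is a principal subalgebra $[g,\one]$, must be related by $\rsf J=\rsf I_{g}$), using the intersection formula of Lemma~\ref{lem:interSect} and the identity $\leftGen\rsf I_{g}\rightGen=\leftGen\rsf I\rightGen$ from Theorem~\ref{thm:gFF}.

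For (a): since $g\in\rsf I$ and $\rsf I$ is upwards closed we have $[g,\one]\subseteq\rsf I$, so Lemma~\ref{lem:interSect} gives $\rsf I\cap\rsf I_{g}=[g,\one]\cap\rsf I=[g,\one]$. Combined with $\leftGen\rsf I\rightGen=\leftGen\rsf I_{g}\rightGen$, an application of Lemma~\ref{lem:gFilter} with $\rsf I_{g}$ playing the role of the ambient subalgebra and $\rsf I$ playing the role of $\rsf J$ yields $\rsf I=(\rsf I_{g})_{g}$.

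For (b): put $\rsf K=\rsf I_{g}$. From the previous paragraph $[g,\one]\subseteq\rsf K$, and since $g\join h\geq g$ and $\rsf K$ is upwards closed, $g\join h\in\rsf K$. Lemma~\ref{lem:interSect}, applied to $\rsf K$ and $h$, gives $\rsf K\cap\rsf K_{h}=[h,\one]\cap\rsf K$, and I claim this equals $[g\join h,\one]$. The inclusion $\supseteq$ is immediate since $g\join h\in\rsf K$ and $\rsf K$ is upwards closed. For $\subseteq$, take $x\in[h,\one]\cap\rsf K$; then $x\geq h\in\rsf I$, so $x\in\rsf I$ by upward-closure of $\rsf I$, hence $x\in\rsf I\cap\rsf I_{g}=[g,\one]$ by (a), so $x\geq g$ and therefore $x\geq g\join h$. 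Since $\leftGen\rsf K_{h}\rightGen=\leftGen\rsf K\rightGen=\leftGen\rsf I\rightGen$ by Theorem~\ref{thm:gFF}, Lemma~\ref{lem:gFilter} (ambient subalgebra $\rsf K$, $\rsf J=\rsf K_{h}$, with $g\join h$ in place of $g$) gives $\rsf K_{h}=\rsf K_{g\join h}$, i.e.\ $(\rsf I_{g})_{h}=(\rsf I_{g})_{g\join h}$.

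The one point needing care is that $h$ need not lie in $\rsf I_{g}$, so Corollary~\ref{cor:gFilter} cannot be invoked directly for $\rsf I_{g}$ and $h$; the step that makes the argument go through is observing that lying in the principal subalgebra $[h,\one]$ forces an element back into $\rsf I$ (because $h\in\rsf I$ and $\rsf I$ is upwards closed), which lets us re-use the formula $\rsf I\cap\rsf I_{g}=[g,\one]$ proved in (a). Everything else is bookkeeping with lemmas already established.
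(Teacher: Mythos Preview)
Your proof is correct and follows essentially the same route as the paper: both parts are reduced to Lemma~\ref{lem:gFilter} via the intersection formula of Lemma~\ref{lem:interSect} and the equality $\leftGen\rsf I_{g}\rightGen=\leftGen\rsf I\rightGen$. The paper's argument for (b) is the chain $\rsf I_{g}\cap(\rsf I_{g})_{h}=[h,\one]\cap\rsf I_{g}=[h,\one]\cap\rsf I\cap\rsf I_{g}=[h,\one]\cap[g,\one]=[g\join h,\one]$, which is exactly your two-inclusion argument compressed into equalities; your final remark explaining why $h\notin\rsf I_{g}$ is not an obstacle is a useful clarification the paper leaves implicit.
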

\begin{proof}
	\begin{enumerate}[(a)]
		\item Since $\rsf I\cap\rsf I_{g}=[g, \one]$and 
		$\leftGen\rsf I\rightGen= \leftGen\rsf I_{g}\rightGen$ the lemma 
		implies $\rsf I=(\rsf I_{g})_{g}$. 
	
		\item 
		\begin{align*}
			\rsf I_{g}\cap(\rsf I_{g})_{h} & = [h, \one]\cap\rsf I_{g}  \\
			 &  =[h, \one]\cap\rsf I\cap\rsf I_{g} \\
			 &  =[h, \one]\cap[g, \one] \\
			 &  =[h\join g, \one]. 
		\end{align*}
		The lemma now implies $(\rsf I_{g})_{h}=(\rsf I_{g})_{g\join h}$. 
	\end{enumerate}
\end{proof}

\subsection{Relative Complements}
Let
$\rsf J\subseteq\rsf I$ be two special subalgebras. There are 
several ways to define the relative complement of $\rsf J$ in $\rsf I$. 

\begin{defn}\label{def:impl}
	Let $\rsf J\subseteq\rsf I$ be two special subalgebras. Then
	\begin{enumerate}[(a)]
		\item $\rsf J\supset\rsf I=\bigcap\Set{\rsf H | \rsf H\join\rsf J=\rsf I}$; 
	
		\item $\rsf J\Rightarrow\rsf I=\bigvee\Set{\rsf H |\rsf H\subseteq\rsf 
		F\text{ and }\rsf H\cap\rsf J={\Set{\one}}}$; 
	
		\item $\rsf J\to\rsf I=\Set{h\in\rsf I |\forall g\in\rsf J\ h\join 
		g=\one}$. 
	\end{enumerate}
\end{defn}

We will now show that these all define the same set. 

\begin{lem}\label{lem:twoThreeSame}
	$\rsf J\to\rsf I=\rsf J\Rightarrow\rsf I$. 
\end{lem}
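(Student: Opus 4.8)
The plan is to prove the two sets $\rsf J\to\rsf I$ and $\rsf J\Rightarrow\rsf I$ coincide by a double inclusion, using the characterization of $\rsf J\Rightarrow\rsf I$ as a join $\bigvee$ of special subalgebras $\rsf H$ with $\rsf H\subseteq\rsf I$ and $\rsf H\cap\rsf J=\Set{\one}$, together with the explicit description of joins of special subalgebras from \lemref{lem:cup}. The first observation is that $\rsf J\to\rsf I$ is itself a special subalgebra contained in $\rsf I$: it is upwards-closed and $\to$-closed by routine checking (if $h\in\rsf J\to\rsf I$ and $h'\geq h$ then $h'\join g\geq h\join g=\one$ for all $g\in\rsf J$), it is compatible because by \corref{cor:preFilterFIP} compatibility of a subset of $\rsf I$ reduces to $x\join\Delta(\one,y)=\one$ for all $x,y$ in it, which follows from $\rsf I$ being compatible, and it is closed under existing meets since if $h_1,h_2\in\rsf J\to\rsf I$ and $h_1\meet h_2$ exists then $(h_1\meet h_2)\join g=(h_1\join g)\meet(h_2\join g)=\one$ in the relevant interval algebra. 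Moreover $(\rsf J\to\rsf I)\cap\rsf J=\Set{\one}$ directly from the defining condition. Hence $\rsf J\to\rsf I$ is one of the $\rsf H$'s in the family defining $\rsf J\Rightarrow\rsf I$, which gives $\rsf J\to\rsf I\subseteq\rsf J\Rightarrow\rsf I$.

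For the reverse inclusion I would argue that every element of $\rsf J\Rightarrow\rsf I$ lies in $\rsf J\to\rsf I$. Using \lemref{lem:cup} repeatedly, an element of the join $\bigvee\rsf H$ over finitely many of the $\rsf H_i$ is a meet $h_1\meet\cdots\meet h_n$ (where it exists) with $h_i\in\rsf H_i$; but in fact it suffices to show the claim for a single $\rsf H$ with $\rsf H\subseteq\rsf I$ and $\rsf H\cap\rsf J=\Set{\one}$, since compatibility of $\rsf H_1\cup\cdots\cup\rsf H_n$ and the meet formula reduce the general case to it — or more cleanly, since $\rsf J\to\rsf I$ is already shown to be special and $\rsf I$-contained, and contains each such $\rsf H$ (see below), it contains the special subalgebra they generate. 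So the crux is: if $\rsf H\subseteq\rsf I$ is special with $\rsf H\cap\rsf J=\Set{\one}$ and $h\in\rsf H$, then for every $g\in\rsf J$ we have $h\join g=\one$. Here I would use that $\rsf H$ and $\rsf J$ both sit inside the compatible set $\rsf I$ (taking the ambient special subalgebra to be $\rsf I$ itself, or $\rsf I\join\rsf J=\rsf I$), so $h$ and $g$ are compatible, meaning $h\meet g$ exists and lies in both $\rsf H$ and $\rsf J$ — the latter because $\rsf J$ is closed under existing meets with... wait, $g\in\rsf J$ and $h\meet g\leq g$ with $h\meet g\in\rsf I$, so $h\meet g\in\rsf J$ by upwards-closure applied downward? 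No — rather $h\meet g\in\rsf H$ (as $\rsf H$ is meet-closed for existing meets and $h\in\rsf H$, but we need the other factor in $\rsf H$). The clean route: $h\meet g$ is below $g\in\rsf J$ hence, being in $\rsf I$, we cannot immediately conclude membership in $\rsf J$; instead use compatibility to write $h\meet g\in\rsf H\cap\rsf J$ by noting $h\meet g\in\rsf H$ since $\rsf H$ absorbs all available meets with its elements only when both factors are in $\rsf H$. I will instead deduce it from \lemref{lem:compEQ}: since $h$ and $g$ are compatible, $h\join\Delta(\one,g)=\one$; and I want $h\join g=\one$, i.e. $g\preceq$-dominates, which by \lemref{lem:compEQ} would follow from $g\preceq h$ or symmetric reasoning — the precise massaging of $\preceq$ and compatibility is where care is needed.

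The main obstacle I anticipate is exactly this last step: showing that for a special $\rsf H\subseteq\rsf I$ with trivial intersection with $\rsf J$, no element of $\rsf H$ can have a nontrivial join with an element of $\rsf J$. The honest argument likely goes: given $h\in\rsf H$, $g\in\rsf J$, both compatible (inside $\rsf I$), so $h\meet g$ exists; then $h\meet g\in\rsf H$ requires noting that $\rsf H_g$-type constructions or just that $h\meet g\geq$ nothing forces it, so one argues $h\meet g\in\rsf H$ because $h\meet g$ is obtained as an available meet — but we only know $h\in\rsf H$, not $g\in\rsf H$. The resolution is to observe $h\meet g\preceq h$ and $h\meet g\in\rsf I$, and separately that the special subalgebra generated by $\rsf H\cup\{h\meet g\}$ still meets $\rsf J$ trivially unless $h\meet g=\one$; alternatively and most directly, work inside an interval algebra representation where $\rsf I$ embeds, use \lemref{lem:fipPreFilter}, and compute: there $h,g$ correspond to intervals, $h\meet g=\one$ would be needed, and if $h\meet g\neq\one$ one produces an element strictly below $\one$ in $\rsf H\cap\rsf J$, contradiction — provided $\rsf H$ being meet-closed and upwards-closed forces $h\meet g\in\rsf H$ via $h\meet g = h\meet(\text{something in }\rsf H)$. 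I expect the author's proof to exploit that $\rsf H\subseteq\rsf J\to\rsf I$ is almost definitional once one sees that $\rsf H\cap\rsf J=\Set{\one}$ plus compatibility of $\rsf H\cup\rsf J$ (automatic inside $\rsf I$) forces $h\join g=\one$ for $h\in\rsf H, g\in\rsf J$, because $h\meet g\in\rsf H\cap\rsf J=\Set\one$ gives $h\meet g=\one$, whence $h=g=\one$ — so in fact $\rsf H=\Set{\one}$? That cannot be right in general, so the subtlety is that compatibility gives $h\join\Delta(\one,g)=\one$, not $h\meet g=\one$, and the membership $h\meet g\in\rsf H$ fails; thus $\rsf J\to\rsf I$ is genuinely larger, and the correct proof of $\supseteq$ must instead show each $h\in\rsf J\Rightarrow\rsf I$ satisfies $h\join g=\one$ by a fip/interval-algebra computation showing compatible elements of a special subalgebra disjoint from $\rsf J$ join to $\one$ with every element of $\rsf J$ — this interval-algebra verification is the step I would spend the real effort on.
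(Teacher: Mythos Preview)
Your overall architecture is right: double inclusion, with the forward direction handled by showing that $\rsf J\to\rsf I$ itself is one of the $\rsf H$'s appearing in the join defining $\rsf J\Rightarrow\rsf I$, and the reverse direction by showing every such $\rsf H$ is contained in $\rsf J\to\rsf I$. Your treatment of the forward direction is in fact more careful than the paper's, since you explicitly verify that $\rsf J\to\rsf I$ is a special subalgebra; the paper simply observes $(\rsf J\to\rsf I)\cap\rsf J=\Set{\one}$ via $h\join h=\one$.

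The gap is in the reverse inclusion, and it is the step you yourself flag as ``the crux'': given $\rsf H\subseteq\rsf I$ special with $\rsf H\cap\rsf J=\Set{\one}$, $h\in\rsf H$, $g\in\rsf J$, show $h\join g=\one$. All of your attempts --- compatibility and existing meets, the relation $\preceq$ via \lemref{lem:compEQ}, an interval-algebra computation --- are detours, and none of them close. The argument is one line and uses nothing but upward-closure: $h\join g\geq h$ so $h\join g\in\rsf H$; $h\join g\geq g$ so $h\join g\in\rsf J$; hence $h\join g\in\rsf H\cap\rsf J=\Set{\one}$. That is exactly the paper's proof. You were looking downward (at meets) when you should have looked upward (at joins), and the ``special'' hypotheses on $\rsf H$ beyond upward-closure are irrelevant for this step.
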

\begin{proof}
	Let $h\in(\rsf J\to\rsf I)\cap\rsf J$. Then $\one=h\join h=h$. Thus
	$\rsf J\to\rsf I\subseteq\rsf J\Rightarrow\rsf I$. 
	
	Suppose that $\rsf H\subseteq\rsf I$ and $\rsf H\cap\rsf J=\Set{\one}$. 
	Let $h\in\rsf H$ and $g\in\rsf J$. Then
	$h\join g\in\rsf H\cap\rsf J=\Set{\one}$ so that $h\join g=\one$. 
	Hence $\rsf H\subseteq(\rsf J\to\rsf I)$ and so
	$\rsf J\Rightarrow\rsf I\subseteq\rsf J\to\rsf I$. 
\end{proof}

\begin{lem}\label{lem:smallH}
	Let $h\in\rsf I$ and $g\in\rsf J$ be such that $g\join h<\one$. Then
	$h\notin g\to\rsf I$. 
\end{lem}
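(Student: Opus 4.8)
The plan is to prove something stronger and purely local: \emph{every} element of $g\to\rsf I$ joins with $g$ to give $\one$, so no $h$ with $g\join h<\one$ can belong to it. By \corref{cor:implSS} we have $g\to\rsf I=\Set{g\to f | f\in\rsf I}$, so it suffices to show that $g\join(g\to f)=\one$ for every $f\in\rsf I$ (indeed for every $f\in\mathcal L$).

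For this I would invoke two standard facts. First, a cubic algebra is an implication algebra under $x\to y=\Delta(\one,\Delta(x\join y,y))\join y$, and in any implication algebra the contraction identity $x\to(x\to y)=x\to y$ holds; this is in \cite{Abb:bk}, and in any case it follows in a line from Abbott's axioms together with antisymmetry of the induced order, since $(x\to(x\to y))\to(x\to y)=((x\to y)\to x)\to x=x\to x=\one$ while $(x\to y)\to(x\to(x\to y))=x\to\bigl((x\to y)\to(x\to y)\bigr)=\one$. Second, as already noted in the paper, in a cubic algebra $a\to b=b$ if and only if $a\join b=\one$. Taking $a=g$ and $b=g\to f$ and applying contraction gives $g\to(g\to f)=g\to f$, hence $g\join(g\to f)=\one$, as wanted. (Alternatively, to avoid citing the implication-algebra literature: $g$ and $f$ lie in a finitely generated subalgebra of the MR-algebra $\env(\mathcal L)$, which is an interval algebra by \thmref{thm:transfer} and the remarks following it; there $g\to f$ is the ordinary interval-algebra implication, for which $g\join(g\to f)=\one$ is a routine Boolean check, and equations of terms holding in $\env(\mathcal L)$ hold in the subalgebra $\mathcal L$.)

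The lemma is then immediate by contraposition: if $h\in g\to\rsf I$ then $h=g\to f$ for some $f\in\rsf I$, and by the above $g\join h=g\join(g\to f)=\one$; so if instead $g\join h<\one$ then $h\notin g\to\rsf I$. There is essentially no obstacle here; the only points that need care are (i) reading $g\to\rsf I$ as the single-element operation of \corref{cor:implSS} rather than as a relative complement $\rsf J\to\rsf I$ in the sense of \defref{def:impl}, and (ii) if one prefers the self-contained route, checking that $g$ together with $f$ generates a subalgebra of $\env(\mathcal L)$ small enough to be an interval algebra, so that the Boolean computation of $g\join(g\to f)=\one$ genuinely applies.
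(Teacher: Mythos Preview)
Your proof is correct and follows the same line as the paper's: the paper's proof is the single sentence ``This is clear as $h=g\to f$ implies $h\join g=\one$,'' and you have simply supplied the justification for that implication via the contraction law $g\to(g\to f)=g\to f$ together with the paper's noted equivalence $a\to b=b\iff a\join b=\one$. Your care about which meaning of $g\to\rsf I$ is intended is well placed but not an issue here, since \corref{cor:implSS} is indeed the relevant definition and \lemref{lem:implGG} later reconciles it with the relative-complement version.
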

\begin{proof}
	This is clear as $h=g\to f$ implies $h\join g=\one$. 
\end{proof}

\begin{thm}\label{thm:oneThreeEqual}
	$\rsf J\supset\rsf I=\rsf J\to\rsf I$. 
\end{thm}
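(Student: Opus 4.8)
The goal is to show the two subalgebras $\rsf J\supset\rsf I=\bigcap\{\rsf H\mid\rsf H\join\rsf J=\rsf I\}$ and $\rsf J\to\rsf I=\{h\in\rsf I\mid\forall g\in\rsf J\ h\join g=\one\}$ coincide. I will prove two inclusions. For $\rsf J\to\rsf I\subseteq\rsf J\supset\rsf I$, it suffices to show that $\rsf J\to\rsf I$ is one of the $\rsf H$ appearing in the intersection defining $\rsf J\supset\rsf I$, i.e.\ that $(\rsf J\to\rsf I)\join\rsf J=\rsf I$; then $\rsf J\supset\rsf I$, being the intersection of all such $\rsf H$, is contained in $\rsf J\to\rsf I$. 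Wait — that gives the reverse containment. Let me restate: if $(\rsf J\to\rsf I)\join\rsf J=\rsf I$ then $\rsf J\to\rsf I$ is a member of the family, so $\rsf J\supset\rsf I\subseteq\rsf J\to\rsf I$. For the other direction $\rsf J\to\rsf I\subseteq\rsf J\supset\rsf I$, I must show $\rsf J\to\rsf I\subseteq\rsf H$ for every $\rsf H$ with $\rsf H\join\rsf J=\rsf I$; this is where the real work lies.

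\textbf{Step 1: $(\rsf J\to\rsf I)\join\rsf J=\rsf I$.} First check that $\rsf J\to\rsf I$ is a special subalgebra — it is upwards-closed (if $h'\geq h$ and $h\in\rsf J\to\rsf I$ then $h'\join g\geq h\join g=\one$), it is $\to$-closed by upwards-closure, and compatibility is automatic since everything lives inside the compatible set $\rsf I$; meet-closure follows similarly. Then $\rsf J\to\rsf I$ is compatible with $\rsf J$ (again inside $\rsf I$), so $(\rsf J\to\rsf I)\join\rsf J$ exists and by \lemref{lem:cup} equals $\{h\meet g\mid h\in\rsf J\to\rsf I,\ g\in\rsf J,\ h\meet g\text{ exists}\}$. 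This is contained in $\rsf I$ since both factors are. For the reverse, given $x\in\rsf I$, I want to write $x=h\meet g$ with $h\in\rsf J\to\rsf I$ and $g\in\rsf J$. The natural candidate is $g=\one$... but $\one$ may not do. Instead, working in an interval algebra via the transfer/embedding machinery, I expect to take $g\in\rsf J$ and $h=g\to x$ (which lies in $g\to\rsf I\subseteq\rsf I$ by \corref{cor:implSS}-style reasoning and satisfies $h\join g=\one$, hence $h\in\rsf J\to\rsf I$ once one checks $h\join g'=\one$ for all $g'\in\rsf J$ — here compatibility of $\rsf J$ with itself gives $g'\join\Delta(\one,g)=\one$, and combining with $h\geq\Delta(\one,g)$-type facts). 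Then $h\meet g=(g\to x)\meet g=x\meet g$, and choosing $g=\one$ finally works if $\one\in\rsf J$; more carefully one observes $x = (\one\to x)\meet\one$ with $\one\to x=x\in\rsf J\to\rsf I$? That needs $x\join g=\one$ for all $g\in\rsf J$, which fails in general. So instead I use: $x\in\rsf I$, pick any $g\in\rsf J$; then $x\join g\in\rsf I$ and $x=(x\join g)\meet(x\join\Delta(\one,g)\dots)$ — the decomposition from \lemref{lem:compEQ}/the $\preceq$ lemma — exhibiting $x$ as a meet of an element of $\rsf J\to\rsf I$ and an element of $\rsf J$.

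\textbf{Step 2 (the main obstacle): $\rsf J\to\rsf I\subseteq\rsf H$ whenever $\rsf H\join\rsf J=\rsf I$.} Let $h\in\rsf J\to\rsf I$, so $h\in\rsf I$ and $h\join g=\one$ for all $g\in\rsf J$. Since $\rsf H\join\rsf J=\rsf I\ni h$, by \lemref{lem:cup} we have $h=a\meet b$ for some $a\in\rsf H$, $b\in\rsf J$ with the meet existing. Then $h\le a$ and $h\le b$, and $h\join b\geq h\join b$... the key point: $h\join b=\one$ because $b\in\rsf J$ and $h\in\rsf J\to\rsf I$. Combined with $h\le b$ this forces... $\one=h\join b=b$, so $b=\one$ and hence $h=a\meet\one=a\in\rsf H$. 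That is the whole argument, provided $b=\one$ genuinely follows — it does: $h\le b$ gives $h\join b=b$, and $h\join b=\one$, so $b=\one$. Thus $h=a\in\rsf H$. I expect the only subtlety to be justifying the representation $h=a\meet b$ rigorously via \lemref{lem:cup} (needing $\rsf H\cup\rsf J$ compatible, which holds since $\rsf H\join\rsf J$ is assumed to exist and equal $\rsf I$), and confirming the meet-existence bookkeeping; the rest is immediate. Combining Steps 1 and 2 yields $\rsf J\supset\rsf I=\rsf J\to\rsf I$.
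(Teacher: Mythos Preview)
Your Step 2 is correct and is essentially the paper's argument for $\rsf J\to\rsf I\subseteq\rsf J\supset\rsf I$: writing $h=a\meet b$ with $a\in\rsf H$, $b\in\rsf J$, then $h\le b$ together with $h\join b=\one$ forces $b=\one$, so $h=a\in\rsf H$. (The paper phrases it as $k=(k\join h)\meet(k\join g)=k\join h$, hence $k\ge h$ and $k\in\rsf H$ by upwards closure; your version is a slight streamlining of the same idea.)

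Your Step 1, however, has a genuine gap. You propose to prove $\rsf J\supset\rsf I\subseteq\rsf J\to\rsf I$ by showing that $(\rsf J\to\rsf I)\join\rsf J=\rsf I$, so that $\rsf J\to\rsf I$ itself appears in the family being intersected. But this equality is precisely the statement that $\rsf J$ is \emph{$\rsf I$-Boolean} in the sense of \defref{def:Boolean}, and that is \emph{not} assumed here --- indeed the paper later takes care to distinguish Boolean from weakly Boolean subalgebras exactly because this join need not recover all of $\rsf I$. Your attempted decomposition of an arbitrary $x\in\rsf I$ as a meet of something in $\rsf J\to\rsf I$ with something in $\rsf J$ is just a restatement of what you need to prove, and your candidate constructions do not go through (as you yourself notice midway).

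The paper avoids this trap by arguing pointwise and contrapositively: if $h\notin\rsf J\to\rsf I$, pick a single witness $g\in\rsf J$ with $h\join g<\one$. By \lemref{lem:smallH} we have $h\notin g\to\rsf I$. Now $[g,\one]\join(g\to\rsf I)=\rsf I$ (this is \lemref{lem:implgTwice}: principal subalgebras \emph{are} $\rsf I$-Boolean), and since $g\in\rsf J$ and $\rsf J$ is upwards closed we get $[g,\one]\subseteq\rsf J$, hence $\rsf J\join(g\to\rsf I)=\rsf I$. Thus $g\to\rsf I$ is one of the $\rsf H$'s in the intersection defining $\rsf J\supset\rsf I$, and it omits $h$. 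So $h\notin\rsf J\supset\rsf I$. The point is that you only ever need Boolean-ness of \emph{principal} subalgebras $[g,\one]$, not of $\rsf J$ itself.
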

\begin{proof}
	Suppose that $h\notin \rsf J\to\rsf I$ so that there is some 
	$g\in\rsf J$ with $h\join g<\one$. Then $h\notin g\to\rsf I$ and 
	clearly $\rsf I=[g, \one]\join(g\to\rsf I)$ so that $\rsf 
	G\supset\rsf I\subseteq g\to\rsf I$ does not contain $h$. Thus
	$\rsf J\supset\rsf I\subseteq\rsf J\to\rsf I$. 
	
	Conversely if $\rsf H\join\rsf J=\rsf I$ and $k\in\rsf J\to\rsf I$ 
	then there is some $h\in\rsf H$ and $g\in\rsf J$ with $k=h\meet g$. 
	But then 
	\begin{align*}
		k & =k\join(h\meet g)  \\
		 & =(k\join h)\meet(k\join g)  \\
		 & =k\join h && \text{ as }k\join g=\one
	\end{align*}
	and so $k\geq h$ must be in $\rsf H$. 
	Thus $\rsf J\to\rsf I \subseteq\rsf J\supset\rsf I$. 
\end{proof}

We earlier defined a filter $g\to\rsf I$. We now show that this new 
definition of $\to$ extends this earlier definition. 

\begin{lem}\label{lem:implGG}
	Let $g\in\rsf I$. Then 
	$$
	g\to\rsf I=[g, \one]\to\rsf I. 
	$$
\end{lem}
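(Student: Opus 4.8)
The plan is to show the two sets coincide by a double inclusion, using the characterization of $g\to\rsf I$ established in Corollary~\ref{cor:implSS} together with the extensional description of $[g,\one]\to\rsf I$ from Definition~\ref{def:impl}(c). First I would unfold the right-hand side: by definition $[g,\one]\to\rsf I=\Set{h\in\rsf I | \forall k\in[g,\one]\ h\join k=\one}$. Since every $k\in[g,\one]$ satisfies $k\geq g$, and $h\join(\cdot)$ is order-preserving with $\one$ as top, the universally quantified condition reduces to the single requirement $h\join g=\one$; that is, $[g,\one]\to\rsf I=\Set{h\in\rsf I | h\join g=\one}$. So the real content is to identify this set with $g\to\rsf I=\Set{g\to f | f\in\rsf I}$.

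For the inclusion $g\to\rsf I\subseteq[g,\one]\to\rsf I$, I would use Lemma~\ref{lem:smallH}'s underlying fact in the forward direction: for any $f\in\rsf I$ we have $(g\to f)\join g=\one$ (this is the implication-algebra identity $a\join(a\to b)=\one$ applied in the cubic setting, or equivalently the observation already used in the proof of \lemref{lem:smallH} that $h=g\to f$ forces $h\join g=\one$). Combined with Corollary~\ref{cor:implSS} telling us $g\to f\in\rsf I$, this gives $g\to f\in[g,\one]\to\rsf I$. For the reverse inclusion, suppose $h\in\rsf I$ with $h\join g=\one$. Then I claim $h=g\to h$: indeed $g\to h=\Delta(\one,\Delta(g\join h,h))\join h=\Delta(\one,\Delta(h,h))\join h$ since $g\join h=h\join g=\one$ wait — more carefully, one uses $g\join h=\one$ so $g\join h$ need not equal $h$; instead compute via the interval-algebra picture: working above a common lower bound and using $h\join g=\one$, the term $\Delta(g\join h,h)=\Delta(\one,h)$ only when $g\join h=\one$, which holds, so $\Delta(g\join h,h)=\Delta(\one,h)$ and hence $g\to h=\Delta(\one,\Delta(\one,h))\join h=h\join h=h$. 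Thus $h=g\to h$ with $h\in\rsf I$, so $h\in g\to\rsf I$.

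The main obstacle I anticipate is the middle computation showing $h\join g=\one$ implies $h=g\to h$; the clean way around it is to invoke the transfer philosophy already used throughout the paper — reduce to $\rsf L$ being an interval algebra $\rsf I(B)$ (every finitely generated subalgebra is), where $g\to h$ unwinds to an explicit Boolean expression and $h\join g=\one$ becomes a containment of complements that makes the expression collapse to $h$ by routine Boolean manipulation. Once both inclusions are in hand the lemma follows immediately; no delicate compatibility bookkeeping is needed since everything takes place inside the single special subalgebra $\rsf I$.
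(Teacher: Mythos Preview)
Your proposal is correct and follows essentially the same two-inclusion route as the paper: the forward inclusion via $(g\to f)\join g=\one$ (the paper phrases this as $k\join(g\to f)\geq g\join(g\to f)=\one$ for any $k\geq g$), and the reverse inclusion via $h\join g=\one\Rightarrow h=g\to h$. Your direct computation $g\to h=\Delta(\one,\Delta(g\join h,h))\join h=\Delta(\one,\Delta(\one,h))\join h=h$ is exactly what is needed, so the interval-algebra fallback you contemplate is unnecessary.
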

\begin{proof}
	Let $g\to f\in g\to\rsf I$ and $k\in[g, \one]$. Then
	$k\join(g\to f)\geq g\join (g\to f)=\one$. Thus $g\to f\in[g, 
	\one]\to\rsf I$ and so $g\to \rsf I\subseteq[g, 
	\one]\to\rsf I$. 
	
	Conversely, if $h\in[g, \one]\to\rsf I$ then $h\join g=\one$ and so
	$h$ is the complement of $g$ in $[h, \one]$. Thus
	$h=g\to h\in g\to\rsf I$ and so $[g,\one]\to\rsf I\subseteq
	g\to \rsf I$. 
\end{proof}

\begin{lem}\label{lem:inclImpl}
	Let $\rsf J\subseteq\rsf H\subseteq\rsf I$. Then
	$$
	\rsf J\to\rsf H\subseteq\rsf J\to\rsf I. 
	$$
\end{lem}
\begin{proof}
	If $h\in\rsf H$ and $h\join g=\one$ for all $g\in\rsf J$ then
	$h\in\rsf J\to\rsf I$. 
\end{proof}

\begin{cor}\label{cor:inclImpl}
	Let $\rsf J\subseteq\rsf H\subseteq\rsf I$ and
	$\rsf J\to\rsf I\subseteq\rsf H$. Then
	$$
	\rsf J\to\rsf H=\rsf J\to\rsf I. 
	$$
\end{cor}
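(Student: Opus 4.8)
The plan is to get the non-trivial inclusion $\rsf J\to\rsf I\subseteq\rsf J\to\rsf H$ essentially for free from the hypotheses, and to combine it with \lemref{lem:inclImpl} to close the loop. First I would observe that \lemref{lem:inclImpl}, applied to the chain $\rsf J\subseteq\rsf H\subseteq\rsf I$, already gives $\rsf J\to\rsf H\subseteq\rsf J\to\rsf I$, so one direction of the claimed equality is immediate. For the reverse inclusion, let $k\in\rsf J\to\rsf I$. By definition $k\in\rsf I$ and $k\join g=\one$ for every $g\in\rsf J$; these two facts say precisely that $k$ is a witness to membership in $\rsf J\to\rsf I$. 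But the extra hypothesis $\rsf J\to\rsf I\subseteq\rsf H$ tells us that in fact $k\in\rsf H$, and since the defining condition ``$k\join g=\one$ for all $g\in\rsf J$'' does not mention $\rsf I$ at all, it persists verbatim as the condition witnessing $k\in\rsf J\to\rsf H$. Hence $k\in\rsf J\to\rsf H$, and we conclude $\rsf J\to\rsf I\subseteq\rsf J\to\rsf H$.

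Putting the two inclusions together yields $\rsf J\to\rsf H=\rsf J\to\rsf I$, as required. The argument is a pure book-keeping manipulation of \defref{def:impl}(c): the only substantive ingredients are (i) the already-proved monotonicity in \lemref{lem:inclImpl} and (ii) the fact that the relation ``$h\join g=\one$ for all $g\in\rsf J$'' is intrinsic to $h$ and $\rsf J$, independent of which ambient special subalgebra one regards $h$ as lying in. No appeal to compatibility, upward-closure, or the $\Delta$-structure is needed beyond what was used to establish those prior results.

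I do not anticipate a genuine obstacle here; if anything, the one point worth stating carefully is that $\rsf J\to\rsf H$ is well-defined as a set, i.e.\ that $\rsf J\subseteq\rsf H$ so that \defref{def:impl}(c) applies to the pair $(\rsf J,\rsf H)$ — this is part of the hypothesis. One could alternatively phrase the whole corollary via \thmref{thm:oneThreeEqual}, rewriting each side as $\rsf J\supset\rsf H$ and $\rsf J\supset\rsf I$ and arguing with the $\bigcap$-definition, but the direct set-membership proof above is shorter and avoids re-deriving the coincidence of the three operations inside $\rsf H$.
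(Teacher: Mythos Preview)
Your proof is correct and follows essentially the same approach as the paper's own proof: one inclusion comes directly from \lemref{lem:inclImpl}, and the other is obtained by noting that any $h\in\rsf J\to\rsf I$ lies in $\rsf H$ by hypothesis and already satisfies the defining join condition for $\rsf J\to\rsf H$. The paper's proof is simply a terser rendering of the same two-step argument.
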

\begin{proof}
	LHS$\subseteq$RHS by the lemma. Conversely if $h\in\rsf J\to\rsf I$ 
	then 
	$h\in\rsf H$ has the defining property for $\rsf J\to\rsf H$ and so 
	is in $\rsf J\to\rsf H$. 
\end{proof}

\begin{cor}\label{cor:conJoint}
	$$
	\rsf J\to(\rsf J\join(\rsf J\to\rsf I))=\rsf J\to\rsf I. 
	$$
\end{cor}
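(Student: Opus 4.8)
The plan is to reduce the statement to Corollary~\ref{cor:inclImpl}. Write $\rsf H=\rsf J\join(\rsf J\to\rsf I)$ for the special subalgebra appearing on the left-hand side, so that the assertion becomes $\rsf J\to\rsf H=\rsf J\to\rsf I$.

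First I would check that $\rsf H$ is in fact defined and that $\rsf J\subseteq\rsf H\subseteq\rsf I$. By Theorem~\ref{thm:oneThreeEqual} and the definition of $\supset$ in Definition~\ref{def:impl}, $\rsf J\to\rsf I$ is an intersection of special subalgebras and hence is special by Lemma~\ref{lem:intersectSpec}; moreover $\rsf J\to\rsf I\subseteq\rsf I$ directly from Definition~\ref{def:impl}(c). Since $\rsf J\cup(\rsf J\to\rsf I)\subseteq\rsf I$ and $\rsf I$ is special, hence compatible, the union $\rsf J\cup(\rsf J\to\rsf I)$ is compatible; so by Definition~\ref{def:cup} the join $\rsf H=\rsf J\join(\rsf J\to\rsf I)$ exists and is the smallest special subalgebra containing $\rsf J\cup(\rsf J\to\rsf I)$. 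As $\rsf I$ is itself a special subalgebra containing this union, $\rsf H\subseteq\rsf I$; and plainly $\rsf J\subseteq\rsf H$.

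Next I would observe that $\rsf J\to\rsf I\subseteq\rsf H$, which is immediate since $\rsf J\to\rsf I$ is one of the two joinands of $\rsf H$. Now the hypotheses of Corollary~\ref{cor:inclImpl} are met: we have the triple $\rsf J\subseteq\rsf H\subseteq\rsf I$ together with the containment $\rsf J\to\rsf I\subseteq\rsf H$. That corollary then yields $\rsf J\to\rsf H=\rsf J\to\rsf I$, which is exactly what is claimed.

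The only place requiring any care is the verification that $\rsf H\subseteq\rsf I$: one must use that $\rsf I$ is genuinely a special subalgebra (closed under all existing meets, per the description of the join in Lemma~\ref{lem:cup}) rather than merely a compatible set, so that it really does contain the special subalgebra generated by $\rsf J\cup(\rsf J\to\rsf I)$. Everything else is a direct appeal to the results on $\to$ already established above, so no further computation is needed.
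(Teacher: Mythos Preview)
Your proposal is correct and matches the paper's intended approach: the corollary is stated in the paper without any proof, immediately after Corollary~\ref{cor:inclImpl}, signalling that it is meant to be the direct instance of that result with $\rsf H=\rsf J\join(\rsf J\to\rsf I)$. Your additional care in verifying that $\rsf J\to\rsf I$ is special and that $\rsf H\subseteq\rsf I$ simply makes explicit the routine checks the paper leaves to the reader.
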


\begin{lem}\label{lem:inclAgain}
	Let $\rsf J\subseteq\rsf H\subseteq\rsf I$. Then
	$$
	\rsf H\to\rsf I\subseteq\rsf J\to\rsf I. 
	$$
\end{lem}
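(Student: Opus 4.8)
The plan is to argue directly from clause (c) of \defref{def:impl}, which expresses $\rsf J\to\rsf I$ as the set of $h\in\rsf I$ such that $h\join g=\one$ for every $g\in\rsf J$. Written this way the defining condition is a universal statement quantified over the left-hand argument, and shrinking that argument only weakens the requirement, so the associated set can only grow. Concretely, I would take an arbitrary $h\in\rsf H\to\rsf I$, unwind the definition to get $h\in\rsf I$ together with $h\join g=\one$ for all $g\in\rsf H$, and then observe that since $\rsf J\subseteq\rsf H$ this condition holds in particular for every $g\in\rsf J$; hence $h\in\rsf J\to\rsf I$.

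The only genuine choice is which of the three equivalent descriptions of $\to$ to use. By \lemref{lem:twoThreeSame} and \thmref{thm:oneThreeEqual} we have $\rsf J\supset\rsf I=\rsf J\Rightarrow\rsf I=\rsf J\to\rsf I$, so the identity of the sets is not in question; but the inclusion is transparent only for description (c), whereas descriptions (a) and (b) would force one to manipulate $\bigcap$ or $\bigvee$ over families of special subalgebras. So I would explicitly invoke (c) at the outset and note that this is legitimate by those two earlier results.

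There is essentially no obstacle: the lemma is the exact mirror of \lemref{lem:inclImpl} with the roles of the two inclusions interchanged (there the right-hand argument was enlarged, here the left-hand argument is shrunk), and in both cases monotonicity of the quantifier does all the work. No compatibility or upward-closure hypotheses need to be checked, since we are only asserting a set-theoretic containment among already-existing special subalgebras. Thus the write-up should be a single short paragraph: fix $h\in\rsf H\to\rsf I$, note $h\in\rsf I$ and $h\join g=\one$ for all $g\in\rsf H\supseteq\rsf J$, conclude $h\in\rsf J\to\rsf I$.
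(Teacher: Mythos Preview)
Your proposal is correct and follows exactly the paper's own argument: the paper's proof is the single sentence ``This is clear as $k\join h=\one$ for all $h\in\rsf H$ implies $k\join g=\one$ for all $g\in\rsf J$,'' which is precisely your unwinding of description (c) in \defref{def:impl}. Your remarks about the equivalence of the three descriptions are unnecessary here, since $\to$ is the notation attached to (c) directly.
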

\begin{proof}
	This is clear as $k\join h=\one$ for all $h\in\rsf H$ implies
	$k\join g=\one$ for all $g\in\rsf J$. 
\end{proof}

\subsection{Delta on Filters}

Now the critical lemma in defining our new $\Delta$ operation. 

\begin{lem}\label{lem:deltaOne}
	$(\rsf J\to\rsf I)\cup\Delta(\one, \rsf J)$ is compatible. 
\end{lem}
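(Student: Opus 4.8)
The plan is to reduce everything to interval algebras, as the discussion after \thmref{thm:transfer} permits: compatibility of a two-element set $\{u,v\}$ is, by \corref{cor:preFilterFIP}, just the condition $u\join\Delta(\one,v)=\one$, and this can be checked inside a finitely generated subalgebra, hence inside some $\rsf I(B)$. So I would fix $h\in\rsf J\to\rsf I$ and $g\in\rsf J$, and aim to show that $h$ is compatible with $\Delta(\one,g)$, i.e. that $h\join\Delta(\one,\Delta(\one,g))=h\join g=\one$. But that is immediate from the defining property of $\rsf J\to\rsf I$: every element of $\rsf J\to\rsf I$ joins with every element of $\rsf J$ to give $\one$. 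So the ``cross'' compatibilities (one factor from each piece) are free.

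What remains is compatibility within each of the two pieces. Within $\rsf J\to\rsf I$ this is inherited: $\rsf J\to\rsf I=\rsf J\supset\rsf I$ by \thmref{thm:oneThreeEqual} is (as an intersection of special subalgebras $\rsf H$ with $\rsf H\join\rsf J=\rsf I$, or directly from \lemref{lem:twoThreeSame} and \defref{def:sia}) a subalgebra of the compatible set $\rsf I$, hence compatible. Within $\Delta(\one,\rsf J)=\rsf J_{\one}$ this is exactly the content of \thmref{thm:gFF} (the case $g=\one$), which tells us $\rsf J_{\one}$ is a special subalgebra and in particular compatible. So the only genuine work is to show that compatibility of all the pairwise joins of the \emph{union} follows — i.e. that once the three ``types'' of pair (both in $\rsf J\to\rsf I$; both in $\Delta(\one,\rsf J)$; one in each) are individually compatible, an arbitrary pair drawn from the union is compatible. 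The first two types are settled above; the third type is the cross case. That exhausts all pairs, and by \corref{cor:preFilterFIP} pairwise compatibility of all pairs in $(\rsf J\to\rsf I)\cup\Delta(\one,\rsf J)$ is precisely compatibility of the set.

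I would write this cross-term computation out in an interval algebra for safety: take $\rsf I(B)$, write $h=[h_0,h_1]$ with $h\in\rsf J\to\rsf I$ and $\Delta(\one,g)=[\comp{g_1},\comp{g_0}]$ where $g=[g_0,g_1]\in\rsf J$, so $\Delta(\one,\Delta(\one,g))=g$; then $h$ compatible with $\Delta(\one,g)$ is $h\meet\Delta(\one,\Delta(\one,g))=h\meet g$ existing, equivalently $h\join\Delta(\one,g^{\text{opp}})=\one$ in the signed-set picture, which unwinds to $h\join g=\one$ — true because $h\in\rsf J\to\rsf I$ and $g\in\rsf J$. The step I expect to be fussiest is keeping the two involutions straight: $\Delta(\one,\cdot)$ is an involution (axiom on $\Delta(\one,\bullet)$ being $\brk<a,b>\mapsto\brk<b,a>$), so compatibility of $h$ with $\Delta(\one,g)$ must be rephrased via \corref{cor:preFilterFIP} as $h\join\Delta(\one,\Delta(\one,g))=\one$, i.e. $h\join g=\one$; once that bookkeeping is done the lemma falls out with essentially no calculation, since the real hypotheses have been arranged so that every pair is covered by one of the three already-established cases.
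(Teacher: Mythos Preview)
Your proof is correct and follows the same line as the paper's: the key observation is that for $h\in\rsf J\to\rsf I$ and $y=\Delta(\one,g)\in\Delta(\one,\rsf J)$ one has $h\join\Delta(\one,y)=h\join g=\one$ directly from the definition of $\rsf J\to\rsf I$. The paper's proof is exactly this one-line cross-term check; you have simply been more explicit in also recording that $\rsf J\to\rsf I\subseteq\rsf I$ and $\Delta(\one,\rsf J)=\rsf J_{\one}$ are each compatible in their own right (which the paper leaves implicit), and your closing interval-algebra paragraph is redundant bookkeeping rather than new content.
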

\begin{proof}
	If $x\in \rsf J\to\rsf I$ and $y\in\Delta(\one, \rsf J)$ then 
	$\Delta(\one,  y)\in\rsf J$ and so $x\join \Delta(\one,  y)=\one$.
\end{proof}

\begin{defn}\label{def:Delta}
	Let $\rsf J\subseteq\rsf I$. Then 
	$$
	\Delta(\rsf J, \rsf I)=\Delta(\one, \rsf J\to\rsf I)\join\rsf J. 
	$$
\end{defn}

The simplest special algebras in $\rsf I$ are the principal ones. In this case 
we obtain the following result. 

\begin{lem}\label{lem:DeltagOne}
	Let $g\in\rsf I$. Then $\Delta([g, \one], \rsf I)=\rsf I_{g}$. 
\end{lem}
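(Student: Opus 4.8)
The goal is to show that for $g\in\rsf I$ we have $\Delta([g,\one],\rsf I)=\rsf I_g$, where by definition $\Delta([g,\one],\rsf I)=\Delta(\one,[g,\one]\to\rsf I)\join[g,\one]$. The natural strategy is to unravel both sides using the identifications already established. First I would invoke \lemref{lem:implGG} to rewrite $[g,\one]\to\rsf I$ as $g\to\rsf I$, and then \corref{cor:implSS}, which describes $g\to\rsf I$ as $\Set{g\to f | f\in\rsf I}$ and identifies it (via its proof) with $\rsf J\cap\rsf I$ where $\rsf J=\Delta(\one,\rsf I_g)$. So the left-hand side becomes $\Delta(\one,\Delta(\one,\rsf I_g)\cap\rsf I)\join[g,\one]$. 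Using \lemref{lem:interTwoFil} to turn the intersection into a set of joins, and the involutivity of $\Delta(\one,\cdot)$ (axiom (c) applied at $\one$, which the paper uses freely), this should collapse to something expressible in terms of $\rsf I_g$, $\Delta(\one,\rsf I)$ and $[g,\one]$.

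**Key steps in order.**
(1) Reduce $[g,\one]\to\rsf I$ to $g\to\rsf I=\Set{g\to f | f\in\rsf I}$ via \lemref{lem:implGG} and \corref{cor:implSS}. (2) Recall from the proof of \corref{cor:implSS} that $g\to f=\Delta(\one,\Delta(g\join f,f))\join f=\Delta(\one,\beta)\join f$ where $\beta=\Delta(g\join f,f)\in\rsf I_g$; so an element of $g\to\rsf I$ is the join of an element of $\Delta(\one,\rsf I_g)$ with an element of $\rsf I$. (3) Apply $\Delta(\one,\cdot)$ to a typical element: $\Delta(\one,\Delta(\one,\beta)\join f)$. Here the main computation is to show, working in an interval algebra (legitimate by the Transfer theorem \thmref{thm:transfer} and the remarks following it, since everything lies in a finitely generated subalgebra), that $\Delta(\one,\Delta(\one,\beta)\join f)\join g = \beta\join g=\Delta(g\join f,f)\join g$, i.e. that after joining with $g$ the two $\Delta(\one,\cdot)$'s cancel down to an element of $\rsf I_g$. (4) Conclude $\Delta([g,\one],\rsf I)=\Set{\Delta(\one,\Delta(\one,\beta)\join f)\join g | f\in\rsf I}\subseteq\rsf I_g$, and for the reverse inclusion observe that for any $h=\Delta(g\join f,f)\in\rsf I_g$ we have $h\geq$ (nothing, but) $h\join g=h$ when... — more carefully, take $f\in\rsf I$, note $g\to f\in g\to\rsf I$, and check that $\Delta(\one,g\to f)\join g$ equals $\Delta(g\join f,f)$, giving $\rsf I_g\subseteq\Delta([g,\one],\rsf I)$. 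Since both sides are upwards-closed special subalgebras (the left by \lemref{lem:deltaOne} plus closure under the operation, the right by \thmref{thm:gFF}), an inclusion of generating sets combined with \lemref{lem:simEq}-style compatibility-forces-equality arguments finishes it.

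**Main obstacle.**
The routine part is the bookkeeping with $\to$, $\join$ and the several earlier lemmas; the genuinely delicate step is (3), the interval-algebra identity showing that $\Delta(\one,\Delta(\one,\beta)\join f)\join g=\beta\join g$. One must write $g=[g_0,g_1]$, $f=[x,y]$, compute $\beta=\Delta(g\join f,f)$ explicitly as in the proof of \lemref{lem:gFF}, then compute $\Delta(\one,\beta)=\Delta(\one,\cdot)$ of that interval, join with $f$, apply $\Delta(\one,\cdot)$ again, and finally join with $g$ — at which point the complement operations should telescope. I expect this to come out cleanly precisely because $\beta\in[g,\one]$, so $\beta\join g=\beta$, and the point is really that the two applications of $\Delta(\one,\cdot)$ composed with the joins by $f$ and then $g$ act as the identity on $\rsf I_g$; verifying this termwise in the Boolean algebra is the crux. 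Once that identity is in hand, both inclusions and hence the equality follow formally.
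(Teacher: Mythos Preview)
Your opening moves are right and match the paper: invoke \lemref{lem:implGG} to rewrite $[g,\one]\to\rsf I$ as $g\to\rsf I$, and use the identification from the proof of \corref{cor:implSS} that $g\to\rsf I=\Delta(\one,\rsf I_{g})\cap\rsf I$. From there, however, you take a wrong turn.

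The essential slip is a confusion between the subalgebra join $\vee$ and the element-level join. By \lemref{lem:cup}, $\Delta([g,\one],\rsf I)=\Delta(\one,g\to\rsf I)\vee[g,\one]$ consists of \emph{meets} $\Delta(\one,g\to f)\meet k$ with $k\geq g$, not element joins with $g$. Your step~(4) writes a typical element as $\Delta(\one,g\to f)\join g$, and your reverse-inclusion target is the identity $\Delta(\one,g\to f)\join g=\Delta(g\join f,f)$; but this identity is false in general (it would force $g\le\Delta(g\join f,f)$, which already fails for generic intervals). So the interval-algebra computation you flag as the ``main obstacle'' is aimed at the wrong formula and cannot succeed.

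The paper avoids all of this. For $\Delta([g,\one],\rsf I)\subseteq\rsf I_{g}$, it simply observes that $g\to\rsf I\subseteq\Delta(\one,\rsf I_{g})$ gives $\Delta(\one,g\to\rsf I)\subseteq\rsf I_{g}$, while $g\in\rsf I_{g}$ and upwards closure give $[g,\one]\subseteq\rsf I_{g}$; since $\rsf I_{g}$ is itself a special subalgebra (\thmref{thm:gFF}), the $\vee$ of two subsets stays inside it. For the reverse inclusion the paper uses the single identity
\[
\Delta(g\join f,f)=(g\join f)\meet\Delta(\one,g\to f),
\]
which exhibits each element of $\rsf I_{g}$ as a meet of something in $[g,\one]$ with something in $\Delta(\one,g\to\rsf I)$, hence in their $\vee$. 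That identity is the one worth checking (it is an instance of the later \lemref{lem:DeltaInMR} with the pair $g\join f$ and $g\to f$), and once you replace your $\join g$ by this $\meet(g\join f)$, your proof collapses to the paper's two-line argument with no interval-algebra calculation needed.
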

\begin{proof}
	From \lemref{lem:implGG} we have $[g, \one]\to\rsf I=g\to\rsf I$ and 
	we know from \corref{cor:implSS} that
	$\Delta(\one, \rsf I_{g})\cap\rsf I=g\to\rsf I$. Thus
	$\Delta(\one, g\to\rsf I)\subseteq\rsf I_{g}$. 
	Also $g\in\rsf I_{g}$ so we have 
	$\Delta([g, \one], \rsf I)\subseteq\rsf I_{g}$. 
	
	Conversely, if $f\in\rsf I$ then 
	$\Delta(g\join f, f)=(g\join f)\meet\Delta(\one, g\to f)$ is in 
	$\Delta(1, g\to\rsf I)\join[g, \one]=\Delta([g, \one], \rsf I)$. 
\end{proof}

\begin{cor}\label{cor:DeltaDoublePrinc}
	Let $g\geq h$ is $\rsf I$. Then 
	$$
		\Delta([g, \one], [h, \one])=[\Delta(g, h), \one]. 	
	$$
\end{cor}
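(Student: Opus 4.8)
The plan is to read the result off \lemref{lem:DeltagOne}, applied with the ambient special subalgebra taken to be the principal subalgebra $[h,\one]$. First I would note that, since $g\geq h$, we have $g\in[h,\one]$ and $[g,\one]\subseteq[h,\one]$, and that $[h,\one]$ is itself a special subalgebra of $\mathcal L$ (principal subalgebras are special, as observed just before \lemref{lem:gFF}). Hence \lemref{lem:DeltagOne}, read with $[h,\one]$ playing the role of $\rsf I$ there, gives
$$
\Delta([g,\one],[h,\one])=[h,\one]_{g}.
$$

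Next I would invoke the description of $[h,\one]_{g}$ recorded in the remark immediately following \thmref{thm:gFF}, namely $[h,\one]_{g}=[\Delta(g\join h,h),\one]$. Since $g\geq h$ forces $g\join h=g$, this is exactly $[\Delta(g,h),\one]$, and concatenating the two equalities finishes the proof.

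I do not expect a real obstacle: the substance is already packaged in \lemref{lem:DeltagOne} and in that remark, and the corollary amounts to the single observation that $g\join h=g$ when $g\geq h$; the only thing to check is that the hypotheses of \lemref{lem:DeltagOne} are genuinely available in this specialised setting, which is immediate since $[h,\one]$ is special and $g\in[h,\one]$. If one wanted an argument that does not lean on the cited remark, one could instead expand $[h,\one]_{g}=\Set{\Delta(g\join f,f) | f\in[h,\one]}$ and verify directly, by a single interval-algebra computation inside the Boolean algebra $[h,\one]$, that this set coincides with the principal filter $[\Delta(g,h),\one]$; that lone computation is the one place where any work would be required, and it is the step I would be most careful to write out in full.
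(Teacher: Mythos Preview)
Your proposal is correct and matches the paper's own proof essentially line for line: the paper writes simply ``As $\Delta([g,\one],[h,\one])=[h,\one]_{g}=[\Delta(g,h),\one]$,'' which is precisely your application of \lemref{lem:DeltagOne} followed by the remark after \thmref{thm:gFF} together with $g\join h=g$.
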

\begin{proof}
	As $\Delta([g, \one], [h, \one])= [h, \one]_{g}= [\Delta(g, h), \one]$. 
\end{proof}

For further properties of the $\Delta$ operation we need some 
facts about the interaction between $\to$ and $\Delta$. Here is the first. 

\begin{lem}\label{lem:implInDelta}
	$$
	\rsf J\to\Delta(\rsf J, \rsf I)=\Delta(\one, \rsf J\to\rsf I). 
	$$
\end{lem}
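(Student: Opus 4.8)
The plan is to prove the two inclusions separately, working inside an interval algebra whenever it helps, since $\rsf J \to \rsf I$, $\Delta(\rsf J, \rsf I) = \Delta(\one, \rsf J \to \rsf I) \join \rsf J$, and the operation $\rsf J \to (-)$ are all built from the elementwise operations $\join$, $\Delta(\one, -)$ and compatibility, whose behaviour on intervals is already recorded. Recall from \lemref{lem:deltaOne} that $(\rsf J \to \rsf I) \cup \Delta(\one, \rsf J)$ is compatible, so $\Delta(\rsf J, \rsf I)$ is genuinely a special subalgebra; I would first note that $\rsf J \subseteq \Delta(\rsf J, \rsf I)$ and that $\Delta(\one, \rsf J \to \rsf I) \subseteq \Delta(\rsf J, \rsf I)$ directly from \defref{def:Delta}.

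For the inclusion $\Delta(\one, \rsf J \to \rsf I) \subseteq \rsf J \to \Delta(\rsf J, \rsf I)$, I would take $x \in \rsf J \to \rsf I$ and $g \in \rsf J$ and check that $\Delta(\one, x) \join g = \one$: this is exactly compatibility of $x$ with $g$, which holds since $x \in \rsf J \to \rsf I$ means $x \join \Delta(\one, g')=\one$ for every $g' \in \rsf J$ — more precisely $x \join g = \one$ for all $g \in \rsf J$, and then compatibility of $x$ with $g$ in the interval-algebra sense gives $\Delta(\one, x) \join g = \one$ as well (this is the standard fact that if $x \join g = \one$ and $x \meet g$ exists then $\Delta(\one, x)$ is the complement of $x$ in an interval containing $g$, hence also joins with $g$ to $\one$). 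Combined with $\Delta(\one, x) \in \Delta(\one, \rsf J \to \rsf I) \subseteq \Delta(\rsf J, \rsf I)$, this shows $\Delta(\one, x) \in \rsf J \to \Delta(\rsf J, \rsf I)$.

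For the reverse inclusion $\rsf J \to \Delta(\rsf J, \rsf I) \subseteq \Delta(\one, \rsf J \to \rsf I)$, take $k \in \rsf J \to \Delta(\rsf J, \rsf I)$. By \lemref{lem:cup} applied to $\Delta(\rsf J, \rsf I) = \Delta(\one, \rsf J \to \rsf I) \join \rsf J$, we may write $k = \Delta(\one, x) \meet g$ for some $x \in \rsf J \to \rsf I$ and $g \in \rsf J$ with the meet existing. Since $k \in \rsf J \to \Delta(\rsf J, \rsf I)$ we have $k \join g' = \one$ for all $g' \in \rsf J$; in particular $k \join g = \one$. But $k \le g$ forces $k \join g = g$, so $g = \one$ and hence $k = \Delta(\one, x) \meet \one = \Delta(\one, x) \in \Delta(\one, \rsf J \to \rsf I)$. (One must also handle the degenerate representations, e.g. $k \in \rsf J$ or $k \in \Delta(\one, \rsf J \to \rsf I)$, but these fall under the same computation.)

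The step I expect to be the main obstacle is the elementwise fact used in the first inclusion: that $x \join g = \one$ together with the existence of $x \meet g$ implies $\Delta(\one, x) \join g = \one$. This is where I would drop into an interval algebra via the transfer philosophy — embed the finitely generated subalgebra $\leftGen x, g \rightGen$ into some $\rsf I(B)$, write $x = [x_0, x_1]$, $g = [g_0, g_1]$, use that $x \join g = \one$ means $x_0 \meet g_0 = 0$ and $x_1 \join g_1 = 1$, that $\Delta(\one, x) = [\comp{x_1}, \comp{x_0}]$, and verify $\comp{x_1} \meet g_0 = 0$ and $\comp{x_0} \join g_1 = 1$ by Boolean algebra. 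Everything else is bookkeeping with the already-established lemmas on $\join$ and $\cap$ of special subalgebras.
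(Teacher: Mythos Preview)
Your two-inclusion strategy and your reverse inclusion are exactly the paper's: write $k\in\Delta(\rsf J,\rsf I)$ as $\Delta(\one,x)\meet g$ with $x\in\rsf J\to\rsf I$, $g\in\rsf J$, then $k\join g=\one$ with $k\le g$ forces $g=\one$, so $k=\Delta(\one,x)$.

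For the forward inclusion you take an unnecessary detour, and there is a small unstated step. You want $\Delta(\one,x)\join g=\one$ for $x\in\rsf J\to\rsf I$ and $g\in\rsf J$, and you propose to get it from $x\join g=\one$ together with the existence of $x\meet g$. But you never justify that $x\meet g$ exists: in a general cubic algebra it need not, and even after embedding $\leftGen x,g\rightGen$ into an interval algebra, the meet exists there precisely because $\{x,g\}$ is compatible (\lemref{lem:fipPreFilter}). Once you invoke compatibility you have already finished, since \corref{cor:preFilterFIP} gives $g\join\Delta(\one,x)=\one$ directly. This is the paper's one-line argument: $x\in\rsf J\to\rsf I\subseteq\rsf I$ and $g\in\rsf J\subseteq\rsf I$, the special subalgebra $\rsf I$ is compatible, hence $\Delta(\one,x)\join g=\one$. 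The hypothesis $x\join g=\one$ from the definition of $\rsf J\to\rsf I$ is never used, and the interval-algebra computation is superfluous.
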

\begin{proof}
	Let $k\in\rsf J\to\rsf I$, $h=\Delta(\one, k)$ and $g\in\rsf J$. Then
	$k, g\in\rsf I$ implies they are compatible and so $\Delta(\one, k)\join g= h\join g=\one$.
	Thus $h\in \rsf J\to\Delta(\rsf J, \rsf I)$ and we get 
	$\Delta(\one, \rsf J\to\rsf I)\subseteq \rsf J\to\Delta(\rsf J, \rsf I)$.
	
	Conversely, suppose that $h\in\Delta(\rsf J, \rsf I)$ and for all $g\in\rsf J$ 
	we have $h\join g=\one$. Then
	there is some $k\in\rsf J\to\rsf H$ and $g'\in\rsf J$ such that
	$h=\Delta(\one, k)\meet g'$. Therefore 
	$\one = h\join g'= (\Delta(\one, k)\meet g')\join g'= g'$ and so
	$h=\Delta(\one, k)\in \Delta(\one, \rsf J\to\rsf I)$. 
\end{proof}

\begin{cor}\label{cor:doubleDelta}
	$$
	\Delta(\rsf J, \Delta(\rsf J, \rsf I))=\rsf J\join(\rsf J\to\rsf I). 
	$$
\end{cor}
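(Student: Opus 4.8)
The goal is to prove $\Delta(\rsf J,\Delta(\rsf J,\rsf I))=\rsf J\join(\rsf J\to\rsf I)$. My plan is to unwind the definition of $\Delta$ twice and then collapse the result using the lemma just established. Writing $\rsf K=\Delta(\rsf J,\rsf I)=\Delta(\one,\rsf J\to\rsf I)\join\rsf J$, the definition gives $\Delta(\rsf J,\rsf K)=\Delta(\one,\rsf J\to\rsf K)\join\rsf J$. By \lemref{lem:implInDelta} we have $\rsf J\to\rsf K=\rsf J\to\Delta(\rsf J,\rsf I)=\Delta(\one,\rsf J\to\rsf I)$. Therefore $\Delta(\rsf J,\rsf K)=\Delta(\one,\Delta(\one,\rsf J\to\rsf I))\join\rsf J$.

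The next step is to simplify $\Delta(\one,\Delta(\one,\rsf J\to\rsf I))$. Since $\Delta(\one,\Delta(\one,x))=x$ for every $x$ (this is axiom (c) of a cubic algebra with $y=\one$, since every $x\le\one$), and since the operation $\rsf H\mapsto\Delta(\one,\rsf H)$ on special subalgebras is defined pointwise (it is the case $g=\one$ of \thmref{thm:gFF}, i.e. $\Delta(\one,\rsf H)=\rsf H_{\one}$), applying it twice returns the original set: $\Delta(\one,\Delta(\one,\rsf J\to\rsf I))=\rsf J\to\rsf I$. Hence $\Delta(\rsf J,\rsf K)=(\rsf J\to\rsf I)\join\rsf J=\rsf J\join(\rsf J\to\rsf I)$, which is the claim. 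One should check that the join $\rsf J\join(\rsf J\to\rsf I)$ is in fact defined, i.e. that $\rsf J\cup(\rsf J\to\rsf I)$ is compatible; but this is immediate from the defining property of $\rsf J\to\rsf I$ together with the fact that both sets lie in $\rsf I$, which is compatible — essentially the content of \lemref{lem:deltaOne}.

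The main thing to be careful about is the bookkeeping around the idempotence of $\Delta(\one,-)$ at the level of special subalgebras, as opposed to at the level of elements. I would make explicit that for a special subalgebra $\rsf H$ one has $\Delta(\one,\Delta(\one,\rsf H))=\rsf H$: the inclusion $\subseteq$ follows because each element of the double-delta has the form $\Delta(\one,\Delta(\one,h))=h$ for some $h\in\rsf H$, and the reverse inclusion is the same computation read backwards. Granting that, the proof is just the three substitutions above, so this should be presentable as a short argument of a few lines rather than a lengthy calculation.

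\begin{proof}
	Write $\rsf K=\Delta(\rsf J,\rsf I)=\Delta(\one,\rsf J\to\rsf I)\join\rsf J$. By definition
	$$
	\Delta(\rsf J,\rsf K)=\Delta(\one,\rsf J\to\rsf K)\join\rsf J,
	$$
	and by \lemref{lem:implInDelta} we have $\rsf J\to\rsf K=\rsf J\to\Delta(\rsf J,\rsf I)=\Delta(\one,\rsf J\to\rsf I)$, so
	$$
	\Delta(\rsf J,\rsf K)=\Delta(\one,\Delta(\one,\rsf J\to\rsf I))\join\rsf J.
	$$
	Now for any special subalgebra $\rsf H$ we have $\Delta(\one,\Delta(\one,\rsf H))=\rsf H$: every element of the left-hand side is $\Delta(\one,\Delta(\one,h))=h$ for some $h\in\rsf H$ (using $\Delta(\one,\Delta(\one,h))=h$, which is axiom (c) with $y=\one$), and conversely each $h\in\rsf H$ equals $\Delta(\one,\Delta(\one,h))$. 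Applying this with $\rsf H=\rsf J\to\rsf I$ gives
	$$
	\Delta(\rsf J,\Delta(\rsf J,\rsf I))=(\rsf J\to\rsf I)\join\rsf J=\rsf J\join(\rsf J\to\rsf I),
	$$
	the last join being defined since $\rsf J\cup(\rsf J\to\rsf I)\subseteq\rsf I$ is compatible.
\end{proof}
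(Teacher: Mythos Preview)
Your proof is correct and follows exactly the same route as the paper's: unwind the definition of $\Delta$, invoke \lemref{lem:implInDelta} to identify $\rsf J\to\Delta(\rsf J,\rsf I)$ with $\Delta(\one,\rsf J\to\rsf I)$, and then cancel the double $\Delta(\one,-)$. The paper presents this as a three-line display without comment; your version is merely more explicit about why $\Delta(\one,\Delta(\one,\rsf H))=\rsf H$ holds at the level of special subalgebras and why the final join is defined, but there is no substantive difference in approach.
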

\begin{proof}
	\begin{align*}
		\Delta(\rsf J, \Delta(\rsf J, \rsf I)) & =\Delta(\one, 
		\rsf J\to\Delta(\rsf J, \rsf I))\join\rsf J  \\
		 & =\Delta(\one, \Delta(\one, \rsf J\to\rsf I))\join\rsf J  \\
		 & =(\rsf J\to\rsf I)\join\rsf J. 
	\end{align*}
\end{proof}

\begin{lem}\label{lem:inclDelta}
	Let $\rsf J\subseteq\rsf H\subseteq\rsf I$. Then
	$$
	\Delta(\rsf J, \rsf H)\subseteq\Delta(\rsf J, \rsf I). 
	$$
\end{lem}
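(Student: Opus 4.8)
The plan is to unfold the definition of $\Delta$ on special subalgebras and reduce the inclusion to two simpler monotonicity facts, one of which is already in hand. By \defref{def:Delta} we have $\Delta(\rsf J,\rsf H)=\Delta(\one,\rsf J\to\rsf H)\join\rsf J$ and $\Delta(\rsf J,\rsf I)=\Delta(\one,\rsf J\to\rsf I)\join\rsf J$. First I would invoke \lemref{lem:inclImpl}, which tells us directly that $\rsf J\to\rsf H\subseteq\rsf J\to\rsf I$ whenever $\rsf J\subseteq\rsf H\subseteq\rsf I$. Next I would observe that the pointwise operation $\Delta(\one,-)$ is monotone on special subalgebras in the sense that $\rsf A\subseteq\rsf B$ implies $\Delta(\one,\rsf A)\subseteq\Delta(\one,\rsf B)$ — this is immediate, since if $x\in\rsf A$ then $\Delta(\one,x)$ is a typical element of $\Delta(\one,\rsf A)$, and $x\in\rsf B$ gives $\Delta(\one,x)\in\Delta(\one,\rsf B)$; as sets, $\Delta(\one,\rsf A)=\{\Delta(\one,x)\mid x\in\rsf A\}$, so this is just image monotonicity. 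Hence $\Delta(\one,\rsf J\to\rsf H)\subseteq\Delta(\one,\rsf J\to\rsf I)$.

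The final step is to check that $\join$ of special subalgebras is monotone in the relevant argument: if $\rsf A\subseteq\rsf B$ then $\rsf A\join\rsf J\subseteq\rsf B\join\rsf J$. By \lemref{lem:cup}, when these joins are defined, $\rsf A\join\rsf J=\{f\meet g\mid f\in\rsf A,\ g\in\rsf J,\ f\meet g\text{ exists}\}$, and similarly for $\rsf B\join\rsf J$; since every witnessing pair $(f,g)$ with $f\in\rsf A$ is also a witnessing pair with $f\in\rsf B$, the inclusion follows. Putting the three monotonicities together yields
$$
\Delta(\rsf J,\rsf H)=\Delta(\one,\rsf J\to\rsf H)\join\rsf J\subseteq\Delta(\one,\rsf J\to\rsf I)\join\rsf J=\Delta(\rsf J,\rsf I),
$$
as required.

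The one point that needs care — and where I expect the only real obstacle — is compatibility: all of these $\join$'s and $\Delta(\one,-)$'s must actually be defined. Here we are helped by \lemref{lem:deltaOne}, which guarantees $(\rsf J\to\rsf I)\cup\Delta(\one,\rsf J)$ is compatible, hence $\Delta(\rsf J,\rsf I)$ is a genuine special subalgebra; the same lemma applied with $\rsf H$ in place of $\rsf I$ handles $\Delta(\rsf J,\rsf H)$. Since everything in sight lies inside $\rsf I$ (or inside the special subalgebra containing both $\rsf J$ and the various images of $\Delta(\one,-)$), \lemref{lem:distrib}'s philosophy applies: within one ambient compatible family there are no incompatibility issues, so each join used above exists. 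With that settled, the argument is the routine three-fold monotonicity chain above, and I would present it in roughly four lines.
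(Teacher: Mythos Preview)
Your proposal is correct and follows exactly the paper's approach: the paper's proof is the single line ``As $\Delta(\one, \rsf J\to\rsf H)\join\rsf J\subseteq \Delta(\one, \rsf J\to\rsf I)\join\rsf J$,'' which is precisely your three-fold monotonicity chain compressed to one step (with \lemref{lem:inclImpl} implicit). Your extra care about compatibility is more than the paper bothers with, but harmless.
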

\begin{proof}
	As $\Delta(\one, \rsf J\to\rsf H)\join\rsf J\subseteq
	\Delta(\one, \rsf J\to\rsf I)\join\rsf J$. 
\end{proof}

\begin{lem}\label{lem:interDelta}
	$\rsf I\cap\Delta(\rsf J, \rsf I)=\rsf J$. 
\end{lem}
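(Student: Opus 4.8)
The plan is to prove the two inclusions of $\rsf I\cap\Delta(\rsf J,\rsf I)=\rsf J$ separately; $\supseteq$ is immediate and $\subseteq$ carries all the content. For the easy direction, $\rsf J\subseteq\rsf I$ is a hypothesis and $\rsf J\subseteq\Delta(\rsf J,\rsf I)$ is read off \defref{def:Delta} (each $g\in\rsf J$ is $\one\meet g$ with $\one\in\Delta(\one,\rsf J\to\rsf I)$), so $\rsf J\subseteq\rsf I\cap\Delta(\rsf J,\rsf I)$. As a sanity check on what the lemma asserts: in the principal case $\rsf J=[g,\one]\subseteq[h,\one]=\rsf I$ (so $h\le g$), \corref{cor:DeltaDoublePrinc} gives $\Delta(\rsf J,\rsf I)=[\Delta(g,h),\one]$, hence $\rsf I\cap\Delta(\rsf J,\rsf I)=[h\join\Delta(g,h),\one]$, and this equals $[g,\one]$ precisely because $h\join\Delta(g,h)=g$ by cubic axiom (a) --- so the lemma is a pointwise version of that axiom.

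For $\subseteq$, fix $x\in\rsf I\cap\Delta(\rsf J,\rsf I)$. First I would unpack the membership in $\Delta(\rsf J,\rsf I)$. By \defref{def:Delta} this set is the join $\Delta(\one,\rsf J\to\rsf I)\join\rsf J$ of two special subalgebras: $\Delta(\one,\rsf J\to\rsf I)$ is $(\rsf J\to\rsf I)_{\one}$, which is special by \thmref{thm:gFF}, and the compatibility that makes the join meaningful is \lemref{lem:deltaOne}. Hence \lemref{lem:cup} applies and produces $k\in\rsf J\to\rsf I$ and $g\in\rsf J$ with $x=\Delta(\one,k)\meet g$, this meet existing.

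The crux is then a two-line computation. A meet lies below each of its factors, so $x\le\Delta(\one,k)$ and therefore $x\join\Delta(\one,k)=\Delta(\one,k)$. On the other hand $x$ and $k$ both lie in the compatible set $\rsf I$ (recall $\rsf J\to\rsf I\subseteq\rsf I$), so \corref{cor:preFilterFIP} gives $x\join\Delta(\one,k)=\one$. Comparing, $\Delta(\one,k)=\one$, whence $x=\Delta(\one,k)\meet g=\one\meet g=g\in\rsf J$, the last step because $\one$ is the top. This establishes $\rsf I\cap\Delta(\rsf J,\rsf I)\subseteq\rsf J$ and finishes the proof.

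I do not expect a genuine obstacle. The only point needing care is the bookkeeping around \lemref{lem:cup} --- confirming that $\Delta(\one,\rsf J\to\rsf I)$ is a bona fide special subalgebra and that its union with $\rsf J$ is compatible, both of which are already available from \thmref{thm:gFF} and \lemref{lem:deltaOne}. Everything after the appeal to \lemref{lem:cup} uses nothing but compatibility of $\rsf I$ and the maximality of $\one$.
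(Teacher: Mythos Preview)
Your proof is correct and follows essentially the same line as the paper's. Both arguments write an element of the intersection as $g\meet\Delta(\one,k)$ with $g\in\rsf J$, $k\in\rsf J\to\rsf I$, and then use compatibility inside $\rsf I$ to force $\Delta(\one,k)=\one$; the paper phrases this as ``$k$ and $\Delta(\one,k)$ are compatible, hence $k=\one$'' (using upward closure to get $\Delta(\one,k)\in\rsf I$), while you pair $x$ with $k$ directly --- a cosmetic difference only. Your explicit justification of the decomposition via \lemref{lem:cup}, \thmref{thm:gFF}, and \lemref{lem:deltaOne} is more careful than the paper, which simply assumes the form.
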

\begin{proof}
	Clearly $\rsf J\subseteq \rsf I\cap\Delta(\rsf J, \rsf I)$. 
	
	Let $g\in\rsf J$ and $k\in\rsf J\to\rsf I$ be such that
	$f=g\meet\Delta(\one, k)\in\rsf I\cap\Delta(\rsf J, \rsf I)$. Then $k\in\rsf I$ so
	$k$ and $\Delta(\one, k)$ are compatible. Thus $k=\one$ and so $f=g\in\rsf J$. 
\end{proof}

\subsection{Boolean elements}
Corollary \ref{cor:doubleDelta} shows us what happens to $\Delta(\rsf  Q, 
\Delta(\rsf  Q, \rsf  P))$. We are interested in knowing when this produces 
$\rsf  P$. 

\begin{defn}\label{def:Boolean}
	Let $\rsf  P$ and $\rsf Q$ be special subalgebras. Then
	\begin{enumerate}[(a)]
		\item $\rsf  Q$ is \emph{weakly $\rsf  P$-Boolean} iff $\rsf  Q\subseteq\rsf 
		P$ and
		$(\rsf  Q\to\rsf  P)\to\rsf  P=\rsf  Q$. 
		
		\item $\rsf  Q$ is \emph{$\rsf  P$-Boolean} iff $\rsf  Q\subseteq\rsf 
		P$ and $\rsf  Q\join(\rsf  Q\to\rsf  P)=\rsf  P$. 
	\end{enumerate}
\end{defn}

Before continuing however we show that ``weak'' really is weaker. 

\begin{lem}\label{lem:BooleanANotherWay}
	Suppose that $\rsf  Q$ is $\rsf  P$-Boolean. 
	Then $\rsf  Q$ is weakly $\rsf  P$-Boolean. 
\end{lem}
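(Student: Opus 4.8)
The plan is to show that the $\rsf P$-Boolean condition $\rsf Q\join(\rsf Q\to\rsf P)=\rsf P$ implies the weak condition $(\rsf Q\to\rsf P)\to\rsf P=\rsf Q$. Write $\rsf K=\rsf Q\to\rsf P$. We always have $\rsf Q\subseteq(\rsf K\to\rsf P)$ because every $q\in\rsf Q$ satisfies $q\join k=\one$ for all $k\in\rsf K$ (that is exactly the defining property of $\rsf K=\rsf Q\to\rsf P$), and since $\rsf Q\subseteq\rsf P$ this puts $q$ into $\rsf K\to\rsf P$. So the work is entirely in the reverse inclusion $(\rsf K\to\rsf P)\to\rsf P\subseteq\rsf Q$, i.e.\ $\rsf K\to\rsf P\subseteq\rsf Q$.

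First I would take an arbitrary $h\in\rsf K\to\rsf P$; so $h\in\rsf P$ and $h\join k=\one$ for every $k\in\rsf K$. Using the hypothesis $\rsf P=\rsf Q\join\rsf K$ together with \lemref{lem:cup}, write $h=q\meet k_{0}$ for some $q\in\rsf Q$, $k_{0}\in\rsf K$, where this meet exists. Now apply the condition $h\join k_{0}=\one$ (legitimate since $k_{0}\in\rsf K$): then
\[
\one=h\join k_{0}=(q\meet k_{0})\join k_{0}=k_{0},
\]
using absorption in the interval algebra where everything lives. Hence $h=q\meet k_{0}=q\meet\one=q\in\rsf Q$, which is the desired inclusion. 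Combining the two inclusions gives $(\rsf Q\to\rsf P)\to\rsf P=\rsf Q$, so $\rsf Q$ is weakly $\rsf P$-Boolean.

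The only real subtlety is making sure the computation $h=q\meet k_{0}$ from \lemref{lem:cup} is available: this requires $\rsf Q$ and $\rsf K$ to be special subalgebras of a common special subalgebra (here $\rsf P$), so that $\rsf Q\vee\rsf K$ is defined and the explicit description of $\rsf Q\join(\rsf Q\to\rsf P)$ as $\Set{q\meet k | q\in\rsf Q,\ k\in\rsf K,\ q\meet k\text{ exists}}$ applies --- but this is guaranteed because $\rsf Q\to\rsf P=\rsf K\subseteq\rsf P$ and $\rsf Q\subseteq\rsf P$ with $\rsf P$ special (hence compatible). The absorption identity $(q\meet k_{0})\join k_{0}=k_{0}$ is just the lattice law, valid because within the compatible set $\rsf P$ all the relevant meets exist and the computations can be carried out in an interval algebra via the transfer machinery. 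I expect the writing to be short; the main thing to get right is citing \lemref{lem:cup} correctly and checking the compatibility precondition, rather than any hard calculation.
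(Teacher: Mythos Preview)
Your proof is correct and the core computation is sound. The paper's own proof is shorter because it cashes in an earlier result: having proved in \thmref{thm:oneThreeEqual} that $\rsf J\to\rsf I=\rsf J\supset\rsf I=\bigcap\Set{\rsf H | \rsf H\join\rsf J=\rsf I}$, it simply observes that $\rsf Q\join(\rsf Q\to\rsf P)=\rsf P$ makes $\rsf Q$ one of the sets in that intersection, so $(\rsf Q\to\rsf P)\to\rsf P\subseteq\rsf Q$ falls out in one line. Your route instead unpacks the join via \lemref{lem:cup} and finishes with the absorption $(q\meet k_{0})\join k_{0}=k_{0}$; this is exactly the computation carried out in the ``conversely'' half of the proof of \thmref{thm:oneThreeEqual}. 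So the two arguments are the same idea, the paper citing the packaged lemma and you redoing the relevant piece by hand --- which has the virtue of being self-contained.

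One small slip to fix: in your first paragraph you wrote ``the reverse inclusion $(\rsf K\to\rsf P)\to\rsf P\subseteq\rsf Q$'' where you meant ``$\rsf K\to\rsf P\subseteq\rsf Q$'' (your following ``i.e.'' already says the right thing).
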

\begin{proof}
	We know that $\rsf  Q\subseteq(\rsf  Q\to\rsf  P)\to\rsf  P$. 
	
	Since $\rsf  Q\join(\rsf  Q\to\rsf  P)=\rsf  P$ we also have that
	$(\rsf  Q\to\rsf  P)\supset\rsf  P\subseteq\rsf  Q$. 
\end{proof}

And now the simplest examples of $\rsf  P$-Boolean subalgebras. 

\begin{lem}\label{lem:implgTwice}
	Let $g\in\rsf  P$. Then 
	$[g, \one]$ is $\rsf  P$-Boolean. 
\end{lem}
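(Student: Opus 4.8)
To show $[g,\one]$ is $\rsf P$-Boolean I must verify two things: that $[g,\one]\subseteq\rsf P$, which is immediate from $g\in\rsf P$ since $\rsf P$ is upwards-closed; and that $[g,\one]\join(\,[g,\one]\to\rsf P\,)=\rsf P$. By \lemref{lem:implGG} we know $[g,\one]\to\rsf P = g\to\rsf P$, so the real content is the identity
$$
[g,\one]\join(g\to\rsf P)=\rsf P.
$$
Both sides are special subalgebras of $\rsf P$, and by \lemref{lem:cup} the left-hand side equals $\Set{k\meet(g\to f) | k\in[g,\one],\ f\in\rsf P\text{ and the meet exists}}$, so the inclusion ``$\subseteq$'' is clear.

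**The main step.** The substance is the reverse inclusion: every $f\in\rsf P$ must be written as $k\meet(g\to f')$ with $k\geq g$ and $f'\in\rsf P$. First I would reduce to interval algebras via the transfer principle, so assume $\rsf P\subseteq\rsf I(B)$ for a Boolean algebra $B$ and $g=[g_0,g_1]$, $f=[f_0,f_1]$. Then $g\to f = [g_0\meet(f_0\join\comp g_1),\ \dots]$ — I would just compute $g\join f$ (which exists since $\rsf P$ is compatible, giving $g_0\meet f_0$ as its lower endpoint and so $\comp{g_0}\meet\comp{f_0}$-type complementation data) and recall from \corref{cor:implSS}'s proof that $g\to f = \Delta(\one,\Delta(g\join f,f))\join f$. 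The natural candidates are $k = g\join f\in[g,\one]$ (it lies above $g$, and lies in $\rsf P$ by compatibility since all available meets are in $\rsf P$) and $f' = f$ itself, and then I must check $k\meet(g\to f)=f$, i.e. $(g\join f)\meet(g\to f)=f$. Interpreting $g\to f$ as the complement of $g$ relative to the interval $[f,\one]$ pushed down appropriately, this is exactly the statement that $f$ is the meet of $g\join f$ with the relative complement of $g$ over $f$ inside the Boolean algebra $[\ell(f\join g),\one]$ or similar — a routine Boolean identity of the form $(a\join x)\meet(\comp a\join x)=x$.

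**The obstacle.** The one place needing care is making sure $k=g\join f$ and $f$ are genuinely \emph{available} as a compatible pair whose meet is $f$ — that is, that the meet used in \lemref{lem:cup}'s description actually exists and equals $f$. Since $f\le g\join f$ trivially, the meet $(g\join f)\meet f = f$ exists automatically, so there is in fact no incompatibility issue; the subtlety is purely that I need $g\to f\in g\to\rsf P$ (clear) and that its meet with $g\join f$ is computed correctly in the ambient interval algebra. Alternatively, and perhaps more cleanly, I would argue directly at the level of special subalgebras: given $f\in\rsf P$, set $k=g\join f$ and note $g\to f\le\one$ satisfies $(g\to f)\join g=\one$, hence $g\to f$ is the complement of $g$ in $[g\to f,\one]$, while $k\meet(g\to f)$ is then forced to be $f$ by the fact — provable in any interval algebra — that $(g\join f)\meet(g\to f)=f$ whenever $g\join f$ exists. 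This avoids explicit endpoint arithmetic and is the route I would write up.
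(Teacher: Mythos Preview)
Your approach is correct and is genuinely different from the paper's. You argue directly: given $f\in\rsf P$, set $k=g\join f\in[g,\one]$ and $h=g\to f\in g\to\rsf P$; since both $k$ and $h$ lie above $f$, their meet can be computed in the Boolean algebra $[f,\one]$, where $g\to f=(g\join f)\to f$ is precisely the complement of $g\join f$, so $k\meet h=f$. This is a clean element-level decomposition and needs no transfer to interval algebras at all---the identity $(g\join f)\meet(g\to f)=f$ holds in any cubic algebra for the reason just given, so your detour through endpoint arithmetic is unnecessary (and your aside that ``$g\to f$ is the complement of $g$ in $[g\to f,\one]$'' is garbled; what you want is that $g\to f$ is the complement of $g\join f$ in $[f,\one]$).

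The paper instead leverages the $\Delta$-machinery on special subalgebras already built up: it invokes \lemref{lem:DeltagOne} and \corref{cor:Idempotence}(a) to get $\Delta([g,\one],\rsf P_{g})=(\rsf P_{g})_{g}=\rsf P$, then unwinds the definition of $\Delta$ together with \lemref{lem:implInDelta} to conclude $\rsf P=[g,\one]\join(g\to\rsf P)$. Your route is more elementary and self-contained; the paper's route exercises (and thereby justifies) the apparatus of $\rsf I_{g}$ and the subalgebra-level $\Delta$, which is the point of the surrounding section. Either is fine for this lemma.
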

\begin{proof}
	We know that 
	$$
	\Delta([g, \one], \rsf  P_{g})=(\rsf  P_{g})_{g}=\rsf  P
	$$
	and so 
	\begin{align*}
		\rsf  P & =[g, \one]\join\Delta(\one, [g, \one]\to\rsf  P_{g})  \\
		 & =[g, \one]\join \Delta(\one, [g, \one]\to\Delta([g, \one], \rsf  P))  \\
		 & =[g, \one]\join\Delta(\one, \Delta(\one, [g, \one]\to\rsf  P))\\
		 & = [g, \one]\join (g\to\rsf  P). 
	\end{align*}
\end{proof}

Essentially because we have so many internal automorphisms we can show 
that Boolean is not a local concept -- that is if $\rsf  Q$ is $\rsf P$-Boolean 
somewhere then it is Boolean in all special subalgebras equivalent to $\rsf P$. And similarly for weakly 
Boolean. 

\begin{lem}\label{lem:moving}
	Let $\rsf  P\sim\rsf H$ and $\rsf  Q\subseteq\rsf  P\cap\rsf H$ be special subalgebras. 
	Let $\beta=\beta_{\rsf  P\rsf H}$ (and so
		$\beta^{-1}=\beta_{\rsf H\rsf  P}$). 
	Then $\beta[\rsf  Q\to\rsf  P]=\beta[\rsf  Q]\to\rsf H$. 
\end{lem}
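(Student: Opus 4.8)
The claim is that for $\mathsf{P}\sim\mathsf H$ with $\mathsf Q\subseteq\mathsf P\cap\mathsf H$, and $\beta=\beta_{\mathsf P\mathsf H}$ the natural ``moving'' map between $\sim$-equivalent special subalgebras, we have $\beta[\mathsf Q\to\mathsf P]=\beta[\mathsf Q]\to\mathsf H$. First I would make precise what $\beta_{\mathsf P\mathsf H}$ is: since $\mathsf P$ and $\mathsf H$ are $\sim$-equivalent special subalgebras of $\mathcal L$, each element $f\in\mathsf P$ has a unique $f'\in\mathsf H$ with $f\sim f'$ (uniqueness by \lemref{lem:simEq}-type reasoning within a special subalgebra), and $\beta(f)=f'$; this is an implication isomorphism $\mathsf P\to\mathsf H$ with inverse $\beta^{-1}=\beta_{\mathsf H\mathsf P}$. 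The plan is to prove the two inclusions separately, both by transferring the defining condition of ``$\to$'' across $\beta$.

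**The forward inclusion.** Take $h\in\mathsf Q\to\mathsf P$, so $h\in\mathsf P$ and $h\join g=\one$ for every $g\in\mathsf Q$. I want $\beta(h)\in\beta[\mathsf Q]\to\mathsf H$, i.e. $\beta(h)\in\mathsf H$ (immediate, since $\beta$ maps into $\mathsf H$) and $\beta(h)\join\beta(g)=\one$ for all $g\in\mathsf Q$. The key observation is that $\beta$ is an implication morphism and $a\join b=\one$ in an implication algebra is equivalent to $a\to b=b$ (noted in the excerpt); since $\beta$ preserves $\to$ and $h\join g=\one$ gives $h\to g=g$, we get $\beta(h)\to\beta(g)=\beta(g)$, i.e. $\beta(h)\join\beta(g)=\one$. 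As $g$ ranges over $\mathsf Q$, $\beta(g)$ ranges over $\beta[\mathsf Q]$, so $\beta(h)\in\beta[\mathsf Q]\to\mathsf H$. That gives $\beta[\mathsf Q\to\mathsf P]\subseteq\beta[\mathsf Q]\to\mathsf H$.

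**The reverse inclusion.** Conversely, take $k\in\beta[\mathsf Q]\to\mathsf H$; then $k\in\mathsf H$ and $k\join\beta(g)=\one$ for all $g\in\mathsf Q$. Write $k=\beta(h)$ with $h=\beta^{-1}(k)\in\mathsf P$ (this is where I use that $\beta$ is onto $\mathsf H$, so in particular $k$ is in its range). Applying $\beta^{-1}$, which is also an implication morphism, to $\beta(h)\join\beta(g)=\one$ — again via the $\to$ reformulation of $\join=\one$ — yields $h\join g=\one$ for all $g\in\mathsf Q$, so $h\in\mathsf Q\to\mathsf P$ and hence $k=\beta(h)\in\beta[\mathsf Q\to\mathsf P]$. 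Combining the inclusions gives the equality.

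**Where the care is needed.** The routine part is the implication-algebra bookkeeping; the one point that deserves attention is that the argument only uses that $\beta$ is an \emph{implication} isomorphism $\mathsf P\to\mathsf H$ — which is exactly what being the ``moving'' map between two $\sim$-equivalent special subalgebras provides — together with the hypothesis $\mathsf Q\subseteq\mathsf P\cap\mathsf H$, which guarantees that $\mathsf Q$ and $\beta[\mathsf Q]$ are genuine subsets of the relevant algebras so that ``$\mathsf Q\to\mathsf P$'' and ``$\beta[\mathsf Q]\to\mathsf H$'' are defined as in \defref{def:impl}(c). So the main obstacle, if any, is purely in verifying that $\beta_{\mathsf P\mathsf H}$ really is a well-defined implication isomorphism with inverse $\beta_{\mathsf H\mathsf P}$, which follows from $\mathsf P,\mathsf H$ being $\sim$-equivalent special subalgebras (each element determined by its $\sim$-class within such a subalgebra) and from $\sim$-classes being sent to $\sim$-classes by implication morphisms (\lemref{lem:simHom}); once that is in hand the equality of the two sets is a short transfer argument.
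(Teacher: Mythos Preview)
Your proposal is correct and follows essentially the same two-inclusion argument as the paper: both directions transfer the condition ``$h\join g=\one$ for all $g\in\rsf Q$'' across the isomorphism $\beta$ (and $\beta^{-1}$). The only cosmetic difference is that the paper uses $\beta(h\join g)=\beta(h)\join\beta(g)$ directly, whereas you detour through the equivalent formulation $a\join b=\one\iff a\to b=b$; otherwise the proofs are the same.
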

\begin{proof}
	Indeed if $g\in\rsf  Q$ and $h\in\rsf  Q\to\rsf  P$ then we have 
		\begin{align*}
			\one & =\beta(h\join g)  \\
			 & =\beta(h)\join\beta(g)
		\end{align*}
		and so $\beta(h)\in\beta[\rsf  Q]\to\rsf H$. 
		
		Likewise, if $h\in\beta[\rsf  Q]\to\rsf H$ and $g\in\rsf  Q$ then
		$\one= h\join\beta(g)= \beta(\beta^{-1}(h)\join g)$ so that
		$\beta^{-1}(h)\join g=\one$. Thus $\beta^{-1}(h)\in\rsf  Q\to\rsf  P$
		whence $h=\beta(\beta^{-1}(h))\in \beta[\rsf  Q\to\rsf  P]$. 
\end{proof}

\begin{thm}\label{thm:Boolean}
	Let $\rsf  Q$ be $\rsf  P$-Boolean, and $\rsf P\sim\rsf R$ with $\rsf Q\subseteq\rsf R$. 
	Then $\rsf  Q$ is $\rsf R$-Boolean. 
\end{thm}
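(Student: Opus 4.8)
The plan is to obtain \thmref{thm:Boolean} by transporting the defining equation of $\rsf P$-Booleanness along the canonical isomorphism $\beta=\beta_{\rsf P\rsf R}$ between the equivalent special subalgebras $\rsf P$ and $\rsf R$, using \lemref{lem:moving} to compute what $\beta$ does to $\rsf Q\to\rsf P$. Since everything will take place inside the compatible set $\leftGen\rsf P\rightGen=\leftGen\rsf R\rightGen$, no questions of incompatibility will arise, and all joins of special subalgebras occurring below are defined.

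First I would record that $\rsf Q\subseteq\rsf P\cap\rsf R$: indeed $\rsf Q\subseteq\rsf P$ because $\rsf Q$ is $\rsf P$-Boolean, and $\rsf Q\subseteq\rsf R$ by hypothesis, so in particular the hypotheses of \lemref{lem:moving} hold with $\rsf H=\rsf R$. Next I would check that $\beta$ is the identity on $\rsf Q$: $\beta(q)$ is the unique element of $\rsf R$ with $\beta(q)\sim q$, while $q$ itself lies in $\rsf R$ with $q\sim q$, and $\sim$-related members of a compatible set must coincide (as used repeatedly above); hence $\beta(q)=q$ for every $q\in\rsf Q$, i.e.\ $\beta[\rsf Q]=\rsf Q$. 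Then \lemref{lem:moving} gives $\beta[\rsf Q\to\rsf P]=\beta[\rsf Q]\to\rsf R=\rsf Q\to\rsf R$. Finally, applying $\beta$ --- which carries $\rsf P$ onto $\rsf R$ and commutes with $\join$ --- to the equation $\rsf Q\join(\rsf Q\to\rsf P)=\rsf P$ yields
\[
\rsf R=\beta[\rsf P]=\beta[\rsf Q]\join\beta[\rsf Q\to\rsf P]=\rsf Q\join(\rsf Q\to\rsf R),
\]
which is exactly the assertion that $\rsf Q$ is $\rsf R$-Boolean.

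The step I expect to be the crux is the last one: justifying that $\beta_{\rsf P\rsf R}$ really does carry $\rsf P$ onto $\rsf R$ and respects the join operation on special subalgebras --- that is, that it is (the restriction of) an internal automorphism of $\leftGen\rsf P\rightGen$, exactly as foreshadowed by the remark preceding \lemref{lem:moving}. Granting that, the argument is a one-line transport of structure; the remaining ingredients --- well-definedness of $\beta$ and definedness of the joins in play --- are routine given that all the data live in a single compatible set.
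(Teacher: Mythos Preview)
Your proposal is correct and follows the same underlying idea as the paper: transport the decomposition $\rsf Q\join(\rsf Q\to\rsf P)=\rsf P$ across the bijection $\beta=\beta_{\rsf P\rsf R}$, using \lemref{lem:moving} together with the fact that $\beta$ fixes $\rsf Q$ pointwise.

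The only real difference is packaging. You work at the level of special subalgebras, applying $\beta$ to the whole equation and invoking that $\beta$ commutes with $\join$; this is clean but leaves the ``crux'' you flag --- that $\beta$ respects the $\join$ of special subalgebras --- to be checked. The paper instead unwinds the same argument element by element: take $h\in\rsf R$, write $\beta^{-1}(h)=g\meet k$ with $g\in\rsf Q$ and $k\in\rsf Q\to\rsf P$, and compute $h=\beta(g\meet k)=\beta(g)\meet\beta(k)=g\meet\beta(k)$. This element-wise version needs only that $\beta$ preserves the single meet $g\meet k$ (immediate, since $\beta$ is an isomorphism of the upper intervals involved), and so it discharges your crux directly rather than appealing to a general preservation statement. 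Either route is fine; the paper's is slightly more self-contained at the point of use.
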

\begin{proof}
	We have $\rsf  Q\join(\rsf  Q\to\rsf  P)=\rsf  P$ and $\rsf 
	Q\subseteq\rsf R$. Let $\beta=\beta_{\rsf P\rsf R}$, $h\in\rsf R$ and find 
	$g\in\rsf  Q$, $k\in\rsf  Q\to\rsf  P$ with $\beta^{-1}(h)=g\meet k$. 
	Then
	$h= \beta(\beta^{-1}(h))= \beta(g\meet k)= \beta(g)\meet\beta(k)= 
	 g\meet\beta(k)$ as $g\in\rsf H$ implies $\beta(g)=g$. As 
	 $\beta(k)\in \beta[\rsf  Q\to\rsf  P]= \beta[\rsf  Q]\to\rsf H= 
	 \rsf  Q\to\rsf H$ we have $h\in \rsf  Q\join(\rsf  Q\to\rsf H)$. 
\end{proof}

\begin{thm}\label{thm:wkBoolean}
	Let $\rsf  Q$ be weakly $\rsf  P$-Boolean for some special subalgebra $\rsf  P$, 
	and $\rsf P\sim\rsf R$ with $\rsf Q\subseteq\rsf R$. 
	Then $\rsf  Q$ is weakly $\rsf R$-Boolean. 
\end{thm}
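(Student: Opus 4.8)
The plan is to mimic the proof of \thmref{thm:Boolean}, replacing the $\rsf P$-Boolean identity $\rsf Q\join(\rsf Q\to\rsf P)=\rsf P$ with the weak identity $(\rsf Q\to\rsf P)\to\rsf P=\rsf Q$, and to transport it along the isomorphism $\beta=\beta_{\rsf P\rsf R}$ using \lemref{lem:moving}. The key observation that makes \lemref{lem:moving} applicable is that $\rsf Q\to\rsf P\subseteq\rsf P$, so in fact $\rsf Q\subseteq(\rsf Q\to\rsf P)\subseteq\rsf P$ and all three algebras are $\sim$-equivalent (they generate the same subalgebra), hence $\beta$ is defined on each of them and agrees with the corresponding $\beta_{\cdot\,\cdot}$ maps.

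First I would record that $\rsf Q\to\rsf P$, $(\rsf Q\to\rsf P)\to\rsf P$, and $\rsf R$ are all special subalgebras $\sim$-equivalent to $\rsf P$, so that $\beta$ restricts to an isomorphism on each; in particular $\beta$ fixes $\rsf Q$ pointwise (since $\rsf Q\subseteq\rsf P\cap\rsf R$ and $\beta=\beta_{\rsf P\rsf R}$ is the identity on the overlap). Next, applying \lemref{lem:moving} with the pair $\rsf Q\subseteq\rsf P$, I get $\beta[\rsf Q\to\rsf P]=\beta[\rsf Q]\to\rsf R=\rsf Q\to\rsf R$. Then I apply \lemref{lem:moving} a second time, now with the pair $(\rsf Q\to\rsf P)\subseteq\rsf P$ — noting $\beta[\rsf Q\to\rsf P]=\rsf Q\to\rsf R$ from the previous step — to obtain $\beta[(\rsf Q\to\rsf P)\to\rsf P]=\beta[\rsf Q\to\rsf P]\to\rsf R=(\rsf Q\to\rsf R)\to\rsf R$. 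Finally, since $\beta$ is a bijection and $(\rsf Q\to\rsf P)\to\rsf P=\rsf Q$ while $\beta$ fixes $\rsf Q$ pointwise, I conclude $(\rsf Q\to\rsf R)\to\rsf R=\beta[\rsf Q]=\rsf Q$, which is exactly the statement that $\rsf Q$ is weakly $\rsf R$-Boolean.

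The main obstacle I anticipate is the bookkeeping needed to justify that \lemref{lem:moving} can be invoked with the second pair: that lemma is stated for $\rsf Q\subseteq\rsf P\cap\rsf H$ with $\rsf P\sim\rsf H$, so I must verify that $(\rsf Q\to\rsf P)\subseteq\rsf R$ (equivalently that $\rsf Q\to\rsf R$, which equals $\beta[\rsf Q\to\rsf P]$, is contained in $\rsf R$ — immediate from the definition of $\to$) and that $(\rsf Q\to\rsf P)\sim\rsf R$ — which follows because $\leftGen\rsf Q\to\rsf P\rightGen=\leftGen\rsf P\rightGen=\leftGen\rsf R\rightGen$. Once the equivalences and containments are in place, the argument is a pure two-step transport and no genuinely new computation is required; the identification $\beta$ fixes $\rsf Q$ pointwise is the only other small point, and it is the same fact used in \thmref{thm:Boolean}.
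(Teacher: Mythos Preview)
Your core strategy---two applications of \lemref{lem:moving} together with the observation that $\beta=\beta_{\rsf P\rsf R}$ fixes $\rsf Q$ pointwise---is exactly what the paper does, and the chain $\rsf Q=\beta[\rsf Q]=\beta[(\rsf Q\to\rsf P)\to\rsf P]=\beta[\rsf Q\to\rsf P]\to\rsf R=(\beta[\rsf Q]\to\rsf R)\to\rsf R=(\rsf Q\to\rsf R)\to\rsf R$ is the paper's proof verbatim.

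Where you go wrong is in the bookkeeping you flag as an obstacle. Several of the auxiliary claims you make to justify the second invocation of \lemref{lem:moving} are false. First, $\rsf Q\subseteq\rsf Q\to\rsf P$ fails: any $g\in\rsf Q\cap(\rsf Q\to\rsf P)$ satisfies $g=g\join g=\one$, so the intersection is $\{\one\}$. Second, $(\rsf Q\to\rsf P)\subseteq\rsf R$ fails in general: take $\rsf Q=\{\one\}$ (which is weakly $\rsf P$-Boolean since $(\{\one\}\to\rsf P)\to\rsf P=\rsf P\to\rsf P=\{\one\}$); then $\rsf Q\to\rsf P=\rsf P$, and $\rsf P\subseteq\rsf R$ need not hold when $\rsf P\sim\rsf R$ but $\rsf P\neq\rsf R$. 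Your ``equivalently'' clause is circular: you use $\beta[\rsf Q\to\rsf P]=\rsf Q\to\rsf R\subseteq\rsf R$, but this says nothing about $\rsf Q\to\rsf P$ itself unless $\beta$ already fixes it. Third, $\leftGen\rsf Q\to\rsf P\rightGen=\leftGen\rsf P\rightGen$ also fails (take $\rsf Q=\rsf P$, so $\rsf Q\to\rsf P=\{\one\}$).

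The correct resolution---which the paper leaves tacit---is that the \emph{proof} of \lemref{lem:moving} never uses the hypothesis $\rsf Q\subseteq\rsf H$; it only uses $\rsf Q\subseteq\rsf P$ so that $\beta$ is defined on $\rsf Q$ and on $\rsf Q\to\rsf P$. With that observation the second application is immediate, since $\rsf Q\to\rsf P\subseteq\rsf P$ by definition. So your plan is right, but replace the faulty containment arguments by this one-line remark about the actual content of \lemref{lem:moving}.
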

\begin{proof}
	\begin{enumerate}[{Claim }1:]
		\item $\beta[\rsf  Q]=\rsf  Q$ -- since $\rsf  Q\subseteq\rsf R$ 
		implies $\beta\restrict\rsf  Q$ is the identity. 
	
		\item Now suppose that $\rsf  Q$ is weakly $\rsf  P$-Boolean. Then
		\begin{align*}
			\rsf  Q & =\beta[\rsf  Q]  \\
			 & =\beta[(\rsf  Q\to\rsf  P)\to\rsf  P]  \\
			 & =\beta[\rsf  Q\to\rsf  P]\to\rsf R  \\
			 & =(\beta[\rsf  Q]\to\rsf R)\to\rsf R\\
			 & = (\rsf  Q\to\rsf R)\to\rsf R. 
		\end{align*}
	\end{enumerate}	
\end{proof}

We need to know certain persistence properties of Boolean-ness. 
\begin{lem}\label{lem:upwards}
	Let $\rsf  Q\subseteq\rsf  R\subseteq\rsf  P$ be $\rsf  P$-Boolean. 
	Then $\rsf  Q$ is $\rsf  R$-Boolean and $\rsf  Q\to\rsf  R=(\rsf  Q\to\rsf 
	F)\cap\rsf  R$. 
\end{lem}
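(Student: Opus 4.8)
The plan is to prove the two assertions in order, using the first to drive the second, and to stay entirely within the lattice of special subalgebras. For the equality $\rsf Q\to\rsf R=(\rsf Q\to\rsf P)\cap\rsf R$: unwinding \defref{def:impl}(c), an element $h$ lies in $\rsf Q\to\rsf R$ precisely when $h\in\rsf R$ and $h\join g=\one$ for all $g\in\rsf Q$, and the second clause is exactly what membership in $\rsf Q\to\rsf P$ requires, so the $\supseteq$ inclusion is immediate. For $\subseteq$, note $\rsf Q\to\rsf R\subseteq\rsf R$ by definition and $\rsf Q\to\rsf R\subseteq\rsf Q\to\rsf P$ by \lemref{lem:inclImpl} applied to $\rsf Q\subseteq\rsf R\subseteq\rsf P$. (Implicitly one uses that $\rsf Q\to\rsf P$, $\rsf Q\to\rsf R$ and therefore $(\rsf Q\to\rsf P)\cap\rsf R$ are special subalgebras, the last by \lemref{lem:intersectSpec}.)

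For the Boolean assertion I must show $\rsf Q\join(\rsf Q\to\rsf R)=\rsf R$; since $\rsf Q\subseteq\rsf R$ is given, this is exactly $\rsf R$-Booleanness of $\rsf Q$ in the sense of \defref{def:Boolean}. Both $\rsf Q$ and $\rsf Q\to\rsf R$ sit inside the compatible set $\rsf R$, so $\rsf Q\vee(\rsf Q\to\rsf R)$ is defined and by \lemref{lem:cup} equals $\Set{f\meet g | f\in\rsf Q,\ g\in\rsf Q\to\rsf R,\ f\meet g\text{ exists}}$. Every such $f\meet g$ is a meet of two members of the special subalgebra $\rsf R$, hence lies in $\rsf R$; so $\rsf Q\join(\rsf Q\to\rsf R)\subseteq\rsf R$. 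Conversely, take $h\in\rsf R$; then $h\in\rsf P$, and since $\rsf Q$ is $\rsf P$-Boolean, \lemref{lem:cup} lets us write $h=f\meet k$ with $f\in\rsf Q$, $k\in\rsf Q\to\rsf P$, and the meet existing. The key step is the observation that $k\geq f\meet k=h\in\rsf R$, so upward-closure of $\rsf R$ gives $k\in\rsf R$, whence $k\in(\rsf Q\to\rsf P)\cap\rsf R=\rsf Q\to\rsf R$ by the first part; therefore $h=f\meet k\in\rsf Q\join(\rsf Q\to\rsf R)$, finishing the equality.

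I expect no serious obstacle: once \lemref{lem:cup} is available the argument is essentially formal, the one genuine input being that $\rsf R$, being special, is upwards-closed and closed under those meets of its own elements that exist. The points needing care are bookkeeping — verifying that each set written down is a special subalgebra so that the lattice operations and \lemref{lem:cup} apply, and in particular noting that $\rsf Q\cup(\rsf Q\to\rsf R)$ is compatible simply because it is contained in the compatible set $\rsf R$. I also note that the proof uses only that $\rsf Q$ is $\rsf P$-Boolean together with $\rsf Q\subseteq\rsf R\subseteq\rsf P$, never that $\rsf R$ itself is $\rsf P$-Boolean.
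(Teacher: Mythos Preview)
Your proposal is correct and follows essentially the same route as the paper. The only cosmetic difference is in the second half: the paper compresses your element-chase into a single distributivity calculation
\[
\rsf R=\rsf P\cap\rsf R=(\rsf Q\join(\rsf Q\to\rsf P))\cap\rsf R=(\rsf Q\cap\rsf R)\join((\rsf Q\to\rsf P)\cap\rsf R)=\rsf Q\join(\rsf Q\to\rsf R),
\]
invoking \lemref{lem:distrib}, whereas you verify this instance of distributivity directly by writing $h\in\rsf R$ as $f\meet k$ and using upward closure of $\rsf R$ to place $k$ in $\rsf R$; the underlying content is identical, and your observation that the hypothesis ``$\rsf R$ is $\rsf P$-Boolean'' is never used is also correct for the paper's argument.
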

\begin{proof}
	First we note that $\rsf  Q\to\rsf  R=(\rsf  Q\to\rsf  P)\cap\rsf  R$ as 
	$x\in$LHS iff $x\in\rsf  R$ and for all $g\in\rsf  Q$ $x\join 
	g=\one$ iff $x\in$RHS. 
	
	Thus we have 
	\begin{align*}
		\rsf  R & =\rsf  P\cap\rsf  R  \\
		 & =(\rsf  Q\join(\rsf  Q\to\rsf  P))\cap\rsf  R  \\
		 & = (\rsf  Q\cap\rsf  R)\join((\rsf  Q\to\rsf  P)\cap\rsf  R) \\
		 & =\rsf  Q\join(\rsf  Q\to\rsf  R). 
	\end{align*}
\end{proof}

\begin{lem}\label{lem:middle}
	Let $\rsf  Q$ be $\rsf  R$-Boolean, $\rsf  R$ be $\rsf  P$-Boolean. Then 
	$\rsf  Q$ is $\rsf  P$-Boolean. 
\end{lem}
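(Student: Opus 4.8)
The plan is to unfold the two hypotheses and show that an arbitrary $h\in\rsf P$ can be written as $q\meet r'$ with $q\in\rsf Q$ and $r'\in\rsf Q\to\rsf P$. We are given $\rsf Q\join(\rsf Q\to\rsf R)=\rsf R$ and $\rsf R\join(\rsf R\to\rsf P)=\rsf P$, and we must produce $\rsf Q\join(\rsf Q\to\rsf P)=\rsf P$. The inclusion $\rsf Q\join(\rsf Q\to\rsf P)\subseteq\rsf P$ is automatic since $\rsf Q,\rsf Q\to\rsf P\subseteq\rsf P$ (using $\rsf Q\subseteq\rsf R\subseteq\rsf P$) and $\rsf P$ is a special subalgebra, hence meet-closed for whatever meets exist. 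So the work is the reverse inclusion.

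First I would take $h\in\rsf P$. By $\rsf P$-Booleanness of $\rsf R$, write $h=r\meet s$ with $r\in\rsf R$ and $s\in\rsf R\to\rsf P$; note $r\join s=\one$ and, since $s\in\rsf P$, also $s\in\rsf Q\to\rsf P$ would follow once we know every element of $\rsf R\to\rsf P$ lies in $\rsf Q\to\rsf P$ — and that is immediate from \lemref{lem:inclAgain} applied to $\rsf Q\subseteq\rsf R\subseteq\rsf P$, which gives $\rsf R\to\rsf P\subseteq\rsf Q\to\rsf P$. Next, since $r\in\rsf R$ and $\rsf Q$ is $\rsf R$-Boolean, write $r=q\meet t$ with $q\in\rsf Q$ and $t\in\rsf Q\to\rsf R$, so $q\join t=\one$. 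Then $h=q\meet t\meet s$, and the idea is to combine $t$ and $s$ into a single element of $\rsf Q\to\rsf P$. Since $t\in\rsf R\subseteq\rsf P$ and $s\in\rsf P$ they are compatible, so $t\meet s$ exists and lies in $\rsf P$; it remains to check $t\meet s\in\rsf Q\to\rsf P$, i.e.\ that $(t\meet s)\join g=\one$ for every $g\in\rsf Q$. For such $g$ we have $t\join g=\one$ (as $t\in\rsf Q\to\rsf R$ and $g\in\rsf Q$), and $s\join g=\one$ (as $s\in\rsf R\to\rsf P\subseteq\rsf Q\to\rsf P$ and $g\in\rsf Q$); since everything sits inside the compatible set $\rsf P$ the distributive identity $(t\meet s)\join g=(t\join g)\meet(s\join g)=\one\meet\one=\one$ holds. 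Hence $t\meet s\in\rsf Q\to\rsf P$, and $h=q\meet(t\meet s)\in\rsf Q\join(\rsf Q\to\rsf P)$, completing the reverse inclusion.

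The main obstacle I anticipate is purely bookkeeping about existence of meets and the validity of the distributive step $(t\meet s)\join g=(t\join g)\meet(s\join g)$: this is fine because all the elements involved live in the single special subalgebra $\rsf P$, which embeds (locally, on finitely generated subalgebras) into an interval algebra where the underlying lattice is Boolean, so compatibility guarantees the relevant meets exist and the Boolean distributive law applies — exactly as in the proofs of \lemref{lem:cup} and \lemref{lem:distrib}. Once that is clear, the argument is just the two successive factorizations followed by merging the ``error terms,'' and nothing deeper than \lemref{lem:inclAgain} and the definitions is required.
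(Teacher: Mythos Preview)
Your proof is correct and follows essentially the same route as the paper's: decompose an arbitrary element of $\rsf P$ first via $\rsf R$-Booleanness of $\rsf R$, then decompose the $\rsf R$-factor via $\rsf R$-Booleanness of $\rsf Q$, and finally verify that the meet of the two ``complement'' factors lies in $\rsf Q\to\rsf P$ by the distributive identity $(t\meet s)\join g=(t\join g)\meet(s\join g)$. One small remark: your justification that $t\meet s$ exists via compatibility is more elaborate than necessary---since $h\le t$ and $h\le s$, both live in the Boolean interval $[h,\one]$, so the meet exists there directly; the paper simply asserts ``Clearly $k\meet l\in\rsf P$'' for the same reason.
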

\begin{proof}
	Let $f\in\rsf  P$. Then there is some $h\in\rsf  R$ and $k\in\rsf 
	H\to\rsf  P$ such that $h\meet k=f$. Also there is some 
	$g\in\rsf  Q$ and $l\in\rsf  Q\to\rsf  R$ such that $h=g\meet l$. 
	Thus $g\meet l\meet k=f$ -- so it suffices to show that $l\meet 
	k\in\rsf  Q\to\rsf  P$. 
	
	Clearly $k\meet l\in\rsf  P$. So let $p\in\rsf  Q$. Then
	$\rsf  Q\subseteq\rsf  R$ and $k\in\rsf  R\to\rsf  P$ implies $p\join 
	 k=\one$. $l\in\rsf  Q\to\rsf  R$ implies $p\join l=\one$. 
	 Therefore
	 $p\join(k\meet l)=(p\join k)\meet(p\join l)= \one\meet\one= \one$. 
\end{proof}

So far we have few examples of Boolean special subalgebras. The next lemma 
produces many more. 

\begin{lem}\label{lem:lots}
	Let $\rsf  P\sim\rsf  R$. Then $\rsf  P\cap\rsf  R$ is $\rsf  P$-Boolean 
	and
	$$
	(\rsf  P\cap\rsf  R)\to\rsf  P=\Delta(\one, \rsf  R)\cap\rsf  P. 
	$$
\end{lem}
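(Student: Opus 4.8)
The plan is to prove the displayed identity $(\rsf P\cap\rsf R)\to\rsf P=\Delta(\one,\rsf R)\cap\rsf P$ first, and then to read off from it that $\rsf Q:=\rsf P\cap\rsf R$ (a special subalgebra of $\rsf P$ by \lemref{lem:intersectSpec}) is $\rsf P$-Boolean, using essentially nothing beyond compatibility and upwards-closure. Throughout I write $\beta_{\rsf R}(x)$ for the unique member of $\rsf R$ that is $\sim$-equivalent to $x$, which exists for every $x\in\leftGen\rsf R\rightGen=\leftGen\rsf P\rightGen$ since $\rsf R\sim\rsf P$; and I use freely that $\Delta(\one,\cdot)$ is a cubic automorphism (straightforward from the axioms, and transparent in an interval algebra, where it is the swap $[a,b]\mapsto[b,a]$), so it preserves $\join$, is an involution fixing $\one$, and preserves $\sim$ by \lemref{lem:simHom}.

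For $\Delta(\one,\rsf R)\cap\rsf P\subseteq(\rsf P\cap\rsf R)\to\rsf P$: if $f=\Delta(\one,s)$ with $s\in\rsf R$ and $f\in\rsf P$, then for any $g\in\rsf Q\subseteq\rsf R$ the set $\{g,s\}$ sits inside the compatible $\rsf R$, so $g\join f=g\join\Delta(\one,s)=\one$ by \corref{cor:preFilterFIP}, i.e. $f\in(\rsf P\cap\rsf R)\to\rsf P$. For the reverse inclusion, take $f\in(\rsf P\cap\rsf R)\to\rsf P$ and set $r=\beta_{\rsf R}(\Delta(\one,f))$. Since $f\in\rsf P$ and $r\in\rsf R$, \lemref{lem:interTwoFil} gives $f\join r\in\rsf P\cap\rsf R=\rsf Q$, so the defining property of $f$ yields $\one=f\join(f\join r)=f\join r$; applying the automorphism $\Delta(\one,\cdot)$ then gives $\Delta(\one,f)\join\Delta(\one,r)=\one$ as well, so $\{r,\Delta(\one,f)\}$ is compatible. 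Since also $r\sim\Delta(\one,f)$, applying \lemref{lem:compEQ} both to $r\preceq\Delta(\one,f)$ and to $\Delta(\one,f)\preceq r$ forces $r=\Delta(\one,f)$. Hence $\Delta(\one,f)=r\in\rsf R$, so $f=\Delta(\one,r)\in\Delta(\one,\rsf R)$ while $f\in\rsf P$, proving the identity.

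Granted the identity, $\rsf Q\join(\rsf Q\to\rsf P)\subseteq\rsf P$ is immediate, as $\rsf Q$ and $\rsf Q\to\rsf P$ both lie inside the compatible special subalgebra $\rsf P$. For the reverse inclusion, given $f\in\rsf P$ I set $r=\beta_{\rsf R}(f)$, $q=f\join r$ and $h=f\join\Delta(\one,r)$. Then $q\in\rsf P\cap\rsf R=\rsf Q$ by \lemref{lem:interTwoFil}; $h\geq f\in\rsf P$ forces $h\in\rsf P$ by upwards-closure; and $h\geq\Delta(\one,r)\in\Delta(\one,\rsf R)=\rsf R_{\one}$, which is a special subalgebra by \thmref{thm:gFF} taken with $g=\one$, so $h\in\Delta(\one,\rsf R)$; hence $h\in\Delta(\one,\rsf R)\cap\rsf P=\rsf Q\to\rsf P$. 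It remains to verify $q\meet h=f$; by \thmref{thm:transfer} (and the remark following it) this may be checked in an interval algebra $\rsf I(B)$, where $r\sim f$ amounts to $\ell(r)=\ell(f)$ by \corref{cor:lenEq}. Writing $f=[f_0,f_1]$ and $r=[r_0,r_1]$, a direct computation gives $q\meet h=[\,f_0\meet(r_0\join\comp{r_1}),\,f_1\join(r_1\meet\comp{r_0})\,]$, and substituting $r_0\join\comp{r_1}=\comp{\ell(r)}=f_0\join\comp{f_1}$ and $r_1\meet\comp{r_0}=\ell(r)=\comp{f_0}\meet f_1$ this collapses, by absorption, to $[f_0,f_1]=f$. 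Thus $f=q\meet h$ with $q\in\rsf Q$ and $h\in\rsf Q\to\rsf P$, so $f\in\rsf Q\join(\rsf Q\to\rsf P)$ by \lemref{lem:cup}; hence $\rsf Q\join(\rsf Q\to\rsf P)=\rsf P$ and $\rsf Q$ is $\rsf P$-Boolean.

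The only genuinely non-routine point is the last interval-algebra identity $q\meet h=f$, together with the small amount of bookkeeping needed to transfer it: one must note that the meet, computed in $\env(\mathcal L)$ where it visibly equals $f$, is still $f$ back in $\mathcal L$, which holds since $\mathcal L$ is upwards-closed in $\env(\mathcal L)$ and $f\in\mathcal L$. A subsidiary point to state with care is the well-definedness of $\beta_{\rsf R}$: a $\sim$-equivalent special subalgebra $\rsf R$ is a g-cover of $\leftGen\rsf R\rightGen=\leftGen\rsf P\rightGen$, so each element of that cubic subalgebra is $\sim$ to a unique element of $\rsf R$.
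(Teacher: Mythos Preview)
Your proof is correct and follows essentially the same route as the paper's: first establish the identity, then decompose an arbitrary $f\in\rsf P$ as the meet of $q=f\join r$ and $h=f\join\Delta(\one,r)$ where $r\in\rsf R$ with $r\sim f$. The paper obtains $q\meet h=f$ instantly by observing that $h=(f\join f')\to f$ is the relative complement of $q=f\join f'$ in the Boolean interval $[f,\one]$, rather than by an explicit interval-algebra calculation; and for the reverse inclusion of the identity the paper notes more directly that $k\sim k'$ together with $k\join k'=\one$ gives $k'=\Delta(k\join k',k)=\Delta(\one,k)$, bypassing the two applications of \lemref{lem:compEQ}. These are cosmetic differences only.
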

\begin{proof}
	First we show that $(\rsf  P\cap\rsf  R)\to\rsf  P=\Delta(\one, \rsf  R)\cap\rsf  P$. 
	
	Let $f\in\rsf  P\cap\rsf  R$ and $k\in\Delta(\one, \rsf  R)\cap\rsf  P$. 
	Then $\Delta(\one, k)\in\rsf  R$ so $\Delta(\one,  k)$ and $f$ are compatible, ie $k\join f=\one$. Hence 
	$\Delta(\one, \rsf  R)\cap\rsf  P\subseteq (\rsf  P\cap\rsf  R)\to\rsf  P$. 
	
	Conversely suppose that $k\in (\rsf  P\cap\rsf  R)\to\rsf  P$. Let
	$h\in\rsf  R$. Then $h\join k\in\rsf  P\cap\rsf  R$ and so
	$h\join k= (h\join k)\join k= \one$. As there is some $k'\sim k$ 
	in $\rsf  R$ this implies $k'\join k=\one$ and (as $k\sim k'$) we have 
	$k=\Delta(\one, k')$. Thus
	$k\in \Delta(\one, \rsf R)\cap\rsf  P$. 
	
	Now let $f\in\rsf  P$. Then let $f'\in\rsf  R$ with $f'\sim f$. 
	Then 
	$(f\join f')\to f= f\meet \Delta(\one, \Delta(f'\join f, f))=
	f\meet \Delta(\one, f')\in \rsf  P\cap\Delta(\one, \rsf  R)$. 
	Also 
	$f\join f'\in\rsf  P\cap\rsf  R$ and $(f\join f')\meet ((f\join f')\to 
	f)=f$ so $f\in (\rsf  P\cap\rsf  R)\join(\rsf  P\cap\Delta(\one, \rsf  R))$. 
\end{proof}

\begin{cor}\label{cor:lots}
	Let $\rsf  P\sim\rsf  R$. Then 
	$$
	\Delta(\rsf  P\cap\rsf  R, \rsf  P)=\rsf  R. 
	$$
\end{cor}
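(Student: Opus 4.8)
The plan is to unwind \defref{def:Delta} and then invoke \lemref{lem:lots} twice: once for the pair $\rsf P, \rsf R$ and once for the pair $\rsf R, \rsf P$ (legitimate since $\sim$ is symmetric, so $\rsf R\sim\rsf P$ as well). Note first that $\rsf P\cap\rsf R\subseteq\rsf P$, so $\Delta(\rsf P\cap\rsf R, \rsf P)$ is defined, and by definition it equals $\Delta(\one, (\rsf P\cap\rsf R)\to\rsf P)\join(\rsf P\cap\rsf R)$. Applying \lemref{lem:lots} gives $(\rsf P\cap\rsf R)\to\rsf P=\Delta(\one, \rsf R)\cap\rsf P$, so the first summand becomes $\Delta(\one, \Delta(\one, \rsf R)\cap\rsf P)$.

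Next I would observe that $x\mapsto\Delta(\one, x)$ is an involution of $\mathcal L$ (cubic axiom (c) with the larger element equal to $\one$, since $x\le\one$ always), hence a bijection, so it commutes with intersection of subsets; together with $\Delta(\one, \Delta(\one, \rsf R))=\rsf R$ this yields $\Delta(\one, \Delta(\one, \rsf R)\cap\rsf P)=\rsf R\cap\Delta(\one, \rsf P)$. Thus $\Delta(\rsf P\cap\rsf R, \rsf P)=(\rsf R\cap\Delta(\one, \rsf P))\join(\rsf R\cap\rsf P)$.

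Finally I would apply \lemref{lem:lots} with the roles of $\rsf P$ and $\rsf R$ interchanged: $\rsf R\cap\rsf P$ is $\rsf R$-Boolean and $(\rsf R\cap\rsf P)\to\rsf R=\Delta(\one, \rsf P)\cap\rsf R$. By the definition of $\rsf R$-Boolean this says $\rsf R=(\rsf R\cap\rsf P)\join((\rsf R\cap\rsf P)\to\rsf R)=(\rsf R\cap\rsf P)\join(\Delta(\one, \rsf P)\cap\rsf R)$, which is precisely the expression obtained in the previous paragraph. Hence $\Delta(\rsf P\cap\rsf R, \rsf P)=\rsf R$. The argument is essentially bookkeeping; the only point needing a moment's care is the commutation of $\Delta(\one, -)$ with $\cap$ (immediate once one notes it is a bijection) and remembering that \lemref{lem:lots} applies equally with the two algebras swapped, so I do not anticipate a real obstacle.
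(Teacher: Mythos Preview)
Your proposal is correct and follows essentially the same route as the paper's own proof: unwind \defref{def:Delta}, apply \lemref{lem:lots} to rewrite $(\rsf P\cap\rsf R)\to\rsf P$ as $\Delta(\one,\rsf R)\cap\rsf P$, use that $\Delta(\one,-)$ is an involution to turn this into $\rsf R\cap\Delta(\one,\rsf P)$, then apply \lemref{lem:lots} with the roles of $\rsf P$ and $\rsf R$ interchanged and finish via the $\rsf R$-Boolean identity. The only difference is that you spell out explicitly why $\Delta(\one,-)$ commutes with intersection, which the paper leaves implicit.
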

\begin{proof}
	\begin{align*}
		\Delta(\rsf  P\cap\rsf  R, \rsf  P)&=(\rsf  P\cap\rsf  R)\join
		\Delta(\one, (\rsf  P\cap\rsf  R)\to\rsf  P)\\
		&=(\rsf  P\cap\rsf  R)\join\Delta(\one, \Delta(\one, \rsf  R)\cap\rsf  P)\\
		&=(\rsf  P\cap\rsf  R)\join(\rsf  R\cap\Delta(\one, \rsf  P))\\
		&=(\rsf  P\cap\rsf  R)\join((\rsf  P\cap\rsf  R)\to\rsf  R)\\
		&=\rsf  R
	\end{align*}
	since $\rsf  P\cap\rsf  R$ is also $\rsf  R$-Boolean. 
\end{proof}

\begin{lem}\label{lem:DeltaInMR}
	Let $g, h$ in $\mathcal L$ be such that $g\meet h$ exists and 
	$g\join h=\one$. Then $\Delta(g, g\meet h)=g\meet\Delta(\one, h)$. 
\end{lem}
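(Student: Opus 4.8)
The plan is to reduce the identity to a routine computation inside an interval algebra, using the enveloping MR-algebra as a bridge, and then to pull the resulting equality back to $\mathcal L$. First I would pass to $\env(\mathcal L)$ and identify $\mathcal L$ with the upwards-closed subalgebra $e[\mathcal L]$ guaranteed by \thmref{thm:envAlg}. The key preliminary observation is that the meet $g\meet h$, which exists in $\mathcal L$ by hypothesis, remains the meet of $g$ and $h$ in $\env(\mathcal L)$: if $w\le g$ and $w\le h$ in $\env(\mathcal L)$ then $w\join(g\meet h)$ lies in $\mathcal L$ by upward-closure and is still a lower bound of $g$ and $h$, so $w\join(g\meet h)\le g\meet h$, whence $w\le g\meet h$. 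Since $e$ is a cubic embedding it commutes with $\Delta$ where defined, so $\Delta(g,g\meet h)$ and $\Delta(\one,h)$ are computed the same way in $\mathcal L$ and in $\env(\mathcal L)$. Finally $g$ and $h$ generate a finitely generated subalgebra of the MR-algebra $\env(\mathcal L)$, which (as in the discussion following \thmref{thm:transfer}) is isomorphic to an interval algebra $\rsf I(B)$; all the elements in sight lie there.

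Now I would write $g=[g_{0},g_{1}]$, $h=[h_{0},h_{1}]$ in $\rsf I(B)$. The hypothesis $g\join h=\one=[0,1]$ says exactly that $g_{0}\meet h_{0}=0$ and $g_{1}\join h_{1}=1$, i.e. $g_{0}\le\comp{h_{0}}$ and $\comp{h_{1}}\le g_{1}$. The interval-algebra formulas give $\Delta(\one,h)=[\comp{h_{1}},\comp{h_{0}}]$ and $g\meet h=[g_{0}\join h_{0},\,g_{1}\meet h_{1}]$, and then
\[
\Delta(g,g\meet h)=\bigl[\,g_{0}\join(g_{1}\meet\comp{h_{1}}),\ g_{1}\meet(g_{0}\join\comp{h_{0}})\,\bigr].
\]
Here the two endpoints collapse: $g_{1}\meet\comp{h_{1}}=\comp{h_{1}}$ because $\comp{h_{1}}\le g_{1}$, and $g_{0}\join\comp{h_{0}}=\comp{h_{0}}$ because $g_{0}\le\comp{h_{0}}$. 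Hence $\Delta(g,g\meet h)=[\,g_{0}\join\comp{h_{1}},\ g_{1}\meet\comp{h_{0}}\,]$, which is precisely the intersection interval of $[g_{0},g_{1}]$ and $[\comp{h_{1}},\comp{h_{0}}]$, that is, $g\meet\Delta(\one,h)$; in particular this meet exists in $\env(\mathcal L)$.

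It remains to pull this back. The element $\Delta(g,g\meet h)$ lies in $\mathcal L$ (it is $\Delta$ of two members of $\mathcal L$ with the second below the first), it equals $g\meet\Delta(\one,h)$ in $\env(\mathcal L)$, and it is below both $g$ and $\Delta(\one,h)$. Any $w\in\mathcal L$ with $w\le g$ and $w\le\Delta(\one,h)$ satisfies the same inequalities in $\env(\mathcal L)$ and hence $w\le\Delta(g,g\meet h)$; therefore $\Delta(g,g\meet h)$ is the meet of $g$ and $\Delta(\one,h)$ in $\mathcal L$ as well, which is the assertion.

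The one point that genuinely requires care is the transfer of an already-existing meet between $\mathcal L$ and $\env(\mathcal L)$ in both directions: a subalgebra does not inherit meets for free, so one must lean on the fact that $e[\mathcal L]$ is upwards-closed, exactly as in the proofs of \thmref{thm:MRgCover} and \thmref{thm:envGCoverII}. Once we are inside $\rsf I(B)$ everything else is elementary Boolean algebra, and the hypothesis $g\join h=\one$ enters only through the two inequalities $g_{0}\le\comp{h_{0}}$ and $\comp{h_{1}}\le g_{1}$ used to simplify the endpoints; the hypothesis that $g\meet h$ exists is needed only so that the left-hand side $\Delta(g,g\meet h)$ is defined in the first place.
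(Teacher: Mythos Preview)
Your proof is correct. You pass to the enveloping MR-algebra, then to an interval algebra, and verify the identity by a direct Boolean computation on endpoints; you also handle carefully the transfer of meets between $\mathcal L$ and $\env(\mathcal L)$ using upward-closure, which is the one point where a careless argument could go wrong.

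The paper, however, takes a much shorter and purely intrinsic route: it never leaves $\mathcal L$. It uses the definition of the cubic implication $g\to x=\Delta(\one,\Delta(g,x))\join x$ (for $x\le g$) to write $\Delta(g,g\meet h)=g\meet\Delta(\one,g\to(g\meet h))$ in one step, and then observes that in the Boolean algebra $[g\meet h,\one]$ modularity gives $g\to(g\meet h)=(g\join h)\to h=\one\to h=h$. Four lines, no embedding, no endpoint bookkeeping. What your approach buys is that it is self-contained and requires no familiarity with the derived implication identities; what the paper's approach buys is that it exhibits the result as an immediate consequence of the Boolean structure of $[g\meet h,\one]$ together with the hypothesis $g\join h=\one$, making clear that nothing special about interval algebras is needed.
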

\begin{proof}
	\begin{align*}
		\Delta(g, g\meet h) & =g\meet\Delta(\one, g\to(g\meet h))  \\
		 & =g\meet \Delta(\one, (g\join h)\to h)  && \text{ by modularity in 
		 }[g\meet h, \one]\\ 
		 & = g\meet \Delta(\one, \one\to h) \\
		 & = g\meet \Delta(\one, h)
	\end{align*}
\end{proof}

\begin{thm}\label{thm:lots}
	$\rsf  R\sim\rsf  P$ iff there is an $\rsf  P$-Boolean subalgebra $\rsf  Q$ 
	such that $\rsf  R=\Delta(\rsf  Q, \rsf  P)$. 
\end{thm}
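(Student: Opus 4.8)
The plan is to prove the two directions separately, with the ``only if'' direction essentially already done and the ``if'' direction requiring the new computational work.

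For the forward direction, suppose $\rsf R\sim\rsf P$. Then by \lemref{lem:lots} the intersection $\rsf Q=\rsf P\cap\rsf R$ is $\rsf P$-Boolean, and by \corref{cor:lots} we have $\Delta(\rsf P\cap\rsf R,\rsf P)=\rsf R$. So we may take $\rsf Q=\rsf P\cap\rsf R$ and we are done. This direction needs only a pointer to the two preceding results.

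For the reverse direction, suppose $\rsf Q$ is $\rsf P$-Boolean and $\rsf R=\Delta(\rsf Q,\rsf P)$; I must show $\rsf R\sim\rsf P$. First I would check that $\rsf R$ is a special subalgebra — this should follow from the fact (used implicitly throughout the ``Delta on Filters'' subsection, e.g. in \lemref{lem:interDelta} and \corref{cor:doubleDelta}) that $\Delta(\rsf J,\rsf I)$ is always special when $\rsf J\subseteq\rsf I$ are special; if this has not been recorded explicitly I would note it here as a short verification using \lemref{lem:deltaOne} together with the closure lemmas. Then, to establish $\rsf R\sim\rsf P$, I would produce the bijection $\beta$ witnessing equivalence: for each $f\in\rsf P$ I would locate a unique $f'\in\rsf R$ with $f\sim f'$, using the fact that $\rsf Q\subseteq\rsf R$ and $\rsf Q\subseteq\rsf P$ both generate the same cubic subalgebra $\leftGen\rsf Q\rightGen$, hence $\leftGen\rsf R\rightGen=\leftGen\rsf Q\rightGen=\leftGen\rsf P\rightGen$ (using \thmref{thm:gFF} applied to the principal pieces, or the general observation that $\Delta(\rsf Q,\rsf P)$ and $\rsf P$ have the same generated algebra). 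Concretely: given $f\in\rsf P$, the $\rsf P$-Boolean hypothesis $\rsf P=\rsf Q\join(\rsf Q\to\rsf P)$ lets me write $f=g\meet k$ with $g\in\rsf Q$, $k\in\rsf Q\to\rsf P$, and then $f'=g\meet\Delta(\one,k)=\Delta(g\join k,g)\cdot(\text{something})$ lands in $\rsf R=\Delta(\one,\rsf Q\to\rsf P)\join\rsf Q$; I would check $f\sim f'$ by reducing to an interval algebra via the transfer theorem and computing lengths, and check uniqueness via \lemref{lem:simEq}-type compatibility (elements of a common special subalgebra that are $\sim$-related and compatible are equal).

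The main obstacle I expect is the reverse direction's verification that the assignment $f\mapsto f'$ is well-defined and bijective onto $\rsf R$ — i.e. that it does not depend on the chosen decomposition $f=g\meet k$, and that every element of $\rsf R$ arises this way. The cleanest route is probably to avoid choosing decompositions altogether: show directly that for every $f\in\rsf P$ there is some $r\in\rsf R$ with $r\sim f$ (existence, from $\leftGen\rsf R\rightGen=\leftGen\rsf P\rightGen$ plus upward-closure of $\rsf R$, exactly as in the proof of \thmref{thm:gFF}), and symmetrically for every $r\in\rsf R$ there is $f\in\rsf P$ with $f\sim r$; then $\rsf R\sim\rsf P$ holds by the definition of $\sim$ on special subalgebras. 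The symmetry is what makes this work: \corref{cor:doubleDelta} gives $\Delta(\rsf Q,\Delta(\rsf Q,\rsf P))=\rsf Q\join(\rsf Q\to\rsf P)=\rsf P$ (using the $\rsf P$-Boolean hypothesis), so $\Delta(\rsf Q,\rsf R)=\rsf P$, and the existence argument applied with the roles of $\rsf R$ and $\rsf P$ swapped finishes it.
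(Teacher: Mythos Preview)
Your proposal is correct and follows essentially the same route as the paper. For the forward direction you cite \lemref{lem:lots} and \corref{cor:lots}, exactly as the paper does (the paper's phrase ``the right to left direction is the last corollary'' is a slip; it is covering the same implication you call forward). For the reverse direction your concrete construction --- decompose $f=g\meet k$ with $g\in\rsf Q$, $k\in\rsf Q\to\rsf P$, and take $f'=g\meet\Delta(\one,k)\in\Delta(\rsf Q,\rsf P)$ --- is precisely what the paper does.

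The one place the paper is cleaner: rather than verifying $f\sim f'$ by passing to an interval algebra and computing lengths, it invokes \lemref{lem:DeltaInMR} to see that $g\meet\Delta(\one,k)=\Delta(g,g\meet k)=\Delta(g,f)$, so $f\sim f'$ is immediate from $f\le g$. Your worries about well-definedness of the decomposition and about the symmetric inclusion (finding, for each $r\in\rsf R$, some $f\in\rsf P$ with $r\sim f$) are legitimate, but the paper does not address them --- it establishes only the one-sided existence and stops. Your suggested fallback via \corref{cor:doubleDelta} (giving $\Delta(\rsf Q,\rsf R)=\rsf P$ and hence symmetry) is a sound way to close that gap if you want the argument to be self-contained.
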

\begin{proof}
	The right to left direction is the last corollary. 
	
	So we want to prove that $\Delta(\rsf  Q, \rsf  P)\sim\rsf  P$ 
	whenever $\rsf  Q$ is $\rsf  P$-Boolean. 
	
	Let $f\in\rsf  P$. We will show that there is some $f'\in\Delta(\rsf  Q, 
	\rsf  P)$ with $f\sim f'$. 
	As $\rsf  Q\join(\rsf  Q\to\rsf  P)=\rsf  P$ we can find $g\in\rsf  Q$ and
	$h\in\rsf  Q\to\rsf  P$ with $f=g\meet h$. As $g\join h=\one$ we know 
	that
	$\Delta(g, g\meet h)=g\meet\Delta(\one, h)$. But
	$g\meet\Delta(\one, h)\in \rsf  Q\join\Delta(\one, \rsf  Q\to\rsf  R)= 
	\Delta(\rsf  Q, \rsf  P)$ and $f=g\meet h\sim \Delta(g, g\meet h)= g\meet\Delta(\one, h)$.
\end{proof}

The Boolean elements have nice properties with respect to $\Delta$. We 
want to show more -- that the set of $\rsf  P$-Boolean elements is a 
Boolean subalgebra of $[\rsf  P, \Set{\one}]$ with the reverse order.

It suffices to show closure under $\cap$ and $\join$ -- closure under 
$\to$ follows from \lemref{lem:BooleanANotherWay}. 

\begin{lem}\label{lem:interStrBool}
	Let $\rsf  Q_{1}$ and $\rsf  Q_{2}$ be $\rsf  P$-Boolean. Then 
	$(\rsf  Q_{1}\to\rsf  P)\join(\rsf  Q_{2}\to\rsf  P)= (\rsf  Q_{1}\cap\rsf 
	G_{2})\to\rsf  P$. 
\end{lem}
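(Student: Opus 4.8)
The plan is to prove the two inclusions separately. The inclusion $(\rsf Q_{1}\to\rsf P)\join(\rsf Q_{2}\to\rsf P)\subseteq(\rsf Q_{1}\cap\rsf Q_{2})\to\rsf P$ will follow formally from monotonicity, while the reverse inclusion will come from decomposing a typical element of the right‑hand side using $\rsf P$‑Booleanness and then finishing with a short computation inside a Boolean interval, exactly the device used in the proof of \lemref{lem:simeq}.

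For the easy inclusion: since $\rsf Q_{1}\cap\rsf Q_{2}\subseteq\rsf Q_{i}\subseteq\rsf P$ for $i=1,2$, \lemref{lem:inclAgain} gives $\rsf Q_{i}\to\rsf P\subseteq(\rsf Q_{1}\cap\rsf Q_{2})\to\rsf P$. By \thmref{thm:oneThreeEqual} this last set equals $(\rsf Q_{1}\cap\rsf Q_{2})\supset\rsf P$, an intersection of special subalgebras and hence itself special by \lemref{lem:intersectSpec}; in particular it is compatible, so $(\rsf Q_{1}\to\rsf P)\cup(\rsf Q_{2}\to\rsf P)$ is compatible, $(\rsf Q_{1}\to\rsf P)\join(\rsf Q_{2}\to\rsf P)$ is defined, and being the smallest special subalgebra containing that union it is contained in $(\rsf Q_{1}\cap\rsf Q_{2})\to\rsf P$.

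For the reverse inclusion I would take $k\in(\rsf Q_{1}\cap\rsf Q_{2})\to\rsf P$. Since $\rsf Q_{i}$ is $\rsf P$‑Boolean we have $\rsf Q_{i}\join(\rsf Q_{i}\to\rsf P)=\rsf P$, so \lemref{lem:cup} lets me write $k=g_{i}\meet h_{i}$ with $g_{i}\in\rsf Q_{i}$, $h_{i}\in\rsf Q_{i}\to\rsf P$ and the meet existing, for $i=1,2$. Then $k\le g_{i}$, hence $k$ lies below each of $g_{1},g_{2},h_{1},h_{2}$ and all of these sit in the Boolean algebra $[k,\one]$, where $g_{i}\meet h_{i}=k$ and $g_{i}\join h_{i}=\one$ force $h_{i}=\comp{g_{i}}$. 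The key observation is that $g_{1}\join g_{2}=\one$: indeed $g_{1}\join g_{2}$ lies above $g_{i}\in\rsf Q_{i}$ and each $\rsf Q_{i}$ is upwards closed, so $g_{1}\join g_{2}\in\rsf Q_{1}\cap\rsf Q_{2}$; thus $k\join(g_{1}\join g_{2})=\one$ by choice of $k$, and $k\le g_{1}$ then gives $g_{1}\join g_{2}=\one$. Computing in $[k,\one]$, the meet $h_{1}\meet h_{2}$ exists in $\mathcal L$ (as $\rsf P$ is compatible), satisfies $k\le h_{1}\meet h_{2}$ so lies in $[k,\one]$, and there equals $\comp{g_{1}}\meet\comp{g_{2}}=\comp{g_{1}\join g_{2}}=\comp{\one}=k$. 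Since $h_{i}\in\rsf Q_{i}\to\rsf P$, \lemref{lem:cup} yields $k=h_{1}\meet h_{2}\in(\rsf Q_{1}\to\rsf P)\join(\rsf Q_{2}\to\rsf P)$.

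The only delicate point I anticipate is the bookkeeping around existence of meets and the passage to $[k,\one]$ — namely that $h_{1}\meet h_{2}$ really exists in $\mathcal L$ and that its value there agrees with the complement computation performed in the Boolean interval; but this is handled just as in the earlier proofs, using compatibility of $\rsf P$ and the fact that $k$ is a common lower bound of $h_{1},h_{2}$. Everything else is the routine verification that the decompositions supplied by \lemref{lem:cup} combine as expected.
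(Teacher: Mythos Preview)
Your proof is correct and follows essentially the same approach as the paper's. Both arguments decompose the given element using $\rsf P$-Booleanness of each $\rsf Q_{i}$, observe that $g_{1}\join g_{2}\in\rsf Q_{1}\cap\rsf Q_{2}$ so that its join with the element is $\one$, and deduce that the element equals $h_{1}\meet h_{2}$; you phrase the final step as a complement computation in the Boolean interval $[k,\one]$, while the paper carries out the equivalent distributive-law manipulation directly.
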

\begin{proof}
	Suppose that $h_{i}\in\rsf  Q_{i}\to\rsf  P$ and $g\in \rsf 
	Q_{1}\cap\rsf  Q_{2}$. Then
	$(h_{1}\meet h_{2})\join g= (h_{1}\join g)\meet (h_{2}\join g)= 
	\one\meet\one=\one$ and so 
	$(h_{1}\meet h_{2}\in (\rsf  Q_{1}\cap\rsf  Q_{2})\to\rsf  P$. 
	
	Conversely, let $h\join g=\one$ for all $g\in\rsf  Q_{1}\cap\rsf  Q_{2}$. 
	As $\rsf  Q_{i}$ are both $\rsf  P$-Boolean there exists $h_{i}\in\rsf 
	G_{i}\to\rsf  P$ and $g_{i}\in\rsf  Q_{i}$ with
	$h= h_{1}\meet g_{1}= h_{2}\meet g_{2}$. Then
	\begin{align*}
		h_{1}\meet h_{2}\meet(g_{1}\join g_{2})&=
		(h_{1}\meet h_{2}\meet g_{1})\join(h_{1}\meet h_{2}\meet g_{2})\\
		&=(h_{2}\meet h)\join(h_{1\meet h})\\
		&=h\meet h=h. 
	\end{align*}
	As $h_{1}\meet h_{2}\in (\rsf  Q_{1}\to\rsf  P)\join(\rsf  Q_{2}\to\rsf  P)$
	and $g_{1}\join g_{2}\in \rsf  Q_{1}\cap\rsf  Q_{2}$
	we then have 
	$h= [h\join (h_{1}\meet h_{2})]\meet (h\join g_{1}\join g_{2})=
	h\join (h_{1}\meet h_{2})$ and so $h=h_{1}\meet h_{2}$ is in 
	$(\rsf  Q_{1}\to\rsf  P)\join(\rsf  Q_{2}\to\rsf  P)$.	
\end{proof}

\begin{cor}\label{cor:interStrBool}
	Let $\rsf  Q_{1}$ and $\rsf  Q_{2}$ be $\rsf  P$-Boolean. Then so is
	$\rsf  Q_{1}\cap\rsf	G_{2}$. 
\end{cor}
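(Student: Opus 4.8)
The plan is to read this off Lemma~\ref{lem:interStrBool}. First note that $\rsf Q_1\cap\rsf Q_2$ is a special subalgebra by Lemma~\ref{lem:intersectSpec}, and $\rsf Q_1\cap\rsf Q_2\subseteq\rsf Q_1\subseteq\rsf P$, so the only thing left to verify in order that $\rsf Q_1\cap\rsf Q_2$ be $\rsf P$-Boolean is the identity $(\rsf Q_1\cap\rsf Q_2)\join\bigl((\rsf Q_1\cap\rsf Q_2)\to\rsf P\bigr)=\rsf P$. Using Lemma~\ref{lem:interStrBool} to rewrite $(\rsf Q_1\cap\rsf Q_2)\to\rsf P$, this reduces to proving
\[
(\rsf Q_1\cap\rsf Q_2)\join(\rsf Q_1\to\rsf P)\join(\rsf Q_2\to\rsf P)=\rsf P .
\]
All three pieces are special subalgebras of $\rsf P$, so the inclusion $\subseteq$ is immediate, and the work is the reverse inclusion.

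For that I would fix $f\in\rsf P$. Since $\rsf Q_1$ and $\rsf Q_2$ are $\rsf P$-Boolean, Lemma~\ref{lem:cup} lets us write $f=g_1\meet h_1=g_2\meet h_2$ with $g_i\in\rsf Q_i$ and $h_i\in\rsf Q_i\to\rsf P$. Then $g_1\join g_2\geq g_i$ lies in each $\rsf Q_i$ by upward closure, hence in $\rsf Q_1\cap\rsf Q_2$. Working inside an interval algebra containing $\{f,g_1,g_2,h_1,h_2\}$ and using $f\le h_1$ and $f\le h_2$,
\[
(g_1\join g_2)\meet(h_1\meet h_2)=\bigl((g_1\meet h_1)\meet h_2\bigr)\join\bigl((g_2\meet h_2)\meet h_1\bigr)=(f\meet h_2)\join(f\meet h_1)=f .
\]
So, once $h_1\meet h_2$ is known to exist in $\mathcal L$, it lies in $(\rsf Q_1\to\rsf P)\join(\rsf Q_2\to\rsf P)$ by Lemma~\ref{lem:cup}, and the display exhibits $f$ as an element of $(\rsf Q_1\cap\rsf Q_2)\join\bigl((\rsf Q_1\to\rsf P)\join(\rsf Q_2\to\rsf P)\bigr)$; commutativity and associativity of the operations on special subalgebras then regroup this as $(\rsf Q_1\cap\rsf Q_2)\join\bigl((\rsf Q_1\cap\rsf Q_2)\to\rsf P\bigr)$, as required.

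The only point that needs a genuine argument is that the meet $h_1\meet h_2$ exists in $\mathcal L$ — it is needed both for the displayed computation and for membership in $(\rsf Q_1\to\rsf P)\join(\rsf Q_2\to\rsf P)$, which by Lemma~\ref{lem:cup} consists of exactly such meets. Here I would pass to $\env(\mathcal L)$: the elements $h_1,h_2$ lie in the compatible set $\rsf P$, so they have a meet in the MR-algebra $\env(\mathcal L)$ (equivalently, in the interval algebra they locally generate); that meet is $\geq f\in\mathcal L$, and $\mathcal L$ is upwards closed in $\env(\mathcal L)$ by \thmref{thm:envAlg}, so the meet already lies in $\mathcal L$, and then in $\rsf P$ since special subalgebras absorb existing meets. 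I expect this use of the envelope to be the one nontrivial step; everything else is the routine bookkeeping sketched above.
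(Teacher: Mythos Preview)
Your argument is correct and follows essentially the same route as the paper: pick $f\in\rsf P$, decompose it twice as $f=g_i\meet h_i$ with $g_i\in\rsf Q_i$ and $h_i\in\rsf Q_i\to\rsf P$, and observe that $f=(g_1\join g_2)\meet(h_1\meet h_2)$ with $g_1\join g_2\in\rsf Q_1\cap\rsf Q_2$ and $h_1\meet h_2\in(\rsf Q_1\to\rsf P)\join(\rsf Q_2\to\rsf P)=(\rsf Q_1\cap\rsf Q_2)\to\rsf P$ via \lemref{lem:interStrBool}. The paper simply says ``as above'', pointing back to the identical computation in the proof of \lemref{lem:interStrBool}.

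One remark: the step you flag as ``the one nontrivial step'' --- the existence of $h_1\meet h_2$ in $\mathcal L$ --- is actually immediate and does not need the envelope. You already note that $f\le h_1$ and $f\le h_2$; hence $h_1,h_2\in[f,\one]$, and every interval $[f,\one]$ in a cubic algebra is a Boolean algebra (this is part of the implication-algebra structure). So $h_1\meet h_2$ exists in $[f,\one]\subseteq\mathcal L$ for free, and the compatibility/envelope detour is unnecessary. This is exactly why the paper's proof (and the parallel computation in \lemref{lem:interStrBool}) passes over the point without comment.
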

\begin{proof}
	Let $f\in\rsf  P$. 
	As $\rsf  Q_{i}$ are both $\rsf  P$-Boolean there exists $h_{i}\in\rsf 
	G_{i}\to\rsf  P$ and $g_{i}\in\rsf  Q_{i}$ with
	$f= h_{1}\meet g_{1}= h_{2}\meet g_{2}$. Then as above
	$f= h_{1}\meet h_{2}\meet(g_{1}\join g_{2})$ and
	$g_{1}\join g_{2}\in\rsf  Q_{1}\cap\rsf  Q_{2}$ and 
	$h_{1}\meet h_{2}\in (\rsf  Q_{1}\to\rsf  P)\join(\rsf  Q_{2}\to\rsf  P)= (\rsf  Q_{1}\cap\rsf 
	G_{2})\to\rsf  P$.
\end{proof}

\begin{cor}\label{cor:joinStrBool}
	Let $\rsf  Q_{1}$ and $\rsf  Q_{2}$ be $\rsf  P$-Boolean. Then so is
	$\rsf  Q_{1}\join\rsf  Q_{2}$. 
\end{cor}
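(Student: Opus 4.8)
The plan is to express $\rsf Q_1\join\rsf Q_2$ as the relative complement in $\rsf P$ of a $\rsf P$-Boolean subalgebra, so that the statement falls out of \lemref{lem:interStrBool} and \corref{cor:interStrBool}. First I would check that $\rsf Q_1\join\rsf Q_2$ is actually defined: both $\rsf Q_i$ lie in the compatible set $\rsf P$, so $\rsf Q_1\cup\rsf Q_2$ is compatible; and since $\rsf P$ is a special subalgebra containing $\rsf Q_1\cup\rsf Q_2$ we get $\rsf Q_1\join\rsf Q_2\subseteq\rsf P$. Next I would record the auxiliary fact that whenever $\rsf Q$ is $\rsf P$-Boolean so is $\rsf Q\to\rsf P$: indeed $\rsf Q\to\rsf P\subseteq\rsf P$, and using $\rsf Q\subseteq(\rsf Q\to\rsf P)\to\rsf P$ (as in the proof of \lemref{lem:BooleanANotherWay}) together with $\rsf Q\join(\rsf Q\to\rsf P)=\rsf P$ we obtain $(\rsf Q\to\rsf P)\join((\rsf Q\to\rsf P)\to\rsf P)\supseteq(\rsf Q\to\rsf P)\join\rsf Q=\rsf P$, while the reverse inclusion is immediate.

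The heart of the argument is the identity $\rsf Q_1\join\rsf Q_2=\bigl((\rsf Q_1\to\rsf P)\cap(\rsf Q_2\to\rsf P)\bigr)\to\rsf P$. By the auxiliary fact both $\rsf Q_1\to\rsf P$ and $\rsf Q_2\to\rsf P$ are $\rsf P$-Boolean, so \lemref{lem:interStrBool} applied to them yields $\bigl((\rsf Q_1\to\rsf P)\to\rsf P\bigr)\join\bigl((\rsf Q_2\to\rsf P)\to\rsf P\bigr)=\bigl((\rsf Q_1\to\rsf P)\cap(\rsf Q_2\to\rsf P)\bigr)\to\rsf P$; and by \lemref{lem:BooleanANotherWay} each $\rsf Q_i$ is weakly $\rsf P$-Boolean, so $(\rsf Q_i\to\rsf P)\to\rsf P=\rsf Q_i$ and the left side collapses to $\rsf Q_1\join\rsf Q_2$. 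Finally $\rsf R:=(\rsf Q_1\to\rsf P)\cap(\rsf Q_2\to\rsf P)$ is $\rsf P$-Boolean by \corref{cor:interStrBool} (applied to the $\rsf P$-Boolean subalgebras $\rsf Q_1\to\rsf P$ and $\rsf Q_2\to\rsf P$), so $\rsf Q_1\join\rsf Q_2=\rsf R\to\rsf P$ is $\rsf P$-Boolean again by the auxiliary fact. Combined with \corref{cor:interStrBool} and \lemref{lem:BooleanANotherWay}, this shows the $\rsf P$-Boolean subalgebras are closed under $\cap$, $\join$ and $\to$.

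I do not expect a genuine obstacle here — it really is a corollary of the two de Morgan-type laws already established. The only points that need a moment's care are verifying that $\rsf Q_1\join\rsf Q_2$ is defined and that $\rsf Q\to\rsf P$ inherits $\rsf P$-Booleanness, both handled above. One could instead argue directly, mirroring the proof of \corref{cor:interStrBool}: given $f\in\rsf P$ write $f=h_i\meet g_i$ with $g_i\in\rsf Q_i$ and $h_i\in\rsf Q_i\to\rsf P$ for $i=1,2$, and check that $f=(g_1\meet g_2)\meet(h_1\join h_2)$ with $g_1\meet g_2\in\rsf Q_1\join\rsf Q_2$ and $h_1\join h_2\in(\rsf Q_1\join\rsf Q_2)\to\rsf P$; but that route forces one to verify separately that the meet $g_1\meet g_2$ exists (it does, since $f$ is a common lower bound and $\mathcal L$ is upwards-closed in its envelope), so the argument through relative complements is the cleaner one to write up.
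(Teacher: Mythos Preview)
Your proposal is correct and follows essentially the same route as the paper: both establish the identity $\rsf Q_{1}\join\rsf Q_{2}=\bigl((\rsf Q_{1}\to\rsf P)\cap(\rsf Q_{2}\to\rsf P)\bigr)\to\rsf P$ by applying \lemref{lem:interStrBool} to the $\rsf P$-Boolean subalgebras $\rsf Q_{i}\to\rsf P$ and using weak Booleanness to simplify $(\rsf Q_{i}\to\rsf P)\to\rsf P$ back to $\rsf Q_{i}$. The only difference is the finishing move: the paper, having computed $(\rsf Q_{1}\join\rsf Q_{2})\to\rsf P=(\rsf Q_{1}\to\rsf P)\cap(\rsf Q_{2}\to\rsf P)$, then carries out exactly the direct elementwise decomposition $f=(g_{1}\meet g_{2})\meet(h_{1}\join h_{2})$ that you mention as your alternative, whereas you short-circuit this by invoking \corref{cor:interStrBool} on $(\rsf Q_{1}\to\rsf P)\cap(\rsf Q_{2}\to\rsf P)$ and your auxiliary fact once more --- a mildly cleaner packaging of the same content.
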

\begin{proof}
	Since we have $(\rsf  Q\to\rsf  P)\to\rsf  P=\rsf  Q$ for $\rsf 
	P$-Booleans we know that
	$\rsf  Q_{i}\to\rsf  P$ are also $\rsf  P$-Boolean and so
	\begin{align*}
		\rsf  Q_{1}\join\rsf  Q_{2} & = ((\rsf  Q_{1}\to\rsf  P)\to\rsf  P)\join 
		((\rsf  Q_{2}\to\rsf  P)\to\rsf  P)\\
		 & = ((\rsf  Q_{1}\to\rsf  P)\cap
				(\rsf  Q_{2}\to\rsf  P))\to\rsf  P \\
				\intertext{Therefore}
		(\rsf  Q_{1}\join\rsf  Q_{2})\to\rsf  P & =
		(((\rsf  Q_{1}\to\rsf  P)\cap
				(\rsf  Q_{2}\to\rsf  P))\to\rsf  P)\to\rsf  P\\
		 & =(\rsf  Q_{1}\to\rsf  P)\cap
				(\rsf  Q_{2}\to\rsf  P). 
	\end{align*}
	Thus we have 
	$$
	(\rsf  Q_{1}\join\rsf  Q_{2})\join((\rsf  Q_{1}\join\rsf  Q_{2})\to\rsf  P)=
	(\rsf  Q_{1}\join\rsf  Q_{2})\join((\rsf  Q_{1}\to\rsf  P)\cap
				(\rsf  Q_{2}\to\rsf  P)). 
	$$
	Let $f\in\rsf  P$ and $g_{i}\in\rsf  Q_{i}$, $h_{i}\in\rsf  Q_{i}\to\rsf 
	P$ be such that $f=g_{i}\meet h_{i}$. Then $f\le g_{1}, g_{2}$ so that 
	$g_{1}\meet g_{2}\in\rsf  Q_{1}\join\rsf  Q_{2}$, 
	$h_{1}\join h_{2}\in (\rsf  Q_{1}\to\rsf  P)\cap
				(\rsf  Q_{2}\to\rsf  P)$ and 
	\begin{align*}
		g_{1}\meet g_{2}\meet (h_{1}\join h_{2}) & =
		(g_{1}\meet g_{2}\meet h_{1})\join(g_{1}\meet g_{2}\meet h_{2})\\
		&=(g_{2}\meet f)\join(g_{1}\meet f)\\
		&=f\meet f&&\text{ as }f\le g_{i}\\
		&=f. 
	\end{align*}
\end{proof}

Thus we have 
\begin{thm}\label{thm:BooleanAlgebra}
	Let $\rsf  P$ be any special subalgebra. Then
	$\Set{\rsf  Q | \rsf  Q\text{ is }\rsf  P\text{-Boolean}}$ ordered by reverse 
	inclusion is a Boolean algebra with $\meet=\join$, $\join=\cap$, 
	$1=\Set{\one}$, $0=\rsf  P$ and $\comp{\rsf  Q}=\rsf  Q\to\rsf  P$. 
\end{thm}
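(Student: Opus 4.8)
The plan is to show that the poset in the statement is a bounded distributive lattice in which every element has a complement, and then to invoke the fact that such a lattice is a Boolean algebra; the operations and constants in the displayed description will then be read off directly.

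First I would check that the class $\Set{\rsf Q | \rsf Q\text{ is }\rsf P\text{-Boolean}}$ is nonempty, closed under the operations involved, and contains the claimed bounds. It contains $\rsf P$ (since $\rsf P\to\rsf P=\Set{\one}$ and $\rsf P\join\Set{\one}=\rsf P$) and $\Set{\one}$ (since $\Set{\one}\to\rsf P=\rsf P$ and $\Set{\one}\join\rsf P=\rsf P$). Closure under $\cap$ and under $\join$ are exactly \corref{cor:interStrBool} and \corref{cor:joinStrBool}. Closure under $\rsf Q\mapsto\rsf Q\to\rsf P$ follows because $(\rsf Q\to\rsf P)\to\rsf P=\rsf Q$ by \lemref{lem:BooleanANotherWay}, so that $(\rsf Q\to\rsf P)\join\bigl((\rsf Q\to\rsf P)\to\rsf P\bigr)=(\rsf Q\to\rsf P)\join\rsf Q=\rsf P$, i.e. $\rsf Q\to\rsf P$ is again $\rsf P$-Boolean.

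Next I would verify the lattice and boundedness. For special subalgebras, $\rsf Q_{1}\cap\rsf Q_{2}$ is the largest special subalgebra contained in both, while $\rsf Q_{1}\join\rsf Q_{2}$ is the smallest special subalgebra containing both — the latter is defined here because $\rsf Q_{1}\cup\rsf Q_{2}$ lies in the compatible set $\rsf P$ — so under reverse inclusion $\cap$ computes suprema and $\join$ computes infima; since both outputs are again $\rsf P$-Boolean, the same is true inside our subclass. These operations are commutative, associative, idempotent and satisfy absorption, as already noted in the text, so we indeed have a lattice; it is bounded, with $1=\Set{\one}$ (the smallest special subalgebra, hence the largest under reverse inclusion) and $0=\rsf P$ (the largest $\rsf P$-Boolean subalgebra under inclusion); and distributivity is \lemref{lem:distrib} applied with $\rsf S=\rsf P$, which is legitimate because every $\rsf P$-Boolean subalgebra is a subalgebra of $\rsf P$.

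Finally, for each $\rsf P$-Boolean $\rsf Q$ I would show that $\rsf Q\to\rsf P$ is its complement: the lattice meet $\rsf Q\join(\rsf Q\to\rsf P)$ equals $\rsf P=0$ by the very definition of $\rsf P$-Boolean, while the lattice join $\rsf Q\cap(\rsf Q\to\rsf P)$ equals $\Set{\one}=1$, since any $h$ lying in both $\rsf Q$ and $\rsf Q\to\rsf P$ satisfies $h=h\join h=\one$. A bounded distributive lattice in which every element is complemented is a Boolean algebra, and in such a lattice complements are unique, so $\comp{\rsf Q}=\rsf Q\to\rsf P$ exactly as claimed. The genuinely substantive content — closure of the class under $\cap$ and under $\join$, and distributivity — has all been isolated in the preceding lemmas, so the main obstacle here is really careful bookkeeping: getting the direction of ``reverse inclusion'' right so that $\cap$ plays the role of the Boolean join, $\join$ that of the Boolean meet, $\Set{\one}$ that of the unit and $\rsf P$ that of the zero, and confirming that \lemref{lem:distrib}, read in that orientation, is precisely the distributive law one needs.
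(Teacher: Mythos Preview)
Your proposal is correct and follows essentially the same route as the paper's proof, which simply cites \lemref{lem:distrib}, the preceding closure remarks (i.e.\ \corref{cor:interStrBool} and \corref{cor:joinStrBool}), and \lemref{lem:BooleanANotherWay}. You have spelled out carefully the bookkeeping that the paper leaves implicit---verifying the bounds, checking that $\cap$ and $\join$ remain the lattice operations inside the subclass, and confirming that $\rsf Q\to\rsf P$ really is the complement---which is exactly what the terse ``immediate from'' in the paper is gesturing at.
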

\begin{proof}
	This is immediate from lemma \ref{lem:distrib} and preceding remarks, 
	and from \lemref{lem:BooleanANotherWay}.  
\end{proof}

We need a stronger closure property for Boolean filters under 
intersection. 
\begin{lem}\label{lem:closIntersect}
	Let $\rsf  P\sim\rsf  R$, $\rsf  Q$ be $\rsf  P$-Boolean and $\rsf K$ 
	be $\rsf  R$-Boolean. Then $\rsf  Q\cap\rsf K$ is $\rsf  P\cap\rsf 
	R$-Boolean. 
\end{lem}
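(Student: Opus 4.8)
The plan is to show first that $\rsf Q\cap\rsf K$ is $\rsf P$-Boolean, and then to push the base down to $\rsf P\cap\rsf R$ using the downward persistence supplied by \lemref{lem:upwards}. Before starting I would record the easy containment $\rsf Q\cap\rsf K\subseteq\rsf P\cap\rsf R$ (immediate from $\rsf Q\subseteq\rsf P$ and $\rsf K\subseteq\rsf R$) together with the bookkeeping identities $\rsf K\cap(\rsf P\cap\rsf R)=\rsf K\cap\rsf P$ and $\rsf Q\cap(\rsf K\cap\rsf P)=\rsf Q\cap\rsf K$, which hold because $\rsf K\subseteq\rsf R$ and $\rsf Q\subseteq\rsf P$ respectively; these let me slip $\rsf P\cap\rsf R$ in and out of intersections at will.

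The core is a short chain of Boolean-ness transfers. By \lemref{lem:lots}, applied once to $\rsf P\sim\rsf R$ and once (using symmetry of $\sim$) to $\rsf R\sim\rsf P$, the subalgebra $\rsf P\cap\rsf R$ is simultaneously $\rsf P$-Boolean and $\rsf R$-Boolean. Since $\rsf K$ and $\rsf P\cap\rsf R$ are both $\rsf R$-Boolean, \corref{cor:interStrBool} gives that $\rsf K\cap(\rsf P\cap\rsf R)=\rsf K\cap\rsf P$ is $\rsf R$-Boolean. Because $\rsf K\cap\rsf P\subseteq\rsf P\cap\rsf R\subseteq\rsf R$, \lemref{lem:upwards} then upgrades this to: $\rsf K\cap\rsf P$ is $(\rsf P\cap\rsf R)$-Boolean. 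Feeding this together with ``$\rsf P\cap\rsf R$ is $\rsf P$-Boolean'' into the transitivity \lemref{lem:middle} yields that $\rsf K\cap\rsf P$ is $\rsf P$-Boolean. Now $\rsf Q$ and $\rsf K\cap\rsf P$ are both $\rsf P$-Boolean, so \corref{cor:interStrBool} once more gives that $\rsf Q\cap(\rsf K\cap\rsf P)=\rsf Q\cap\rsf K$ is $\rsf P$-Boolean.

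To conclude, $\rsf Q\cap\rsf K\subseteq\rsf P\cap\rsf R\subseteq\rsf P$ and $\rsf Q\cap\rsf K$ is $\rsf P$-Boolean, so a last application of \lemref{lem:upwards} (base moved from $\rsf P$ down to $\rsf P\cap\rsf R$) shows $\rsf Q\cap\rsf K$ is $(\rsf P\cap\rsf R)$-Boolean, which is the claim. The main difficulty here is not any individual computation -- each step is a citation -- but the bookkeeping: one must keep straight whether a given Boolean-ness assertion is relative to $\rsf P$, to $\rsf R$, or to $\rsf P\cap\rsf R$, and invoke \lemref{lem:upwards} and \lemref{lem:middle} with exactly the right nested triple of subalgebras. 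A minor point, handled just as in the proof of \lemref{lem:distrib}, is that every join occurring implicitly in a Boolean condition is legitimate since all the subalgebras in sight lie inside one of the compatible sets $\rsf P$ or $\rsf R$.
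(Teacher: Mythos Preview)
Your proof is correct, but it proceeds quite differently from the paper's. The paper gives a direct element-level verification: take an arbitrary $p\in\rsf P\cap\rsf R$, decompose it as $p=g\meet g'$ with $g\in\rsf Q$, $g'\in\rsf Q\to\rsf P$ and as $p=k\meet k'$ with $k\in\rsf K$, $k'\in\rsf K\to\rsf R$, and then check that $p=(g\join k)\meet(g'\meet k')$ with $g\join k\in\rsf Q\cap\rsf K$ and $g'\meet k'\in(\rsf Q\cap\rsf K)\to(\rsf P\cap\rsf R)$. No previous closure results are invoked.

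Your argument instead assembles the statement from the structural lemmas already proved (\lemref{lem:lots}, \corref{cor:interStrBool}, \lemref{lem:upwards}, \lemref{lem:middle}): you first establish that $\rsf Q\cap\rsf K$ is $\rsf P$-Boolean and then push the base down to $\rsf P\cap\rsf R$. This is in a sense the reverse of the paper's logical order, since the paper derives \corref{cor:closIntersect} (that $\rsf Q\cap\rsf K$ is $\rsf P$-Boolean) \emph{from} the present lemma via \lemref{lem:middle}, whereas you obtain the present lemma \emph{after} first getting $\rsf P$-Boolean-ness. The trade-off is clear: the paper's proof is self-contained and short, while yours makes explicit that the result is a formal consequence of the Boolean-subalgebra calculus already in place, at the cost of a longer citation chain and some bookkeeping about which base a given Boolean-ness assertion is relative to.
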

\begin{proof}
	Let $p\in\rsf  P\cap\rsf  R$ be arbitrary. Choose
	$g\in\rsf  Q$, $g'\in\rsf  Q\to\rsf  P$ with $g\meet g'=p$ and choose
	$k\in\rsf K$, $k'\in\rsf K\to\rsf  R$ with $k\meet k'=p$. 
	
	Then $g'$ and $k'$ are both above $p$ so $g'\meet k'$ exists and is 
	is $\rsf  P\cap\rsf  R$. Also
	$(g\join k)\meet(g'\meet k')=p$. $g\join k\in\rsf  Q\cap\rsf K$ so we 
	need to show that $g'\meet k'$ is in $(\rsf  Q\cap\rsf K)\to(\rsf 
	P\cap\rsf  R)$. Let 
	$q\in\rsf  Q\cap\rsf K$. Then $q\join g'=\one=q\join k'$ so that
	$q\join (g'\meet k')= (q\join g')\meet(q\join k')= \one$. 
\end{proof}

\begin{cor}\label{cor:closIntersect}
	Let $\rsf  Q$ be $\rsf P$-Boolean, $\rsf K$ be $\rsf R$-Boolean and 
	$\rsf P\sim\rsf R$. Then $\rsf  Q\cap\rsf K$ is $\rsf P$-Boolean. 
\end{cor}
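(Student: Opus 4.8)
The plan is to stitch together three results already in hand, so the argument is essentially a one-line chain once the inclusions are checked. First I would record the containments that the hypotheses give us for free: since $\rsf Q$ is $\rsf P$-Boolean we have $\rsf Q\subseteq\rsf P$, and since $\rsf K$ is $\rsf R$-Boolean we have $\rsf K\subseteq\rsf R$; hence $\rsf Q\cap\rsf K\subseteq\rsf P\cap\rsf R$. This is exactly the setup required to invoke \lemref{lem:closIntersect}, which then tells us that $\rsf Q\cap\rsf K$ is $(\rsf P\cap\rsf R)$-Boolean.

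Next I would apply \lemref{lem:lots} to the pair $\rsf P\sim\rsf R$, which gives directly that $\rsf P\cap\rsf R$ is $\rsf P$-Boolean (and in particular $\rsf P\cap\rsf R\subseteq\rsf P$, consistent with the previous step). So we now have a two-step tower: $\rsf Q\cap\rsf K$ is $(\rsf P\cap\rsf R)$-Boolean and $\rsf P\cap\rsf R$ is $\rsf P$-Boolean.

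Finally I would close the argument with the transitivity principle \lemref{lem:middle}: taking $\rsf Q\cap\rsf K$ in the role of ``$\rsf Q$'', $\rsf P\cap\rsf R$ in the role of ``$\rsf R$'', and $\rsf P$ in the role of ``$\rsf P$'', we conclude that $\rsf Q\cap\rsf K$ is $\rsf P$-Boolean, as desired. The ``hard part'' here is negligible — the only thing to be careful about is that the subset conditions baked into the definition of $\rsf X$-Boolean line up at each stage, namely $\rsf Q\cap\rsf K\subseteq\rsf P\cap\rsf R\subseteq\rsf P$, and both of those inclusions have already been observed above.
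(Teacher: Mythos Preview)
Your proof is correct and follows exactly the same three-step chain as the paper: first \lemref{lem:closIntersect} to get $\rsf Q\cap\rsf K$ is $(\rsf P\cap\rsf R)$-Boolean, then the fact that $\rsf P\cap\rsf R$ is $\rsf P$-Boolean (the paper cites \thmref{thm:lots} here, though \lemref{lem:lots} is the more direct reference, as you use), and finally \lemref{lem:middle} for transitivity.
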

\begin{proof}
	The lemma tells us that $\rsf  Q\cap \rsf K$ is $\rsf  P\cap\rsf  R$-Boolean. 
	Theorem \ref{thm:lots} tells us that $\rsf  P\cap\rsf  R$ is $\rsf  P$-Boolean.
	And from \lemref{lem:middle} we have $\rsf  Q\cap\rsf K$ to be $\rsf 
	P$-Boolean. 
\end{proof}

The last closure property we need is with respect to $\Delta$. 
\begin{lem}\label{lem:deltaGivesStrA}
	Let $\rsf  Q\subseteq\rsf  R\subseteq\rsf  P$ be $\rsf  P$-Boolean 
	subalgebras. Then
	$$
		\Delta(\rsf  Q, \rsf  R)\to\Delta(\rsf  Q, \rsf  P)=\Delta(\one, \rsf 
		R\to\rsf  P). 
	$$
\end{lem}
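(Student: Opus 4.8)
The plan is to prove the two set inclusions directly, using throughout the explicit description $\rsf A\to\rsf B=\{h\in\rsf B: h\join g=\one\text{ for all }g\in\rsf A\}$ from \defref{def:impl}, together with the fact that $\Delta(\rsf Q,\rsf X)=\Delta(\one,\rsf Q\to\rsf X)\join\rsf Q$, so that by \lemref{lem:cup} a typical element of $\Delta(\rsf Q,\rsf X)$ has the form $\Delta(\one,k)\meet q$ with $k\in\rsf Q\to\rsf X$ and $q\in\rsf Q$. I would also record once and for all that $\Delta(\one,\cdot)$ is a poset automorphism of $\mathcal L$ fixing $\one$ — it is an involution and order preserving by the cubic axioms (c) and (d) with top $\one$ — so it preserves binary joins and all existing meets; in particular $\Delta(\one,x)\join\Delta(\one,y)=\Delta(\one,x\join y)$, which equals $\one$ exactly when $x\join y=\one$.

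The preliminary facts are these. Since $\rsf Q\subseteq\rsf R\subseteq\rsf P$ are $\rsf P$-Boolean, \lemref{lem:upwards} gives that $\rsf Q$ is $\rsf R$-Boolean with $\rsf Q\to\rsf R=(\rsf Q\to\rsf P)\cap\rsf R$; hence $\rsf Q\to\rsf R\subseteq\rsf R$ and $\rsf R=\rsf Q\join(\rsf Q\to\rsf R)$. Also $\Delta(\rsf Q,\rsf R)\subseteq\Delta(\rsf Q,\rsf P)$ by \lemref{lem:inclDelta}, so the left-hand side is well formed, and $\rsf R\to\rsf P\subseteq\rsf Q\to\rsf P$ by \lemref{lem:inclAgain}, so $\Delta(\one,\rsf R\to\rsf P)\subseteq\Delta(\one,\rsf Q\to\rsf P)\subseteq\Delta(\rsf Q,\rsf P)$; in particular every $h\in\Delta(\one,\rsf R\to\rsf P)$ already lies in $\Delta(\rsf Q,\rsf P)$.

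For $\supseteq$: take $h=\Delta(\one,k)$ with $k\in\rsf R\to\rsf P$, and any $g\in\Delta(\rsf Q,\rsf R)$, written $g=\Delta(\one,a)\meet q$ with $a\in\rsf Q\to\rsf R\subseteq\rsf R$ and $q\in\rsf Q$. Expanding by distributivity, $h\join g=\bigl(\Delta(\one,k)\join\Delta(\one,a)\bigr)\meet\bigl(\Delta(\one,k)\join q\bigr)$; the first factor is $\Delta(\one,k\join a)=\one$ since $a\in\rsf R$ and $k\in\rsf R\to\rsf P$, and the second is $\one$ since $k,q$ lie in the compatible set $\rsf P$ (\corref{cor:preFilterFIP}). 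So $h\join g=\one$ for all $g\in\Delta(\rsf Q,\rsf R)$, whence $h\in\Delta(\rsf Q,\rsf R)\to\Delta(\rsf Q,\rsf P)$. For $\subseteq$: take $h\in\Delta(\rsf Q,\rsf R)\to\Delta(\rsf Q,\rsf P)$ and write $h=\Delta(\one,k)\meet q$ with $k\in\rsf Q\to\rsf P$, $q\in\rsf Q$. Since $q\in\rsf Q\subseteq\Delta(\rsf Q,\rsf R)$ and $h\le q$, the defining property gives $q=h\join q=\one$, so $h=\Delta(\one,k)$. For each $a\in\rsf Q\to\rsf R$ we have $\Delta(\one,a)\in\Delta(\rsf Q,\rsf R)$, so $\Delta(\one,k\join a)=\Delta(\one,k)\join\Delta(\one,a)=\one$ and thus $k\join a=\one$; combined with $k\join q'=\one$ for all $q'\in\rsf Q$ and the decomposition $\rsf R=\rsf Q\join(\rsf Q\to\rsf R)$, distributivity yields $k\join r=\one$ for every $r\in\rsf R$, i.e. $k\in\rsf R\to\rsf P$, so $h\in\Delta(\one,\rsf R\to\rsf P)$.

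I expect the main obstacle to be purely organizational: making sure that every $\join$ and $\meet$ invoked is actually defined — which stays under control because all the sets in play sit inside the single compatible special subalgebra $\Delta(\rsf Q,\rsf P)$ — and that the distributive identity $w\join(u\meet v)=(w\join u)\meet(w\join v)$ for $u\meet v$ existing is available, the one point where I lean on the fact that the finitely generated subalgebra at hand may be taken to be an interval algebra. A cleaner variant of the $\subseteq$ step is to first prove $\Delta(\rsf Q,\rsf R)\join\Delta(\one,\rsf R\to\rsf P)=\Delta(\rsf Q,\rsf P)$ — using $\rsf Q\to\rsf P=(\rsf Q\to\rsf R)\join(\rsf R\to\rsf P)$ (which follows from \lemref{lem:upwards} and $\rsf P$-Booleanness of $\rsf R$) and the fact that $\Delta(\one,\cdot)$ distributes over $\join$ of special subalgebras — and then quote \thmref{thm:oneThreeEqual}.
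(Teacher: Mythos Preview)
Your proof is correct and follows essentially the same route as the paper's: decompose elements of $\Delta(\rsf Q,\rsf X)$ as $\Delta(\one,k)\meet q$ with $k\in\rsf Q\to\rsf X$, $q\in\rsf Q$, and check the two inclusions directly using compatibility and distributivity. The only cosmetic difference is in the $\subseteq$ step: the paper packages the conclusion $k\in\rsf R\to\rsf P$ via the Boolean identity $(\rsf Q\to\rsf R)\to(\rsf Q\to\rsf P)=\rsf R\to\rsf P$, whereas you unpack it by hand using $\rsf R=\rsf Q\join(\rsf Q\to\rsf R)$ and distributivity --- the same content, expressed at a slightly lower level.
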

\begin{proof}
	As $\rsf  Q\subseteq\rsf  R\subseteq\rsf  P$ in a Boolean algebra we have
	$$
	(\rsf  Q\to\rsf  R)\to(\rsf  Q\to\rsf  P)=\rsf  R\to\rsf  P. 
	$$
	Also we have 
	\begin{align*}
		\Delta(\rsf  Q, \rsf  R) & =\rsf  Q\join\Delta(\one, \rsf  Q\to\rsf  R)  \\
		\Delta(\rsf  Q, \rsf  P) & =\rsf  Q\join\Delta(\one, \rsf  Q\to\rsf  P). 
	\end{align*}
	Let $x\in\Delta(\rsf  Q, \rsf  R)$ and $g\in \rsf  Q$, $h\in\rsf  Q\to\rsf  R $
	with $x=g\meet\Delta(\one, h)$. 
	Let $y\in\Delta(\rsf  Q, \rsf  P)$ and $g'\in \rsf  Q$, $f\in\rsf  Q\to\rsf 
	P $ with $y=g'\meet\Delta(\one, f)$ and suppose that $x\join y=\one$ 
	for all such $x$. 
	Then 
	\begin{align*}
		y\join x & = (g'\meet\Delta(\one, f))\join
		(g\meet\Delta(\one, h))\\
		 & =(g'\join g)\meet(g'\join\Delta(\one, h))\meet
		 (\Delta(\one, f)\join g)\meet\Delta(\one, f\join h)\\
		 & =(g'\join g)\meet\Delta(\one, f\join h) 
	\end{align*}
	since $g$ and $f$ are compatible, as are $g'$ and $h$.
	
	Thus $g'\join g=\one$ and $f\join h=\one$ for all $g\in\rsf  Q$ and 
	all $h\in\rsf  Q\to\rsf  R$. Choosing $g=g'$ implies $g'=\one$
	and so
	$f\in (\rsf  Q\to\rsf  R)\to(\rsf  Q\to\rsf  P)= \rsf  R\to\rsf  P$. 
	Hence $y=\Delta(\one, f)\in\Delta(\one, \rsf  R\to\rsf  P)$. 
	
	Conversely if $f\in\rsf  R\to\rsf  P$ then $g\join \Delta(\one, 
	f)=\one$ for all $g\in\rsf  Q$. And 
	$f\in (\rsf  Q\to\rsf  R)\to(\rsf  Q\to\rsf  P)$ implies $h\join f=\one$ 
	for all
	$h\in\rsf  Q\to\rsf  R$. Hence $(g\meet\Delta(\one, h))\join\Delta(\one, 
	f)=\one$ and so 
	$\Delta(\one, f)$ is in $\Delta(\rsf  Q, \rsf  R)\to\Delta(\rsf  Q, \rsf  P)$. 
\end{proof}

\begin{lem}\label{lem:deltaGivesStr}
	Let $\rsf  Q\subseteq\rsf  R\subseteq\rsf  P$ be $\rsf  P$-Boolean 
	subalgebras. Then
	$\Delta(\rsf  Q, \rsf  R)$ is $\Delta(\rsf  Q, \rsf  P)$-Boolean. 
\end{lem}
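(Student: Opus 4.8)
The plan is to check the two clauses of \defref{def:Boolean}(b) for the pair $\rsf Q':=\Delta(\rsf Q,\rsf R)$ and $\rsf P':=\Delta(\rsf Q,\rsf P)$: that $\rsf Q'\subseteq\rsf P'$ and that $\rsf Q'\join(\rsf Q'\to\rsf P')=\rsf P'$. The inclusion is immediate, since $\rsf Q\subseteq\rsf R\subseteq\rsf P$ gives $\Delta(\rsf Q,\rsf R)\subseteq\Delta(\rsf Q,\rsf P)$ by \lemref{lem:inclDelta}. For the equation I would feed in \lemref{lem:deltaGivesStrA}, which already computes $\rsf Q'\to\rsf P'=\Delta(\one,\rsf R\to\rsf P)$, so that everything comes down to the set identity $\Delta(\rsf Q,\rsf R)\join\Delta(\one,\rsf R\to\rsf P)=\Delta(\rsf Q,\rsf P)$.

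The crux is the auxiliary claim $(\rsf Q\to\rsf R)\join(\rsf R\to\rsf P)=\rsf Q\to\rsf P$. One inclusion is routine: if $h_1\in\rsf Q\to\rsf R$ and $h_2\in\rsf R\to\rsf P$ with $h_1\meet h_2$ existing, then for every $g\in\rsf Q\subseteq\rsf R$ a short distributivity computation in a finitely generated (hence interval-algebra) subalgebra gives $(h_1\meet h_2)\join g=(h_1\join g)\meet(h_2\join g)=\one$, and $h_1\meet h_2\in\rsf P$ because $\rsf P$ is closed under existing meets; hence $h_1\meet h_2\in\rsf Q\to\rsf P$. For the reverse inclusion I would exploit that $\rsf R$ is $\rsf P$-Boolean: given $h\in\rsf Q\to\rsf P$, \lemref{lem:cup} and $\rsf R\join(\rsf R\to\rsf P)=\rsf P$ let us write $h=h_1'\meet h_2$ with $h_1'\in\rsf R$ and $h_2\in\rsf R\to\rsf P$; then $h_1'\geq h$ forces $h_1'\join g=\one$ for all $g\in\rsf Q$, so $h_1'\in(\rsf Q\to\rsf P)\cap\rsf R=\rsf Q\to\rsf R$ by \lemref{lem:upwards}, and hence $h\in(\rsf Q\to\rsf R)\join(\rsf R\to\rsf P)$. (Alternatively the claim drops out of \thmref{thm:BooleanAlgebra}: $\rsf Q\to\rsf R=(\rsf Q\to\rsf P)\cap\rsf R$ is $\rsf P$-Boolean by \corref{cor:interStrBool}, $\rsf R\to\rsf P$ is the complement of $\rsf R$, and since the ambient $\join$ is the Boolean meet a one-line Boolean computation using $\rsf Q\subseteq\rsf R$ finishes it.)

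To assemble the result, recall from \defref{def:Delta} that $\Delta(\rsf Q,\rsf R)=\rsf Q\join\Delta(\one,\rsf Q\to\rsf R)$ and $\Delta(\rsf Q,\rsf P)=\rsf Q\join\Delta(\one,\rsf Q\to\rsf P)$, and note that the involutory automorphism $\Delta(\one,-)$ of $\mathcal L$ commutes with the special-subalgebra join of \lemref{lem:cup}. Then, by associativity of $\join$ and the claim,
\begin{align*}
	\Delta(\rsf Q,\rsf R)\join\bigl(\Delta(\rsf Q,\rsf R)\to\Delta(\rsf Q,\rsf P)\bigr)
	 &=\rsf Q\join\Delta(\one,\rsf Q\to\rsf R)\join\Delta(\one,\rsf R\to\rsf P)\\
	 &=\rsf Q\join\Delta(\one,(\rsf Q\to\rsf R)\join(\rsf R\to\rsf P))\\
	 &=\rsf Q\join\Delta(\one,\rsf Q\to\rsf P)=\Delta(\rsf Q,\rsf P),
\end{align*}
and all these joins are defined because every subalgebra occurring sits inside the compatible set $\Delta(\rsf Q,\rsf P)$.

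The step I expect to be the main obstacle is the reverse inclusion in the auxiliary claim: it is the only place where the $\rsf P$-Booleanness of $\rsf R$ (rather than merely the chain $\rsf Q\subseteq\rsf R\subseteq\rsf P$) is genuinely used, and one has to be slightly careful that the meets and joins invoked really exist. The remainder is bookkeeping with \defref{def:Delta}, \lemref{lem:deltaGivesStrA}, and the automorphism property of $\Delta(\one,-)$.
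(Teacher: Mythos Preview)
Your proposal is correct and follows essentially the same route as the paper. The paper's proof is the same chain of equalities you write at the end; where you isolate the auxiliary claim $(\rsf Q\to\rsf R)\join(\rsf R\to\rsf P)=\rsf Q\to\rsf P$ and prove it directly, the paper instead rewrites $\rsf R\to\rsf P$ as $(\rsf Q\to\rsf R)\to(\rsf Q\to\rsf P)$ (the Boolean identity already quoted in the proof of \lemref{lem:deltaGivesStrA}) and then uses that $\rsf Q\to\rsf R$ is $(\rsf Q\to\rsf P)$-Boolean---which is exactly your ``alternative'' via \thmref{thm:BooleanAlgebra}.
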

\begin{proof}
	Since 
	\begin{align*}
		\Delta(\rsf  Q, \rsf  R)\join(\Delta(\rsf  Q, \rsf  R)\to\Delta(\rsf  Q, \rsf  P)) & =
		\rsf  Q\join\Delta(\one, \rsf  Q\to\rsf  R)\join
		\Delta(\one, \rsf  R\to\rsf  P)\\
		 & =
		\rsf  Q\join\Delta(\one, \rsf  Q\to\rsf  R)\join
		\Delta(\one, (\rsf  Q\to\rsf  R)\to(\rsf  Q\to\rsf  P))  \\
		 & = \rsf  Q\join\Delta((\one, \rsf  Q\to\rsf  R)\join
		((\rsf  Q\to\rsf  R)\to(\rsf  Q\to\rsf  P)))  \\
		 & =\rsf  Q\join\Delta(\one, \rsf  Q\to\rsf  P)\\
		 &= \Delta(\rsf  Q, \rsf  P). 
	\end{align*}
\end{proof}

From this lemma  we can derive another property of $\Delta$. 
\begin{lem}\label{lem:implDDD}
	Let $\rsf  Q\subseteq\rsf  R\subseteq\rsf  P$ be $\rsf  P$-Boolean 
	subalgebras. Then
	$$
		\rsf  Q\to\Delta(\rsf  R, \rsf  P)=(\rsf  Q\to\rsf  R)\join\Delta(\one, 
		\rsf  R\to\rsf  P). 
	$$
\end{lem}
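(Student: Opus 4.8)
The plan is to expand the definition $\Delta(\rsf R,\rsf P)=\Delta(\one,\rsf R\to\rsf P)\join\rsf R$ and then prove the asserted equality by two set inclusions, using \lemref{lem:cup} to present each of the joins involved as a set of binary meets. First I would record the bookkeeping that makes everything legitimate: $\rsf R\to\rsf P$ is a special subalgebra (by \thmref{thm:oneThreeEqual} it equals $\rsf R\supset\rsf P$, an intersection of special subalgebras, and such intersections are special by \lemref{lem:intersectSpec}), so $\Delta(\one,\rsf R\to\rsf P)=(\rsf R\to\rsf P)_{\one}$ is special by \thmref{thm:gFF}, and therefore $\Delta(\rsf R,\rsf P)=\Delta(\one,\rsf R\to\rsf P)\join\rsf R$ is a special subalgebra containing both $\rsf R$ and $\Delta(\one,\rsf R\to\rsf P)$. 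Since $\rsf Q\to\rsf R\subseteq\rsf R\subseteq\Delta(\rsf R,\rsf P)$, the set $(\rsf Q\to\rsf R)\cup\Delta(\one,\rsf R\to\rsf P)$ sits inside the compatible set $\Delta(\rsf R,\rsf P)$, so by \defref{def:cup} and \lemref{lem:cup} the join $(\rsf Q\to\rsf R)\join\Delta(\one,\rsf R\to\rsf P)$ exists and equals $\Set{r\meet s | r\in\rsf Q\to\rsf R,\ s\in\Delta(\one,\rsf R\to\rsf P)\text{ and }r\meet s\text{ exists}}$; likewise $\Delta(\rsf R,\rsf P)$ is exactly the set of meets $r\meet s$ with $r\in\rsf R$ and $s\in\Delta(\one,\rsf R\to\rsf P)$.

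For the inclusion $\supseteq$ I would take a typical element $x=r\meet s$ of the right-hand side, with $r\in\rsf Q\to\rsf R$ and $s=\Delta(\one,k)$ for some $k\in\rsf R\to\rsf P$. Then $x\in\Delta(\rsf R,\rsf P)$, since that is a special subalgebra and hence closed under existing meets. Given any $g\in\rsf Q$ we have $r\join g=\one$ by the definition of $\rsf Q\to\rsf R$, and $g\join\Delta(\one,k)=\one$ by \corref{cor:preFilterFIP}, because $g$ and $k$ both lie in the compatible set $\rsf P$; hence $x\join g=(r\join g)\meet(s\join g)=\one$ and so $x\in\rsf Q\to\Delta(\rsf R,\rsf P)$.

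For $\subseteq$ I would take $x\in\rsf Q\to\Delta(\rsf R,\rsf P)$ and use \lemref{lem:cup} to write $x=r\meet s$ with $r\in\rsf R$ and $s\in\Delta(\one,\rsf R\to\rsf P)$. Because $x\le r$ and $x\join g=\one$ for every $g\in\rsf Q$, monotonicity of $\join$ forces $r\join g=\one$ for every $g\in\rsf Q$; since $r\in\rsf R$ this is precisely the statement that $r\in\rsf Q\to\rsf R$. Thus the same factorisation $x=r\meet s$ now exhibits $x$ as a member of $(\rsf Q\to\rsf R)\join\Delta(\one,\rsf R\to\rsf P)$, completing the second inclusion.

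The step I expect to need the most care is the initial one: verifying that $\Delta(\rsf R,\rsf P)$ is a genuine special subalgebra (so that $\rsf Q\to\Delta(\rsf R,\rsf P)$ is defined and closed under the meets used above) and that the two unions in question are compatible. This rests on the observation behind \lemref{lem:deltaOne} that $\Delta(\one,\cdot)$ preserves compatibility, together with \lemref{lem:intersectSpec} and \thmref{thm:gFF}; once that scaffolding is in place the two inclusions are essentially immediate, and in fact the argument as sketched uses only the containments $\rsf Q\subseteq\rsf R\subseteq\rsf P$, the $\rsf P$-Boolean hypotheses serving only to situate the result within the surrounding theory.
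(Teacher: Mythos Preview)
Your argument is correct and follows essentially the same route as the paper's proof: both directions decompose elements of $\Delta(\rsf R,\rsf P)=\rsf R\join\Delta(\one,\rsf R\to\rsf P)$ via \lemref{lem:cup} as meets $r\meet s$, and the key step in $\subseteq$ is your observation that $x\le r$ forces $r\join g=\one$, placing $r$ in $\rsf Q\to\rsf R$. Your version supplies considerably more bookkeeping than the paper (which simply says ``the RHS is clearly a subset of $\Delta(\rsf R,\rsf P)$'' and then runs the two inclusions in a few lines), and your closing remark that the $\rsf P$-Boolean hypotheses are not actually used in the argument is a correct and worthwhile observation.
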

\begin{proof}
	The RHS is clearly a subset of $\Delta(\rsf  R, \rsf  P)$. 
	Let $g\in\rsf  Q$. If $h\in\rsf  Q\to\rsf  R$ then $h\join g=\one$. 
	If $k\in \Delta(\one, \rsf  R\to\rsf  P)$ then $\Delta(\one, k)\in\rsf R\to\rsf P\subseteq\rsf Q\to\rsf P$ 
	so that $g\join k=\one$. Thus the RHS is a subset of the LHS. 
	
	Conversely suppose that $h=h_{1}\meet h_{2}$ is in $\rsf  R\join\Delta(\one, 
	\rsf  R\to\rsf  P)= \Delta(\rsf  R, \rsf  P)$ and $g\join h=\one$
	for all $g\in\rsf  Q$. Then $g\join h_{1}=\one$ for all $g\in\rsf  Q$ 
	and so $h_{1}\in\rsf  Q\to\rsf  R$. Thus the LHS is a subset of the RHS. 
\end{proof}

\begin{cor}\label{cor:iteratedDelta}
	Let $\rsf  Q\subseteq\rsf  R\subseteq\rsf  P$ be $\rsf  P$-Boolean 
	subalgebras. Then
	$$
		\Delta(\rsf  Q, \Delta(\rsf  R, \rsf  P))=\Delta(\Delta(\rsf  Q, \rsf  R), 
		\Delta(\rsf  Q, \rsf  P)). 
	$$
\end{cor}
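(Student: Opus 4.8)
The plan is to recognize \corref{cor:iteratedDelta} as the ``associativity'' axiom for $\Delta$ — it is precisely axiom (b) of a cubic algebra read in the reverse-inclusion order on special subalgebras, with $\rsf Q\subseteq\rsf R\subseteq\rsf P$ playing the role of $z\ge y\ge x$ — and to verify it by expanding both sides via \defref{def:Delta} together with the two ``$\to$-across-$\Delta$'' identities already in hand, \lemref{lem:implDDD} and \lemref{lem:deltaGivesStrA}. Before computing I would first note both sides are defined: $\rsf R\subseteq\Delta(\rsf R,\rsf P)$ (immediate from \defref{def:Delta}, since $\rsf A\subseteq\rsf A\join\rsf B$ always, as $a=a\meet\one$), hence $\rsf Q\subseteq\rsf R\subseteq\Delta(\rsf R,\rsf P)$ and the left side makes sense; and $\Delta(\rsf Q,\rsf R)\subseteq\Delta(\rsf Q,\rsf P)$ by \lemref{lem:inclDelta}, so the right side makes sense.

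The key structural fact used throughout is that $x\mapsto\Delta(\one,x)$ is a cubic automorphism of $\mathcal L$ — an involution (axiom (c) with top $\one$), order-preserving (axiom (d)), and $\Delta$-preserving (axiom (b)) — so it carries existing meets to existing meets, and therefore, by \lemref{lem:cup}, it carries the join of special subalgebras to the join of special subalgebras: $\Delta(\one,\rsf A\join\rsf B)=\Delta(\one,\rsf A)\join\Delta(\one,\rsf B)$, while $\Delta(\one,\Delta(\one,\rsf S))=\rsf S$.

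For the left side, \defref{def:Delta} and then \lemref{lem:implDDD} give
\[
\Delta(\rsf Q,\Delta(\rsf R,\rsf P))=\Delta(\one,\rsf Q\to\Delta(\rsf R,\rsf P))\join\rsf Q=\Delta(\one,(\rsf Q\to\rsf R)\join\Delta(\one,\rsf R\to\rsf P))\join\rsf Q,
\]
and distributing $\Delta(\one,\cdot)$ over the join and cancelling the double $\Delta(\one,\cdot)$ this is $\Delta(\one,\rsf Q\to\rsf R)\join(\rsf R\to\rsf P)\join\rsf Q$. For the right side, \defref{def:Delta} and then \lemref{lem:deltaGivesStrA} give
\[
\Delta(\Delta(\rsf Q,\rsf R),\Delta(\rsf Q,\rsf P))=\Delta(\one,\Delta(\rsf Q,\rsf R)\to\Delta(\rsf Q,\rsf P))\join\Delta(\rsf Q,\rsf R)=\Delta(\one,\Delta(\one,\rsf R\to\rsf P))\join\Delta(\rsf Q,\rsf R),
\]
which, using $\Delta(\one,\Delta(\one,\rsf R\to\rsf P))=\rsf R\to\rsf P$ and $\Delta(\rsf Q,\rsf R)=\Delta(\one,\rsf Q\to\rsf R)\join\rsf Q$ (from \defref{def:Delta}), becomes $(\rsf R\to\rsf P)\join\Delta(\one,\rsf Q\to\rsf R)\join\rsf Q$. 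The two expressions coincide by commutativity of $\join$ on special subalgebras, proving the identity.

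The only point requiring care — not so much an obstacle as a routine verification — is the bookkeeping with the partial join: one must be sure that each combined set really is the special subalgebra generated by the union, and that $\Delta(\one,\cdot)$ genuinely distributes over it. This is fine because everything occurring on the left (resp. right) lies inside the single compatible set $\Delta(\rsf R,\rsf P)$ (resp. $\Delta(\rsf Q,\rsf P)$), so all of the joins in question exist, and the displayed manipulations are exactly the images under the automorphism $\Delta(\one,\cdot)$ of identities already established in \lemref{lem:implDDD}, \lemref{lem:deltaGivesStrA} and \lemref{lem:cup}.
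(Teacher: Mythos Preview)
Your proposal is correct and follows essentially the same route as the paper: both arguments expand via \defref{def:Delta}, invoke \lemref{lem:implDDD} for the left side and \lemref{lem:deltaGivesStrA} for the right, and meet at the common expression $\rsf Q\join\Delta(\one,\rsf Q\to\rsf R)\join(\rsf R\to\rsf P)$. The only cosmetic difference is that the paper writes a single chain of equalities from left to right, whereas you expand the two sides separately and compare; your additional remarks on well-definedness and on $\Delta(\one,\cdot)$ distributing over $\join$ make explicit steps the paper leaves tacit.
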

\begin{proof}
	\begin{align*}
		\Delta(\rsf  Q, \Delta(\rsf  R, \rsf  P)) & =
		\rsf  Q\join\Delta(\one, \rsf  Q\to\Delta(\rsf  R, \rsf  P))\\
		 & =\rsf  Q\join\Delta(\one, (\rsf  Q\to\rsf  R)\join\Delta(\one, 
		\rsf  R\to\rsf  P))  \\
		 & =\rsf  Q\join \Delta(\one, \rsf  Q\to\rsf  R)\join(\rsf  R\to\rsf  P)  \\
		 & = \Delta(\rsf  Q, \rsf  R)\join\Delta(\one, \Delta(\rsf  Q, \rsf  R)\to
		 \Delta(\rsf  Q, \rsf  P)) \\
		 & = \Delta(\Delta(\rsf  Q, \rsf  R), \Delta(\rsf  Q, \rsf  P)). 
	\end{align*}
\end{proof}

\subsection{An MR-algebra}
The results of the last section show us that there is a natural MR-algebra sitting 
over the top of any cubic algebra. The first theorem describes the case for 
cubic algebras with g-covers.

\begin{thm}\label{thm:MRalgFilter}
    Let $\mathcal L$ be a cubic algebra with a g-cover. 
	Let $\mathcal L_{sB}$ be the set of all special subalgebras that are $\rsf P$-Boolean for some
	g-cover $\rsf P$. Order these by reverse inclusion. 
	Then 
	\begin{enumerate}[(a)]
		\item $\mathcal L_{sB}$ contains $\Set{\one}$ and is closed
		 under the operations $\join$ and $\Delta$. 
		
		\item $\brk<\mathcal L_{sB}, \Set{\one}, \join, \Delta>$ is an atomic MR-algebra. 
	
		\item The mapping $e\colon\mathcal L\to \mathcal L_{sB}$ given by 
		$g\mapsto[g, \one]$ is a full embedding. 
	
		\item The atoms of $\mathcal L_{sB}$ are exactly the g-covers of 
		$\mathcal L$. 
	\end{enumerate}
\end{thm}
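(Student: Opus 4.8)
The plan is to leverage the structural results already established about $\rsf P$-Boolean subalgebras, the operations $\join$, $\cap$ and $\Delta$ on special subalgebras, and the existence of a g-cover $\rsf P_{0}$ for $\mathcal L$. First I would fix a g-cover $\rsf P_{0}$ and observe, using \lemref{lem:cup}, \corref{cor:joinStrBool}, \lemref{lem:deltaGivesStr} together with \corref{cor:lots} and \thmref{thm:lots}, that $\mathcal L_{sB}$ is closed under $\join$ and $\Delta$ --- the subtlety being that $\join$ and $\Delta$ of algebras that are Boolean over \emph{different} g-covers land among algebras Boolean over \emph{some} g-cover. Here the key fact is that all g-covers are $\sim$-equivalent to one another (they all collapse isomorphically onto $\mathcal L/\sim$, so $\beta$-maps move one to another), and then \corref{cor:closIntersect} and \thmref{thm:lots} let one reduce everything to a single reference g-cover $\rsf P$. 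The top element $\Set{\one}$ is trivially $\rsf P_{0}$-Boolean, so part~(a) follows.

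For part~(b) I would verify the cubic/MR axioms directly on $\brk<\mathcal L_{sB},\Set{\one},\join,\Delta>$ with the reverse-inclusion order. Axioms (a)--(d) of a cubic algebra are essentially \lemref{lem:interDelta}, \corref{cor:iteratedDelta}, \corref{cor:doubleDelta} (specialised to the case where $\rsf J\to\rsf I$ behaves like a complement, which holds precisely because we restrict to Boolean subalgebras), and \lemref{lem:inclDelta}; the implication-algebra identities (e)--(f) then follow since, by \thmref{thm:BooleanAlgebra}, the interval $[\rsf P,\Set{\one}]$ inside $\mathcal L_{sB}$ is a Boolean algebra, and every principal-type computation takes place in such an interval --- exactly the device used earlier to check cubic axioms inside interval algebras. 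For the MR-axiom I would use the $\caret$ characterisation from the lemma after \defref{def:caret}: given $\rsf Q_{1},\rsf Q_{2}$ both $\rsf P$-Boolean and strictly below $\rsf P$, the caret $\rsf Q_{1}\caret\rsf Q_{2}$ is their join $\rsf Q_{1}\join\rsf Q_{2}$ inside the Boolean algebra $[\rsf P,\Set{\one}]$, which always exists; passing between reference g-covers via \corref{cor:closIntersect} shows the caret is always defined, hence $\mathcal L_{sB}$ is an MR-algebra. Atomicity and the identification of atoms in part~(d) go together: a nonempty special subalgebra $\rsf Q$ that is $\rsf P$-Boolean is \emph{maximal} in the reverse order (i.e.\ minimal as a set) iff it cannot be properly enlarged within any g-cover, and by \lemref{lem:upwards} and \lemref{lem:middle} the only way to be unenlargeable is $\rsf Q=\rsf P$ itself, a g-cover; conversely every element sits above a g-cover by \lemref{lem:lots} applied repeatedly, giving atomicity.

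For part~(c), the map $g\mapsto[g,\one]$ is well defined into $\mathcal L_{sB}$ by \lemref{lem:implgTwice} (each principal $[g,\one]$ is $\rsf P_{0}$-Boolean for the g-cover $\rsf P_{0}$, after translating $g$ into $\rsf P_{0}$ via the $\sim$-structure), it is injective because $[g,\one]=[h,\one]$ forces $g=h$, it reverses order correctly ($g\le h$ iff $[h,\one]\subseteq[g,\one]$), and it preserves $\join$ and $\Delta$ by \corref{cor:DeltaDoublePrinc} and the identity $[g,\one]\join[h,\one]=[g\join h,\one]$. Fullness --- that the image is an up-set, i.e.\ $[g,\one]\subseteq\rsf Q$ with $\rsf Q\in\mathcal L_{sB}$ implies $\rsf Q=[h,\one]$ for some $h\le g$ --- is where I expect the main obstacle: one must show that a $\rsf P$-Boolean subalgebra containing a principal filter is itself principal. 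I would attack this by noting $\rsf Q\to[g,\one]$ is then a principal-complement inside the Boolean algebra $[g,\one]$-relative structure, forcing $\rsf Q$ to have a least element; concretely, $\rsf Q\cap([g,\one]\to\cdots)$ computations via \lemref{lem:deltaGivesStrA} and \corref{cor:implSS} pin down a generator. Assembling these four parts gives the theorem.
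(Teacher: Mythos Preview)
Your treatment of parts (a), (b), and (d) follows the paper's line closely: closure under the $\mathcal L_{sB}$-join (which is set-theoretic intersection under the reverse order) is \corref{cor:closIntersect}, closure under $\Delta$ is \lemref{lem:deltaGivesStr}, and the cubic axioms are verified from \lemref{lem:interDelta}, \corref{cor:iteratedDelta}, \corref{cor:doubleDelta}, \lemref{lem:inclDelta} together with the fact that each interval $[\rsf P,\Set{\one}]$ is Boolean (\thmref{thm:BooleanAlgebra}). For the MR-property the paper argues slightly differently---it shows directly that any two $\rsf Q_{1},\rsf Q_{2}\in\mathcal L_{sB}$ lie $\preccurlyeq$-above a common g-cover---but your caret argument reaches the same conclusion. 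For (d) the paper simply invokes \thmref{thm:lots}; your route via \lemref{lem:upwards} and \lemref{lem:middle} is a reasonable alternative.

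The genuine problem is part~(c), specifically fullness. You have the reverse-inclusion bookkeeping backwards: ``image is an up-set'' in $\mathcal L_{sB}$ would mean $\rsf Q\subseteq[g,\one]$ (not $[g,\one]\subseteq\rsf Q$) forces $\rsf Q$ principal. More importantly, the claim you set out to prove---``a $\rsf P$-Boolean subalgebra containing a principal filter is itself principal''---is false: every g-cover $\rsf P$ contains $[\one,\one]=\Set{\one}$ yet need not be principal. So the programme you sketch for this step cannot succeed.

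The paper's notion of ``full'' here is much weaker and its proof is one line: for any $\rsf Q\in\mathcal L_{sB}$ and any $g\in\rsf Q$ one has $[g,\one]\subseteq\rsf Q$ (since $\rsf Q$ is upwards-closed), i.e.\ $\rsf Q\le e(g)$ in $\mathcal L_{sB}$. In other words, the image elements sitting above a given $\rsf Q$ are exactly the $e(g)$ with $g\in\rsf Q$. No argument about principality of arbitrary $\rsf P$-Boolean subalgebras is needed or true.
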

\begin{proof}
	\begin{enumerate}[(a)]
		\item It is easy to see that $\one\to\rsf =\rsf P$ for all filters 
		$\rsf P$. Corollary \ref{cor:closIntersect} and \lemref{lem:deltaGivesStr}
		give the closure under join and Delta respectively. 
		
		\item 
		We will proceed sequentially through the axioms. 
		\begin{enumerate}[i.]
		\item  if $x\le y$ then $\Delta(y, x)\join x = y$ -- this is 
		\lemref{lem:interDelta}. 
	
		\item  if $x\le y\le z$ then $\Delta(z, \Delta(y, x))=\Delta(\Delta(z, 
			y), \Delta(z, x))$ -- this is \corref{cor:iteratedDelta}. 
	
		\item  if $x\le y$ then $\Delta(y, \Delta(y, x))=x$ -- this 
		is \corref{cor:doubleDelta} and the definition of $\rsf 
		F$-Boolean. 
	
		\item  if $x\le y\le z$ then $\Delta(z, x)\le \Delta(z, 
		y)$ -- this is \lemref{lem:inclDelta}. 
	
		\item[] Let $xy=\Delta(1, \Delta(x\join y, y))\join y$ for any $x$, $y$ 
		in $\mathcal L$. 
			
			First we note that if $\rsf Q\subseteq\rsf P$ then
			\begin{align*}
				\Delta(\one, \Delta(\rsf Q, \rsf P))\cap\rsf P & =
				\Delta(\one, \rsf Q\join\Delta(\one, \rsf Q\to\rsf P))\cap\rsf P\\
				 & = (\Delta(\one, \rsf Q)\join(\rsf Q\to\rsf P))\cap\rsf P . 
			\end{align*}
			If $g\in\rsf Q$ and $h\in\rsf P$ is such that $\Delta(\one, g)\meet h\in\rsf P$ 
			then $g=\Delta(\one, g)$ (since $g\simeq\Delta(\one, g)$ and
			$g\meet \Delta(\one, g)$ exists). Thus 
			$(\Delta(\one, \rsf Q)\join(\rsf Q\to\rsf P))\cap\rsf P= 
			\rsf Q\to\rsf P$. 
	
		\item  $(xy)y=x\join y$ and 
		\item  $x(yz)=y(xz)$. 
			These last two properties hold as $\mathcal L_{sB}$ is locally 
			Boolean and hence an implication algebra. 
	\end{enumerate}
		
		To see that $\mathcal L_{sB}$ is an MR-algebra it suffices to note 
		that if $\rsf Q_{1}$ and $\rsf Q_{2}$ are in $\mathcal L_{sB}$ and we have 
		g-covers $\rsf P_{1}, \rsf P_{2}$ with $\rsf Q_{i}\subseteq\rsf 
		P_{i}$ then $\Delta(\rsf P_{1}\cap\rsf P_{2}, \rsf P_{2})= \rsf 
		P_{1}\supseteq\rsf Q_{1}$ so that $\rsf P_{2}\preccurlyeq\rsf Q_{1}$. 
		It is clear that $\rsf P_{2}\preccurlyeq\rsf Q_{2}$.
	
		\item It is clear that this mapping preserves 
		order and join. Preservation of $\Delta$ is \corref{cor:DeltaDoublePrinc}. 
		
		It is full because $[g, \one]\subseteq\rsf Q$ whenever $g\in\rsf 
		G$. 
	
		\item This is \thmref{thm:lots}. 
	\end{enumerate}
\end{proof}

The structure $\mathcal L_{sB}$ is another notion of envelope for 
cubic algebras. The existence of such an envelope -- it is an 
MR-algebra with a g-filter into which $\mathcal L$ embeds as a full 
subalgebra -- implies that $\mathcal L$ has a g-cover,  so this 
result cannot be directly extended to all cubic algebras. 

We note that if $\mathcal L$ is finite then $\mathcal L_{sB}$ is the 
same as the enveloping algebra given by \thmref{thm:envAlg}. 

%

\begin{bibdiv}
\begin{biblist}
        \DefineName{cgb}{Bailey, Colin G.}
    \DefineName{jso}{Oliveira,  Joseph S.}

\bib{Abb:bk}{book}{
author={Abbott,J.C.}, 
title={Sets, Lattices, and Boolean Algebras}, 
publisher={Allyn and Bacon, Boston, MA}, 
date={1969}
}

\bib{BO:eq}{article}{
title={An Axiomatization for Cubic Algebras}, 
author={cgb}, 
author={jso}, 
book={
    title={Mathematical Essays in Honor of Gian-Carlo Rota}, 
    editor={Sagan,  Bruce E.}, 
    editor={Stanley, Richard P.}, 
    publisher={Birkha\"user}, 
    date={1998.}, 
}, 
pages={305--334}
}

\bib{BO:filAlg}{article}{
title={The Algebra of Filters of a Cubic Algebra}, 
author={cgb}, 
author={jso},
status={in preparation},
eprint={arXiv:0901.4933v1 [math.RA]} 
}

\bib{BO:cong}{article}{
title={Congruences and Homomorphisms of Cubic Algebras}, 
author={cgb}, 
author={jso},
status={in preparation} 
}

\bib{BO:fil}{article}{
author={cgb}, 
author={jso}, 
title={Cube-like structures generated by filters}, 
journal={Algebra Universalis}, 
volume={49}, 
date={2003}, 
pages={129--158}
}

\bib{BO:UniMR}{article}{
title={A Universal Axiomatization of Metropolis-Rota Implication Algebras}, 
author={cgb}, 
author={jso},
status={in preparation} ,
eprint={arXiv:0902.0157v1 [math.CO]}
}

\bib{MR:cubes}{article}{
author={Metropolis, Nicholas}, 
author={Rota,  Gian-Carlo}, 
title={Combinatorial Structure of the faces 
of the n-Cube}, 
journal={SIAM J.Appl.Math.}, 
volume={35}, 
date={1978}, 
pages={689--694}
}

\end{biblist}
\end{bibdiv}

\end{document}